   \newtheorem{theorem}[subsubsection]{Theorem}
      \newtheorem*{theorem*}{Theorem}
   \newtheorem{proposition}[subsubsection]{Proposition}
   \newtheorem{lemma}[subsubsection]{Lemma}
    \newtheorem{Inductiveclaim}[subsubsection]{Inductive Claim}
   \newtheorem*{claim}{Claim}
   \newtheorem{corollary}[subsubsection]{Corollary}
   \newtheorem*{conjecture*}{Conjecture}
\theoremstyle{definition}
          \newtheorem*{exercise*}{Exercise}
   \newtheorem{example}[subsubsection]{Example}
   \newtheorem*{example*}{Example}
   \newtheorem{definition}[subsubsection]{Definition}
   \newtheorem*{definition*}{Definition}
   \newtheorem{remark}[subsubsection]{Remark}
   \newtheorem{notation}[subsubsection]{Notation}
\newtheorem{warning}[subsubsection]{Warning}
\newcommand{\CC}{{\mathbb{C}}}
\newcommand{\QQ}{{\mathbb{Q}}}
\newcommand{\NN}{{\mathbb{N}}}
\newcommand{\PP}{{\mathbb{P}}}
\newcommand{\ZZ}{{\mathbb{Z}}}
\newcommand{\GG}{{\mathbb{G}}}
\renewcommand{\AA}{{\mathbb{A}}}
\newcommand{\bmu}{{\boldsymbol{\mu}}}
\newcommand{\cA}{{\mathcal A}}
\newcommand{\cC}{{\mathcal C}}
\renewcommand{\cD}{{\mathcal D}}
\newcommand{\cF}{{\mathcal F}}
\newcommand{\cG}{{\mathcal G}}
\renewcommand{\cH}{{\mathcal H}}
\newcommand{\cI}{{\mathcal I}}
\newcommand{\cK}{{\mathcal K}}
\newcommand{\cO}{{\mathcal O}}
\renewcommand{\cR}{{\mathcal R}}
\def\<{\langle}
\def\>{\rangle}
\newcommand{\Int}{{\operatorname{Int}}}
\newcommand{\Spec}{\operatorname{Spec}}
\newcommand{\Span}{\operatorname{Span}}
\newcommand{\Proj}{\operatorname{Proj}}
\newcommand{\cHom}{{{\cH}om}}
\newcommand{\codim}{\operatorname{codim}}
\newcommand{\lcm}{{\operatorname{lcm}}}
\newcommand{\satpull}[1]{#1^*_{\sat}}
\newcommand{\dispull}[1]{#1^*_{\operatorname{dis}}}
\newcommand{\folpull}[1]{#1^*_{\operatorname{fol}}}
\def\:{{\colon}}
\def\.{{,\dots,}}
\def\dim{{\rm dim}}
\def\inv{{\rm inv}}
\def\-inv{{\operatorname{-inv}}}
\newcommand\lb{\llbracket}
\newcommand\rb{\rrbracket}
\newcommand{\double}{\genfrac..{0pt}1
{\raise -1pt\hbox{$\scriptstyle\longrightarrow$}}{\raise 3pt\hbox
{$\scriptstyle\longrightarrow$}}}
\renewcommand{\setminus}{\smallsetminus}
\def\sat{{\rm sat}}
\def\tototi{\mathbin{\mathop{\otimes}\limits^{\raise-1pt\hbox
{$\scriptscriptstyle {\rm L}$}}}}
\def\indlim{\mathop{\vrule width0pt height7pt depth
4pt\smash{\lim\limits_{\raise 1pt\hbox to 14.5pt
{\rightarrowfill}}}}}
\def\projlim{\mathop{\vrule width0pt height7pt depth
4pt\smash{\lim\limits_{\raise 1pt\hbox to 14.5pt
{\leftarrowfill}}}}}
\newcommand\displaceamount{3pt}
\newcommand{\doubledown}{\ar@<\displaceamount>[d]\ar@<-\displaceamount>[d]}
\newcommand{\doubleup}{\ar@<\displaceamount>[u]\ar@<-\displaceamount>[u]}
\newcommand{\doubleright}{\ar@<\displaceamount>[r]\ar@<-\displaceamount>[r]}
\newcommand{\spa}{{\operatorname{span}}}
\newcommand{\nsing}{{\operatorname{ns}}}
\newcommand{\inte}{{\operatorname{int}}}
\newcommand{\ext}{{\operatorname{ext}}}
\newcommand{\ord}{{\operatorname{ord}}}
\def\rank{{\operatorname{rank}}}
\def\supp{{\operatorname{supp}}}
\newcommand\pa{{\mathfrak a}}
\newcommand\pb{{\mathfrak b}}
\def\nsrank{{\operatorname{sm-rank}}}
\newcommand\Sm{{smooth}}
\begin{document}

\title{Principalization on logarithmically foliated orbifolds}

\author[D. Abramovich]{Dan Abramovich}
\address{Department of Mathematics, Box 1917, Brown University,
Providence, RI, 02912, U.S.A}
\email{dan\_abramovich@brown.edu}

\author[A. Belotto da Silva]{Andr\'e Belotto da Silva}
\address{Universit\'e Paris Cit\'e and Sorbonne Universit\'e, UFR de Math\'ematiques, Institut de Math\'ematiques de Jussieu-Paris Rive Gauche, UMR7586,
F-75013 Paris, France.
Institut universitaire de France (IUF).}
\email{belotto@imj-prg.fr}

\author[M. Temkin]{Michael Temkin}
\address{Einstein Institute of Mathematics\\
               The Hebrew University of Jerusalem\\
                Edmond J. Safra Campus, Giv'at Ram, Jerusalem, 91904, Israel}
\email{temkin@math.huji.ac.il}

\author[J. W{\l}odarczyk] {Jaros{\l}aw W{\l}odarczyk}
\address{Department of Mathematics, Purdue University\\
150 N. University Street \\ West Lafayette, IN 47907-2067}
\email{wlodar@math.purdue.edu}

\thanks{This research is supported by BSF grants 2018193, 2022230, ERC Consolidator Grant 770922 - BirNonArchGeom, NSF grants
DMS-2100548, DMS-2401358, the Plan d'Investissements France 2030, IDEX UP ANR-18-IDEX-0001, and Simon foundation grants MPS-SFM-00006274, MPS-TSM-00008103. }

\subjclass[2020]{ 14E15, 32S65, 32S45, 14A20, 14A21}

\date{\today}

\begin{abstract}
In characteristic zero, we  construct principalization of ideals on  smooth orbifolds endowed with a normal crossings divisor and a foliation. We then illustrate how the method can be used in the general study of foliations via two applications. First, we provide a resolution of singularities of Darboux totally integrable foliations in arbitrary dimensions --- including  rational and meromorphic Darboux foliations. Second, we show how to transform a generically transverse section into a transverse section. \end{abstract}

\maketitle

\setcounter{tocdepth}{1}
\tableofcontents 

\section{Introduction}
\emph{Resolution of singularities} encompasses some of the most vexing problems in algebraic geometry. Apart from the problem of resolution of singularities of schemes in positive  and mixed characteristics, major open problems, even in characteristic 0,  have included resolution of singularities of morphisms and of differential equations. The present work addresses first-order linear differential equations, classically described using singular \emph{foliations} or \emph{involutive distributions}. We have been involved in resolution of morphisms in the past, see \cite{ATW-relative,BelBmon}, and hope the present work will help address remaining question in that direction as well.

Much of the work on resolution of singularities of varieties proceeds through a classical process of embedding a singular variety $X$ in a smooth variety $Y$, and then \emph{principalizing} the ideal of $X$. We accordingly address here, as a first step, the problem of principalization of an ideal on an ambient space carrying a mildly singular foliation. The work \cite[Page 5]{Belthesis}, \cite{BelRACSAM,BelJA} of Belotto da Silva introduced this problem and solved it in a number of cases:

\begin{enumerate}
\item The case of an $\cF$-invariant ideal: $\cF(\cI)= \cI$;
\item The case of a foliation by curves; and
\item The local uniformization principalization problem, that is, the general case, but the resolution is only given by local blow-ups.
\end{enumerate}

Thus our present work generalizes the work \cite{BelRACSAM,BelJA} to the general case, see Theorem \ref{thm:PrincipalizationFoliated} --- always in characteristic $0$. To this end, we combine the methods developed by Abramovich, Temkin and W{\l}odarczyk \cite{ATW-principalization,ATW-relative,ATW-weighted} concerning logarithmic and weighted resolution of varieties and morphisms, of Belotto \cite{BelRACSAM,BelJA,BelRI} concerning resolution in the presence of a foliation, and of W{\l}odarczyk \cite{Wlodarczyk-cobordant} concerning resolution of singularities via Rees algebras and cobordant blow-ups. We then illustrate how our result can be useful in at least two different situations. First, we provide an application to the study of resolution of singularities of foliations, by settling it in the case of Darboux (including rational or  meromorphic) totally integrable foliations, see Theorem \ref{thm:ResDarbouxTotallyIntegrable}. Second, we prove resolution of singularities under the constraint of preserving transverse sections, see Theorem \ref{thm:ResTransverseSections}, see \cite{BelPan}.

\subsection{Principalization in the presence of a foliation}\label{ssec:IntroResolutionFoliated}

Our main result relies on the notions of foliated logarithmic varieties, $\cI$-admissible centers, $\cK$-monomial foliations, $\cF$-aligned centers and strict and controlled transforms foliations. These notions are described in detail in the paper. Here we only explain their meaning, discussing these notions mostly in the algebraic setup, the analytic analogue being similar.

\smallskip
\noindent
\emph{Foliated logarithmic varieties.} By a foliated logarithmic variety, we mean a triple $(X,\cF,E)$ where $X$ is an algebraic variety --- always in characteristic 0 --- $E$ is a simple normal crossing divisors and $\cF$ is an involutive (not necessarily saturated, see Remark \ref{rk:LiteratureDefFoliations}) coherent sub-sheaf $\cF\subset \cD^{\log}_X$ of the sheaf of logarithmic derivations $\cD^{\log}_X$ with respect to $E$. See details in Section~\ref{ssec:2FoliatedLog}. We also consider the induced notion of foliated logarithmic \emph{orbifolds} $(X,\cF,E)$, discussed in Section \ref{sssec:Orbifolds}. 

\smallskip
\noindent
\emph{$\cK$-monomial foliations.} Let $\cK$ be a sub-field of $\CC$. The notion of a foliation $\cF$ being a $\cK$-monomial foliation was defined in \cite[Section~3]{BelRACSAM}. This notion guarantees that the foliation has everywhere formal first integrals which are monomial, with exponents in $\cK$, in terms of local parameters, including the parameters of the logarithmic structure, see Definition \ref{def:KMonomialFoliation}. These are the  final singularities for rational or Darboux totally integrable foliations, see details in Section~\ref{ssec:ResTotallyIntegrable} and Theorem~\ref{thm:ResDarbouxTotallyIntegrable}, which answers \cite[Problem 1.2]{BelRI}.

\smallskip
\noindent
\emph{$\cI$-admissible centers.} Let $\cI \subset \cO_X$ be an ideal sheaf. In \cite{ATW-weighted} one discusses the property of a center of the form ${J}=(x_1^{a_1},\dots, x_k^{a_k})$ being \emph{$\cI$-admissible}. A generalization accounting for logarithmic parameters was introduced in Quek's \cite{Quek} and an equivalent treatment in the language of rationally graded Rees algebras is given  in \cite{Wlodarczyk-cobordant},  see Sections \ref{ssec:CanonicalInvariant}, \ref{Sec:transforms-Rees}. The definition of admissibility remains fundamentally the same.
 
\smallskip
\noindent
\emph{$\cF$-aligned centers.} In \cite[Section~4]{BelRACSAM} one discusses regular centers of blowing up which are \emph{admissible} with respect to a foliation $\cF$. To avoid confusion with admissibility of a center for an ideal sheaf, we prefer to call these \emph{$\cF$-aligned} instead, see Definition \ref{def:Faligned}. Heuristically, these are centers which are ``as close as possible" to having normal crossings with the foliation --- they can be ``decomposed" into $\cF$-transverse and $\cF$-invariant parts.

\smallskip
\noindent
\emph{Weighted blow-ups and cobordant blow-ups.}  To a center ${J}$ one associates its \emph{stack theoretic weighted blow-up} $ Bl_{J}(X) \to X$, the central operation of our paper, see Section \ref{sec:BlowUp}. It is best decribed via its Cox construction: the spectrum of the extended Rees algebra of ${J}$ is its \emph{weighted blow-up cobordism} $B_{J} \to X$. It admits a \emph{vertex} $V_{J}$, and the complement $B_{{J}+} = B_{J} \setminus V_{J}$ is called \emph{the cobordant blow-up} of ${J}$. Grading provides a $\GG_m$-action, and one sets $Bl_{J}(X) = [ B_{{J}+}/\GG_m]$, the stack-theoretic quotient. 

As $B_{{J}+}$ provides a presentation of the blow-up, we may use it to study the properties of the blow-up, staying in the category of schemes with group actions rather than stacks. Quite remarkably, transforms of ideals and of foliations are best described on $B_{{J}+}$.

\smallskip
\noindent
\emph{Strict and controlled transform of $\cF$.} Given a center ${J}$ which is $\cI$-admissible and $\cF$-aligned, we consider weighted (or cobordant) blow-ups between triples $\sigma : (X',\cF',E') \to (X,\cF,E)$, where $X' = Bl_{J}(X)$ (or $X' =B_{{J}+}$), $E'$ is the union of the total transform of $E$ with the exceptional divisor of $\sigma$, and $\cF$ stands for either the \emph{strict} or \emph{controlled} transform of $\cF$, see Definition \ref{def:TransformFoliation} (or  \ref{Def:split-transforms}) --- the choice will be made precise each time. In birational geometry, it is common to use the strict transform of foliations; but, as observed in \cite{Belthesis}, it does not behave well when working with ideals.

It is shown in \cite[Section~4]{BelRACSAM} that the blowing up of a regular $\cF$-aligned center preserves the property of a foliation being $\cK$-monomial. 
  Here we have to generalize $\cF$-aligned centers and their blow-up properties to centers of weighted blow-ups, see Theorem \ref{Th:aligned center}. More is true: the transform of a logarithmically smooth foliation remains logarithmically smooth. This is manifested exceptionally  beautifully on the cobordant blow-up, where the split transform of a \emph{smooth} foliation remains \emph{smooth}.

\subsection{Main results: principalization and embedded resolution} Let $X$ be either a smooth irreducible variety over a field $K$ of characteristic $0$, or a coherent complex analytic space. With the notions above in place, our main results are:

\begin{theorem}[Principalization over foliated manifolds]\label{thm:PrincipalizationFoliated}
Let $(X,\cF,E)$ be a smooth foliated logarithmic variety and $\cI$ be a coherent ideal sheaf over $X$.
If $X$ is algebraic, let $X_0=X$; otherwise, let $X_0 \subset X$ be a relatively compact open set, and let $(X_0,\cF_0,E_0)$ be the restricted triple. There exists a sequence of weighted (or cobordant) blow-ups
\[
(X_0,\cF_0,E_0)\stackrel{\sigma_0}\leftarrow (X_1,\cF_1,E_1)\stackrel{\sigma_{1}}\leftarrow\cdots\stackrel{\sigma_{k-1}}\leftarrow (X_k,\cF_k,E_k)=(X',\cF',E'),
\]
where $\cF_{i+1}$ is the controlled (resp., strict) transform of $\cF_i$ by $\sigma_i$ such that:
\begin{enumerate}
\item\label{It:principalization} The total transform $\cI' = \cI \cdot \cO_{X'}$ (resp., strict transform, see Definition \ref{def:StrictTransformReesAlgebra}) is locally principal and monomial (resp., $\cI'=\cO_{X'}$).
\item\label{It:is-aligned} The center ${J}_i$ of the blow-up $\sigma_i$ is $\cF_i$-aligned and $\cI_i$-admissible, where $\cI_i$ is the controlled (resp., strict) transform of $\cI_0$, see Definitions \ref{def:FadmissibleCenter},
\ref{def:ControlledTransformReesAlgebra} and \ref{def:StrictTransformReesAlgebra}.

 \item\label{It:non-singular} If $\cF$ is logarithmically smooth then each $\cF_i$ is logarithmically smooth. If $\cF$ is $\cK$-monomial  then each $\cF_i$ is $\cK$-monomial. If $\cF$ is \Sm, then for the cobordant principalization only, each $\cF_{i}$ is also \Sm.

\item\label{It:functorial} The process is functorial for field extensions   and for smooth morphisms up to trivial redundant blow-ups (or trivial cobordant blow-ups), and equivariant for
   group actions and derivations for which $\cF$ and $\cI$ are invariant, 
see Definitions \ref{def:fieldextension}, \ref{def:MorphismLogFoliatedVariety}, \ref{def:GsemiinvariantFol} and \ref{def:deltainvariantFol}.

\item\label{It:is-F-invariant} If $\cI$ is $\cF$-invariant, that is $\cF(\cI)\subset \cI$, then all centers ${J}_i$ are $\cF_i$-invariant.

 \item\label{It:non-compact} If $(X,\cF,E)$ is a  coherent analytic space then there is a proper bimeromorphic map $(X,\cF,E)\leftarrow (X',\cF',E')$ providing a principalization of $\cI$ such that, in a neighborhood $X_0$ of any compact set $Z\subset X$, it factors  into finitely many weighted blowings up satisfying properties (1)-(5) above. 
\end{enumerate}
\end{theorem}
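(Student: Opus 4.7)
The plan is to adapt the weighted (or cobordant) principalization algorithm of \cite{ATW-weighted, Quek, Wlodarczyk-cobordant} to the foliated setting by attaching to each point $p\in X$ a local lexicographic invariant $\inv_p(\cI,\cF,E)$ with three properties: (i) it is upper semicontinuous, (ii) its maximal locus determines a weighted center ${J}$ that is simultaneously $\cI$-admissible and $\cF$-aligned, and (iii) after blowing up ${J}$ the invariant strictly drops. Given such an invariant, iterating the construction produces the sequence $\sigma_0,\sigma_1,\ldots$ of the theorem; property \ref{It:is-aligned} is built into (ii), \ref{It:principalization} is the standard principalization consequence of the order becoming zero, and \ref{It:non-singular} follows from the transform results for $\cF$-aligned centers, namely Theorem \ref{Th:aligned center} and the split-transform analysis of Section \ref{Sec:transforms-Rees}.

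The heart of the proof is the construction of the foliated invariant. In the unfoliated case of \cite{ATW-weighted}, the invariant is built recursively from orders of $\cI$ along maximal contact hypersurfaces cut out by elements differentiated with arbitrary log derivations, producing coefficient ideals on lower-dimensional strata. In the foliated setting I would, following \cite{BelRACSAM, BelJA}, restrict the class of allowed derivations so that maximal contact may be chosen in a form compatible with $\cF$. Concretely, at each point $p$ one splits the local parameters (including the logarithmic ones) into $\cF$-invariant and $\cF$-transverse groups adapted to the $\cK$-monomial structure, and takes maximal contact elements within each group; the associated weighted center then decomposes into $\cF$-invariant and $\cF$-transverse factors, which is exactly the condition that it be $\cF$-aligned in the sense of Definition \ref{def:Faligned}. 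When $\cI$ itself is $\cF$-invariant, every $\cF$-tangent derivation preserves $\cI$, so every stage of the recursion may be carried out with derivations in $\cF$; the resulting center then lies entirely in the $\cF$-invariant part, giving \ref{It:is-F-invariant}. The main obstacle is verifying that this restricted class of derivations still produces enough maximal contact to obtain a \emph{strictly decreasing} and \emph{intrinsically defined} invariant, and that the invariant is insensitive to the coordinate splitting used.

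Termination and the inductive step are best carried out on the cobordant blow-up $B_{{J}+}\to X$, where by Theorem \ref{Th:aligned center} the split transform of a smooth (respectively log-smooth, $\cK$-monomial) foliation along an $\cF$-aligned center remains smooth (respectively log-smooth, $\cK$-monomial). Consequently the ambient triple $(X_1,\cF_1,E_1)$ obtained after one blow-up is of the same type as $(X_0,\cF_0,E_0)$, and the invariant can be recomputed and shown to drop by the same lexicographic argument as in \cite{ATW-weighted, Wlodarczyk-cobordant}; descending by the $\GG_m$-action recovers the stack-theoretic weighted blow-up statement. Since the invariant takes values in a well-ordered set and is bounded, the process terminates in finitely many steps on any quasi-compact base. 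The strict-transform version of \ref{It:principalization}, with $\cI'=\cO_{X'}$, is deduced in the usual way: after each round one factors off the monomial part of the total transform and iterates, as in \cite[\S3]{ATW-weighted}.

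The remaining properties are formal consequences of the construction. Functoriality \ref{It:functorial} reduces to functoriality of the derivative/coefficient construction under smooth morphisms and field extensions, which is standard in the ATW framework and survives the $\cF$-compatibility restriction because smooth morphisms pull foliations back to foliations; equivariance for group actions and for derivations preserving $(\cF,\cI)$ follows identically. For the analytic case \ref{It:non-compact}, on a relatively compact open $X_0\subset X$ the invariant achieves only finitely many values, hence finitely many aligned weighted blow-ups suffice on $X_0$, and functoriality allows one to glue the local resolutions into a global proper bimeromorphic modification as in \cite{ATW-principalization}. The genuinely new input is thus the foliated invariant and its compatibility with $\cF$-aligned centers; everything else is a combination of the ATW-style induction with the transform results for split and strict transforms established earlier in the paper.
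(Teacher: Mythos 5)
Your proposal follows essentially the same architecture as the paper: attach a lexicographic foliated invariant $\inv_\cF$ to $\cI$, show it is upper semicontinuous with a unique $(\cR,\cF)$-admissible $\cF$-aligned maximal center, show the invariant drops after blowing up, iterate, and then transfer the ambient-preservation statements from Theorem \ref{Th:aligned center}. This is exactly the structure of the paper's Section \ref{ssec:FunctorialPrincipalization}, which invokes Proposition \ref{prop:FCanonicalInvBasicProperties} (semicontinuity and uniqueness of the center), Theorem \ref{prop:DecreaseInvariant} (the drop), and the well-ordering to conclude finiteness; the Rees-algebra reformulation $\cR = \cO_X[t\cI]$ is a cosmetic difference.

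Two points are worth noting, however. First, what you flag as the ``main obstacle'' --- that restricting to $\cF$-compatible derivations still yields an intrinsic, semicontinuous, strictly decreasing invariant --- is in fact the entirety of the new mathematical content and is where the paper invests Sections \ref{Sec:rectification}--\ref{sec:algorithm}: the nested-regular rectification theorem is needed so that $\cF$-aligned coordinates yield genuine algebraic centers rather than merely formal ones, the foliated replacement Lemma \ref{lem:ReplacementFoliated} is needed so that different $\cF$-maximal contact choices give the same invariant, and the $\delta$-invariance argument of Section \ref{Sec:delta-invariance} is needed for part \eqref{It:functorial}. Stating these as an obstacle rather than resolving them leaves the core of the proof unproved.

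Second, your mechanism for part \eqref{It:is-F-invariant} is backwards. If $\cI$ is $\cF$-invariant then $\cF^n(\cI)\subset\cI$ never reaches $\cO_X$, so $\ord_{\cF,\pa}(\cI)=\infty$: the recursion cannot proceed using $\cF$-maximal contacts at all, so it is not true that ``every stage of the recursion may be carried out with derivations in $\cF$.'' The paper instead passes to $\cF^\infty(\cR)$ and runs the $\cD^{\log}_X$- or $\cD_X$-version of the algorithm (Section \ref{ssec:Case2}); the resulting center has $l=0$ in its $\cF$-aligned presentation and is shown $\cF$-invariant by applying $\delta$-invariance to all local sections $\delta\in\cF$ (Lemma \ref{lem:FcanonicalInvForInvariantRees}, Proposition \ref{prop:FInvariantBlowup}). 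Your argument as written would not produce the stated conclusion.
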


For the next result, we denote by $\cI := \cI^{Y}$ the reduced coherent ideal sheaf whose zero locus is a subvariety $Y \subset X$.

\begin{theorem}[Embedded desingularization of a subvariety] \label{thm:EmbeddedDesingularization} Let $(X,\cF,E)$ be a smooth foliated logarithmic variety and $Y \subset X$ be a subvariety. Denote by $(X_0,\cF_0,E_0)$ the restricted triple,  as in Theorem \ref{thm:PrincipalizationFoliated}, and $Y_0 := Y \cap X_0$. There exists a sequence of weighted (or cobordant) blow-ups
\[
(X_0,\cF_0,E_0)\stackrel{\sigma_0}\leftarrow (X_1,\cF_1,E_1)\stackrel{\sigma_{1}}\leftarrow\cdots\stackrel{\sigma_{k-1}}\leftarrow (X_k,\cF_k,E_k)=(X',\cF',E'),
\]
where $\cF_{i+1}$ is the strict transform of $\cF_i$, such that:
\begin{itemize}
\item[(1')] The strict transform $Y'$ of $Y_0 $ is smooth, has normal crossings with $E'$, and it is itself an $\cF'$-aligned center.  
\item[(2')] The center ${J}_i$ of the blow-up $\sigma_i$ is $\cF_i$-aligned and $\cI_i:=\cI^{Y_i}$-admissible, where $Y_i$ is the strict transform of $Y_0$.

\item[(\ref{It:non-singular}'-\ref{It:non-compact}')] Properties \eqref{It:non-singular}-\eqref{It:non-compact} from Theorem \ref{thm:PrincipalizationFoliated} hold true. \end{itemize}
\end{theorem}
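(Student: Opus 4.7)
The plan is to derive this embedded desingularization statement from the principalization result of Theorem \ref{thm:PrincipalizationFoliated}, applied to the reduced ideal sheaf $\cI = \cI^Y$ in the \emph{strict}-transform convention for both $\cF$ and $\cI$. That theorem produces a finite sequence of weighted (or cobordant) blow-ups
\[
(X_0,\cF_0,E_0)\stackrel{\sigma_0}\leftarrow \cdots\stackrel{\sigma_k}\leftarrow (X_{k+1},\cF_{k+1},E_{k+1}),
\]
with each center $J_i$ being $\cF_i$-aligned and $\cI^{Y_i}$-admissible, and terminating with $\cI^{Y_{k+1}}=\cO_{X_{k+1}}$, i.e.\ with $Y_{k+1}=\emptyset$. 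Here I use that, by admissibility of the center, the strict transform of the reduced ideal $\cI^{Y_i}$ is again the reduced ideal $\cI^{Y_{i+1}}$ of the strict transform $Y_{i+1}$ of $Y_i$.

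My desingularization sequence is this same sequence truncated at step $k$, i.e.\ discarding the very last blow-up $\sigma_k$. To justify this truncation I need to show that at the penultimate stage $Y_k$ is already smooth with normal crossings with $E_k$, and that the final center $J_k$ coincides with $\cI^{Y_k}$. This rests on the structure of the invariant $\inv$ driving the algorithm (see \cite{ATW-weighted, Quek, Wlodarczyk-cobordant}): the minimal value of $\inv$ on a non-unit ideal is the tuple $(1,\dots,1)$ of length equal to the codimension, and this value is attained precisely on loci which, locally for the log structure, are smooth complete intersections of a subset of a system of logarithmic coordinates. The unique $\cI^{Y_i}$-admissible center realizing this value is $\cI^{Y_i}$ itself. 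Since the algorithm always blows up the locus of maximal invariant, $\sigma_k$ must have center $J_k=\cI^{Y_k}$, and $Y_k$ is then already smooth with normal crossings with $E_k$. Setting $Y':=Y_k$, property (1') follows: $Y'$ is smooth with normal crossings with $E'$, and it is $\cF'$-aligned because $J_k$ is $\cF_k$-aligned by property (\ref{It:is-aligned}) of Theorem \ref{thm:PrincipalizationFoliated}. Property (2') is the same property applied to indices $i=0,\dots,k-1$.

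The remaining items (\ref{It:non-singular}') through (\ref{It:non-compact}') are inherited directly from the corresponding statements of Theorem \ref{thm:PrincipalizationFoliated}, since we run exactly the same algorithm and merely stop one step earlier. In particular, preservation of logarithmic smoothness and of $\cK$-monomiality along each $\cF_i$, functoriality for field extensions and smooth morphisms up to trivial redundant blow-ups, equivariance for group actions and derivations preserving both $\cF$ and $\cI^Y$, and the relative-compact factorization in the analytic setting all descend verbatim to the truncated sequence.

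The main obstacle I expect is the analysis of the invariant at the terminal step: one must verify that, on an ideal of the form $(x_1,\dots,x_c)$ compatible with the logarithmic boundary $E_k$, the invariant indeed equals $(1,\dots,1)$ and that the unique admissible center realizing this value is $(x_1,\dots,x_c)$ itself. This is a local computation in the rationally graded Rees algebra formalism, independent of the foliation. Once this local fact is in place, the embedded desingularization statement becomes essentially a repackaging of principalization, with the foliated structure introducing no further difficulty beyond what has already been handled in the proof of Theorem \ref{thm:PrincipalizationFoliated}.
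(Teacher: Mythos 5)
Your starting point — applying Theorem \ref{thm:PrincipalizationFoliated} to $\cI^Y$ in the strict-transform convention — is the same as the paper's, and your local analysis at the terminal step (that the last center has invariant $(1,\ldots,1,\infty+1,\ldots)$ and support a smooth normal crossings locus) is sound. But the proposal to obtain the resolution by simply discarding the \emph{last} blow-up has a genuine gap.

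The issue is that in the lexicographic order on $(\QQ_+)^{\leq n}$, tuples with \emph{fewer} entries are \emph{larger}, because the padding is by $\infty+\infty$: thus $(1)>(1,1)>(1,1,1)>\cdots$, and more generally $(1)>(1,a_2,\ldots)$ for any $a_2$. Consequently a smooth codimension-one component of $Y_i$ carries a strictly larger invariant than any point on a codimension-two component, even a singular one. The principalization algorithm therefore blows up smooth low-codimension components of $Y_i$ well \emph{before} the higher-codimension pieces are fully resolved, and each such blow-up makes that component's strict transform empty. Your truncation keeps all those intermediate blow-ups, so the resulting $Y'$ has those components missing. Concretely, take $X=\AA^3$, $\cF=\cD_X$, $E=\emptyset$, and $Y=Y^{(1)}\sqcup Y^{(2)}$ with $Y^{(1)}=V(x_1)$ and $Y^{(2)}=V(x_1-1,x_2)$, both already smooth and disjoint. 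Then $\inv_\cF(\cI^Y)=(1)$ on $Y^{(1)}$ and $(1,1)$ on $Y^{(2)}$, and $(1)>(1,1)$, so the algorithm blows up $Y^{(1)}$ at step $0$ (killing its strict transform) and $Y^{(2)}_1$ at step $1$; truncating at the penultimate step gives $Y'=Y^{(2)}_1$, and $Y^{(1)}$ has been discarded. The modification is no longer an isomorphism over the smooth locus of $Y$, so this is not an embedded desingularization in the intended sense.

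The paper's proof avoids this with a different stopping rule: whenever the invariant along the current center attains the form $(1,\ldots,1,\infty+1,\ldots,\infty+1,\infty+\infty+1,\ldots,\infty+\infty+1)$, the corresponding locus is an open and closed component of $Y_i$ (it must be isolated from the rest of $Y_i$, since at a point of non-trivial intersection the invariant of $\cI^{Y_i}$ would have an entry $>1$); that component is \emph{set aside without being blown up}, and the algorithm continues on $X_i\setminus Y_i^{(j)}$. The final $Y'$ is the disjoint union of the set-aside pieces, so no component of $Y$ is lost, and termination still follows from the well-ordering of the invariant (and compactness in the analytic case). Your local analysis is the right ingredient, but it must be applied per component at the first moment that component reaches the smooth form, not once globally at the final step.
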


These results are foliated versions of the main results from \cite{ABQTW,ATW-weighted,Wlodarczyk-cobordant}. They also include the construction of an invariant $\inv_{\cF}$ for ideal sheaves, extending the invariant $\inv$ from \cite{ATW-weighted}, see Theorems \ref{Th:semicontinuity}, \ref{Th:the center} and \ref{Th:principalization}. Our proofs follow several of the ideas of these papers --- but they remain self-contained and no knowledge of the previously cited papers is necessary. In fact, by taking $\cF = \cD_X$ or $\cD_{X}^{\log}$, we recover some of their main results as well.

\begin{remark}[On more general foliations]\label{rk:LogCanonical}
We expect that Theorem \ref{thm:PrincipalizationFoliated}\eqref{It:non-singular} can be strengthened to include much more general classes of foliations; the key is to prove that $\cF$-aligned centers preserve the desired class of foliations, see footnote \ref{Foot:thick} on page \pageref{Foot:thick}.
For instance, in \cite{BelJA} it is proved that non-weighted, analytic $\cF$-aligned blow-ups preserve the property of a foliation being \emph{elementary}, that is, a saturated rank-$1$ log-canonical foliation \cite[I.ii.4]{McQPan}. In particular, a version of Theorem \ref{thm:PrincipalizationFoliated}\eqref{It:non-singular} where we replace  ``$\mathcal{K}$-monomial" by ``elementary'' can be deduced, see \cite{BelJA}. Such arguments are however orthogonal to the present paper.
\end{remark}

\subsubsection{Relation to resolution of morphisms.}
Taking a different perspective, the problem of resolution of foliated varieties is closely related to the problem of resolution or monomialization of logarithmic morphisms \cite{ATW-principalization,ATW-relative,BelBmon}. In such a case the foliation $\cF$ associated with logarithmically smooth morphism $X\to B$ is simply given by the sheaf  $\cF=\cD_{X/B}^{\log}$ of relative logarithmic derivations. Consequently the algorithm of princpalization of ideals $\cI$ on foliated varieties should be thought of as the natural extension of the relative principalization of ideals on $\cI$ on $X$ \emph{over $B$}.

One should stress however two key differences between the two problems. First, in the relative logarithmic resolution one needs to keep the morphism $X\to B$ logarithmically smooth during the process. This has consequences even to the arguably simple situation of an $\cF$-invariant ideal: one is led to consider modifications on the base $B$, which induce modifications on the space $X$. In the foliated case the whole resolution process is done on $X$, the base $B$ implicitly replaced by the highly non-separated object $[X/\cF]$ classifying leaves. Second, foliations on logarithmic varieties are more general than those defined by logarithmically smooth morphisms, and they open the prospect of applications to differential geometry and differential equations.

\subsection{Resolution of classes of foliations}\label{sec:IntroResTotallyIntegrable}

Resolution of singularities of foliations is a problem which goes back to the beginning of the twentieth century \cite{Ben} and remains quite mysterious in high dimensions. In fact, the best results to date are either limited to foliations in threefolds \cite{Ben,Sei,Pan,McQPan,Can,CanCer,CRS,RReis,posva}; or are local in nature \cite{BelRI,CCS,CanoDuque}. 

As is well-known in birational geometry of foliations, it is usually impossible to find a birational model of a foliation which is non-singular. This is analogous to the case of morphisms: it is usually impossible to find a birational model of a morphism which is a smooth morphism. In the case of \emph{morphisms} of varieties in characteristic 0, it is shown in \cite{AK} and \cite{ATW-relative} that one can find a \emph{logarithmically smooth} birational model. As the vector field 
\begin{equation}\label{eq:Euler}
(x+y) \frac{\partial}{\partial x} + y^2 \frac{\partial}{\partial y}
\end{equation}
from \cite[Example 7.10]{IY} shows, even this is impossible for general foliations, already for a rank-1 foliation on a surface: it can  be brought to a log smooth form only on formal completion.\footnote{\label{foot:formal-cov}As shown in  \cite[Example 7.10]{IY}, the change of variables $(x_1,y_1) = (x+\sum_{k=1}^\infty (k-1)! y^k, y)$ brings it to the form $x_1\partial_{x_1} + y_1^2\partial_{y_1}$, which is log smooth for the formal divisor $x_1=0$.}

Thus  the goal of resolution of singularities of foliations is to find birational models having a form  which  is ``as simple as possible". In fact, some references prefer the term ``reduction of singularities" instead of resolution of singularities, eg. \cite{CRS}.

In analogy to the case of varieties and of morphisms, where principalization implies resolution through embedding \cite{ATW-relative}, one naturally asks
\begin{quote}
can one use principaliation to ``resolve" a singular foliation?
\end{quote}
We argue in this paper that the answer should be yes in many cases. 
The immediate cases are those where the foliation admits rational or meromorphic first integrals, since the problem can be viewed as a version of resolution of morphisms, see Definition \ref{Def:totally integrable} below. More examples, including with transcendental leaves, are given below. But first, we provide a framework that allows to treat a range of cases at the same time.

\begin{definition}[Thick Classes]\label{def:thick}
We say that a class $\cC$ of foliated logarithmic orbifolds is \emph{thick} if for every element $(Y,\cF_Y,E_Y)$ of $\cC$, the foliation $\cF_Y$ is saturated and:
\begin{enumerate} 
\item\label{defItem:thickProduct} If $(X,  E_X)$ is a log smooth manifold, then $(Z,\cF_Z,E_Z) \in \cC$ where
\[
Z= X \times Y, \quad \cF_Z =   \pi_X^*\cD_X^{\log}\oplus \pi_Y^* \cF_Y, \quad E_Z = \pi^{-1}_X(E_X) \cup \pi^{-1}_{Y}(E_Y).
\]
\item\label{defItem:thickFaligned} If $\phi: (Y',\cF',E') \to (Y,\cF_Y,E_Y)$ is an $\cF$-aligned blow-up, then $(Y',\cF',E') \in \cC$; and
\item\label{defItem:thickRestriction} If $X\subset Y$ is an $\cF$-aligned center such that $X \not\subset E$, then $(X, \cF_X, E_X) \in \cC$, where $\cF_X = \cF_{ \boldsymbol | \boldsymbol | X}$ is the saturated restriction, see Definition \ref{Def:saturated-pullback}\eqref{DefItem:saturated-pullback-Restriction}, and $E_X = E_{|X}$.
\end{enumerate}
\end{definition}

A few examples of thick classes are as follows:
\begin{theorem}\label{thm:ExamplesThickClasses}
The classes of  foliated logarithmic orbifolds $(Y,\cF,E)$ where $\cF$ is 
\begin{enumerate}
\item log-smooth, Definition \ref{Def:foliations}(\ref{ItDef:LogSmooth})

 or 
\item
 $\cK$-monomial, Definition \ref{def:KMonomialFoliation},

  \end{enumerate}
are thick.

\end{theorem}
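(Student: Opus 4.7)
The strategy is to verify the three defining conditions of Definition~\ref{def:thick} for each class separately, exploiting the local structural characterizations of log-smooth and $\cK$-monomial foliations and the decomposition of $\cF$-aligned centers into $\cF$-transverse and $\cF$-invariant parts. In both cases saturation is automatic from the local description, so only conditions \eqref{defItem:thickProduct}, \eqref{defItem:thickFaligned} and \eqref{defItem:thickRestriction} require work.

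First, I would dispose of \eqref{defItem:thickProduct}. Choose local log coordinates on $X$ compatible with $E_X$ and on $Y$ compatible with $E_Y$, so that $\cD_X^{\log}$ is freely generated by derivations of the form $x_i\partial_{x_i}$ (along $E_X$) and $\partial_{x_j}$ (off $E_X$), while $\cF_Y$ admits either a logarithmic frame (log-smooth case) or formal monomial first integrals with exponents in $\cK$ ($\cK$-monomial case). Since the pullback of log coordinates along the projections $\pi_X,\pi_Y$ yields log coordinates on $Z$ compatible with $E_Z$, the direct sum $\pi_X^*\cD_X^{\log}\oplus\pi_Y^*\cF_Y$ inherits a logarithmic frame in the log-smooth case, and the pulled-back monomial first integrals of $\cF_Y$ together with the logarithmic coordinates of $X$ (which are first integrals of $\pi_X^*\cD_X^{\log}$ along the $Y$-fibers) give formal monomial first integrals in the $\cK$-monomial case. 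Functoriality under the coordinate presentation makes this step essentially formal.

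Second, condition \eqref{defItem:thickFaligned} is precisely the content of Theorem~\ref{Th:aligned center} in the log-smooth case, and its extension to the $\cK$-monomial class follows the same pattern as \cite[Section~4]{BelRACSAM}, now for weighted centers: the Cox presentation $B_{J+}\to X$ of an $\cF$-aligned center ${J}$ introduces exceptional log coordinates which interact with the $\cF$-transverse/$\cF$-invariant decomposition of ${J}$ in such a way that both a logarithmic frame and monomial first integrals with $\cK$-exponents are preserved under the (strict or controlled) transform. I would simply invoke the weighted generalization established earlier in the paper.

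The main obstacle, and the step that deserves real attention, is condition \eqref{defItem:thickRestriction}: the saturated restriction to an $\cF$-aligned center $X\subset Y$ with $X\not\subset E$. Here one uses the key structural property of $\cF$-aligned centers: locally around any point of $X$, there exist log coordinates $(u_1,\ldots,u_p,v_1,\ldots,v_q)$ such that $X=\{u_1=\cdots=u_p=0\}$ (up to weighted structure) and the $u_i$ split into an $\cF$-invariant block and an $\cF$-transverse block. Consequently the saturated pullback $\cF_{\boldsymbol|\boldsymbol| X}$ admits a description in terms of the restrictions of the $v_j$-directional (and logarithmic) generators of $\cF$, with the $\cF$-transverse components of $X$ absorbed by saturation. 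In the log-smooth case this yields a logarithmic frame of $\cF_X$ in the coordinates $(v_j)$ compatible with $E_X=E_{|X}$. In the $\cK$-monomial case, the restrictions of monomial first integrals of $\cF_Y$ to $\{u_i=0\}$ remain monomial in the $v_j$ with exponents still lying in $\cK$, precisely because the $\cF$-invariant part of the center sits inside the zero locus of first integrals and the $\cF$-transverse part is killed by saturation rather than by restriction. I would check $X\not\subset E$ is exactly what prevents the restriction of a logarithmic coordinate from becoming identically zero, so the logarithmic structure $E_X=E_{|X}$ is compatible with the inherited foliation. This reduces the verification to a local bookkeeping of coordinates, analogous to, but simpler than, the corresponding blow-up calculation of step \eqref{defItem:thickFaligned}.
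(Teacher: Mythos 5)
Your high-level structure — check each of the three conditions of Definition~\ref{def:thick}, with the aligned-blow-up step being the one that carries weight — matches the paper's own approach, and your treatment of condition~\eqref{defItem:thickProduct} is fine. But there are two genuine gaps.

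First, your treatment of condition~\eqref{defItem:thickFaligned} is circular rather than a proof. You write that this ``is precisely the content of Theorem~\ref{Th:aligned center}'' and that you would ``simply invoke the weighted generalization established earlier in the paper''; but in the paper's logic Theorem~\ref{Th:aligned center} is \emph{not} established prior to Theorem~\ref{thm:ExamplesThickClasses} — it is proved \emph{as} Theorems~\ref{Th:aligned center log-smooth} and~\ref{Th:aligned center new}, i.e., simultaneously with the thickness claim. Appealing to it leaves the hardest part of the theorem untouched. What the paper actually does is: in the log-smooth case, present the center formally, apply Lemma~\ref{lem:LocalPresentationLogSmooth} (a rank criterion for sub-bundles) to the explicit controlled transforms of the generators $\partial_{x_i}$ and $\nabla_j$ from Proposition~\ref{prop:TransformFoliationAligned}; in the $\cK$-monomial case, first invoke Lemma~\ref{lem:PresentationMonomialAligned} to reconcile the monomial presentation of $\cF$ with the $\cF$-aligned presentation of the center, and then carry out the chart-by-chart computation of $\tau^c(\partial_{v_1})$, $\tau^c(\nabla_j)$ via the explicit formulas~\eqref{eq:TransformDerivationsWeighted}. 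Your one-sentence gesture at the Cox presentation does not substitute for either step.

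Second, for the restriction condition~\eqref{defItem:thickRestriction} in the $\cK$-monomial case, you assert that ``the restrictions of monomial first integrals of $\cF_Y$ to $\{u_i=0\}$ remain monomial,'' but this presupposes that the monomial coordinates $(v,w)$ of $\cF$ and the $\cF$-aligned coordinates $(x,y,z)$ cutting out the center coincide (or can be made to coincide). That compatibility is exactly the nontrivial content of Lemma~\ref{lem:PresentationMonomialAligned}, whose proof requires Claim~\ref{cl:HomMonomialPresentation} on homogeneous generators and a careful induction; it does not follow from the bare definitions. Without it your restriction argument has no floor under it. (Your remark that the transverse part is ``killed by saturation rather than by restriction'' also inverts the mechanism: the paper computes the \emph{non-saturated} restriction directly, checks it is already log-smooth / $\cK$-monomial hence saturated, and only then concludes the saturated restriction equals it — cf.\ the role of Lemma~\ref{Lem:restriction-and-ideal} and Corollary~\ref{lem:PresentationMonomialAligned1}.)
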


The above result is proved as Theorems \ref{Th:aligned center log-smooth} and \ref{Th:aligned center new}.\footnote{Several other classes were suggested by P. Cascini, J. V. Pereira, and S. Spicer.\label{Foot:thick}}

By combining Theorem \ref{thm:EmbeddedDesingularization} with an idea developed in \cite[Theorem 1.2.12]{ATW-relative}, we get:

\begin{theorem}[Thick Reduction of singularities]\label{thm:ResThickClasses}
Let $\cC$ be a thick class. Let $(B,\cG,E_B) \in \cC$ and consider a rational (or meromorphic) morphism $\phi: (X,E) \dashedrightarrow (B,E_B)$ whose graph projects properly over $X$ and $\phi^{-1}(U)$ is dense in $X$, where $U$ is the regular locus of $\cG$\footnote{The latter property is always satisfied for dominant morphisms; c.f. Lemma \ref{lem:SatPullBack}.}. 

Consider the triple $(X,\cF,E)$, where $\mathcal{F} = \satpull{\phi}(\cG)$ is the saturated pullback of $\cG$ by $\phi$, see Definition \ref{Def:saturated-pullback}. There exists a proper birational or bimeromorphic morphism $\tau: (X',\cF',E') \to (X,\cF,E)$, where $\cF'$ is the strict transform of $\cF$, such that $(X',\cF',E') \in \cC$.
 
The process is functorial for field extensions and smooth morphisms with respect to the pair $(\cF,\phi)$.
\end{theorem}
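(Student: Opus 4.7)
The plan is to reduce the statement to the embedded desingularization of the graph $\Gamma\subset X\times B$ of $\phi$ inside the product, and then extract the desired modification by invoking the three thickness properties of $\cC$ in sequence.

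First, I form the product foliated triple $(Z,\cF_Z,E_Z)$ with $Z:=X\times B$, $\cF_Z:=\pi_X^*\cD_X^{\log}\oplus\pi_B^*\cG$, and $E_Z:=\pi_X^{-1}(E)\cup\pi_B^{-1}(E_B)$. Thickness property \eqref{defItem:thickProduct}, applied to $(B,\cG,E_B)\in\cC$ and the log smooth manifold $(X,E)$, gives $(Z,\cF_Z,E_Z)\in\cC$. Let $\Gamma\subset Z$ be the closure of the graph of $\phi$; by hypothesis $\pi_X|_\Gamma\colon\Gamma\to X$ is proper and birational (or bimeromorphic), and density of $\phi^{-1}(U)$ together with the Zariski density of $U\subset B$ ensures that $\Gamma$ is not contained in any component of $E_Z$.

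Next, I apply the embedded desingularization Theorem \ref{thm:EmbeddedDesingularization} to $\Gamma\subset Z$. This yields a sequence of $\cF_Z$-aligned, $\cI^{\Gamma}$-admissible weighted (or cobordant) blow-ups
\[
(Z,\cF_Z,E_Z)\leftarrow (Z_1,\cF_{Z,1},E_{Z,1})\leftarrow\cdots\leftarrow(Z',\cF_Z',E_Z'),
\]
after which the strict transform $\Gamma'$ is smooth, has normal crossings with $E_Z'$, and is itself $\cF_Z'$-aligned. Iteratively invoking thickness property \eqref{defItem:thickFaligned}, every intermediate triple --- and hence $(Z',\cF_Z',E_Z')$ --- remains in $\cC$. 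Since $\Gamma'$ is $\cF_Z'$-aligned and $\Gamma'\not\subset E_Z'$, thickness property \eqref{defItem:thickRestriction} gives
\[
(X',\cF',E') := \bigl(\Gamma',\,\cF_Z'\boldsymbol{|}\boldsymbol{|}\Gamma',\,E_Z'|_{\Gamma'}\bigr)\in\cC,
\]
with proper birational (or bimeromorphic) structure map $\tau\colon X'=\Gamma'\to\Gamma\xrightarrow{\pi_X} X$.

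It then remains to identify $\cF'$ with the strict transform of $\cF=\satpull{\phi}(\cG)$ under $\tau$. On the open locus where $\phi$ is regular and $\cG$ is locally free, a direct computation at $(x,\phi(x))\in\Gamma$ shows that a vector $(v_1,v_2)\in\cF_Z=\cT_X\oplus\cG$ is tangent to $\Gamma$ precisely when $v_2=d\phi(v_1)$ and $v_1\in(d\phi)^{-1}\cG$; identifying $\Gamma$ with $X$ via $\pi_X$ and saturating then recovers $\satpull{\phi}(\cG)=\cF$. This identification must then be propagated through the blow-up sequence. Functoriality in the pair $(\cF,\phi)$ for field extensions and smooth morphisms follows from the functoriality statement of Theorem \ref{thm:EmbeddedDesingularization} together with the manifest functoriality of the graph and product constructions.

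The main obstacle I anticipate is precisely this propagation step: showing that taking the strict transform of $\cF_Z$ under an $\cF_Z$-aligned, $\cI^{\Gamma}$-admissible weighted blow-up commutes with saturated restriction along the strict transform of $\Gamma$. This compatibility should be forced by the structural decomposition of $\cF$-aligned centers, as encoded in Definition \ref{def:Faligned}, into an $\cF$-invariant and an $\cF$-transverse part, which restricts sensibly along the $\cF_Z$-aligned subvariety $\Gamma$. Making this precise in the weighted Rees-algebra framework of Section \ref{Sec:transforms-Rees}, and verifying its compatibility with the saturation operation, is where I expect the bulk of the technical work to sit.
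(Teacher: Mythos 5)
Your overall architecture matches the paper's proof closely: form the product triple $(Z,\cF_Z,E_Z)$, invoke thickness property \eqref{defItem:thickProduct}, apply Theorem \ref{thm:EmbeddedDesingularization} to the graph inside the product, and then use thickness properties \eqref{defItem:thickFaligned} and \eqref{defItem:thickRestriction} to conclude. The one cosmetic difference is that you work directly with the closure of the graph of the rational map, whereas the paper first resolves indeterminacies so that the graph is already smooth before applying embedded desingularization; both are fine, and yours is arguably leaner since Theorem \ref{thm:EmbeddedDesingularization} accepts an arbitrary subvariety.

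Where you genuinely diverge is the final identification step, and there you misjudge the difficulty. You anticipate that identifying $\cF' = \cH'_{\boldsymbol{|}\boldsymbol{|}\Gamma'}$ with the strict transform of $\cF$ requires propagating a compatibility between strict transform of $\cF_Z$ and saturated restriction through the entire blow-up sequence, and you flag this as ``the bulk of the technical work.'' But no such propagation is needed. Both objects being compared --- the saturated restriction $\cH'_{\boldsymbol{|}\boldsymbol{|}\Gamma'}$ and the strict transform of the saturated foliation $\cF$ --- are \emph{saturated} foliations on $\Gamma'$, and by Remark \ref{Remarks:transforms}(5) the strict transform of a saturated foliation coincides with its saturated pullback. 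Lemma \ref{Lem:saturated-U} then says a saturated foliation is determined by its restriction to any dense open subset of the nonsingular locus. On the dense open set over which the resolution sequence is an isomorphism (and over which $\phi$ is regular), the two foliations visibly agree by your direct tangent-space computation. That is the whole argument --- no bookkeeping through the Rees-algebra framework or the aligned-center decomposition is required. You have the right ingredients (the generic computation and the saturation), but you do not realize that combining them via Lemma \ref{Lem:saturated-U} closes the gap in one step, so your proposal, as written, does not actually finish the proof.
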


The above result is proved in Section \ref{ssec:Proofthm:ResTotallyIntegrable}. We use the generality of meromorphic maps  because in foliation theory this is sometimes important. For instance the dicritical case is often harder, see  \cite{Cano-Cerveau, Cano}. In our situation we use elimination of indeterminacies and reduce to the holomorphic case.

We illustrate Theorem \ref{thm:ResThickClasses} with some familiar cases, starting with the following:

\begin{definition}[Globally totally integrable foliations]\label{Def:totally integrable} We say that a foliation $\cF$ on a logarithmic variety $X$ is  a \emph{globally rationally (or meromorphically) totally integrable foliation} if there exists a rational (or meromorphic) morphism $\varphi: X \dashedrightarrow B$ whose graph projects properly over $X$, such that 
\[
\cF = \cD_{X/B}^{\log}, \text{ that is } \cF \cdot \cO_{X,\pa} = \spa_{\cO_{X,\pa}}\{ \partial \in \cD_X^{\log};\, \partial( f\circ \varphi) \equiv 0,\, \forall f\in \cO_{B,\varphi(\pa)}\},
\]
for all $\pa\in X$ where $\varphi$ is well-defined. 
\end{definition}

In other words, a totally integrable foliation is the pullback, in the sense appearing in Theorem \ref{thm:ResThickClasses}, of the zero foliation, which is logarithmically smooth and monomial. Thanks to Corollary \ref{cor:LogSmoothnessSmooth}, in this case we can say a bit more than Theorem \ref{thm:ResThickClasses}:

\begin{theorem}[Resolution of rationally totally integrable foliations]\label{thm:ResTotallyIntegrable}
Let $(X,\cF,E)$ be a smooth foliated logarithmic variety. Suppose that $\cF$ is a globally rationally (or meromorphically) totally integrable foliation.

There exists a proper birational or  bimeromorphic morphism $\tau: (X',\cF',E') \to (X,\cF,E)$, where $X'$ is an orbifold, $\cF'$ is the strict transform of $\cF$ and $E'$ is a SNC divisor, such that $\cF'$ is a $\mathbb{Q}$-monomial and logarithmically smooth foliation.

Moreover, the split transform $\cF''$ under the corresponding cobordant resolution $\sigma:(X'', \cF'', E'') \to (X, \cF, E)$,  is in fact \emph{\Sm}.

 The process is functorial for field extensions and smooth morphisms with respect to the pair $(\cF,\varphi)$.

\end{theorem}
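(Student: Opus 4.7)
The plan is to realize $\cF$ as the saturated pullback of the zero foliation on the base $B$ and then apply Theorem \ref{thm:ResThickClasses} with the thick class of log-smooth $\mathbb{Q}$-monomial foliations. Let $\varphi: X \dashrightarrow B$ be the rational (or meromorphic) map witnessing that $\cF$ is globally totally integrable. First I would replace $B$ by a smooth birational model (using Hironaka-style resolution in characteristic zero), which does not change $\cF = \cD_{X/B}^{\log}$ because this sheaf depends only on the rational map $\varphi$ and not on the chosen model of $B$; I then equip the resulting smooth $B$ with an auxiliary SNC divisor $E_B$. Let $\cG = 0 \subset \cD_B^{\log}$ be the zero foliation. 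Directly from Definition \ref{Def:totally integrable}, $\cF = \satpull{\varphi}(\cG)$, since a logarithmic derivation on $X$ annihilates every $f \circ \varphi$ if and only if it descends (in the saturated sense) to the zero subsheaf of $\cD_B^{\log}$. The regular locus of $\cG$ is all of $B$, so $\varphi^{-1}(U)$ is the domain of $\varphi$, which is dense in $X$; properness of the graph projection over $X$ holds by hypothesis.

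Next, I would choose the thick class $\cC$ to consist of foliated logarithmic orbifolds $(Y,\cH,E_Y)$ with $\cH$ simultaneously log-smooth and $\mathbb{Q}$-monomial. Since the three closure conditions of Definition \ref{def:thick} are conjunctive, the intersection of two thick classes is again thick; combined with Theorem \ref{thm:ExamplesThickClasses} this shows $\cC$ is thick. The zero foliation $\cG$ on the smooth log variety $B$ is tautologically log-smooth (locally it is the zero subsheaf of $\cD_B^{\log}$) and tautologically $\mathbb{Q}$-monomial, since any local coordinate system on $B$ furnishes a monomial system of first integrals; hence $(B,\cG,E_B)\in\cC$. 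Applying Theorem \ref{thm:ResThickClasses} to this input yields the desired proper birational (respectively, bimeromorphic) morphism $\tau: (X',\cF',E')\to(X,\cF,E)$ with $\cF'$ the strict transform of $\cF$ and $(X',\cF',E')\in\cC$; in particular $\cF'$ is both $\mathbb{Q}$-monomial and log-smooth on an orbifold $X'$.

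For the ``moreover'' clause, I would rerun the same argument using the cobordant variant of Theorem \ref{thm:EmbeddedDesingularization} underlying the proof of Theorem \ref{thm:ResThickClasses}: the analogous cobordant resolution $\sigma: (X'',\cF'',E'') \to (X,\cF,E)$ produces a split transform $\cF''$ that likewise lies in $\cC$. Then Corollary \ref{cor:LogSmoothnessSmooth}, which promotes log-smoothness to smoothness for split transforms under cobordant principalization, upgrades $\cF''$ to a smooth foliation on $(X'', E'')$. Functoriality with respect to the pair $(\cF,\varphi)$ is inherited directly from the corresponding statement in Theorem \ref{thm:ResThickClasses}. The main conceptual work has already been carried out in Theorems \ref{thm:ExamplesThickClasses} and \ref{thm:ResThickClasses}, so no new obstacle is expected here; the only subtlety is confirming that $\cF$ really is the saturated pullback of the zero foliation, which is essentially the content of Definition \ref{Def:totally integrable}.
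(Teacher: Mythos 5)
Your proof is correct and follows the same route the paper intends: identify $\cF$ as the saturated pullback of the zero foliation on a smooth model of $B$, apply Theorem~\ref{thm:ResThickClasses} with the thick class of log-smooth $\mathbb{Q}$-monomial foliations, and invoke Corollary~\ref{cor:LogSmoothnessSmooth} for the smoothness claim in the cobordant case. Your explicit remark that the three conditions of Definition~\ref{def:thick} are closure conditions, hence that an intersection of thick classes is thick, fills in a small step the paper leaves implicit (it observes only that the zero foliation is both log smooth and monomial) and is a welcome clarification.
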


We simultaneously provide a similar result for \emph{globally Darboux-totally integrable foliations}:

\begin{definition}[Globally $\mathcal{K}$-Darboux totally integrable foliations]\label{def:DarbouxTI}
Let $(X,\cF,E)$ be a foliated logarithmic variety and let $\mathbb{Q} \subset \mathcal{K} \subset K$ be a field. We say that the foliation $\cF$ is \emph{globally $\mathcal{K}$-Darboux totally integrable} if there exists a smooth variety $B$ and a  rational (or meromorphic) morphism $\phi: X \dashedrightarrow B$ whose graph projects properly over $X$, and a $\mathcal{K}$-monomial foliation $\mathcal{G}$  on $B$ such that $\mathcal{F} = \satpull{\phi}(\cG)$ is the saturated pullback of $\cG$ by $\phi$, see Definition \ref{Def:saturated-pullback}, and $\phi^{-1}(U)$ is dense in $X$, where $U$ is the regular locus of $\cG$.
\end{definition}

In other words, a Darboux totally integrable foliation is the pullback, in the sense appearing in Theorem \ref{thm:ResThickClasses}, of a monomial foliation. The usual definition of complex analytic Darboux totally integrable foliation is explicitly given in terms of locally defined multivalued functions, see e.g. \cite[Page 968]{BelRI} and references therein; we show that these definitions are consistent in Section \ref{rk:BasicKmonomialDarboux} below. Thanks to Theorem \ref{thm:LogSmoothnessMonomial}, in this case we can once again say a bit more than Theorem \ref{thm:ResThickClasses}:

\begin{theorem}[Resolution of Darboux  totally integral foliations]\label{thm:ResDarbouxTotallyIntegrable}
Let $(X,\cF,E)$ be a smooth foliated logarithmic variety.  
Suppose that $\cF$ is a globally $\mathcal{K}$-Darboux totally integrable foliation.

There exists a proper  birational or bimeromorphic morphism $\tau: (X',\cF',E') \to (X,\cF,E)$, where $X'$ is an orbifold, $\cF'$ is the strict transform of $\cF$ and $E'$ is a SNC divisor, such that $\cF'$ is a $\cK$-monomial and logarithmically smooth foliation.

Moreover, in the corresponding cobordant resolution $\sigma:(X'', \cF'', E'') \to (X, \cF, E)$, the foliation $\cF''$ is in fact \emph{\Sm}.

 The process is functorial for field extensions and smooth morphisms with respect to the pair $(\cF,\varphi)$.
\end{theorem}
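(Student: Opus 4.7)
The strategy is to recognize Theorem~\ref{thm:ResDarbouxTotallyIntegrable} as a direct consequence of the thick-class reduction machinery (Theorem~\ref{thm:ResThickClasses}) applied to the specific thick class of $\cK$-monomial foliations, supplemented with the log-smoothness and smoothness enhancements alluded to in Theorems~\ref{thm:LogSmoothnessMonomial} and in the cobordant refinement. Concretely, by Definition~\ref{def:DarbouxTI} the given foliation has the form $\cF = \satpull{\phi}(\cG)$ for a rational or meromorphic map $\phi: X \dashedrightarrow B$ with graph proper over $X$, where $(B,\cG,E_B)$ carries a $\cK$-monomial foliation, and $\phi^{-1}(U)$ is dense in $X$ for $U$ the regular locus of $\cG$. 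Thus the triple $(B,\cG,E_B)$ lies in the class $\cC_{\cK}$ of foliated logarithmic orbifolds whose foliation is $\cK$-monomial.

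First, I invoke Theorem~\ref{thm:ExamplesThickClasses} to conclude that $\cC_{\cK}$ is a thick class. This is essentially already granted by the excerpt, reducing to the preservation statements: stability under products with logarithmically smooth manifolds, under $\cF$-aligned blow-ups (Theorem~\ref{Th:aligned center new}), and under restriction to $\cF$-aligned centers not contained in $E$. Once this is in hand, Theorem~\ref{thm:ResThickClasses} applies verbatim to the data $(X,\cF,E)$ with target $(B,\cG,E_B)\in\cC_{\cK}$: we obtain a proper birational (or bimeromorphic) morphism $\tau: (X',\cF',E')\to (X,\cF,E)$, where $\cF'$ is the strict transform and $(X',\cF',E')\in\cC_{\cK}$, i.e.\ $\cF'$ is $\cK$-monomial. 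Functoriality for field extensions and smooth morphisms with respect to $(\cF,\varphi)$ is inherited from the corresponding functoriality clause in Theorem~\ref{thm:ResThickClasses}, which in turn descends from the functoriality of the principalization algorithm in Theorem~\ref{thm:PrincipalizationFoliated}\eqref{It:functorial}.

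It remains to upgrade ``$\cK$-monomial'' to ``$\cK$-monomial and logarithmically smooth'', and, in the cobordant variant, to upgrade logarithmic smoothness to honest smoothness. For the first upgrade I appeal to Theorem~\ref{thm:LogSmoothnessMonomial}, which --- exactly as the excerpt foreshadows --- asserts that after the orbifold blow-ups produced by our algorithm, a $\cK$-monomial foliation is automatically logarithmically smooth (the monomial first integrals with $\cK$-exponents in log parameters trivially span the annihilator). For the second upgrade, one runs the same resolution process in the cobordant presentation $B_{J_+}\to X$ rather than the stacky quotient $Bl_J(X)$; by the parallel statement in Theorem~\ref{thm:PrincipalizationFoliated}\eqref{It:non-singular} and the split-transform analysis of Definition~\ref{Def:split-transforms}, smoothness of $\cF$ is preserved at each cobordant blow-up. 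Combining with the $\cK$-monomial output of Theorem~\ref{thm:ResThickClasses} yields the final \Sm\ conclusion for $\cF''$ on $X''$.

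The main conceptual obstacle is really packaged inside Theorem~\ref{thm:ExamplesThickClasses}: verifying that $\cF$-aligned weighted blow-ups preserve $\cK$-monomiality is the content of Theorem~\ref{Th:aligned center new} and is the nontrivial core underlying the whole argument. Once that preservation is established, the present theorem is essentially a bookkeeping corollary, whose only subtlety lies in making sure the density of $\phi^{-1}(U)$ and the properness of the graph are used to guarantee that the saturated pullback $\satpull{\phi}(\cG)$ satisfies the hypotheses of Theorem~\ref{thm:ResThickClasses} (cf.\ Lemma~\ref{lem:SatPullBack}); this is a routine check reducing, via elimination of indeterminacies, to the case where $\phi$ is a genuine morphism.
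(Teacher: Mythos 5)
Your overall architecture coincides with the paper's: the theorem is obtained by applying Theorem~\ref{thm:ResThickClasses} to the thick class of $\cK$-monomial foliations (thickness being Theorem~\ref{Th:aligned center new}) and then invoking Theorem~\ref{thm:LogSmoothnessMonomial}. However, your gloss on the upgrade step contains a genuine misreading. A $\cK$-monomial foliation is \emph{not} automatically logarithmically smooth, and nothing in the output of Theorem~\ref{thm:ResThickClasses} makes it so: in the normal form $\nabla_j=\sum_k a_{jk}w_k\partial_{w_k}$ the hypersurfaces $V(w_k)$ need not be components of $E$, so the annihilator is not a subbundle of $\cD^{\log}_X$ --- see Remark~\ref{rk:BasicKmonomial}(3) and Example~\ref{ex:LogSmooth}. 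Theorem~\ref{thm:LogSmoothnessMonomial} is therefore not an assertion that log-smoothness comes for free; it constructs a \emph{further} sequence of $\cF$-invariant weighted (or cobordant) blow-ups, driven by the lower-semicontinuous \Sm-rank $\nsrank$, whose centers are the loci where that rank is minimal, and only after composing with this extra sequence does $\cF'$ become log-smooth. The same correction applies to your cobordant upgrade: the original $\cF$ is singular, so ``smoothness of $\cF$ is preserved at each cobordant blow-up'' is not the mechanism; rather, the extra blow-ups of Theorem~\ref{thm:LogSmoothnessMonomial} strictly increase the \Sm-rank until the split transform is smooth. Since you do cite the correct theorem, the argument is repaired by taking $\tau$ (resp.\ $\sigma$) to be the composite of the morphism from Theorem~\ref{thm:ResThickClasses} with the sequence from Theorem~\ref{thm:LogSmoothnessMonomial}; as written, your $\tau$ stops one stage too early.
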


Thus, if one accepts non-birational operations, such as the cobordant blow-up, a more beautiful picture is revealed: the \emph{split} transform under a sequence of \emph{cobordant} blow-ups of a Darboux foliation is in fact \emph{\Sm.}\footnote{This is a manifestation of the fact that the sheaf $\cD_{B_+/\AA^1}$ of \emph{relative} derivations is an equivariant model of the sheaf $\cD^{\log}_{Bl(X)}$ of \emph{logarithmic} derivation, when the logarithmic structure is given by the exceptional divisor. See also Corollary \ref{cor:LogSmoothnessSmooth}.}

\subsection{Resolution preserving transverse locus}\label{sec:IntroResTransverseLocus}
We now illustrate how our result can be used to study geometric properties of subvarieties. Let us recall the notion of \emph{transverse section} to a foliation:

\begin{definition}[Transverse section]\label{def:TransverseSection}
A subvariety $Y \subset X$ is said to be {\it transverse} to a 
 foliation $\cF$ at a point $\pa \in Y$ if there exists a partial system of parameters $(x_1,\ldots,x_p)$ centered at $\pa$ and derivations $\partial_1,\ldots,\partial_p \in \cF \cdot \cO_{X,\pa}$ such that: (i) the ideal $\cI^{Y}$ of $Y$ is locally equal to $(x_1,\ldots,x_p)$; and (ii) $\det(\partial_i(x_j))_{i,j}$ is a unit. We say that $Y$ is  transverse to  $\cF$ if it is everywhere transverse to $\cF$. If additionally the rank of $\cF$ is equal to $p$ so that $\cF_p=\spa_{{\cO}_{X,\pa}}(\partial_1,\ldots,\partial_p)$ then we shall call $Y \subset X$ the {\it transverse section} of $\cF$ at $\pa$.
\end{definition}

It is impossible to overstate the importance of transverse sections in foliation theory. In many applications, natural choices of sub-varieties $Y \subset X$ are only \emph{generically} transverse sections, see eg. \cite{Mattei91,BelPan}. We may use Theorem \ref{thm:PrincipalizationFoliated} to transform them into transverse sections without modifying the points where $Y$ is already transverse to $\cF$:

\begin{theorem}[Resolution preserving transverse locus]\label{thm:ResTransverseSections}
Let $(X,\cF,E)$ be a smooth foliated logarithmic variety. Let $Y\subset X$ be a closed subvariety, and denote by $Y^{tr} \subset Y$ the set of points where $Y$ is transverse to $\cF$. Denote by $(X_0,\cF_0,E_0)$ the restricted triple, just as in Theorem \ref{thm:PrincipalizationFoliated}, and $Y_0 := Y \cap X_0$. Assume that $Y^{tr}\cap Y_0\subset Y_0$ is dense. There exists a sequence of weighted (or cobordant) blow-ups
\[
(X_0,\cF_0,E_0)\stackrel{\sigma_0}\leftarrow \cdots\stackrel{\sigma_{k-1}}\leftarrow (X_k,\cF_k,E_k)=(X',\cF',E'),
\]
where $\cF_{i+1}$ is the strict transform of $\cF_i$ such that:
\begin{itemize}
\item[(1')] The strict transform $Y'$ of the subvariety $Y$ is  transverse to $\cF'$. 
If additionally $dim(Y)=rank(\cF)$ then  $Y'$  is a transverse section of $\cF'$. Moreover, the composite $\sigma= \sigma_0 \circ \cdots \sigma_{k-1}$ is an isomorphism over $Y^{tr} \cap X_0$.
\item[(\ref{It:is-aligned}',\ref{It:non-singular}',\ref{It:functorial}')] Properties \emph{(\ref{It:is-aligned}'), (\ref{It:non-singular}'),} and \emph{(\ref{It:functorial}')} from Theorem \ref{thm:EmbeddedDesingularization} hold true. 
\end{itemize}
\end{theorem}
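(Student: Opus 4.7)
\emph{Strategy.} The plan is to invoke Theorem~\ref{thm:EmbeddedDesingularization} directly on $(X,\cF,E)$ and $Y$, producing a sequence of weighted (or cobordant) blow-ups whose strict transform $Y'$ is smooth, has normal crossings with $E'$, and is $\cF'$-aligned. Properties (\ref{It:is-aligned}',\ref{It:non-singular}',\ref{It:functorial}') of our statement are then immediate transcriptions of the corresponding properties of Theorem~\ref{thm:EmbeddedDesingularization}, and the only two additional assertions to verify are: (a) the composite $\sigma=\sigma_0\circ\cdots\circ\sigma_{k-1}$ is an isomorphism over $Y^{tr}\cap X_0$, and (b) $Y'$ is not only $\cF'$-aligned but actually \emph{transverse} to $\cF'$.

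\emph{Isomorphism over the transverse locus.} Let $p\in Y^{tr}\cap X_0$. Choose local parameters $x_1,\dots,x_p,y_1,\dots,y_{n-p}$ at $p$ with $\cI^{Y}=(x_1,\dots,x_p)$ and derivations $\partial_i\in\cF$ for which $\partial_i(x_j)=\delta_{ij}$. Because $\cF\subset\cD^{\log}_X$, any logarithmic derivation sends a defining function of a component of $E$ to a multiple of that function; together with $\partial_i(x_i)=1$ this rules out any $x_i$ defining a component of $E$. Hence near $p$, the subvariety $Y$ is already smooth, has normal crossings with $E$, and is entirely $\cF$-transverse, so trivially $\cF$-aligned. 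The embedded desingularization invariant $\inv_\cF$ attached to $\cI^Y$ therefore attains its minimum target value at $p$; by functoriality (property~\ref{It:functorial} of Theorem~\ref{thm:PrincipalizationFoliated}, transported to the embedded setting), no center $J_i$ meets the image of $p$ in $X_i$, and $\sigma$ is an isomorphism over a neighborhood of $p$, giving (a).

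\emph{Transversality.} Let $U:=\sigma^{-1}(Y^{tr}\cap X_0)\subset Y'$. By (a), $\sigma$ restricts to an isomorphism $U\toisom Y^{tr}\cap X_0$ with $\cF'|_U=\cF|_U$, so $U$ is transverse to $\cF'$ in $X'$. Density of $Y^{tr}$ in $Y$ together with birationality of $\sigma$ make $U$ dense in $Y'$. Now fix any $q\in Y'$. Since $Y'$ is smooth and $\cF'$-aligned, we may choose local parameters at $q$ yielding a decomposition $\cI^{Y'}=(x_1,\dots,x_r,y_1,\dots,y_s)$ where each $x_i$ admits a dual derivation $\partial_i\in\cF'$ with $\partial_i(x_j)=\delta_{ij}$ and each $y_j=0$ is $\cF'$-invariant. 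If $s>0$, then on an entire Zariski neighborhood of $q$ in $Y'$ the map $\cF'\to N_{Y'/X'}$ drops rank by at least $s$, contradicting the presence of transverse points from $U$ arbitrarily close to $q$. Hence $s=0$ throughout $Y'$, establishing (b). When $\dim Y=\rank\cF$, the generic rank is preserved through strict transforms along $\cF$-aligned centers, giving $\rank\cF'=\dim Y'$ on all of $Y'$, so $Y'$ is a transverse section of $\cF'$.

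\emph{Main obstacle.} The technical heart is the isomorphism-over-$Y^{tr}$ step: converting the natural intuition ``$\inv_\cF$ is at its minimum at a smooth purely-transverse NC point'' into a rigorous statement. One must show that the local configuration $(\AA^n,(x_1,\dots,x_p),\text{coordinate foliation})$ is exactly the situation in which the algorithm of Theorem~\ref{thm:EmbeddedDesingularization} produces no blow-up, and that functoriality propagates this through all later stages. Everything else reduces to openness of transversality together with the density of $Y^{tr}$.
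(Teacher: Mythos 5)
Your strategy is genuinely different from the paper's in structure: you invoke Theorem~\ref{thm:EmbeddedDesingularization} as a black box (whose stopping criterion is the weaker form $(1,\ldots,1,\infty+1,\ldots)$) and then argue transversality of $Y'$ a posteriori via a density-and-openness argument. The paper instead modifies the stopping criterion inside the algorithm, running it until the maximal invariant reaches $(1,\ldots,1)$ exactly, and then reads off transversality directly from Lemma~\ref{lem:InvariantTransverseSection}. Your rank-drop argument for step (b) is a nice replacement for the invariant bookkeeping the paper does, though it should also account for the divisorial $z$-coordinates in the $\cF'$-aligned presentation (not just the $y$-coordinates), since those too produce $\cF'$-invariant hypersurfaces containing $Y'$ and hence rank drop.

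The genuine gap is exactly what you flag as ``the technical heart'': you need a precise characterization of the value $\inv_{\cF,\pa}(\cO_X[t\cI^Y])$ at a transverse point, not merely the observation that $\cI^Y$ yields an $\cF$-aligned center there. Both directions of the implication matter for your argument: the forward direction (transverse implies $\inv_{\cF,\pa} = (1,\ldots,1)$) is what prevents the algorithm from ever selecting a center meeting $Y^{tr}$, and it requires comparing $\inv_{\cF,\pa}(\cO_X[t\cI^Y])$ to $\inv_{\cF,\pa}$ of the $\cF$-aligned center $\cO_X[x_1 t,\ldots,x_p t]$ and then invoking uniqueness and maximality of the canonical center (Inductive Claim~\ref{claim:MainInduction}(1)) together with Lemma~\ref{lem:BasicPropertyFInvariantCenter}. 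The converse (needed if one wants to detect transversality from the invariant, as the paper does to identify the final components) goes by induction on dimension using the splitting of $\widehat\cF_{(x_1)}$ along a maximal contact from Theorem~\ref{thm:splittingFoliation} and the coefficient-splitting of Lemma~\ref{lem:FcanonicalInvForFiniteRees}. None of this is automatic, and it is precisely the content of the paper's Lemma~\ref{lem:InvariantTransverseSection}. Without proving such a statement, the assertion that ``no center $J_i$ meets the image of $p$'' remains a heuristic. A secondary point: the paper's proof also handles the finiteness/compactness of the process of passing to the complement of the transverse components, which your sketch leaves implicit.
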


We prove Theorem \ref{thm:ResTransverseSections} in Section~\ref{ssec:Proofthm:ResTransverseSections}. A complementary statement where  $\cF$ is transverse to $Y$ at a point, suggested to us by J. V. Pereira, is provided in Proposition \ref{Prop:sub-transverse}.

\subsection{Main methods of the proof}\label{sec:IntroMethods}

\subsubsection{Orbifolds and weighted (or cobordant) blow-ups}\label{sssec:Orbifolds}

The presence of weighted blow-ups in resolution of singularities of foliations goes back to the late 80's \cite{BrunellaMiari,DenRou,Pelletier,Rou,PanFamilies}. An example of Sanchez-Sanz highlighted by Panazzolo \cite[Section~1.4]{Pan} shows that, already in dimension three, line foliations cannot be resolved without weighted blow-ups, or at the very least roots. See \cite{CRS,RReis} for a different way forward. In the same paper, Panazzolo proves the existence of resolution of singularities for real-analytic line foliations in threefolds. All of these references concern the real-analytic case and use \emph{manifolds with corners} in order to formalize weighted blow-ups.

When considering the complex analytic or the algebraic category, it becomes natural to work with \emph{orbifolds} instead. Formally, an orbifold is a smooth algebraic or analytic \emph{Deligne Mumford stack} where the stabilizers are generically trivial, see eg. \cite{Fantechi, AbramovichStacksEveryone} for  introductions. We will rarely explicitly use this definition, and a reader that is familiar with the classical definition of orbifolds from differential geometry, see eg. \cite[$\S$ 2.4]{LieGrupoids}, may easily read the paper as well (but they might miss on a beautiful feature of working with stacks: morphisms are very natural and it is, therefore, easy to encode functorial statements). The use of stacks is at the core of the approach of Abramovich, Temkin and W\l odarczyk for varieties and morphisms \cite{ATW-weighted,ATW-relative}, of McQuillan for varieties \cite{MMcQ} and of McQuillan and Panazzolo for line foliations in three dimensions \cite{McQPan}. As a drawback of this choice, the paper may seem technical for a non-algebraic geometer. We sweeten the pill by relying in two observations.

First, our orbifolds are everywhere locally a stack-theoretic quotients $[W/G]$ of a smooth variety or  an analytic manifold $W$ by a group $G$ acting on $W$, where $G$ is either a torus $\GG_m^d$ (or $\CC^*$) or a finite abelian group. In particular, in order to define an object (such as a divisor, foliation, center of blow-up, etc) over an orbifold, it is sufficient to define it over the variety or  analytic manifold $W$ and demand it to be \emph{$G$-invariant}. Since our methods are functorial with respect to group actions and open embedding, this observation allows us to work almost always locally over a smooth variety or  analytic manifold --- we only need to explain, for each object, what being $G$-invariant means.

Second, we present stack-theoretic blow-ups via a two step process, following an idea that W\l odarczyk developed in \cite{Wlodarczyk-cobordant}. In fact, we start by studying \emph{weighted blow-up cobordisms and cobordant blow-ups}: a classical algebraic geometry operation, known also as the \emph{degeneration to the weighted normal cone}  (see \cite[Section 1]{Rees-valuation}, \cite[Chapter 5]{Fulton}) over varieties or  analytic manifolds, see Section~\ref{ssec:CobordantBU}. As it has been observed in \cite{Wlodarczyk-cobordant} and independently studied in \cite{Quek-Rydh}, one can represent the stack-theoretic weighted blow-up as a stack-theoretic quotient of a cobordant blow-up. 
In particular, we could restrict this paper to the category of schemes and analytic spaces provided that we only use cobordant blow-ups. Additionally, some of our key proofs (eg. that the invariant $\inv_{\cF}$ decreases after blow-up of the $\inv_{\cF}$-maximal center, see Theorem \ref{prop:DecreaseInvariant}) remain elementary.

\begin{remark}[Passage from local to global]\label{war:Overaching}
We briefly expand on our use of the first observation above. Throughout sections Sections~\ref{sec:MainObjects} through \ref{sec:BlowUp}, we work with smooth varieties or  analytic manifolds $X$ admitting global coordinates $x=(x_1,\ldots,x_n)$. This hypothesis simplifies notation. In the general case, it is necessary to work on neighborhoods $U$ of every point $\pa \in X$ which admits such coordinates. The passage from local to global follows by canonicity --- in other words, functoriality with respect to local isomorphisms ---  of all the local constructions. There is, therefore, no loss of generality in assuming that $X$ is actually equal to one of these neighborhoods $U$, as long as we prove (as we do in those sections) that all constructions are canonical.
\end{remark}

\begin{remark}[Manifold with corners and Lie groupoids]\label{rk:OnManifoldCorners}
Bearing in mind future applications to non-commutative and differential geometry, it is possible to re-state our results in the language of manifold with corners \cite{KotteMelrose, Gillam-Molcho} or Lie groupoids \cite{LieGrupoids}.
\end{remark}

\subsubsection{Rees algebras, lifting and the foliated Replacement Lemma}\label{sssec:Rees} Orthodox proofs of resolution of singularity of ideal sheaves and varieties in characteristic zero are based on the construction of auxiliary objects related to ideal sheaves: marked ideals, maximal contacts, coefficient ideals, companion ideals, homogenized ideals, MC-ideals, etc, \cite{Hironaka,Kollar,Bierstone-Milman,ATW-weighted,Encinas-Hauser,Encinas-Villamayor,Wlodarczyk}. In this work, we follow a variation of this approach developed by W{\l}odarczyk in \cite{Wlodarczyk-cobordant}, where all auxiliary objects are defined in terms of \emph{rationally graded Rees algebras}, see Definition \ref{def:ReesAlgebra}, instead of ideal sheaves. We note that Rees algebras appear as a natural replacement for coefficient ideals in \cite{Encinas-Villamayor-Rees, Bravo-Villamayor,  BV13, Bravo-Villamayor-Book}, and a natural way to describe centers of weighted blow-ups in \cite{Quek-Rydh}; the paper \cite{Wlodarczyk-cobordant} puts them as a centerpiece of the entire resolution algorithm.

There are several reasons for this choice, having to do with underlying simplicity, elegance, and a unifying philosophy explaining why resolution in characteristic 0 actually works. This is explained in detail in \cite{Wlodarczyk-cobordant} and will hopefully become clear through the paper. But importantly here, it is technically key for our proofs in the foliated case, as we now explain.

Recall that in Hironaka's classical inductive approach, one assumes existence of functorial resolution in a lower dimension and pushes it to a resolution on a higher dimensional variety. Such a resolution \emph{lifts} from a (maximal contact) subvariety to a variety using the concept of the coefficient ideal. On the other hand the gluing process is usually controlled by some sort of homogenization technique, see eg. \cite{Wlodarczyk,Kollar},  which links the resolution of different choices of (maximal contact) subvarieties by \emph{connecting isomorphism}.\footnote{This discussion applies equally to the earlier approach using ``Hironaka's trick", see \cite{BMT}.} In the presence of a foliation, both the lifting and the connecting isomorphisms now need to keep track of an ambient foliation \emph{which is not encoded in the subvariety}. Consequently the resolution on a maximal contact subvariety does not lift automatically to the resolution on a variety, and the gluing process is therefore affected by the foliation. By using Rees algebras, following \cite{Wlodarczyk-cobordant}, we can address both of these points. Note especially our notion of \emph{lifting associated to derivations}, see Definition \ref{def:LiftingAssociatedToD} 
and Proposition \ref{prop:CoefficientSplitting}, 
and our \emph{foliated replacement Lemma}, see Lemma \ref{lem:ReplacementFoliated}.

\begin{warning}
Following this choice, we systematically consider Rees algebras throughout Section~\ref{sec:MainObjects} to \ref{sec:Final} --- ideal sheaves are implicitly covered by Remark \ref{rk:Dictionary}.
\end{warning}

\subsubsection{The invariant $\inv_{\cF}$ and the desingularization algorithm.} Since the original proof of Hironaka's embedded desingularization of varieties in characteristic zero \cite{Hironaka}, an important new feature in the proof is the inclusion of an \emph{invariant} that controls the desingularization algorithm \cite{Bierstone-Milman,Encinas-Villamayor}. The invariant does not only provide a better understanding of the proof, but also helps in understanding the structure of the singularities themselves.

In \cite{ATW-weighted}, Abramovich, Temkin and W{\l}odarczyk introduced a new invariant $\inv$, in fact a natural recursive extension of \emph{order} of an ideal, which provides the necessary control of desingularization via weighted blow-ups. The invariant $\inv$ shares many of the features from \cite{Bierstone-Milman,Encinas-Villamayor}, but has the advantage of being independent of the \emph{history} of the resolution, making it simpler and arguably more geometrical. An extension of the invariant to a context involving exceptional divisors is proposed by W\l odarczyk \cite{Wlodarczyk-cobordant} and in our joint recent work with Quek \cite{ABQTW}. In this paper we further extend the invariant to the context of foliations $\inv_{\cF}$, taking our inspiration from \cite{ATW-relative}. Our paper is, nevertheless, completely self-contained --- no knowledge of previous papers is necessary --- and recovers the previous invariants setting $\cF = \cD_X$ or $\cF=\cF_X^{\log}$.

In order to explain the main ideas of the construction, let us briefly recall the idea behind the invariant $\inv$ of an ideal sheaf $\cI$ in terms of Rees algebras, see Section~\ref{sssec:Rees}, broadly following \cite{Wlodarczyk-cobordant} (compare \cite[Section 9.8]{Bravo-Villamayor-Book}, which differs by a factor). Given a point $\pa \in X$, recall that
\[
\ord_{\cF}(\cI):=\min  \{n \in \mathbb{N} \mid  \cD^n(\cI)=\cO_{X,\pa}\},
\]
which can be re-written in terms of Rees algebras as
\[
\ord_{\pa}(\cI)=\max\{k\in \QQ_{>0} \mid\cI t\subset \cO_X[m_{\pa}t^{1/k}]\},
\]
where $\cR=\cO_X[m_{\pa}t^{1/k}]$ is the rationally graded Rees algebra generated by its degree-$1/k$ term $m_{\pa} t^{1/k}$, where $t$ is the place-holder variable. The definition of $\inv_{\pa}(\cI)$ from  \cite{Wlodarczyk-cobordant} is a natural extension of the above presentation of the order of the ideal, which can be defined recursively via \emph{coefficient ideals} and \emph{hypersurfaces of maximal contact}, see Lemma \ref{lem:FcanonicalInvForFiniteRees}. If the divisors are not present then it boils down to:
\[
\inv_{\pa}(\cI):=\max\{(a_1,\ldots,a_k)\mid \cI t\subset \cO_X[x_1t^{1/a_1},\ldots,x_lt^{1/a_l}]^{\inte}\},
\]
where $a_1\leq\ldots\leq a_l$ are  rational numbers ordered lexicographically and the maximum is considered over all the partial coordinate systems $x_1,\ldots,x_k$ at $ \pa \in X$. Here ``$\inte$" stands for the corresponding integral closure.

In this paper we adapt this invariant to take into consideration $\cF$.  We start by establishing a notion of $\cF$-order to replace the usual order, see Definition \ref{def:Forder},
\[
\ord_{\cF,\pa}(\cI):=\min  \{\{n \in \mathbb{N} \mid  \cF^n(\cI)=\cO_{X,\pa}\} \cup \{\infty\}\},
\]
which is a variation of previous notions of orders \cite{ATW-relative,BelJA,BelRACSAM,BelBmon}. We then extend the invariant $\inv$ to its foliated version, see Definition \ref{def:FCanonicalInvariantGeneral}:\footnote{As explained in Section \ref{sssec:Foliation} below, the inherent transcendental nature of foliation requires working with completions.}
\begin{equation}\label{eq:FinvPresentationSimple}
\begin{aligned} \inv_{\cF,\pa}(\cI):=\max \{&(a_1,\ldots,a_k,\infty+b_1,\ldots,\infty+b_k)\mid \\ &  \cI t \subset \widehat{\cO}_{X,\pa}[x_1t^{1/a_1},\ldots,x_lt^{1/a_l},y_1t^{1/b_1},\ldots,y_rt^{1/b_r}]^\inte\}
\end{aligned}
\end{equation}
where $a_1\leq\ldots\leq a_l$ and $b_1\leq\ldots\leq b_r$ are rational numbers and the coordinate systems $(x_1,\ldots,x_k,y_1,\ldots,y_r)$ ``respect" the foliation $\cF$, that is, they form a formal \emph{$\cF$-aligned center}, see Definition \ref{def:Faligned}.\footnote{The complete definition includes a third layer of invariants accounting for monomial variables, which we suppress in the discussion here.} More precisely, the coordinates $(y_1,\ldots,y_r)$ are $\cF$-invariant, that is:
\[
\cF \cdot \widehat{\cO}_{X,\pa}[y_1t^{1/b_1},\ldots,y_rt^{1/b_r}]^\inte \subset \widehat{\cO}_{X,\pa}[y_1t^{1/b_1},\ldots,y_rt^{1/b_r}]^\inte,
\]
and we can find a formal presentation
\begin{equation}\label{eq:FallignedPresentationSimple}
\widehat{\cF}=\widehat{\cO}_{X,\pa}\cdot \cF=\spa(\partial_{x_1},\ldots,\partial_{x_k}, \nabla_1,\ldots,\nabla_r),
\end{equation}
where $\nabla_j \in \cD_{X}$ are derivations independent of $(x_i,\partial_{x_i})$, see Definition \ref{def:DerIndependentx}. The center provided by these formal coordinates is in fact algebraic. Moreover the presentation $\spa(\partial_{x_1},\ldots,\partial_{x_k}, \nabla_1,\ldots,\nabla_r)$ is  \emph{nested-regular}
as discussed in Section \ref{sssec:Foliation}. Following this construction, we collect some of the most important properties of $\inv_{\cF}$ and $\cF$-aligned centers in the following results:

\begin{theorem}[Semicontinuity and functoriality of invariant]\label{Th:semicontinuity}
The invariant $\inv_{\cF}(\cI)$ is upper semicontinuous, functorial for smooth morphisms and field extensions, and takes values in a well-ordered set.

The invariant  $\inv_{\cF}(\cI)$ attains a maximum (1) when $X$ is algrebraic, (2) in the preimage  $\phi^{-1}(U) \subset X$ 
of a neighborhood of a compact subset of $Y$, when $X\to Y$ is smooth, and $\cI$ and $\cF$ are pulled back from $Y$. Here for  
 $\cF$ we use either Section \ref{Sec:nonsat-smooth}  or \ref{Sec:relative-pullback}. 
\end{theorem}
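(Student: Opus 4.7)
The plan is to reduce each of the four claims to the analogous property for the non-foliated invariant $\inv$ developed in \cite{ATW-weighted,Wlodarczyk-cobordant}, reinterpreted through the extra data of a nested-regular $\cF$-aligned presentation as in \eqref{eq:FallignedPresentationSimple}. The underlying slogan is that the foliated invariant coincides with the classical invariant computed on a slice transverse to the $\cF$-invariant directions, so the only new ingredients come from the compatibility of $\cF$-aligned centers with smooth morphisms, closures, and small neighborhoods, which is furnished by Theorem \ref{Th:aligned center} and the foliated Replacement Lemma \ref{lem:ReplacementFoliated}.

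First I would verify that the values of $\inv_{\cF}$ lie in a well-ordered set. Each entry $a_i$ (respectively $b_j$) in the tuple \eqref{eq:FinvPresentationSimple} is bounded by $\ord_{\pa}(\cI)$, which is a finite integer whenever the invariant is finite, while an admissibility relation $\cI t\subset \cO[x_1t^{1/a_1},\ldots]^{\inte}$ in an integrally closed rationally graded Rees algebra forces the denominators of the $a_i,b_j$ to divide some integer bounded in terms of the length of the tuple. Since this length $k+r$ is itself bounded by $\dim X$, the lexicographic product of finitely many well-ordered sets of such bounded rationals, with the explicit symbol $\infty$ separating the two blocks, is well-ordered.

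For upper semicontinuity I would pick, at a point $\pa$ where $\inv_{\cF,\pa}(\cI)=\alpha$, a formal nested-regular $\cF$-aligned presentation realizing $\alpha$ together with its coordinate system. The key step is to replace this formal datum by an algebraic (or coherent analytic) one defined in an open neighborhood $U$ of $\pa$: combining the foliated Replacement Lemma with the existence result for $\cF$-aligned centers allows one to select coordinates $x_i,y_j\in\cO_X(U)$ and derivations $\partial_{x_i},\nabla_j\in\cF(U)$ whose formal germs at $\pa$ realize the maximum. The admissibility relation $\cI t\subset\cO[x_it^{1/a_i},y_jt^{1/b_j}]^{\inte}$ then extends from the completion to an open neighborhood by coherence of the Rees algebra, so at every $\mathfrak{q}\in U$ this explicit $\cF$-aligned presentation witnesses $\inv_{\cF,\mathfrak q}(\cI)\geq\alpha$, proving that the locus $\{\inv_{\cF}\geq\alpha\}$ is closed.

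Functoriality for smooth morphisms $f\colon X'\to X$ and for field extensions is nearly tautological from the definitions: pulling back a partial coordinate system and an $\cF$-aligned presentation yields the corresponding datum for $f^*\cI$ and $f^*\cF$, and the integral closure of a Rees algebra commutes with smooth base change. Attainment of the maximum then combines semicontinuity with the descending chain condition coming from well-orderedness: in case (1), algebraic Noetherianity forces the filtration $\{\inv_{\cF}\geq\alpha\}_{\alpha}$ to stabilize, producing a top value; in case (2), the relatively compact preimage $\phi^{-1}(U)$ meets only finitely many level sets of $\inv_{\cF}$ by compactness combined with semicontinuity. The main obstacle I anticipate is the algebraization step in semicontinuity: since $\inv_{\cF}$ genuinely involves $\widehat{\cO}_{X,\pa}$ in order to accommodate examples such as the vector field \eqref{eq:Euler} whose foliation linearizes only formally, one must check that nevertheless the \emph{tuple of exponents} can be realized by an algebraically (or analytically) defined $\cF$-aligned presentation -- this is precisely where the nested-regularity condition and the explicit construction of $\cF$-aligned centers developed earlier in the paper are indispensable.
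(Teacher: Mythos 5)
There is a genuine gap, and it sits at the heart of your semicontinuity argument: the logic runs in the wrong direction. Extending an $\cF$-aligned presentation realizing $\inv_{\cF,\pa}(\cI)=\alpha$ from the completion at $\pa$ to a neighborhood $U$, and concluding that every $\mathfrak q\in U$ satisfies $\inv_{\cF,\mathfrak q}(\cI)\geq\alpha$, would show that $\{\inv_{\cF}\geq\alpha\}$ is \emph{open}, not closed. Worse, the claim itself is false: admissibility $\cR\subset\cA_J$ does spread out to a neighborhood, but at a point $\mathfrak q\notin V(\cA_J)$ some $x_i$ is a unit, the localized Rees algebra $\cA_{J,\mathfrak q}$ degenerates, and it no longer witnesses the tuple $(a_1,\ldots)$; already for $\cI=(x^2+y^3)$ the invariant is $(2,3)$ at the origin and strictly smaller nearby. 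The mechanism the paper actually uses is the opposite one: off the maximal contact hypersurface $H=V(x_1)$ the $\cF$-order strictly drops (Lemma \ref{lem:FMaximalContactNeighborhood}), and \emph{on} $H$ one has the recursion $\inv_{\cF,\pb}(\cR)=(\ord_{\cF,\pb}(\cR),\inv_{\cF_{|H},\pb}(\cC(\cR,\cF)_{|H}))$ of Lemma \ref{lem:FcanonicalInvForFiniteRees}, so semicontinuity follows by induction on $\dim X$ from semicontinuity of $\ord_{\cF}$ and of the restricted invariant, with a separate reduction via $\cF^\infty(\cR)$ when the order is infinite. Your proposal never invokes this induction, yet it is also what makes the invariant well defined at a single point (existence and uniqueness of the maximizing center, Inductive Claim \ref{claim:MainInduction}), which you take for granted.

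The well-ordering argument also rests on a false premise: the entries of the invariant are \emph{not} bounded by $\ord_{\pa}(\cI)$ (again $(2,3)$ versus order $2$), and the denominators of the $a_i,b_j$ are not bounded in terms of the length of the tuple alone — they are controlled recursively, each entry's denominator depending on the preceding entries through the coefficient-algebra construction. This recursive control is precisely what the paper's dimension induction supplies, and it is the nontrivial content behind "takes values in a well-ordered set." Your functoriality and max-attainment paragraphs are closer to the mark (pullback of presentations for the $\geq$ direction, quasi-compactness plus semicontinuity for attainment), but functoriality also needs the reverse inequality, i.e.\ that the pullback of the maximal center remains maximal, which again comes from the functoriality of maximal contact, coefficient Rees algebras, and the uniqueness statement in the inductive claim rather than from the definition alone.
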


We note that functoriality implies that $\inv_{\cF}(\cI)$ is invariant under arbitrary group actions.

\begin{theorem}[The center associated to an ideal]\label{Th:the center}

Suppose the invariant $\inv_{\cF}(\cI)$ attains its maximum along a locus $Z$. Then $Z$ is closed, and is
the support of a unique $\cF$-aligned $\cI$-admissible center ${J}$ having invariant $\max_{\pa}\inv_{\cF,\pa}(\cI)$ precisely along the locus $Z$. In particular $Z$ is regular. The formation of ${J}$ is functorial for smooth surjective morphisms and field extensions.

\end{theorem}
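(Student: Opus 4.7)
My plan has four steps: (1) closedness and regularity of $Z$; (2) local construction of $J$ from an invariant-maximising presentation; (3) algebraicity, uniqueness, and gluing; and (4) functoriality.

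First, by Theorem \ref{Th:semicontinuity}, $\inv_{\cF}(\cI)$ is upper semicontinuous with values in a well-ordered set, so the locus $Z$ where it attains its maximum $\bar a$ is closed. The regularity of $Z$ will drop out from the local description of $J$ in Step 2. Now fix $\pa \in Z$. By the definition of the invariant (cf.\ \eqref{eq:FinvPresentationSimple} and Definition \ref{def:FCanonicalInvariantGeneral}), the value $\bar a$ is realised by an $\cF$-aligned partial system of parameters $(x_1,\ldots,x_l,y_1,\ldots,y_r)$ of $\widehat\cO_{X,\pa}$ with rational weights $(a_1,\ldots,a_l,b_1,\ldots,b_r)$ (plus possible monomial data), in which the $y_j$'s span an $\cF$-invariant ideal and $\widehat \cF$ admits a nested-regular splitting
\[
\widehat\cF = \spa(\partial_{x_1},\ldots,\partial_{x_l},\nabla_1,\ldots,\nabla_r)
\]
with derivations $\nabla_j$ independent of $(x_i,\partial_{x_i})$ in the sense of Definition \ref{def:DerIndependentx}. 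I take for $J$ the rationally graded Rees algebra generated by $x_i t^{1/a_i}$ and $y_j t^{1/b_j}$. The containment $\cI t \subset J^{\inte}$ implicit in \eqref{eq:FinvPresentationSimple} gives $\cI$-admissibility, and the splitting of $\widehat\cF$ combined with the $\cF$-invariance of the $y_j$'s gives $\cF$-alignment. Since $(x_i, y_j)$ are part of a regular system of parameters, the support $V(J)$ is regular at $\pa$, of codimension $l+r$.

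The parameters above are a priori only formal. To see $J$ descends to an algebraic (or analytic) Rees algebra on a neighbourhood of $\pa$, I would invoke the iterated construction of coefficient ideals and $\cF$-compatible hypersurfaces of maximal contact encoded by the lifting associated to derivations (Definition \ref{def:LiftingAssociatedToD} and Proposition \ref{prop:CoefficientSplitting}), together with the foliated Replacement Lemma \ref{lem:ReplacementFoliated}. The maximality condition built into $\inv_{\cF}$ then forces any two such presentations to generate the same Rees algebra after integral closure, yielding uniqueness at $\pa$: any other $\cF$-aligned $\cI$-admissible center $J'$ whose invariant equals $\bar a$ along its support realises the maximum in \eqref{eq:FinvPresentationSimple}, hence agrees with $J$ locally. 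Because the construction is canonical at each point of $Z$, the local centers glue to a unique global center $J$ whose support is exactly $Z$, and $\inv_{\cF}(J)=\bar a$ precisely on $Z$ since at a point $q\notin Z$ one has $\inv_{\cF,q}(\cI)<\bar a$ by definition of $Z$.

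Finally, functoriality of $J$ under smooth surjective morphisms $f\colon X'\to X$ and field extensions is inherited from the corresponding functoriality of $\inv_{\cF}$ in Theorem \ref{Th:semicontinuity}: coefficient ideals, maximal contacts and the splittings of $\widehat\cF$ behave naturally under pullback by smooth maps, so $f^*J$ realises the maximum of $\inv_{\cF}$ for $f^*\cI$ and coincides with the canonical center associated to that datum. The main obstacle in the above plan is Step 3, that is, descending the $\cF$-aligned presentation from $\widehat\cO_{X,\pa}$ down to $\cO_{X,\pa}$ in a situation where, as the example \eqref{eq:Euler} illustrates, the foliation itself need only become log-smooth after formal completion. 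This is precisely the role of the foliated Replacement Lemma together with the Rees-algebra formalism inherited from \cite{Wlodarczyk-cobordant}; once that is in place the uniqueness and gluing steps are a clean consequence of the maximality encoded in $\inv_{\cF}$.
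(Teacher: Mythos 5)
Your proposal is correct and follows the same approach as the paper, which derives this theorem directly from Inductive Claim~\ref{claim:MainInduction} together with Proposition~\ref{prop:FCanonicalInvBasicProperties}: you correctly identify closedness of $Z$ from semicontinuity, local algebraicity via the nested-regular lifting (Proposition~\ref{prop:RegLiftingFAligned}) and the foliated Replacement Lemma~\ref{lem:ReplacementFoliated}, gluing via uniqueness of the $\inv_{\cF,\pa}$-maximal center, and functoriality from the Inductive Claim. One small slip: when you write down $J$ you include only the generators $x_i t^{1/a_i}$ and $y_j t^{1/b_j}$ and compute the codimension as $l+r$, omitting the divisorial generators $z_k t^{1/c_k}$ that you flagged parenthetically as monomial data --- the full $\cF$-aligned center has codimension $l+r+s$ and the $z_k$ must be part of $J$.
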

These results follow from Inductive claim \ref{claim:MainInduction} together with Proposition \ref{prop:FCanonicalInvBasicProperties}. The center ${J}$ from Theorem \ref{Th:the center} will be called \emph{$\inv_{\cF}$-maximal center}.

\begin{theorem}[The invariant drops]\label{Th:principalization}
Under the hypotheses of Theorem \ref{Th:the center},
consider the weighted (or cobordant) blowing up $\pi: (X',\cF',E') \to (X,\cF,E)$ by the $\inv_{\cF}$-maximal center ${J}$. Let  $\cF'$ and $\cI'$ be both either the strict transforms or the controlled transforms. Then
\[
\max \inv_{\cF',\pa'}(\cI') < \max \inv_{\cF,\pa}(\cI).
\]
\end{theorem}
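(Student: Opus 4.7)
The plan is to follow the general strategy by which $\inv$ is shown to strictly drop after blowing up its maximal center in \cite{ATW-weighted,Wlodarczyk-cobordant}, adapted to the foliated setting using the nested-regular $\cF$-aligned presentation \eqref{eq:FallignedPresentationSimple}, the rationally graded Rees algebra formalism, and the foliated Replacement Lemma \ref{lem:ReplacementFoliated}. By the upper semicontinuity in Theorem \ref{Th:semicontinuity}, it is enough to verify the drop pointwise; and since $\pi$ is an isomorphism outside the locus $Z$ where $\inv_{\cF}(\cI)$ is maximized, the question reduces to showing, for every $\pa' \in \pi^{-1}(Z)$ lying over $\pa \in Z$, that $\inv_{\cF',\pa'}(\cI')$ is strictly smaller in lexicographic order than $\inv_{\cF,\pa}(\cI)$. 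It is convenient to work on the cobordant blow-up $B_{{J}+}$ and descend to $Bl_{{J}}(X)$ afterwards by functoriality for the $\GG_m$-action.

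At such a point $\pa$ I would fix a formal nested-regular $\cF$-aligned presentation realizing $\inv_{\cF,\pa}(\cI)$: local coordinates $(x_1,\ldots,x_k,y_1,\ldots,y_r)$ with each $y_j$ being $\cF$-invariant, a derivation system $\widehat{\cF} = \spa(\partial_{x_1},\ldots,\partial_{x_k},\nabla_1,\ldots,\nabla_r)$ with each $\nabla_j$ independent of $(x_i,\partial_{x_i})$ in the sense of Definition \ref{def:DerIndependentx}, and the Rees-algebra identity
\[
\cI\,t \;\subset\; \widehat{\cO}_{X,\pa}[x_1 t^{1/a_1},\ldots,x_k t^{1/a_k}, y_1 t^{1/b_1},\ldots, y_r t^{1/b_r}]^{\inte}
\]
computing $\inv_{\cF,\pa}(\cI) = (a_1,\ldots,a_k,\infty+b_1,\ldots,\infty+b_r)$. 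By Theorem \ref{Th:the center}, the $\inv_{\cF}$-maximal center ${J}$ is precisely the $\cF$-aligned $\cI$-admissible center encoded by this data, and on $B_{{J}+}$ the generators $x_i t^{1/a_i}$ and $y_j t^{1/b_j}$ lift to regular functions whose images at $\pa'$ I can track explicitly.

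The main argument proceeds layer by layer. For the transverse part $(a_1,\ldots,a_k)$, one shows at $\pa'$ either a strict lexicographic drop of the leading tuple, or the tuple is preserved and one descends via a maximal-contact hypersurface $H = \{x_1 = 0\}$ to the coefficient ideal, reducing to an inductive step with strictly fewer $\cF$-transverse coordinates; the descent is controlled by the lifting associated to derivations (Definition \ref{def:LiftingAssociatedToD}, Proposition \ref{prop:CoefficientSplitting}). For the monomial layers $\infty + b_j$, since the $y_j$ are $\cF$-invariant, the situation reduces to the $\cF$-invariant monomial analysis of \cite{BelRACSAM, ATW-weighted}, in which the choice of strict vs.\ controlled transform of $\cF$ matches the corresponding choice for $\cI$, and the monomial exponent either drops strictly or triggers descent to the preceding layer. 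Throughout, Theorem \ref{Th:aligned center} ensures that $\cF'$ is still logarithmically smooth (and $\cK$-monomial when $\cF$ was), so $\inv_{\cF',\pa'}(\cI')$ is computed in the same framework.

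The hard step, I expect, is the maximal-contact descent in the presence of the foliation: the hypersurface $H = \{x_1 = 0\}$ is not in general $\cF$-invariant, so the naive restriction $\cF_{|H}$ loses the $\cF$-aligned structure and one cannot simply apply the inductive hypothesis to the coefficient ideal on $H$. The foliated Replacement Lemma \ref{lem:ReplacementFoliated} is exactly the tool tailored to this situation: it allows one to substitute the ambient derivation system by an equivalent nested-regular system whose horizontal part restricts cleanly to $H$, while preserving both the $\cF$-aligned structure and the Rees-algebra identity computing $\inv_{\cF}$. Verifying that these replacements commute with the transform on $B_{{J}+}$ and are compatible with the induction on the coefficient ideal is the technical heart of the argument; once this is in place, the layerwise drops assemble into the strict inequality $\max_{\pa'}\inv_{\cF',\pa'}(\cI') < \max_{\pa}\inv_{\cF,\pa}(\cI)$.
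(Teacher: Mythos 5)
Your high-level scheme — work on the cobordant blow-up, descend to the stack, reduce to a pointwise statement by semicontinuity, run a dimensional induction through the maximal-contact hypersurface and the coefficient Rees algebra — agrees with the paper's strategy in Theorem \ref{prop:DecreaseInvariant}. But the two ingredients you single out as decisive are not the ones the paper's proof actually leans on, and one of them is in fact not invoked in that proof at all.

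The foliated Replacement Lemma \ref{lem:ReplacementFoliated} is not the technical heart of the drop. In the paper it serves the \emph{well-definedness and uniqueness} of the $\inv_{\cF}$-maximal center (see the uniqueness step in Inductive Claim \ref{claim:MainInduction} and Lemma \ref{lem:BasicPropertyFInvariantCenter}); it plays no role in the proof of Theorem \ref{prop:DecreaseInvariant}. The ingredient that actually carries the drop is Lemma \ref{Lem:transform-FR},
\[
\sigma^c\bigl((t^{-1/a}\cF)(\cR)\bigr)\ \subseteq\ (T^{-1/a}\sigma^c(\cF))(\sigma^c(\cR)),
\]
together with Lemma \ref{Lem:poles-of-transform}. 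These let one prove the precise Claim in the paper's Case~1: at any $\pb\in\sigma^{-1}(\pa)$ one has $\ord_{\cF^c,\pb}(\sigma^c(\cR))\le a$, and if equality holds then $x_1'$ is an $\cF^c$-maximal contact. That Claim is what forces the induction to descend to the strict transform $H'$ of $H$ with the same coefficient data (via Lemma \ref{lem:CoefficientAdmissible}, Lemma \ref{lem:FcanonicalInvForFiniteRees}, and Proposition \ref{prop:CoefficientSplitting}); without it your "either the leading tuple drops, or one descends" dichotomy is unsupported. Your proposal never says how the leading $\cF$-order is controlled on the transform — that is exactly the missing step.

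Your diagnosis of the "hard step" is also off target. The hypersurface $H=V(x_1)$ is \emph{transverse} to $\cF$ (one has $\partial_1(x_1)$ a unit), and that transversality is precisely what makes the descent clean: the splitting Theorem \ref{thm:splittingFoliation} shows $\widehat{\cF}_{(x_1)} = \Span(\partial_{x_1},\cF_{|H})$, so the nonsaturated restriction $\cF_{|H}$ is a foliation compatible with the coefficient Rees algebra, and the induction applies directly. No "substitution of derivation systems" is needed here. Separately, your remark that Theorem \ref{Th:aligned center} is needed "so $\inv_{\cF',\pa'}(\cI')$ is computed in the same framework" is not correct: $\inv_{\cF'}$ is defined for an arbitrary foliation, and the drop theorem makes no log-smoothness or monomiality hypothesis, so preservation of those classes is irrelevant to the statement being proved. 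Finally, your treatment of the $\ord_{\cF,\pa}=\infty$ case (the "monomial layers") is too vague to assess: the paper handles it via the three-stage induction $\cF=\cD_X$, $\cF=\cD_X^{\log}$, $\cF\subset\cD_X^{\log}$ using $\sigma^c(\cF^\infty(\cR))=(\cF^c)^\infty(\sigma^c(\cR))$ and Lemma \ref{lem:FcanonicalInvForInvariantRees}, which is a rather different mechanism than a direct appeal to the monomial analysis of \cite{BelRACSAM,ATW-weighted}.
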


This Theorem is a consequence of Theorem \ref{prop:DecreaseInvariant}.

\subsubsection{Nested-regular Rectification Theorem and splitting along transverse sections.}\label{sssec:Foliation}
We must comment on a  technical point. Note that the discussion of the invariant we just performed was done via formal power series, but we claimed our invariant is semicontinous. In order to fill this gap we need to develop new methods in algebraic foliation theory, extending \cite[Theorem 30.1]{Matsumura-ringtheory}.

In the construction of $\cF$-aligned centers, captured in Equation \eqref{eq:FinvPresentationSimple} on page \pageref{eq:FinvPresentationSimple}, we must provide local coordinates $(x,y)$ with two qualitatively different meanings with respect to the foliation $\cF$, which is partially captured in equation \eqref{eq:FallignedPresentationSimple}. In particular, the $x$-coordinates are $\cF$-transverse, in the strong sense that $\partial_x$ belongs to the foliation. In order to build these coordinates, we are naturally led to use one of the most fundamental tools in analytic foliation theory:

\begin{theorem}[Rectification Theorem, see e.g. {\cite[Th. 1.18]{IY}}] \label{thm:FlowBox} Suppose that $X$ is a coherent analytic space. Let $\partial \in \cD_{X,\pa}$ and $x\in \cO_{X,\pa}$ be such that $\partial(x)$ is a unit. There exists an Euclidean coordinate system $(x,y) = (x, y_1,\ldots,y_{n-1})$ centered at $\pa$ such that $\partial = V(x,y) \partial_{x}\sim \partial_{x}$, where $V(x,y)$ is a unit.
\end{theorem}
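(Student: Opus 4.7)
The plan is to construct first integrals $y_1, \ldots, y_{n-1}$ of $\partial$ which, together with $x$, form an analytic coordinate system centered at $\pa$. Indeed, once such $y_i$ are found, writing
\[
\partial = A\, \partial_x + \sum_{i=1}^{n-1} B_i\, \partial_{y_i}
\]
in the new coordinates yields $B_i = \partial(y_i) = 0$ and $A = \partial(x) = V$, the latter being a unit by hypothesis. Thus the entire problem reduces to finding these first integrals.

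First I would extend $x$ to a coordinate system. Since $\partial(x)(\pa) \ne 0$, the differential $dx$ is nonzero at $\pa$, so $x - x(\pa) \in \m_{\pa} \setminus \m_{\pa}^2$ and extends to a local Euclidean coordinate system $(x, z_1, \ldots, z_{n-1})$ centered at $\pa$. In these coordinates
\[
\partial = V(x,z)\, \partial_x + \sum_{j=1}^{n-1} b_j(x,z)\, \partial_{z_j},
\]
where $V := \partial(x)$ is an analytic unit near the origin.

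Next I would construct the $y_i$ by solving the Cauchy problem
\[
\partial(y_i) = V\, \partial_x y_i + \sum_{j=1}^{n-1} b_j\, \partial_{z_j} y_i = 0, \qquad y_i|_{\{x=0\}} = z_i, \qquad i = 1, \ldots, n-1.
\]
Because $V$ does not vanish near $\pa$, this is a non-characteristic first-order linear analytic PDE with initial data on the hypersurface $H = \{x = 0\}$; equivalently, one may solve the analytic ODE system $dz_j/dx = b_j/V$ with initial data $z(0) = \zeta \in H$, and define $y_i(x,\zeta)$ to be the $i$-th component of the inverse-flow trajectory reaching $H$. The Cauchy--Kovalevskaya theorem, or equivalently existence and analytic dependence on initial data for analytic ODEs, produces unique convergent analytic solutions $y_i$ on a neighborhood of $\pa$. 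By construction $\partial_{z_j} y_i|_{x=0} = \delta_{ij}$, so the Jacobian of $(x, y_1, \ldots, y_{n-1})$ with respect to $(x, z_1, \ldots, z_{n-1})$ at $\pa$ is the identity; hence $(x, y)$ is indeed an analytic coordinate system near $\pa$, and the reduction in the first paragraph concludes the proof.

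The only substantive analytic ingredient is the solvability of the non-characteristic Cauchy problem above, which is classical in the coherent analytic setting under the smoothness hypothesis on $X$; the reference \cite{IY} supplies the details.
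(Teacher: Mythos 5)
Your proof is correct and follows the same classical approach the paper relies on: the theorem is delegated to \cite[Th.~1.18]{IY}, whose argument is sketched in Remark~\ref{rk:FormRegAnalytic} via the exponential/flow map $(\tilde x,\tilde y)=\exp(x\partial)(0,y)$, and your construction of the $y_i$ as first integrals along the characteristics is exactly the inverse of that flow map. One minor observation: appealing to the full Cauchy--Kovalevskaya theorem is heavier than necessary here --- the analytic ODE flow with analytic dependence on initial data (which you also mention, and which is what the paper's Theorem~\ref{thm:FRFlow} encodes algebraically through the Lie-series construction of the first integral) already suffices.
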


This result plays a fundamental role in this work. In general, nevertheless, it is not possible to obtain a rectification theorem over regular or \'{e}tale coordinate systems.

\begin{example} \label{EX:folNonFB}
Consider $\partial=\partial_x-y\partial_y $ with first integral  $y':=e^xy$.  Then $\partial=\partial_x$ with respect to the Euclidean coordinate system $(x,y')$. But there is no regular or \'{e}tale coordinate system $(x,\bar{y})$ such that $\partial = \partial_x$.
\end{example}

Note that a formal version of the rectification Theorem would not be enough in order to show the semi-continuity of the invariant $\inv_{\cF}$. We therefore prove a \emph{nested-regular rectification Theorem}, see Theorem \ref{thm:FRFlow}, where we work with intermediate local rings $\cO_{X,\pa} \subset \widehat{\cO}_{X,(x)} \subset \widehat{\cO}_{X,\pa}$, see Definition \ref{def:ForReg} and the remark that follows, which allows us to work algebraically on the subvariety $V(x)$. As the main consequence, we establish a \emph{local splitting of foliations along a transverse section}, see Theorem \ref{thm:splittingFoliation}. Later on, maximal contact hypersurfaces will provide these transverse sections. It will be necessary to understand the restriction of the foliation to them, as well as the lifting information from the restricted foliation to the original ambient space,  for the inductive scheme of resolution of singularities.

\subsubsection{An example}
To illustrate how the algorithm changes in the presence of a foliation, let $\cF$ be the foliation on $\AA^2=\Spec(K[x,y])$ generated by $\partial_x$, and  let $\cI=(x^5+y)$. Then the associated maximal center at the origin is the Rees algebra $\cA=\cO_X[xt^{1/5},yt]$ with the invariant $(5,\infty+1)$.
Note that $t$-gradation is given by the ideal $\cA_{1}=(x^5,y)$, while  the admissibility condition  can be stated  as inclusion of $t$-gradations as
\[
\cI=(x^5+y)\subset (x^5,y).
\]
On first glance such a condition heavily depends on the choice of the second coordinate $y$. For example for $y':=x^5+y$ we would simply have $\cI\subset (y')$ leading to smaller invariant $(0,\infty+1)$. However in an $\cF$-aligned center, we only allow $y'$ for which $\partial_x(y') \in (y')$.

\subsubsection{Preserving properties of foliations} The underlying result of Theorem \ref{thm:PrincipalizationFoliated}\eqref{It:non-singular} is the following:

\begin{theorem}[Aligned blow-ups preserve properties of foliations] \label{Th:aligned center}
Let $(X,\cF,E)$ be a smooth foliated logarithmic variety and consider an $\cF$-aligned weighted (or cobordant)  blowing up  $\pi: (X',\cF',E') \to (X,\cF,E)$. Assume $\cF$ is either logarithmically smooth or $\cK$-monomial. Then the controlled and strict transforms of $\cF$ coincide and are correspondingly logarithmically smooth or $\cK$-monomial. If $\cF$ is \Sm, \emph{and $\pi$ is the cobordant blow-up,} and $\cF'$ is taken in the split sense, then $\cF'$ is \Sm.
\end{theorem}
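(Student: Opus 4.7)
The plan is to reduce the statement to a local, explicit computation in coordinates adapted to the foliation. By the definition of an $\cF$-aligned center together with the local splitting result (Theorem \ref{thm:splittingFoliation}), around every point of $X$ one may choose a nested-regular coordinate system $(x_1,\ldots,x_k,y_1,\ldots,y_r,z_1,\ldots,z_s)$ so that the center admits a presentation ${J}=\cO_X[x_1 t^{1/a_1},\ldots,x_k t^{1/a_k},y_1 t^{1/b_1},\ldots,y_r t^{1/b_r}]^{\inte}$, the variables $y_j$ and $z_l$ are $\cF$-invariant, and $\cF$ has a local description $\spa(\partial_{x_1},\ldots,\partial_{x_k},\nabla_1,\ldots,\nabla_m)$ with each $\nabla_j$ independent of $x$ in the sense of Definition \ref{def:DerIndependentx}. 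The divisor $E$ fits compatibly into this picture.

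First I would write down the cobordant blow-up $B_{{J}+}\to X$ explicitly, introducing a weight-$1$ variable $s$ together with transformed variables $\tilde{x}_i,\tilde{y}_j$ of weights $-a_i,-b_j$, so that $x_i=s^{a_i}\tilde{x}_i$ and $y_j=s^{b_j}\tilde{y}_j$ (leaving the $z_l$ untouched), and then pull back the generators of $\cF$. The $\cF$-transverse derivations transform as $\partial_{x_i}=s^{-a_i}\partial_{\tilde{x}_i}$, while each $\nabla_j$, being independent of $x$, transforms by the chain rule applied only to the $y$-substitution into an expression involving $\partial_{\tilde{y}_j}$, $\partial_{z_l}$, and a rational multiple of $s\partial_s$. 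The control multiplier of the controlled transform, and the saturation giving the strict transform, are precisely those needed to convert each $s^{-a_i}\partial_{\tilde{x}_i}$ into the regular derivation $\partial_{\tilde{x}_i}$; for an $\cF$-aligned center these two prescriptions force the same renormalization, so the controlled and strict transforms agree and yield $\cF'=\spa(\partial_{\tilde{x}_1},\ldots,\partial_{\tilde{x}_k},\tilde\nabla_1,\ldots,\tilde\nabla_m)$ with $\tilde\nabla_j$ still independent of $\tilde{x}_i$. In particular the presentation of $\cF'$ is again nested-regular, which is the content of log-smoothness.

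Next I would verify the two preservation statements. For log-smoothness, the new divisor $E'=\pi^{-1}(E)\cup\{s=0\}$ is normal crossing in the $(s,\tilde{x},\tilde{y},z)$-coordinates, the $\partial_{\tilde{x}_i}$ are literal coordinate derivations, and the $\tilde\nabla_j$ are logarithmic along $\{s=0\}$ because only $s\partial_s$ and derivations in the other coordinates appear in their expression, so $\cF'\subset \cD_{X'}^{\log}$ with a nested-regular generating set, giving log-smoothness in the sense of Definition \ref{Def:foliations}(\ref{ItDef:LogSmooth}). For $\cK$-monomiality, any local $\cK$-monomial first integral $\prod x_i^{\alpha_i}\prod y_j^{\beta_j}$ with $\alpha_i,\beta_j\in\cK$ pulls back to $s^{\sum a_i\alpha_i+\sum b_j\beta_j}\prod \tilde{x}_i^{\alpha_i}\prod \tilde{y}_j^{\beta_j}$, whose exponents lie in $\cK$ because $\QQ\subset\cK$ contains the weights $a_i,b_j$; together with the presentation of $\cF'$ above, this produces $\cK$-monomial first integrals at every point of $X'$.

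The final clause, about the split transform of a smooth foliation under a cobordant blow-up, falls out of the same computation once we replace the logarithmic normalization by the split one: the split transform is generated by the pulled-back $\cF$ with the weights absorbing exactly the factors of $s$, producing $\{\partial_{\tilde{x}_1},\ldots,\partial_{\tilde{x}_k},\tilde\nabla_1,\ldots,\tilde\nabla_m\}$ as genuine (not merely logarithmic) derivations on $B_{{J}+}$, so smoothness of $\cF$ descends to smoothness of $\cF'$. The main obstacle I anticipate is the careful bookkeeping required to ensure that the three flavours of transform (controlled, strict, split) all land in the same module precisely when the center is $\cF$-aligned, and that the lifted derivations $\tilde\nabla_j$ remain nested-regular rather than merely formally so on the blow-up; this is where Proposition \ref{prop:CoefficientSplitting} and the foliated replacement Lemma \ref{lem:ReplacementFoliated} would do the heavy lifting, reducing the preservation claims to the algebraic manipulation sketched above.
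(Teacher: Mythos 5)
Your overall skeleton --- fix an $\cF$-aligned presentation, pull back the generators explicitly, and use the aligned structure to see that the controlled and strict transforms coincide --- matches the paper's route (Theorems \ref{Th:aligned center log-smooth} and \ref{Th:aligned center new}, via Lemma \ref{lem:LocalExpressionTransformDerivation} and Proposition \ref{prop:TransformFoliationAligned}). However, both preservation arguments have genuine gaps. For log-smoothness, you assert that the transformed presentation ``is again nested-regular, which is the content of log-smoothness.'' It is not: nested-regularity is a property of a coordinate system, whereas log-smoothness means $\cD^{\log}_{X'}/\cF'$ is locally free, i.e.\ the generators $\partial_{\tilde x_i},\tilde\nabla_j$ must stay linearly independent in $\cD^{\log}_{X'}\otimes k(\pb)$ at \emph{every} point $\pb$ of the exceptional fiber. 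A regular generating set inside $\cD^{\log}$ is far from sufficient (the saddle-node with $E=\emptyset$ in Example \ref{ex:LogSmooth}(2) has one and is not log-smooth). The paper's proof isolates exactly this: writing $\nabla_i=\sum A_{ij}\partial_{y_j}+\sum B_{ik}z_k\partial_{z_k}+\sum C_{ih}\partial_h$, condition (2) of Definition \ref{def:Faligned} forces $A_{ij}(\pa)=0$, so $[B_{ik},C_{ih}](\pa)$ has rank $\rank(\cF)-l$, and this matrix is constant along the fiber after pullback; your proposal never checks any rank condition. Relatedly, regularity of $\sigma^{\ast}(\nabla_j)$ along $s=0$ is not a consequence of independence of $x$ alone --- it uses the $\cF$-invariance of the $(y,z)$-part of the center --- and on the cobordant blow-up no $s\partial_s$ term can occur, since the transforms land in $\cD^{\log}_{B/\AA^1}$; the $u\partial_u$ terms you describe appear only on \'etale charts of the stack-theoretic blow-up.

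For $\cK$-monomiality there are two further gaps. Pulling back a first integral $\prod x_i^{\alpha_i}\prod y_j^{\beta_j}$ presupposes that the monomial coordinates of $\cF$ and the coordinates presenting the $\cF$-aligned center can be taken to be the same system; this is not automatic and is precisely Lemma \ref{lem:PresentationMonomialAligned} (resting on the homogeneous-generator Lemma \ref{lem:homogenousGenerators}). More seriously, your conclusion ``at every point of $X'$'' is unjustified: the pulled-back monomials give a monomial presentation only at the origin of each chart, and at a point of the exceptional divisor where some transformed divisorial coordinate becomes a unit one must re-establish monomiality by a further (in general only formal) change of coordinates of the form $w_j\mapsto\tilde w_j(\tilde x+b)^{-a_{1,j}/a_{1,n-p}}$ together with taking linear combinations of the $\tilde\nabla_j$; this translation argument is where most of the work in the proof of Theorem \ref{Th:aligned center new} lies. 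Finally, Proposition \ref{prop:CoefficientSplitting} and Lemma \ref{lem:ReplacementFoliated} concern coefficient algebras and uniqueness of centers and do not enter this proof; the results that actually carry it are Proposition \ref{prop:TransformFoliationAligned}, Lemma \ref{lem:LocalPresentationLogSmooth} and Lemma \ref{lem:PresentationMonomialAligned}.
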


The  result for $\mathcal{K}$-monomial foliations and smooth blow-ups was proven in the complex analytic category in \cite[Prop. 4.4]{BelRACSAM}.

\subsection{Algebraic and analytic categories}\label{sec:IntroAlgebraicAnalytic}
Our main results are stated in both the algebraic and complex analytic categories. A great deal of the foundation theory for these categories are very similar. But there are some notable exceptions associated to considerations about topology and morphisms. It will be, therefore, convenient to chose one of the two categories and use their terminology consistently --- and our choice is algebraic geometry. It is, nevertheless, essential to establish a dictionary for the complex analytic category, which we now present. 

By a variety $X$, we mean either a reduced scheme of finite type over a field $K$
 of characteristic zero, or a coherent complex analytic space, where $K = \CC$. We will denote by $\cO_X$ the corresponding sheaf of functions. Note that several of the basic notions for $X$, such as being smooth, reduced, irreducible, normal, a subvariety (a complex  subspace), etc, are all classically defined in complex geometry, see eg. \cite{Grauert-Remmert-Coherent}. The notation $\GG_m$ will stand for $\CC^*$ in the complex case.
Faithfully flat descent is available in the complex analytic theory, see \cite{Kiehl}.

\begin{definition}[Regularity of functions]\label{def:RegularityConvention}
Let $\cO_{X,\pa}$ be a local ring of a point $\pa$ on an algebraic variety $X$. A function $f\in \widehat{\cO}_{X,\pa}$ is  {\it formal, regular or \'etale} at $\pa$ if it is respectively in $\widehat{\cO}_{X,\pa}$, ${\cO}_{X,\pa}$ or in the henselianization ${\cO}_{X,\pa}^h$.

Let $\cO_{X,\pa}$ be a local ring of a point $\pa$ on a coherent analytic space $X$. A function $f\in \widehat{\cO}_{X,\pa}$ is  {\it formal, or Euclidean} at $\pa$ if it is respectively in $\widehat{\cO}_{X,\pa}$, or in $\cO_{X,\pa}$.
\end{definition}

It is convenient to make a consistent distinction between the Euclidean and analytic-Zariski topology. By an open set $U\subset X$, we mean an open set in the Euclidean topology. By a Zariski-open set $U \subset X$, we mean that $X \setminus U$ is a closed subvariety of $X$ --- of course, in the algebraic category, every open set is a Zariski-open set. We extend this convention to the notion of ``generic".

Consider now a morphism $\varphi : X \to Y$ between analytic spaces. Given $\pa \in X$, we denote by $\varphi^{\ast}_{\pa} :\cO_{Y,\varphi(\pa)} \to  \cO_{X,\pa}$ the induced morphism of local rings. We say that $\varphi$ is \'{e}tale  (resp. smooth), if it is everywhere a local isomorphism (resp. local submersion). By a finite morphism $\varphi$, we mean that $\varphi$ is continuous and closed (with respect to the Euclidean topology) and has finite fibers (this terminology is standard in complex geometry, see \cite[page 47]{Grauert-Remmert-Coherent}). The word ``generic" in these contexts (eg. generically smooth or generically finite) are always going to be in reference to the analytic-Zariski topology.

More generally, recall that a mapping $\varphi: X \dashrightarrow Y$ is meromorphic, following Remmert \cite[Definition 16]{Remmert-Meromorphic},
 if there exists a Zarsiki-open set $U \subset X$ such that: (i) $\varphi_{|U}: U \to Y$ is analytic; and (ii) the closure $\overline{\Gamma} \subset X \times Y$ of the graph $\Gamma = \Gamma_{\varphi_{|U}}$ is an analytic subset of $X\times Y$ which projects properly to $X$. A bimeromophic map $\varphi: X \dashrightarrow Y$ is then a meromorphic map whose inverse is also meromorphic. Finally, we will sometimes say that $\varphi$ is birational instead of bimeromorphic to unify terminology --- we will insist in the word bimeromophic in definitions and in the statements of our main results, in order to avoid confusion, but we won't make the distinction in proofs. 

When defining the blow-up in the complex analytic category, it is usual to do it via a very explicit elementary construction --- in essence, the projection of an almost everywhere local graph. We will prefer to use  adapted versions of Spec and Proj instead, called $\operatorname{Specan}$ and $\operatorname{Projan}$ in \cite[Definitions 1.5.2 and 1.7.1]{AHV-analytic}. To unify the discussion, the notation $\Spec$ and $\Proj$ in the complex analytic category will mean $\operatorname{Specan}$ and $\operatorname{Projan}$. A broader context is provided in Hakim's book \cite{Hakim}: one can define a relative scheme $\Spec_Y(A)$ and $\Proj_Y(A)$ for a graded algebra finitely generated by coherent sheaves over an arbitrary locally ringed space $Y$, in particular a coherent analytic space, and then analytify the result.

\begin{remark}[On more general analytic spaces]\label{rk:MissingAnalyticCategories}
It should be possible to extend the results to the real analytic and $\mathbb{K}$-analytic categories, where $\mathbb{K}$ is an algebraic closed complete valued field of characteristic zero. It should likewise be possible to work in the generality of regular quasi-excellent schemes {\em with enough derivations} in the sense of \cite[1.3.2(iii)]{Temkin-qe}, including the category of formal varieties.\footnote{As far as we know, \cite{ATW-relative} is the only paper so far in which the approach of working with arbitrary schemes with enough derivations was implemented.}

 Addressing this requires work orthogonal to this paper, as foliations on formal schemes behave differently from algebraic foliations,\footnote{A warning sign is Equation \eqref{eq:Euler} on page \pageref{eq:Euler}.} and the alternative formulation of manifolds with corners (\cite{Pan}, see Remark \ref{rk:OnManifoldCorners}) belong to a different subject altogether. We have postponed these questions to future work.

\end{remark}

\subsection{Structure of the paper} In Section~\ref{sec:Foliation} we provide the definition of foliated logarithmic varieties. In Section \ref{Sec:rectification} we prove the two algebraic foliation theory results discussed in Section~\ref{sssec:Foliation}. In Section~\ref{sec:MainObjects}, we introduce the main objects of the resolution of singularities algorithm: Rees algebras $\cR$, $\cF$-order, $\cF$-aligned centers, $(\cR,\cF)$-admissible centers, coefficient Rees algebras, maximal contact and the foliated replacement Lemma. In Section~\ref{sec:algorithm} we establish the algorithm to define the invariant and choosing the $\cF$-aligned center. Up to this point of the paper, we do not need blow-ups, and we can work over smooth varieties or  analytic manifolds only, see Section~\ref{sssec:Orbifolds}. In Section~\ref{sec:BlowUp} we define cobordant and stack-theoretic weighted blow-ups and we prove that the invariant $\inv_{\cF}$ drops after blowing up the $\inv_{\cF}$-maximal center. In Section \ref{Sec:nice-foliations} we describe $\cK$-monomial and logarithmically smooth foliations, show that they are preserved under aligned blow-ups, and provide a logarithmic resolution of $\cK$-monomial foliations, Theorem \ref{thm:LogSmoothnessMonomial}. Finally, in Section~\ref{sec:Final} we collect the proofs of the main results of the paper. Appendix~\ref{App:transforms} collects constructions and results on pullbacks of saturated and non-saturated foliations, as well as \'etale charts for foliated weighted blow-ups.

\subsection*{Acknowledgements} Dan Abramovich thanks IHES and the Hebrew University  for their hospitality during 2024-2025. We thank CIRM and BNF for hospitality at meetings where final touches were made. We thank the participants of the CIRM workshop \emph{Foliations, birational geometry and applications}, 3 – 7 February, 2025, where many stimulating discussions occurred. Thanks to St\'ephane Druel and Jorge Vot\'orio Pereira for help with references, and to Paolo Cascini, Pereira, and Calum Spicer for suggestions related to Sections \ref{sec:IntroResTotallyIntegrable} and \ref{sec:IntroResTransverseLocus}.

\section{Singular Foliations}\label{sec:Foliation}

\subsection{Logarithmic varieties}\label{ssec:Log}
Let $X$ be either a smooth irreducible variety over a field $K$ of characteristic $0$, or a coherent complex analytic space,
and let $E$ be a simple normal crossing (SNC) divisor on $X$. For convenience of discussion we also assume $X$ irreducible. We  shall call the pair $(X,E)$ a {\it logarithmic variety}, which we take to include both the algebraic and analytic settings. A morphism between two logarithmic varieties $\phi:(X,E) \to (Y,F)$ is a morphism between $X$ and $Y$ such that the reduced divisor associated to $\phi^{-1}(F)$ is {contained in} $E$ --- this is consistent with the notion of morphisms of logarithmic structures in the sense of Kato--Fontaine--Illusie \cite{Kato-log}. A partial coordinate system $x_1,\ldots,x_n$ on an open subset $U$ of $X$ {\it is adapted to  $E$} if the restriction $E_{i|U}$ of any component  $E_i$ in $E$ intersecting $U$ is of the form $V(x_i)$. If $\pa\in V(x_i)$, where $V(x_i)$ determines a component in $E_{|U}$, then we say that $x_i$ is {\it divisorial} at $\pa$. Otherwise, we say that $x_i$ is {\it free} at $\pa$.

\subsection{Foliated logarithmic varieties}\label{ssec:2FoliatedLog}
Now, consider the coherent sheaf of $K$-derivations $\cD_{X}=\cD_{X/K}$, and its subsheaf of logarithmic derivations $\cD^{\log}_{X}= \cD_{X}(-\log E)$: given a local coordinate system $x_1,\ldots,x_n$ adapted to $E$, the sheaf  $\cD^{\log}_{X}$ is generated by the operators $\partial_{x_i}$ associated with  free coordinates,  along with $x_j\partial_{x_j}$ associated with divisorial coordinates.  

\begin{definition}[Distributions and foliations] \label{Def:foliations}
\hfill
\begin{enumerate} 
\item By a {\it distribution} (respectively a {\it logarithmic distribution}) $\Delta$ on $(X,E)$ we mean any coherent subsheaf of $\cD_{X}$
 (respectively $\cD^{\log}_{X}$).  
 \item We say that $\Delta$ is a foliation, and we denote it by ${\mathcal F}$, if it is involutive,  that is closed
 under Lie bracket, so for any $\partial^1,\,\partial^2\in \cF(U)$ on open subset $U$ the Lie bracket
 \[
 [\partial^1,\partial^2]=\partial^1\partial^2-\partial^2\partial^1\in \cF(U).
 \]

\item
The {\it rank} of the foliation ${\mathcal F}$ is its rank as a sheaf. 
\item\label{ItDef:LogSmooth} The {\it singular} (respectively {\it logarithmically singular}) set of ${\mathcal F}$, which we denote by $\mbox{Sing}({\mathcal F})$, respectively $\mbox{Sing}^{\log}({\mathcal F})$, is the set of points $\pa \in X$ where the quotient $\cD_{X,\pa}/ {\mathcal F}_{\pa}$ (respectively $\cD_{X,\pa}^{\log} / {\mathcal F}_{\pa}$) is not locally free; otherwise we say that $\cF$ is {\it (logarithmically) {\Sm}  at $\pa$}. The set of {\Sm} points of a foliation $\cF$ on  a {\Sm} variety $X$ will be denoted by $(X,\cF)^{\nsing}=X\setminus  \mbox{Sing}({\mathcal F})$.

\item  If $\cD_{X,\pa}/ {\mathcal F}_{\pa}$ (respectively, $\cD_{X,\pa}^{\log}/ {\mathcal F}_{\pa}$) is  torsion-free, then we say that $\cF$ is saturated (respectively, logarithmically saturated) at $\pa$.
\end{enumerate}
\end{definition}

Note that the singular set is a proper  subvariety; if ${\mathcal F}$ is saturated then $\mbox{Sing}({\mathcal F})$ is of codimension $\geq 2$, and similarly for the logarithmic notion.

When $\cF$ is {\Sm} at $\pa$  the exact sequence $$0\to \cF_\pa\to \cD_{X,\pa}\to \cD_{X,\pa}/ {\mathcal F}_{\pa}\to 0$$ splits and both $\cF_\pa$ and  $\cD_{X,\pa}/ {\mathcal F}_{\pa}$ are free $\cO_{X,\pa}$-modules.

\begin{remark}[On the definition of foliations]\label{rk:LiteratureDefFoliations}
In the birational geometry literature, going back at least to the foundation work of Baum and Bott \cite{BaumBott}, it is common to define a foliation as being everywhere saturated. This choice is not usually imposed in references in differential geometry, see eg. \cite{Lavau}, and we do not need this condition for our considerations. In fact, it will be convenient to keep the definition of foliation more flexible in order to introduce the notion of \emph{controlled transform} of a foliation, see Definition \ref{def:TransformFoliation}.
\end{remark}

\begin{definition}[Foliated logarithmic variety]\label{def:LogFoliatedVariety}
We call the triple $(X,\cF,E)$, where $\cF \subset \cD^{\log}_X$ is a foliation, a \emph{foliated logarithmic variety}.
\end{definition}

\subsubsection{Functoriality  of foliations}

{In Appendix~\ref{App:transforms} we collect several constructions and results on the transformation of saturated and nonsaturated foliations by various types of morphisms.} With this, we are ready to define several notions of functoriality, used in the main theorems, via the non-saturated pull-back:

\begin{definition}[Smooth morphism of foliated logarithmic variety]\label{def:MorphismLogFoliatedVariety}
We say that  $\phi: (X,\cF,E) \to (Y,\cG,F)$ is a smooth morphism between foliated logarithmic varieties if $\phi$ is a smooth morphism of logarithmic varieties such that $\cF = \dispull\phi(\cG)$, see Definition \ref{Def:fol-pullback}.
\end{definition}

\begin{definition}[G-invariant foliation]\label{def:GsemiinvariantFol}
Let $\phi: X\to X$ be an authomorphism. We say that $\cF$ is \emph{$\phi$-invariant} if $\cF=\dispull{\phi}(\cF)$, again Definition \ref{Def:fol-pullback}. More generally, given a group of authomorphisms $G \subset \mbox{Aut}(X)$, we say that $\cF$ is \emph{$G$-invariant} if $\cF$ is $\phi$-invariant for all elements $\phi \in G$. 

A morphism  of foliated manifolds $\psi: (Y,\cF_Y) \to (X,\cF_X)$ is \emph{$G$-equivariant} if it is equivariant and the foliations are $G$ invariant.
\end{definition}

A useful infinitesimal version is the following:

\begin{definition}[$\delta$-invariant foliation]\label{def:deltainvariantFol}
Let $\delta\in \cD^{\log}_X$ be a derivation. We say that $\cF$ is \emph{$\delta$-invariant} if $$[\cF,\delta] :=\{[\partial,\delta]: \partial \in \cF\}\quad \subseteq \quad \cF.$$ 
A morphism  of foliated manifolds $\psi: (Y,\cF_Y) \to (X,\cF_X)$ is \emph{$\delta$-equivariant} if $\delta$ lifts to $Y$ and the foliations are $\delta$-invariant.
\end{definition}

We will also need one more type of functoriality --- compatibility with field extensions (see also Section \ref{Sec:relative-pullback}). 

\begin{definition}[Ground field extension]\label{def:fieldextension}
Assume that $(X,\cF,E)$ is a foliated $K$-variety and $L/K$ is a field extension. Then by $(X,\cF,E)_L$ we denote the foliated $L$-variety consisting of $X_L=X\otimes_XL$, $E_L=E\otimes_KL$ and $\cF_L=\cF\otimes_KL$, where the latter is the pullback under the morphism of schemes $X_L\to X$, which is defined similarly to the pullback under smooth morphisms.
\end{definition}
 In particular all transforms along $X_L \to X$ coincide. In fact any $K$-derivation on $X$ uniquely extends to an $L$-derivation on $X_L$, so the simplest description of $\cF_L$ is as follows: one extends the derivations of $\cF$ to $L$-derivations and takes the $L$-linear span.


\section{Rectification and splitting of foliations}\label{Sec:rectification}
\subsection{Nested-regular rectification theorem}
In this section we provide an algebraic version of the Rectification Theorem for general foliations, see Theorem \ref{thm:FRFlow} below, which allows us to define regular blow-up centers later on. 

There is a tension we address here: foliations are by nature a transcendental notion, and it is natural to use formal coordinates, but in order for centers to be regular,  the coordinates must satisfy certain coherency properties and they cannot be purely formal. To address this point, we consider an intermediate class of formal coordinates. Recall that given a noetherian ring $R$ and an ideal $I$ of $R$, we may associate to it the $I$-adic completion $\widehat{R}_I$, given by:
\[
\widehat{R}_I:=\lim_{\leftarrow} R/I^n,
\]
which is itself a noetherian ring. More specifically, for any ideal $I \subset{\cO}_{X,\pa}$ denote by $\widehat{\cO}_{X,I}$ the completion of $\cO_{X,\pa}$ at $I$. In what follows, we work with completions with respect to ideals which admit a simple \emph{formal presentation}, so it is convenient to follow the following convention: we say that a formal ideal $I \subset{\widehat{\cO}}_{X,\pa}$ is \emph{algebraizable} if the ideal $I':=I  \cap \cO_{X,\pa} \subset{\cO}_{X,\pa}$ is such that $I'\cdot \widehat{\cO}_{X,\pa} = I$; in this case, we establish the notation $\widehat{\cO}_{X,I} := \widehat{\cO}_{X,I'}$. We are ready to define:

\begin{definition}[Nested-regular coordinates]\label{def:ForReg}
A partial local coordinate system
\[
(x_1,\dots x_k)\in \widehat{\cO}_{X,\pa}
\]
at a point $\pa$ will be called {\it nested-regular} if $x_1\in \cO_{X,\pa}$ and for $i=1,\ldots, k$ each formal ideal $I_i=(x_1,\ldots,x_i)$ is algebraizable and $x_{i+1}\in \widehat{\cO}_{X,I_i}$.
\end{definition}

\begin{warning}[Notation $\widehat{\cO}_{X,I} = \widehat{\cO}_{X,(x)}$] Given a partial nested-regular coordinate system $(x_1,\ldots,x_k)$ we will denote by $\widehat{\cO}_{X,(x)}$ the ring $\widehat{\cO}_{X,I}$, where $I = (x_1,\ldots,x_k)$ is algebraizable. 
\end{warning}

\begin{definition}[Lifting of nested-regular coordinates]\label{def:Lifting}
Given a partial nested-regular coordinate system
\(
(x_1,\dots x_k)\in \widehat{\cO}_{X,\pa}
\)
at a point $\pa$, set $H := V(x_1,\ldots,x_k)$. We say that an injective homomorphism:
\[
\ell: \cO_{H,\pa}=\widehat\cO_{X,\pa}/(x) \rightarrow \widehat{\cO}_{X,(x)},
\]
is a \emph{lifting} if it is a right inverse of the natural surjection $\widehat{\cO}_{X,(x)}\to \cO_{H,\pa}$.
\end{definition}

More specifically, we work with a special type of lifting:

\begin{definition}[Lifting associated to $(x,\partial)$]\label{def:LiftingAssociatedToD}
Let $x \in \cO_{X,\pa}$ be a coordinate and $\partial$ be a derivation such that $\partial(x) \in \cO_{X,\pa}^{\times }$. Set $H=V(x)$. We say that a lifting $\ell:\cO_{H,\pa} \to \cO_{X,(x)}$ is \emph{associated to $(x,\partial)$} if:
\[
\ell(\cO_{H,\pa}) = \cO^{\partial} := \{f\in \widehat{\cO}_{X,(x)};\, \partial(f)\equiv 0\} \subset \widehat{\cO}_{X,(x)}.
\]
In this case, we may denote $\ell$ by $\ell_{x,\partial}$.
\end{definition}

We are ready to state the main result of this section, an extension of \cite[Theorem 30.1]{Matsumura-ringtheory}:

\begin{theorem}[Nested-regular rectification] \label{thm:FRFlow} 
Let $(X,E)$ be a logarithmic variety of dimension $n$ and $\pa \in X$ be a point. Let $\partial \in \cD^{\log}_{X,\pa}$ be a logarithmic derivation on $\cO_{X,\pa}$ and let $x\in \cO_{X,\pa}$ be a germ such that $\partial(x)$ is a unit. Set $H:=V(x)$. 

\begin{enumerate}
\item There exists a nested-regular coordinate system $(x,\widehat{y})=(x,\widehat{y}_1,\ldots,\widehat{y}_{n-1}) \in \widehat{\cO}_{X,(x)}$ adapted to $E$ such that:
\[
\partial = u \, \partial_{x}\sim \partial_{x}, \quad \text{is a derivation over }\widehat{\cO}_{X,(x)},
\]
where $u \in \widehat{\cO}_{X,(x)}^{\times}$ and the elements $y_i=\widehat{y}_{i|H}\in \cO_{H,\pa}$ are regular (in particular, all the formal ideals  $(x,\widehat{y}_i)$ are algebraizable). Moreover, there exists a lifting $\ell_{x,\partial}: \cO_{H,\pa} \to \widehat{\cO}_{X,(x)}$ associated to $(x,\partial)$ given by $\ell_{x,\partial}(y_i) =\widehat{y}_i$.

\item The statement is compatible with smooth morphism and field extension: given $(X,E, x)$ which pulls back to $(X',E', x')$, and a lift $\partial'$ of $\partial$, the pullback  of $(x,\widehat y)$ can be completed to a coordinate system satisfying the theorem for   $(X',E'), x'$ and $\partial'$.
\end{enumerate}
\end{theorem}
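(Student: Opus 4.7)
My plan is to build $\widehat{y}_1,\dots,\widehat{y}_{n-1}$ as formal first integrals of $\partial$ that restrict to a regular coordinate system on $H$, and to take $\ell_{x,\partial}$ as the ring map assigning to each $y_i$ its unique first integral. In characteristic zero the equation $\partial(f)=0$ is a first-order ODE in $x$ that can be solved by a coefficient-by-coefficient recursion in $\widehat{\cO}_{X,(x)}$, so the construction is essentially the formal existence and uniqueness theorem for ODEs combined with some care about the logarithmic structure.

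First I would fix regular local coordinates. Because $\partial\in\cD^{\log}_{X,\pa}$ sends every divisorial regular parameter into its own ideal, the unit condition on $\partial(x)$ prevents $x$ from lying in the ideal of any component of $E$ through $\pa$, and a short linear-algebra argument in $\m/\m^2$ lets us complete $x$ to a regular coordinate system $(x,z_1,\dots,z_{n-1})$ adapted to $E$ with $x$ free. In these coordinates $\widehat{\cO}_{X,(x)}\cong\cO_{H,\pa}[[x]]$ and $\partial=u\,\partial_x+\sum_j g_j\,\partial_{z_j}$, with $u=\partial(x)$ a unit and, whenever $z_j$ is divisorial, $g_j=\partial(z_j)\in(z_j)$.

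The lifting is then constructed by formal integration. Given $s\in\cO_{H,\pa}$, I would seek $\widehat{s}=\sum_{k\geq 0}a_k\,x^k$ with $a_0=s$ and $\partial(\widehat{s})=0$. Extracting the coefficient of $x^k$ from this equation yields a linear equation of the form $u_0(k+1)\,a_{k+1}=\Phi_k(a_0,\dots,a_k)$, where $u_0=u|_H\in\cO_{H,\pa}^{\times}$ and $\Phi_k$ is polynomial in the earlier $a_l$'s and their $z$-derivatives. Since $u_0$ is a unit and the characteristic is zero, this uniquely determines $a_{k+1}\in\cO_{H,\pa}$, so $\ell_{x,\partial}(s):=\widehat{s}$ is well-defined with image landing in $\cO^{\partial}:=\ker(\partial)$. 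The map is a ring homomorphism by uniqueness: both $\ell(s_1)\ell(s_2)$ and $\ell(s_1s_2)$ are first integrals restricting to $s_1 s_2$ on $H$, so they coincide, and analogously for sums.

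I would then set $\widehat{y}_j:=\ell_{x,\partial}(z_j|_H)$. For free $z_j$ one has $\widehat{y}_j\equiv z_j\pmod{x}$, so $(x,\widehat{y}_j)=(x,z_j)$ in $\widehat{\cO}_{X,(x)}$ is algebraizable. For divisorial $z_j$, the inclusion $g_j\in(z_j)$ makes $\partial$ descend to a derivation $\bar{\partial}$ on $\widehat{\cO}_{X,(x)}/(z_j)$; uniqueness of the analogous lifting associated to $(x,\bar{\partial})$ forces the image of $\widehat{y}_j$ in this quotient to be zero, so $\widehat{y}_j=z_j V_j$ for some $V_j$ with $V_j|_H=1$, hence a unit in the local complete ring $\widehat{\cO}_{X,(x)}$. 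Thus $(x,\widehat{y}_j)=(x,z_j)$ and $\widehat{y}_j$ defines the same component of $E$ as $z_j$; induction on $j$ gives $(x,\widehat{y}_1,\dots,\widehat{y}_i)=(x,z_1,\dots,z_i)$ for every $i$, so the system is nested-regular and adapted to $E$. The identity $\partial=u\,\partial_x$ in the new frame is then immediate from $\partial(\widehat{y}_j)=0$ and $\partial(x)=u$. Functoriality in (2) follows from the universal characterization of $\ell_{x,\partial}$ as the unique ring section of the reduction $\widehat{\cO}_{X,(x)}\to\cO_{H,\pa}$ landing in $\cO^{\partial}$, which is manifestly compatible with smooth pullback and with field extension. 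The main obstacle I anticipate is precisely the divisorial case: the naive first integral $\ell(z_j|_H)$ only restricts correctly, and extra care through the $\partial$-invariance of $(z_j)$ and the descent-to-quotient trick is what forces $\widehat{y}_j$ to define the same component of $E$ and thereby preserve the SNC structure.
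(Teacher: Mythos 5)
Your proof is correct, but it follows a genuinely different route from the paper. Where the paper writes down the explicit truncated Taylor-flow series $y_d := y_0 - \sum_{j=1}^d \frac{x^j}{j!}\partial^j(y_0)$ (after normalizing $\partial(x)=1$), checks by a one-line telescoping computation that $\partial(y_d)\in(x^d)$, and handles the divisorial case via the observation that $\partial^j(y_0)\in (y_0)$ for all $j$ (immediate by induction from $\partial(y_0)\in(y_0)$), you instead solve the equation $\partial(\widehat s)=0$ coefficient by coefficient in $\widehat{\cO}_{X,(x)}\cong \cO_{H,\pa}[[x]]$, invoke uniqueness to get the ring-homomorphism property, and treat the divisorial case by passing to the quotient $\widehat{\cO}_{X,(x)}/(z_j)$ and applying uniqueness again. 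The two constructions produce the same $\widehat y_j$ (both are first integrals of $\partial$ in $\widehat{\cO}_{X,(x)}$ restricting to $z_j|_H$ on $H$, and your uniqueness argument pins this down), and both yield the nested-regularity and functoriality needed, so neither has a gap. What each buys: the paper's closed-form $y_d$ lives in $\cO_{X,\pa}$, makes the functoriality of the whole construction manifest (everything is built from $y_0,x,\partial$ by universal operations), and disposes of the divisorial case in one line. Your ODE-style recursion is arguably closer to how one thinks about the classical rectification theorem and characterizes $\ell_{x,\partial}$ abstractly as the unique ring section landing in $\ker\partial$, which is a cleaner input to the functoriality argument in Part (2). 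Two small points worth spelling out in a final write-up: the identification $\widehat{\cO}_{X,(x)}\cong\cO_{H,\pa}[[x]]$ requires a coordinate-induced section $\cO_{H,\pa}\to\cO_{X,\pa}$, which does exist here precisely because $X$ is smooth with the coordinate system $(x,z)$, but it is a choice you are implicitly making; and for the divisorial case you should note that $\widehat{\cO}_{X,(x)}/(z_j)$ is indeed the $(x)$-adic completion of $\cO_{V(z_j),\pa}$, so that the uniqueness you invoke is uniqueness for the same statement one dimension down.
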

\begin{proof}
We can assume without loss of generality that $\partial(x)\equiv 1$ by rescaling $(1/\partial(x))\partial$. Let $y\in \cO_{X,\pa}$.  

We first claim that there exists  a sequence $(y_d)_{d\in \mathbb{N}}$, functorial for smooth morphisms and field extensions, where $y_0 = y$ and $y_d\in \cO_{X,\pa}$ for all $d$, such that:

(a) $y_{c}-y_d\in (x^{c+1})$  for $c\leq d$, \quad  (b) $\partial(y_d)\in (x^d)$,
\quad 
(c) If $y$ is divisorial so is $y_d$.

In particular, the limit of the sequence $(y_d)$ defines a unique element $\widehat{y} \in \widehat{\cO}_{X,(x)}$ such that $\partial(\widehat{y}) \equiv 0$. 

Note that the Theorem follows from the claim. Indeed, consider any coordinate system $(x,y_1,\ldots,y_{n-1})$ at $\pa\in X$ adapted to $E$. 
Associate to each element $y_i$ the formal limit $\widehat{y}_i$ from the claim. 

We now turn to the proof of the claim. First, put $y_0:=y$ and set
\[
y_d:= y_0 - \sum_{j=1}^d\frac{x^j}{j!} \partial^j(y_{0}).
\]
To prove (a), note that:
\[
y_c- y_d = \sum_{j=c+1}^d\frac{x^j}{j!} \partial^j(y_{0}) \in (x^{c+1}).
\]
Property (b) follows by noticing that:
\[
\partial(y_d) = \partial(y_0) - \sum_{j=0}^{d-1}\frac{x^{j}}{j!} \partial^{j+1}(y_{0}) - \sum_{j=1}^d\frac{x^{j}}{j!} \partial^{j+1}(y_{0}) = \frac{x^d}{d!} \partial^{d+1}(y_0) \in (x^d).
\]
For Property (c), if $y$ is divisorial, $\partial(y_0) \in (y_0)$, and hence $y_d \in (y_0)$, allowing us to conclude that it is also divisorial.
\end{proof}

\begin{example} \label{EX:folNonFB2} In Example \ref{EX:folNonFB} consider $\widehat{y}:=e^xy \in \widehat{\cO}_{X,(x)}$, which is such that $\partial(\widehat{y})\equiv 0$. Then the ideal
\(
(x,\widehat{y})=(x,e^xy)=(x,y)
\)
is algebraizable, implying that $(x,\widehat{y})$ is a nested-regular system of parameters. In particular, note that
\(
\widehat{y}_{|V(x)}=e^xy_{{|V(x)}}=y_{{|V(x)}} \in \cO_{V(x)}
\)
is also regular.
\end{example}

\begin{example}\label{ex:DifferentLiftings}
Note that different derivations provide different lifting. Indeed, let $(x,y)$ be coordinates of $\cO_{\mathbb{C}^2,0}$ and set $H=V(x)$. Note that $\cO_{H,0}$ is generated by $\bar{y} = y_{|H}$. Consider the two derivations $\partial = \partial_{x}$ and $\partial' = \partial_{x} + \partial_{y}$, which satisfy $\partial(x)=\partial'(x) =1$. Denoting by $\ell=\ell_{x,\partial}$ and $\ell'=\ell_{x,\partial'}$ we get:
\[
\ell(\bar{y}) = y, \quad \ell'(\bar{y}) = y - x
\]
providing two different lifting from $\cO_{H,0}$ to $\widehat{\cO}_{\mathbb{C}^2,(x)}$.
\end{example}

\begin{remark}[Euclidean coordinates]\label{rk:FormRegAnalytic} If $X$ is a complex coherent analytic space, then the nested-regular rectification Theorem \ref{thm:FRFlow} holds true with Euclidean coordinates instead of nested-regular coordinates. In fact, the usual proof of the rectification Theorem \ref{thm:FlowBox} given in \cite{IY}, for instance, already satisfies the extra conditions given in Theorem \ref{thm:FRFlow}. To see this, note that the change of coordinates there boils down to:
\[
(\tilde{x},\tilde{y}_1,\ldots,\tilde{y}_{n-1}) = \exp(x\partial)(0,y_1,\ldots,y_{n-1}) = \varphi(x,y),
\]
which is Euclidean. Now, the Taylor expansion of $\varphi$ is (e.g. \cite[Chapter 5.3]{SagleWalde}):
\[
f(\varphi(x,y)) = \sum_{j=1}^{\infty} \frac{x^j}{j!}\partial^j(f), \quad \forall f\in \cO_{X,\pa},
\]
and we can verify all functoriality conditions just as in the proof of the Theorem \ref{thm:FRFlow}.
\end{remark}

\subsection{Local splitting of foliations along transverse sections}\label{sec:Fol:LocalSplittingFoliations}

In this section, we establish a local splitting of foliations along a transverse hypersurface. The result was previously established in \cite[Prop. 8.6.6]{Belthesis} in the analytic category (but without the functorial statements), and its proof follows a well-known foliation theory argument, which we learned from \cite[Th. 3.1]{MY04}:

\begin{theorem}[Local splitting along transverse hypersurfaces]\label{thm:splittingFoliation} 
Consider a foliated logarithmic variety $(X,\mathcal{F},E)$. Let $x\in \cO_{X,\pa}$ be a coordinate and $\partial \in \cF_{\pa}$ be a derivation such that $\partial(x)$ is invertible. Set $\widehat{\cF}_{(x)}:=\widehat{\cO}_{X,(x)}\cdot \cF$ and $H=V(x)$. 
\begin{enumerate}
\item  The foliation $\widehat{\cF}_{(x)}$  {\it splits} at $H$, that is, there exists a nested-regular coordinate system $(x,y)\in \widehat{\cO}_{X,(x)}$ such that $\partial = u \partial_x$ with $u\in \widehat{\cO}_{X,(x)}^{\times}$, and a lifting $\ell_{x,\partial_x}: \cO_{H,\pa} \to \widehat{\cO}_{X,(x)}$ associated to $(x,\partial_x)$ which provides an embedding of the restricted foliation $\cF_{|H}$, see Definition \ref{Def:fol-pullback}, into $\widehat{\cF}_{(x)}$ and
\[
\widehat{\cF}_{(x)}=\spa_{\widehat{\cO}_{X,(x)}}(\partial_x, d\ell_{x,\partial_x}(\cF_{|H})).
\]
\item The statement is compatible with smooth morphisms and field extensions as explained in Theorem \ref{thm:FRFlow}.
  \end{enumerate}
 \end{theorem}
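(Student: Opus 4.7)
\textbf{Strategy and setup.} The plan is to apply the nested-regular rectification Theorem~\ref{thm:FRFlow} to $(x,\partial)$, producing a nested-regular coordinate system $(x,\hat y_1,\ldots,\hat y_{n-1})$ with $\partial=u\partial_x$, $u\in\widehat{\cO}_{X,(x)}^{\times}$, and the lifting $\ell:=\ell_{x,\partial}$. Since $u$ is a unit, $\ker\partial=\ker\partial_x$ in $\widehat{\cO}_{X,(x)}$, so $\ell=\ell_{x,\partial_x}$ and $\partial_x=u^{-1}\partial\in\widehat{\cF}_{(x)}$. The central device I would use is the formal Lie-series operator
\[
\Phi(\eta) \;:=\; \sum_{k\geq 0}\frac{(-x)^k}{k!}\,\operatorname{ad}_{\partial_x}^k(\eta),
\]
defined for derivations $\eta\in\widehat{\cF}_{(x)}$ with $\eta(x)=0$. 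Heuristically $\Phi(\eta)$ is the transport of $\eta$ along the formal flow of $\partial_x$ to the slice $H=V(x)$; the crux is to show that this algebraic version both \emph{remains} inside $\widehat{\cF}_{(x)}$ and \emph{realizes} $d\ell_{x,\partial_x}(\eta|_H)$.

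\textbf{Step 1: the embedding $d\ell_{x,\partial_x}(\cF_{|H})\subseteq\widehat{\cF}_{(x)}$.} Involutivity of $\cF$ combined with the Leibniz rule gives that $[\partial_x,-]$ preserves the $\widehat{\cO}_{X,(x)}$-span $\widehat{\cF}_{(x)}$. Each summand of $\Phi(\eta)$ then lies in $\widehat{\cF}_{(x)}$, and the weights $(-x)^k/k!$ force $(x)$-adic convergence in the coherent — hence $(x)$-adically complete — module $\widehat{\cF}_{(x)}$ over $\widehat{\cO}_{X,(x)}$. A telescoping computation using $\partial_x\bigl((-x)^k/k!\bigr)=-(-x)^{k-1}/(k-1)!$ will show $[\partial_x,\Phi(\eta)]=0$; an induction from $\eta(x)=0$ and $\partial_x(x)=1$ yields $\operatorname{ad}_{\partial_x}^k(\eta)(x)=0$ for all $k$, whence $\Phi(\eta)(x)=0$; and reducing modulo $x$ leaves only the $k=0$ summand, giving $\Phi(\eta)|_H=\eta|_H$. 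These three properties characterize $d\ell_{x,\partial_x}(\eta|_H)$ uniquely, so $\Phi(\eta)=d\ell_{x,\partial_x}(\eta|_H)$. Finally, given $\delta_H\in\cF_{|H}$, a lift $\widetilde\delta\in\cF$ with $\widetilde\delta|_H=\delta_H$ exists by the definition of the pullback (Definition~\ref{Def:fol-pullback}); replacing $\widetilde\delta$ by $\widetilde\delta-(\widetilde\delta(x)/\partial(x))\partial\in\cF$ ensures $\widetilde\delta(x)=0$, and then $d\ell_{x,\partial_x}(\delta_H)=\Phi(\widetilde\delta)\in\widehat{\cF}_{(x)}$.

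\textbf{Step 2: spanning.} Fix finite generators $\delta_{H,1},\ldots,\delta_{H,m}$ of the coherent sheaf $\cF_{|H}$ and set
\[
M \;:=\; \widehat{\cO}_{X,(x)}\partial_x + \sum_{i=1}^m\widehat{\cO}_{X,(x)}\,d\ell_{x,\partial_x}(\delta_{H,i}) \;\subseteq\; \widehat{\cF}_{(x)}.
\]
I would then proceed by $(x)$-adic iteration: given $\delta\in\widehat{\cF}_{(x)}$, construct $m_n\in M$ and $\delta_n\in\widehat{\cF}_{(x)}$ with $\delta=m_n+x^n\delta_n$. Decompose $\delta_n=\delta_n(x)\partial_x+\eta_n$ with $\eta_n(x)=0$; the explicit form $\eta_n-\Phi(\eta_n)=\sum_{k\geq 1}\frac{(-1)^{k+1}x^k}{k!}\operatorname{ad}_{\partial_x}^k(\eta_n)$ factors as $\eta_n=\Phi(\eta_n)+x\,r_n$ with $r_n\in\widehat{\cF}_{(x)}$, and by Step~1 together with $\cO_{H,\pa}$-linearity of $d\ell_{x,\partial_x}$,
\[
\Phi(\eta_n) = d\ell_{x,\partial_x}(\eta_n|_H) = \sum_i \ell_{x,\partial_x}(c_{n,i})\,d\ell_{x,\partial_x}(\delta_{H,i}) \in M,
\]
where $\eta_n|_H=\sum_i c_{n,i}\delta_{H,i}$. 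Take $m_{n+1}:=m_n+x^n\bigl(\delta_n(x)\partial_x+\Phi(\eta_n)\bigr)\in M$ and $\delta_{n+1}:=r_n$. Since $M$ is finitely generated over the complete local ring $\widehat{\cO}_{X,(x)}$, it is itself $(x)$-adically complete; the Cauchy sequence $m_n$ then converges to some $m_\infty\in M$ while $x^n\delta_n\to 0$, so $\delta=m_\infty\in M$ and therefore $\widehat{\cF}_{(x)}=M$.

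\textbf{Functoriality and the main obstacle.} Functoriality under smooth morphisms and field extensions will be inherited from the corresponding clause of Theorem~\ref{thm:FRFlow}: the coordinates, the lift $\ell_{x,\partial_x}$, the operator $\Phi$, and the iteration of Step~2 are all defined purely in terms of $x$, $\partial_x$, iterated brackets within $\cF$, and a fixed finite generating set of the coherent $\cF_{|H}$, each of which commutes with the specified base changes. The subtlety I expect to require most care is Step~2, where one must verify that the iteration closes in the particular submodule $M$ rather than in some larger ambient completion. The resolution is exactly coherence of $\cF_{|H}$, which makes $M$ finitely generated and hence automatically $(x)$-adically complete over $\widehat{\cO}_{X,(x)}$; this complete-ring Nakayama-type closure is the essential algebraic input.
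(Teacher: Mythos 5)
Your proof is correct and establishes the same conclusions as the paper's, but by a genuinely different (though closely related) mechanism. The paper fixes a finite generating set $\partial_x,\partial_1,\ldots,\partial_r$ of $\widehat{\cF}_{(x)}$, normalizes so that $\partial_i(x)=0$, encodes involutivity as a linear system $[\partial_x,\partial_i]=\sum_j A_{ij}\partial_j$, and then solves the resulting ODE on coefficients with the matrix exponential $\mu=\exp\bigl(-\int_0^x A\bigr)$; the new generators $\nabla_i=\sum_j\mu_{ij}\partial_j$ commute with $\partial_x$ and kill $x$ by construction, and since $\mu(0)=\mathrm{Id}$ is invertible, the spanning statement $\widehat{\cF}_{(x)}=\spa(\partial_x,\nabla_1,\ldots,\nabla_r)$ is immediate. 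You instead define an \emph{intrinsic} transport operator $\Phi(\eta)=\sum_k\frac{(-x)^k}{k!}\operatorname{ad}_{\partial_x}^k(\eta)$ on derivations annihilating $x$, verify directly that $\Phi(\eta)$ stays in $\widehat{\cF}_{(x)}$ (by involutivity and $(x)$-adic completeness), commutes with $\partial_x$, kills $x$, and restricts to $\eta|_H$, and identify it with $d\ell_{x,\partial_x}(\eta|_H)$ by the uniqueness characterization of that lift. That realizes the embedding without choosing generators, which is conceptually clean. The price is that spanning must then be proved separately; your $(x)$-adic iteration closed up by Artin--Rees and completeness of a finitely generated module is sound, but more work than needed. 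A shorter route, closer in spirit to the paper's, is to apply $\Phi$ to the normalized generators $\partial_i$ themselves: since $\Phi(\partial_i)\equiv\partial_i\pmod{x}$, the transition matrix between $(\partial_i)$ and $(\Phi(\partial_i))$ is a unit over $\widehat{\cO}_{X,(x)}$, and spanning follows immediately. Two small points worth tightening in the write-up: (i) your lift $\widetilde\delta$ of $\delta_H\in\cF_{|H}$ lives a priori in $\widehat{\cF}_{(x)}$, not in $\cF$, though nothing breaks; and (ii) the identification of $(\widehat{\cF}_{(x)})_{|H}$ with $\cF_{|H}$ used implicitly in Step~2 (when writing $\eta_n|_H=\sum c_{n,i}\delta_{H,i}$) deserves a sentence of justification via Lemma~\ref{Lem:restriction-and-ideal} and the decomposition $\cF_\pa=\cO_{X,\pa}\partial+\cF_\pa^0$ with $\cF_\pa^0=\{\partial'\in\cF_\pa:\partial'(x)\in(x)\}$.
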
 
\begin{proof}
By the nested-regular rectification Theorem \ref{thm:FRFlow}, there exists nested-regular coordinate system $(x,y) \in \widehat{\cO}_{X,(x)}$ such that $\partial\sim\partial_x$ and a lifting $\ell: \cO_{H,\pa} \to \widehat{\cO}_{X,(x)}$ such that $\partial(\ell(f))\equiv 0$ for all $f\in \cO_{H,\pa}$. In particular, $\partial =u \partial_x$, where $u$ is a unit, and the derivation $\partial_x$ belongs to $\widehat{\cF}_{(x)}$. The functorial statements of the Theorem now hold true because of the functorial statements of Theorem \ref{thm:FRFlow} and by observing that $\partial_x = \partial(x)^{-1} \partial$. 

It remains to show the splitting formula. In fact, we claim that there exist derivations $\nabla_1,\ldots,\nabla_r \in \widehat{\cF}_{(x)}$, such that $\nabla_j(x) \equiv 0$ and $[\nabla_j,\partial_x] \equiv 0$ for $i=1,\ldots,s$ and
\[
\widehat{\cF}_{(x)} = \spa_{\widehat{\cO}_{X,(x)}}(\partial_x, \nabla_1,\ldots,\nabla_{s}).
\]
Note that Theorem follows from the claim. In fact, by Lemma \ref{Lem:restriction-and-ideal}:
\[
\cF_{|H}=  \spa_{\cO_{H,\pa}}(\nabla_{1|X},\ldots,\nabla_{s|X})
\]
and, from the fact that $\nabla_j(x) \equiv 0$ and $[\nabla_j,\partial_x] \equiv 0$, we get that $d\ell_{x,\partial_x}(\nabla_{j|X})=\nabla_j$ as needed.

We turn to the proof of the claim. Let $\partial_1 ,\ldots , \partial_r \in \widehat{\cF}_{(x)}$ be arbitrary derivations such that \(
\widehat{\cF}_{(x)} = \mbox{Span}_{\cO_X}({\partial_x}, \partial_1,\ldots , \partial_r).
\)
We now modify the derivatives $\partial_1 ,\ldots , \partial_r$ in order to obtain all properties of the claim. Start by noting that, after replacing $\partial_i$ by $\partial_i - \partial_i(x){\partial_x}$, we may suppose that $\partial_i(x)\equiv 0$. Combined with the fact that $\cF$ is involutive, we get:
\[
[{\partial_x},\partial_i] = \sum^r_{j=1} A_{ij}(x,y) \partial_j, \quad i=1,\ldots,r
\]
where $A_{ij} \in \widehat{\cO}_{X,(x)}$. Consider a derivation of the generic form $\nabla=\sum^r_{i=1} \mu_i \partial_i$, where $\mu_i \in \widehat{\cO}_{X,\pa}$ will play the role of indeterminate. Then:
\[
[{\partial_x},\nabla]= \sum^r_{j=1} \left( \partial_x(\mu_j) +  \sum^r_{i=1} \mu_i A_{ij}(x,y) \right)\, \partial_j,
\]
which leads to the system of equations:
\[
{\partial_x}(\mu_j) +  \sum^r_{i=1} \mu_i A_{ij}(x,y) = 0, \quad j=1,\ldots,r.
\]
This system admits as fundamental solution
\[
\mu= \exp \left(-\int_0^x A(t,y)\, dt\right),
\]
where $A = [A_{ij}] \in M(r \times r, {\widehat{\cO}_{X,(x)}})$. Note that in the analytic case $A$ and, therefore, $\mu$ are both convergent. In the algebraic case, note that since
\[
\int_0^x A(t,y) dt=xB(x,y)\in x\cdot M(r\times r,{\widehat{\cO}_{X,(x)}})
\]
for some matrix $B$ with ${\widehat{\cO}_{X,(x)}}$-coefficients, the fundamental solution  $\mu=e^{-{xB(x)}}$ is a well defined matrix with entries in $\widehat{\cO}_{X,(x)} \simeq \cO_{V(x)}\lb x \rb$, 
where the isomorphism is defined via the lifting $\ell$. Note that only finitely many powers $(xB)^k$ contribute to each coefficient $C_i\in M(r\times r,{\ell({\cO}_{V(x),\pa})})$ of
\[
\mu:=C_0+C_1x+\ldots +C_ix^i+\ldots.
\]
Therefore, there exists $r$ formal solutions $\vec{\mu}_i = (\mu_{i1},...,\mu_{ir})$ such that $\vec{\mu}_i(0) = e_i = (0,\ldots,0, \allowbreak1,0,\ldots,0)$ (where the $1$ is on the $i$-position). We denote by \(\nabla_i := \sum^r_{j=1} \mu_{ij} \partial_{j}\) the derivation associated to each of these solutions. One checks by direct computation that they satisfy all properties of the claim, by construction.
\end{proof}

The proof highlights the following notion, which plays an important role in the notion of $\cF$-aligned centers given in Definition \ref{def:Faligned} below:

\begin{definition}[Derivation independent of $(x,\partial)$]\label{def:DerIndependentx} Consider a local ring $\cO_{X,\pa} \subset \cO \subset \widehat{\cO}_{X,\pa}$, a coordinate $x\in \cO$ and derivation over $\cO$ such that $\partial(x)=1$ (in practice, $\cO = \cO_{X,\pa}$ or $\widehat{\cO}_{X,(x)}$ or $\widehat{\cO}_{X,\pa}$). A derivation $\nabla$ over $\cO$ is said to be \emph{independent of $(x,\partial)$} if $\nabla(x) \equiv 0$ and $[\nabla,\partial] \equiv 0$.
\end{definition}

\section{Centers, invariant and maximal contact for Rees algebras}\label{sec:MainObjects}

\subsection{Rees algebras and ideal sheaves}\label{sec:Rees-and-Q-ideals} We start by recalling the basic notions of ideal sheaves and Rees algebras we need, starting by the notion of {\it valuative $\QQ$-ideals ${J}$} (or simply  {\it $\QQ$-ideals ${J}$}), introduced in \cite{ATW-weighted} and closely related to \emph{idealistic exponents} of \cite{Hironaka-idealistic}. In the simplest case they could be directly related to the notion of  {\it rational powers of ideals} considered by Huneke-Swanson (\cite[Section 10.5]{HS06}). They naturally generalize the notion of ideals and can be used for a concise description of centers. The following definition is an adaptation of the more general definition from \cite{ATW-weighted} to a simpler case as in \cite{Wlodarczyk-cobordant}. It applies also in the analytic setting.

\begin{definition}[$\QQ$-ideal] By  a {\it $\QQ$-ideal ${J}$} on a smooth variety $X$ we mean an equivalence  class  of  formal expressions  ${J}^{1/n}$, where ${J}$ is the ideal on $X$ and $n\in \NN$ is a natural number.  We say that ${J}^{1/n}$ and $\cI^{1/m}$ are {\it equivalent} if the integral closures of
${J}^{m}$ and $\cI^{n}$ are equal:
\[
({J}^m)^\inte=(\cI^m)^\inte.
\]
\end{definition}

\begin{remark} If ${J}^{1/n}$ and $\cI^{1/m}$ represent the same $\QQ$-ideal for some ideals $\cI$ and ${J}$ and $m,n\in \NN$, then in Rush \cite{Rush07}
one says that  the ideals $\cI$ and ${J}$ are {\it projectively equivalent} with respect to the coefficient $m/n$.
	
\end{remark}
  One naturally considers  the operations of addition $\cI^{1/n}+{J}^{1/m}=(\cI^m+{J}^n)^{1/mn}$, and multiplication $\cI^{1/n}\cdot{J}^{1/m}=(\cI^m{J}^n)^{1/mn}$. Any  $\QQ$-ideal ${J}=\cI^{1/n}$ on $X$ defines the {\it ideal of sections ${J}_X:=\{f\in \cO_X\mid f^n\in \cI^{\inte}\}$}. In particular, $\cI^1_X=\cI^{\inte}$ is the integral closure of $\cI$. The ideals
\[
  \cI_{m/n}:=(\cI^{m/n})_X
 \]
 are known as the {\it rational powers of $\cI$} and were considered by Huneke--Swanson in \cite{HS06}. Consequently one can associate  with any $\QQ$-ideal ${J}$  the graded algebra of $\QQ$-ideals
\[
  \cO_X[{J} t]=\cO_X\oplus {J} t\oplus {J}^2 t^2\oplus\ldots,
  \]
 and the associated (integer) Rees algebra
\[
 \cO_X[{J} t]_X:=\cO_X\oplus {J}_X t\oplus ({J}^2)_X t^2\oplus\ldots,
 \]
as in \cite{ATW-weighted} and \cite{Wlodarczyk-cobordant}. This leads to the observation made by Quek in \cite{Quek} that $\QQ$-ideals are in bijection with integrally closed Rees algebras. Under this correspondence the $\QQ$-ideal ${J}=\cI^{1/n}$ corresponds  to the integral closure $\cO_X[\cI t^n]^\inte$. In particular, as was observed in \cite{ATW-weighted} we associate with a $\QQ$-ideal ${J}$ locally of the form ${J}=(x_1^{1/w_1},\dots,x_k^{1/w_k})$ the algebra
\[
\cA_J=\cO_X[x_1t^{w_1},\ldots,x_kt^{w_k}]^\inte.
\]
In this paper we are going to consider Rees algebras on a smooth variety $X$ with grading given by  finitely generated additive subsemigoups $\Gamma\subset\QQ_{\geq 0}$.

\begin{definition}[Rationally graded Rees algebras: {\cite[Def. 2.2.1]{Wlodarczyk-cobordant}}]\label{def:ReesAlgebra} By a {\it rationally graded Rees algebra} or simply {\it Rees algebra} we mean a finitely generated $\cO_X$-algebra (or $\cO_{X,\pa}$, or $\widehat{\cO}_{X,(x)}$) which can be  written of the form:
\[
\cR=\bigoplus_{a\in \Gamma} R_at^a\subset \cO_X[t^{1/w_R}],
\]
 where $w_R\in \QQ_{>0}$ is the smallest rational number such that $\Gamma\subseteq (1/w_R)\cdot \ZZ_{\geq 0}$, and the ideals $R_a\subseteq \cO_X$ satisfy: (1) $R_0=\cO_X$ and (2) $R_a\cdot R_b\subseteq R_{a+b}$. Moreover, given a Rees algebra $\cR$ we consider:
\begin{enumerate}
\item The Rees algebra $\cR^{\ext}:=\cR[t^{-1/w_R}]$ will be called the {\it extended Rees algebra}.
\item The integral closure of $\cR$ in $\cO_X[t^{1/w_R}]$ is denote $\cR^\inte$.
\item  The {\it vertex}  $V(\cR)$  of $\cR$ (or $V(\cR^{\ext})$ of $\cR^{\ext}$) is the vanishing locus
\[
V(\cR):= V\left(\sum_{a>0} R_a \right)
\]
\end{enumerate}
\end{definition}

\begin{warning}
From now on, by a Rees algebra we mean a \emph{rationally graded} Rees algebra.
\end{warning}

\begin{remark}
We do not assume here that $R_a\subseteq R_{b}$  if $a\geq b$ for $a,b\in \Gamma$. However this condition is satisfied if
$\cR=\cR^\inte$ is integrally closed in $\cO_X[t^{1/w_R}]$.
\end{remark}

When making operations with two Rees algebras $\cR_1$ and $\cR_2$, such as their sum, we may run into the inconvenience that $w_{R_1} \neq w_{R_2}$. Of course, it is possible to remedy this by simply considering $\cR_1$ and $\cR_2$ as Rees algebras contained in $\cO_{X}[t^{1/w}]$ with $w$ being the least common multiple of $w_{R_1}$ and $w_{R_2}$. In order to avoid making this operation precise every time, nor keeping track of the actual $w$ which is necessary, we establish the following convention:

\begin{notation}
Given a Rees algebra $\cR$, we denote by:
\[
\cR^{\Int}  = \left(\cR \cdot \cO_X[t^{1/nw_R}] \right)^{\inte},
\]
where the integrable closure is taken on $\cO_X[t^{1/nw_R}]$ for some unspecified $n\in \mathbb{N}$.
\end{notation}

In other words, $\cR^{\Int} $ is the ``integrable closure of $\cR$ in $\cO_X[t^{1/nw_R}]$, for some sufficiently big $n\in \mathbb{N}$". This notation will be useful whenever we need to make an operation with Rees algebras where keeping track of the precise $n$ is unnecessary. The termonology of \emph{valuative $\QQ$-ideals of \cite{ATW-weighted} has the advantage that it identifies all these integral closures as the same object.}

\begin{remark}[Relation between $\cR$ and $\cR^{\Int}$]\label{rk:ReExtendedNonExtended}
Let $f \in R_a$ and let $b\in \mathbb{Q}_{>0}$ such that $b<a$, be fixed. Then we may always assume that $ft^{a-b} \in \cR^{\Int}$. Indeed, since we work with the integral closure, we may suppose without loss of generality that $a\in \mathbb{N}$ and $b = 1/q$ for $q\in \mathbb{N}$. In this case, since
\(
f^{aq-1} \in R_{a(aq-1)},
\)
and $f\in R_0$, we get that $f^{aq} \in R_{a(aq-1)}$. By taking the integral closure, we get that $f t^{a-1/q} \in R_{a-1/q}t^{a-1/q}$ as we wanted to prove.
\end{remark}

Let us finish by explaining how to make a dictionary between (usual) ideal sheaves and Rees algebras.

\begin{remark}[Relation between ideal sheaves and Rees algebras]\label{rk:Dictionary} 
Let $\cI$ be a coherent ideal sheaf over $\cO_X$. Consider the standard Rees algebra $\cR^0_\cI := \cO_X[t\cI] := \oplus_{a\in \NN} \cI^at^a $. We may replace it by its integral closure $\cR_\cI = \cO_X[t\cI]^{\inte}$, and note that, as usual, it is finitely generated since our rings are Japanese.

Conversely, consider a finitely generated Rees algebra $\cR=\oplus_{a \in \Gamma}  \cI_a t^a$ for some cyclic subgroup $\Gamma\subset \QQ$ over $\cO_X$. Say $\cR$  is locally  generated by a finite number of terms $f_i t^{b_i/a_i}$ for $i=1,\ldots,r$, where $f_i \in \cO_{X}$  and $b_i,a_i \in \mathbb{Z}_{>0}$. Now, note that $f_i^{a_i} t^{b_i}$ also belongs to $\cR$.

Following \cite{Wlodarczyk-cobordant} let $b = \mbox{lcm}(b_1/a_1,\ldots,b_r/a_r)$ be the smallest positive rational number which is a common integer multiple $$b=n_1b_1/a_1=\ldots=n_kb_k/a_k,$$ with $n_i\in \NN$,   
  and consider:
\begin{equation}\label{eq:ReesToIdeals}
\cI_{b}(\cR):= \Span_{\cO_{X}}( f_1^{n_1}, \ldots, f_r^{n_r} )^{\inte}.
\end{equation} 
It is immediate to show that, up to rescaling the power of $t$ used, 
\[
\cR(\cI_{b}(\cR))^{\Int} = \cR^{\Int} \quad \text{ and } \quad \cI_b(\cR(\cI))^{\Int} = \cI^{\Int}.
\]
In particular, every notion that we introduce for a Rees algebra $\cR$ can be extended to ideal sheaves $\cI$ by considering  $\cR(\cI)$, see also \cite[Corollary 2.11]{Quek}. From now on, we focus our exposition in finitely generated Rees algebras instead of ideal sheaves.
\end{remark}

\subsection{Preliminaries on weighted resolution of singularities}\label{ssec:CanonicalInvariant}
We recall two important notions introduced in \cite{ATW-weighted} and \cite{Wlodarczyk-cobordant}: $E$-\emph{adapted} centers and the \emph{canonical invariant} of a Rees algebra.

In \cite{ATW-weighted}, a \emph{center} is a $\QQ$-ideal locally  of the form $(x_1^{a_1},\ldots,x_k^{a_k})$, where $(x_1,\ldots,x_k)$ is a partial coordinate system.  We shall always assume that $a_1\leq\ldots\leq a_k$. As in \cite{Wlodarczyk-cobordant} we identify any center $J=(x_1^{a_1},\ldots, x_k^{a_k})$  with the associated Rees algebra
 \[
 \cA_J=\cO_{X,\pa}[x_1t^{1/a_1},\ldots,x_l t^{1/a_l}]^{\Int} =: \cO_{X,\pa}[x_it^{1/a_i}]^{\Int}_{l},
 \]
where we use the concise notation $\cO_{X,\pa}[x_it^{1/a_i}]^{\Int}_l$ where $l$ stands for the number of coordinates $x_i$. Over a logarithmic variety, we will further use:

\begin{definition}[Adapted center, compare {\cite[Def 3.1.7]{Wlodarczyk-cobordant}}]\label{def:EAdaptedCenter}
 Let $E$ be an SNC divisor on a smooth variety $X$. We say that a center $\cA_J$ is {\it adapted to  $E$} at point $\pa\in X$ if it can be written as
\[
\begin{aligned}
\cA_J \cdot \cO_{X,\pa}&=\cO_{X,\pa}[x_1t^{1/a_1},\ldots,x_lt^{1/a_l},z_1t^{1/c_1},\ldots,z_st^{1/c_s}]^\Int \\
&=:\cO_{X,\pa}[x_it^{1/a_i},z_kt^{1/c_k}]^\Int_{l,s},
\end{aligned}
\]
where $x_1,\ldots,x_l$ are $E$-free variables, $z_1,\ldots,z_s$ are $E$-divisorial variables, and $a_1 \leq \ldots, \leq a_l$ and $c_1 \leq \ldots \leq c_s$.
 \end{definition}

We may now introduce the canonical invariant $\inv$ used in the weighted resolution of singularities following \cite{ABQTW}, which is a variation of \cite{ATW-weighted,Wlodarczyk-cobordant}. The invariant was originally described in terms of $\QQ$-ideals, and the following Rees algebra version follows ideas from W{\l}odarczyk in \cite{Wlodarczyk-cobordant}. To this end, we consider the set:
\[
\QQ_+:=\QQ_{\geq 0}\,\amalg\, \{\infty + c \mid c\in \QQ_{\geq 0}\}.
\]
which can be endowed with a total order in an obvious way.

\begin{definition}[Canonical invariant for centers]\label{def:CanonicalInvCenters}
The canonical invariant of an adapted center
\(
\cA_J=\cO_{X,\pa}[x_it^{a_i},z_kt^{1/c_k}]^{\Int}_{l,s},
\)
is:
\[
\inv(\cA_J):=(a_1,\ldots,a_l,\infty + c_1,\ldots,\infty+c_s) \in (\QQ_+)^{l+s}.
\]
\end{definition}

We will later prove that this notion is well-defined, see Lemma \ref{lem:BasicPropertyFInvariantCenter}. Note that the canonical invariant of an adapted center takes value in:
\[
(\QQ_{+})^{\leq n}\ \  :=\ \  \bigsqcup_{l\leq n}\QQ_{+}^{l},
\]
where $n=\mbox{dim}X$. We endow \((\QQ_{+})^{\leq n}\) with a total order: for any sequence $\vec{v} \in (\QQ_+)^l$, consider its completion into a vector with $n$-entries by adding $\infty + \infty$ in entries $l+1$ until $n$. Then, the lexicographical order is total over $(\QQ_+)^{\leq n}$.\footnote{For example, $(2,3)<(2,4)<(2)<(3,3) < (\infty+1,\infty+5).$}

\begin{definition}[Canonical invariant for Rees algebras, {compare {\cite[Def 3.1.7]{Wlodarczyk-cobordant}}}]\label{def:CanonicalInvGeneral} \hfill

\noindent Given a  Rees algebra $\cR$, we say that an adapted center $\cA_J$ is \emph{$\cR$-admissible} at a point $\pa\in X$ if \(\cR_{\pa} \subset  {\cA}_{J,\pa}\)  The {\it canonical invariant} {$\inv_{\pa}(\cR)$} is given by:
  \[
  \inv_{\pa}(\cR):=\max \{ \inv_{\pa}(\cA_J)  \mid \, \cA_J\text{ is an adapted $\cR$-admissible center at $\pa$}\}.
\]

\end{definition}

See \cite[Section 2.4]{ATW-weighted} for admissibility in terms of $\QQ$-ideals, and a precursor for non-weighted centers  in \cite[Section 9.12]{Bravo-Villamayor-Book}.

In \cite{ABQTW}, it is proven that the above notion is well-defined and functorial, that is, for any Rees algebra $\cR$ over $\cO_{X}$, there exists a unique $\cR$-admissible center $\cA_{J}$ at $\pa$ which is $\inv$-maximal, and its formation commutes with smooth morphisms and field extensions. We nevertheless deduce this result later, following Proposition \ref{prop:FCanonicalInvBasicProperties} below applied to $\cF=\cD^{\log}_X$. We call the center $\cA_J$ {\it the $\inv$-maximal center at $\pa$}.

\subsection{$\cF$-order} We now introduce a basic invariant that relates a foliation with ideal sheaves and Rees algebras, generalizing the usual order. Let $(X,\cF,E)$ be a foliated logarithmic variety. Given an ideal $\cI$ on $X$, by $\cF(\cI)$ we mean the ideal generated by $f\in \cI$ and $\partial(f)$, where $f\in \cI$ and $\partial\in \cF$. Consequently we set inductively
\[
\cF^n(\cI):=\cF(\cF^{n-1}(\cI)),\quad \cF^\infty(\cI):=\bigcup \cF^{n}(\cI).
\]
\begin{definition}[$\cF$-invariant ideals and Rees algebras]\label{def:FinvariantIdeal}
An ideal sheaf $\cI$ is $\cF$-invariant if \(
\cF(\cI) \subset \cI.
\) Similarly, a Rees algebra $\cR=\bigoplus R_a$ is $\cF$-invariant if $\cF(R_a) \subset R_a$ or all $a$.
\end{definition}

\begin{definition}[$\cF$-order]\label{def:Forder}
By the $\cF$-order of an ideal $\cI$ at a point $\pa$ we mean
\[
\ord_{\cF,\pa}(\cI):=\min  \{\{n \in \mathbb{N} \mid  \cF^n(\cI)=\cO_{X,\pa}\} \cup \{\infty\}\}.
\]
The notion of $\cF$-order can be extended to elements $f \in \cO_{X,\pa}$ by taking:
\[
\ord_{\cF,\pa}(f):=\ord_{\cF,\pa}(\cO_X\cdot f).
\]
\end{definition}

The following is immediate by considering generators of $\cF$ and $\cI$:
\begin{lemma}[Invariance and functoriality of $\cF$-order]\label{Lem:functorial} Consider a smooth morphism $X' \to X$ or field extensions $X':= X_L \to X_K = X$ with pullback foliation $\cF'$ and pullback ideal $\cI' = \cI \cO_{X'}$. 

We have  $(\cF^n(\cI))' = {\cF'}^n(\cI')$. If $\cI$ is $\cF$-invariant then $\cI'$ is $\cF'$-invariant; the converse holds if $X' \to X$ is surjective.
Finally, if $\pa' \mapsto \pa$ we have  $\ord_{\cF,\pa}(\cI) = \ord_{\cF',\pa'}(\cI')$.  
\end{lemma}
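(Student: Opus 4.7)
The plan is to verify the three assertions in order, using that all constructions commute with localization and with the standard descriptions of pullback. Throughout, write $f \colon X' \to X$ either for the smooth morphism in question or for the base change by $L/K$; in both cases $f^*$ preserves generation of ideals, and the pullback foliation $\cF'$ is generated as an $\cO_{X'}$-module by lifts of local generators of $\cF$ (this is the content of Definition \ref{Def:fol-pullback} for the smooth case and of Definition \ref{def:fieldextension} for field extensions; in both cases derivations extend uniquely to $\cO_{X'}$ from a set of $\cO_X$-generators).

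First I would establish the identity $(\cF(\cI))' = \cF'(\cI')$. Locally pick generators $f_1,\dots,f_r$ of $\cI$ and generators $\partial_1,\dots,\partial_s$ of $\cF$. By definition $\cF(\cI)$ is generated by the $f_i$ together with the $\partial_j(f_i)$. Pulling back, $\cI'$ is generated by the images $f_i \otimes 1$ and $\cF'$ is generated by the lifts $\partial_j'$, so $\cF'(\cI')$ is generated by the $f_i \otimes 1$ and the $\partial_j'(f_i \otimes 1) = \partial_j(f_i)\otimes 1$. These are precisely the generators of $(\cF(\cI))'$, giving the equality. Induction on $n$, using $\cF^{n+1}(\cI)=\cF(\cF^n(\cI))$ and $\cF'^{n+1}(\cI')=\cF'(\cF'^n(\cI'))$, yields $(\cF^n(\cI))' = {\cF'}^n(\cI')$ for all $n$.

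Next, $\cF$-invariance. If $\cF(\cI) \subseteq \cI$ then applying $f^*$ preserves inclusion, so $\cF'(\cI') = (\cF(\cI))' \subseteq \cI'$, i.e.\ $\cI'$ is $\cF'$-invariant. For the converse under surjectivity, both a surjective smooth morphism and a field extension yield a faithfully flat map $\cO_X \to f_*\cO_{X'}$, so the inclusion $\cF(\cI) \subseteq \cI$ can be tested after pulling back: from $\cF'(\cI') \subseteq \cI'$ and $\cF'(\cI') = (\cF(\cI))'$ we deduce $\cF(\cI) \subseteq \cI$ by faithful flatness (an ideal $J \subseteq \cO_X$ is contained in $\cI$ iff $J'\subseteq \cI'$).

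Finally, the order. Assume $\pa' \mapsto \pa$. Then $\cO_{X',\pa'}$ is faithfully flat over $\cO_{X,\pa}$, so for any ideal $J \subseteq \cO_{X,\pa}$ we have $J = \cO_{X,\pa}$ if and only if $J \cdot \cO_{X',\pa'} = \cO_{X',\pa'}$. Applying this to $J = \cF^n(\cI)_\pa$ and using the already established $(\cF^n(\cI))' = {\cF'}^n(\cI')$, the sets $\{n : \cF^n(\cI)_\pa = \cO_{X,\pa}\}$ and $\{n : \cF'^n(\cI')_{\pa'} = \cO_{X',\pa'}\}$ coincide, whence $\ord_{\cF,\pa}(\cI) = \ord_{\cF',\pa'}(\cI')$. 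The only point requiring genuine care is the statement about generators of $\cF'$; however once one observes that local generators of $\cF$ lift to generators of $\cF'$ under both smooth pullback and ground-field extension, everything else is a one-line manipulation, so there is no substantive obstacle.
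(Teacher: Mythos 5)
Your proof is correct and is exactly the expansion of what the paper leaves implicit (the paper simply says the lemma ``is immediate by considering generators of $\cF$ and $\cI$''); the strategy of reducing to local generators and then invoking faithful flatness for the converse and the order statement is the intended one. One small imprecision worth noting: for a smooth morphism the lift of a derivation is \emph{not} unique (it is unique only modulo $\cD^{\log}_{X'/X}$), contrary to the parenthetical remark; however this does not affect the argument, since any two lifts of $\partial_j$ differ by a relative derivation, and relative derivations annihilate $f_i\otimes 1\in\cI'$ because $\cI'$ is pulled back from $X$, so $\partial_j'(f_i\otimes 1)=\partial_j(f_i)\otimes 1$ is well defined independently of the choice of lift.
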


\begin{remark}[On the literature of the $\mathcal{F}$-order]\label{rk:ForderLiterature}\hfill
\begin{enumerate}
\item The notion of $\cF$-order generalizes the usual order of ideals. If $X$ is smooth and $\cF=\cD_X$ then
 \[
 \ord_{\pa}(\cI)=\min\{d\mid \cI_{\pa}\subset m^d_{X,\pa}\}=\ord_{\cD_X,\pa}(\cI).
 \]
Moreover, if $X$ is logarithmically smooth and $\cF=\cD_X^{\log}$ then $\ord_{p,\cD_X}(\cI)$ is the logarithmic order introduced in \cite{ATW-principalization}. More generally if $X\to B$ is logarithmically smooth then the $\cF$-order for the sheaf of relative logarithmic derivations $\cF=\cD_{X/B}^{\log}$ is the relative logarithmic order considered in \cite{ATW-relative}.

\item A very similar notion of $\mathcal{F}$-order, called $\mathcal{F}$-tangency order, appears in \cite{Belthesis,BelJA,BelRACSAM,BelBmon}. In fact, it is defined as:
\[
\nu_{\cF,\pa}(\cI):=\min \{n \in \mathbb{N} \mid  \cF^n(\cI)=\cF^{n+1}(\cI)\}.
\]
which coincides with $\ord_{\cF,\pa}(\cI)$ whenever the latter is finite. The invariant $\nu_{\cF,\pa}$ was used to obtain partial results of resolution of singularities of foliated manifolds, a monomialization of quasianalytic morphisms, and in applications to foliation theory \cite{BelPan}.
\end{enumerate}
\end{remark}

It follows immediately from the definition that the condition $\ord_{\cF,\pa}(\cI)>d$ means simply that $\cF^d(\cI)_{\pa}\subsetneqq \cO_{X,\pa}$ or equivalently $\pa\in V(\cF^d(\cI))$. In other words:
\begin{equation}
\label{eq:Prop1Forder}
\supp_{\cF}(\cI,d):=\{\pa\in X\mid \ord_{\cF,\pa}(\cI)\geq d\}=V(\cF^{d-1}(\cI)),
\end{equation}
for every $d\in \mathbb{N} \cup \{\infty\}$, where we use the convention that $d-1 =\infty$ when $d= \infty$. Combining this observation with the Leibniz rule, we immediately get:

\begin{lemma}[Basic properties of the $\mathcal{F}$-order]\label{lem:BasicPropForder}
Given the ideal $\cI$ on a foliated variety $(X,\mathcal{F})$, the $\cF$-order $\ord_{\cF,\pa}(\cI)$ is an upper semicontinous function on $X$. Moreover, given coherent ideals $\mathcal{I}$ and ${J}$ and foliations $\cF$ and $\cG$:
\begin{enumerate}
\item  $\ord_{\cF,\pa}(\cI\cdot {J})=\ord_{\cF,\pa}(\cI)+\ord_{\cF,\pa}({J})$
\item If $\ord_{\cF,\pa}(\cI)=a$ then for $d<a$ we have $\ord_{\cF,\pa}(\cF^{d}(\cI))=a-d$.
\item If $\mathcal{I} \subset {J}$ then $\ord_{\cF,\pa}(\cI)\geq \ord_{\cF,\pa}({J})$
\item If $\cF\subset \cG$ then	$\ord_{\cF,\pa}(\cI)\geq \ord_{\cG,\pa}(\cI)$.
\end{enumerate}
\end{lemma}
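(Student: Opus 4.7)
The first claim, upper semicontinuity, is immediate from \eqref{eq:Prop1Forder}: the superlevel set $\{\pa \mid \ord_{\cF,\pa}(\cI) \geq d\}$ equals $V(\cF^{d-1}(\cI))$ and is therefore closed (a suitable intersection handles the case $d = \infty$). Claims (3) and (4) follow from straightforward inductions on $n$: $\cI \subset J$ gives $\cF^n(\cI) \subset \cF^n(J)$, and $\cF \subset \cG$ gives $\cF^n(\cI) \subset \cG^n(\cI)$, so the $\cF$-order is no smaller on the smaller ideal or foliation. Claim (2) is a direct consequence of the recursive definition: with $a = \ord_{\cF,\pa}(\cI)$ one has $\cF^{a-d}(\cF^d(\cI)) = \cF^a(\cI) = \cO_{X,\pa}$, while $\cF^{a-d-1}(\cF^d(\cI)) = \cF^{a-1}(\cI) \subsetneqq \cO_{X,\pa}$.

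The only substantive content is the product formula (1). Write $a := \ord_{\cF,\pa}(\cI)$ and $b := \ord_{\cF,\pa}(J)$; using (3), the cases $a = \infty$ or $b = \infty$ force both sides to equal $\infty$, so I may assume $a, b < \infty$. For the inequality $\ord_{\cF,\pa}(\cI J) \geq a + b$, a routine induction on $n$, using the Leibniz rule $\partial(fg) = \partial(f)g + f\,\partial(g)$ together with the monotonicity $\cF^i(\cI) \subset \cF^{i+1}(\cI)$, yields
\[
\cF^n(\cI \cdot J) \;\subset\; \sum_{i+j = n}\, \cF^i(\cI) \cdot \cF^j(J).
\]
Since $\cO_{X,\pa}$ is local, a product of two ideals equals $\cO_{X,\pa}$ only when both factors do. For $n \leq a + b - 1$ no pair $(i,j)$ with $i + j = n$ satisfies both $i \geq a$ and $j \geq b$, so every summand on the right is a proper ideal, and hence so is their sum; this gives $\cF^{a+b-1}(\cI J) \subsetneqq \cO_{X,\pa}$.

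For the reverse inequality I will prove the companion statement that, if $D$ and $E$ are compositions of $r$ and $s$ derivations in $\cF$ respectively, and $f, g \in \cO_{X,\pa}$, then
\[
D(f) \cdot E(g) \;\in\; \cF^{r+s}\bigl((fg)\bigr).
\]
This is established by induction on $r + s$: writing $D = \partial \circ D_0$ with $\partial \in \cF$ and $\ord(D_0) = r - 1$, the Leibniz identity
\[
\partial(D_0(f)) \cdot E(g) \;=\; \partial\bigl(D_0(f) \cdot E(g)\bigr) \;-\; D_0(f) \cdot \partial(E(g))
\]
places each term on the right in $\cF^{r+s}\bigl((fg)\bigr)$ by the inductive hypothesis. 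Now choose presentations $1 = \sum_i c_i D_i(f_i)$ with $f_i \in \cI$ and $\ord(D_i) \leq a$ witnessing $\cF^a(\cI) = \cO_{X,\pa}$, and analogously $1 = \sum_j c'_j E_j(g_j)$ for $\cF^b(J)$. Multiplying the two expressions and applying the companion claim to each product $D_i(f_i)\,E_j(g_j) \in \cF^{a+b}\bigl((f_i g_j)\bigr) \subset \cF^{a+b}(\cI J)$ gives $1 \in \cF^{a+b}(\cI J)$, as required. The main obstacle is precisely this $\leq$ direction: the Leibniz inclusion used above points the wrong way to bound $\cF^{a+b}(\cI J)$ from above, and the companion claim is what lets us realize products of derivatives of the two factors inside derivatives of the product $fg$.
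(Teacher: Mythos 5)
Your treatment of upper semicontinuity, claims (2), (3), (4), and the inequality $\ord_{\cF,\pa}(\cI J)\geq\ord_{\cF,\pa}(\cI)+\ord_{\cF,\pa}(J)$ is correct. The companion claim on which the reverse inequality rests, however, is false. Take $X=\AA^2$ with coordinates $(x,y)$, $\cF$ generated by $\partial_x$, $\pa$ the origin, $f=x$, $g=x+y$, $D=\partial_x$ (so $r=1$), $E=\mathrm{id}$ (so $s=0$). Then $D(f)\,E(g)=x+y$, while $\cF^{1}\bigl((fg)\bigr)=(x^2+xy,\,2x+y)$. Modulo $2x+y$ one has $y\equiv-2x$, so that ideal becomes $(x^2)$ while $x+y\equiv-x\neq0$; hence $x+y\notin\cF^{1}\bigl((fg)\bigr)$. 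The failure is already visible in your own induction: in the second Leibniz term $D_0(f)\cdot\partial(E(g))$ the total derivation count is $(r-1)+(s+1)=r+s$, so the induction on $r+s$ never decreases and cannot close.

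The fix requires reducing to a \emph{single} derivation rather than distributing derivations across the two factors. Choose $\cO_{X,\pa}$-generators $\partial_1,\dots,\partial_k$ of $\cF_\pa$, and $f\in\cI$ together with a word $\partial_{i_1}\cdots\partial_{i_a}$ of length $a$ with $\partial_{i_1}\cdots\partial_{i_a}(f)(\pa)\neq0$; such $f$ and word exist because $\cF^a(\cI)_\pa=\cO_{X,\pa}$ is generated by such words and $\cO_{X,\pa}$ is local. Using involutivity and $\ord_{\cF,\pa}(f)=a$, the value $\partial_{j_1}\cdots\partial_{j_a}(f)(\pa)$ is symmetric in the multi-index: swapping adjacent factors changes the expression by $\partial_{j_1}\cdots\partial_{j_{p-1}}\bigl([\partial_{j_p},\partial_{j_{p+1}}]\,\partial_{j_{p+2}}\cdots\partial_{j_a}(f)\bigr)$, and since $[\partial_{j_p},\partial_{j_{p+1}}]$ is an $\cO$-combination of the $\partial_l$, expanding by Leibniz produces only terms in which at most $a-1$ of the $\partial$'s act on $f$, hence values in the maximal ideal. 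Consequently, writing $\partial_\lambda:=\sum\lambda_i\partial_i$, the map $\lambda\mapsto\partial_\lambda^{\,a}(f)(\pa)$ is the diagonal of a nonzero symmetric $a$-multilinear form and therefore, in characteristic $0$, a nonzero polynomial in $\lambda$. Pick $g\in J$ analogously of order $b$; a generic $\lambda$ makes both $\partial_\lambda^{\,a}(f)(\pa)$ and $\partial_\lambda^{\,b}(g)(\pa)$ nonzero, and then the one-derivation Leibniz rule gives
\[
\partial_\lambda^{\,a+b}(fg)(\pa)=\binom{a+b}{a}\,\partial_\lambda^{\,a}(f)(\pa)\,\partial_\lambda^{\,b}(g)(\pa)\neq0,
\]
since every cross term has $\partial_\lambda^{\,i}(f)(\pa)=0$ for $i<a$ or $\partial_\lambda^{\,j}(g)(\pa)=0$ for $j<b$, and the binomial coefficient is a unit. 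This produces a unit in $\cF^{a+b}\bigl((fg)\bigr)\subset\cF^{a+b}(\cI J)$, giving $\ord_{\cF,\pa}(\cI J)\leq a+b$. This genericity step is the real content hidden behind the paper's terse ``combining with the Leibniz rule.''
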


We finish this section by remarking that the definition and properties of the $\cF$-order  extend to Rees algebras. In fact:

\begin{definition}[$\cF$-order for Rees algebras]\label{def:ForderRees}

By the $\cF$-order of a finitely generated Rees algebra $\cR=\bigoplus R_bt^b$, we mean:
\[
\ord_{\cF,\pa}(\cR)\quad := \quad \min \{\,\ord_{\cF,\pa}(R_b)/b\  \mid\  b>0\,\}.
\]
\end{definition}

\begin{remark}\label{rk:ForderCompatibilityIdealRees}
Recall that to an ideal sheaf $\cI$ we associate the Rees algebra $ \cO_{X}[t\cI]$ (Remark \ref{rk:Dictionary}). It follows from the definition that, for every $\pa\in X$:
\[
\ord_{\cF,\pa}(\cI) = \ord_{\cF,\pa}(\cO_{X}[t\cI]).
\]
\end{remark}

Now, if $\cR=\bigoplus R_at^a$ 
is finitely generated by  $R_{a_1},\ldots,R_{a_k}$, then there exists $a\in \mathbb{N}$ such that
\(
\ord_{\cF,\pa}(\cR)=\min_i \{\ord_{\cF,\pa}(R_{a_i})/a_i\}=\ord_{\cF,\pa}(R_{a})/a.
\)
 With any such choice of $a$, we generalize equation \eqref{eq:Prop1Forder} by:
\begin{equation}
\label{eq:Prop1ForderRees}
\supp_{\cF}(\cR,d)=\{\pa\in X\mid \ord_{\cF,\pa}(\cR)\geq d\}=V(\cF^{\lceil{da-1}\rceil}(R_a)),
\end{equation}
for $d\in \mathbb{N} \cup\{\infty\}$,   where we use the convention $\lceil{da-1}\rceil=\infty$ if $d=\infty$. We may now extend the relevant parts of Lemma \ref{lem:BasicPropForder} to Rees algebras:

\begin{lemma}
[Basic properties of the $\mathcal{F}$-order for Rees algebras]\label{lem:BasicPropForderRees}
Given a finitely generated Rees algebra $\cR$ on a foliated variety $(X,\mathcal{F})$, the $\cF$-order $\ord_{\cF,\pa}(\cR)$ is an upper semicontinous function on $X$. Moreover, given finitely generated Rees algebras $\cR$ and $\cR'$ and foliations $\cF$ and $\cG$:
\begin{enumerate}
\item If $\cR\subset \cR'$ then $\ord_{\cF,\pa}(\cR)\geq \ord_{\cF,\pa}(\cR')$
\item If $\cF\subset \cG$ then	$\ord_{\cF,\pa}(\cR)\geq \ord_{\cG,\pa}(\cR')$.
\end{enumerate}
\end{lemma}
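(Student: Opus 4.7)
The strategy is to reduce each assertion to the corresponding property for ordinary ideals established in Lemma \ref{lem:BasicPropForder}, by pointwise application to each graded piece $R_b$ of $\cR = \bigoplus R_b t^b$, and then taking infima over $b > 0$.

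For upper semicontinuity, my plan is to give a support description that avoids the choice of the ``optimal'' $a$ appearing in \eqref{eq:Prop1ForderRees} (which in general depends on the point $\pa$ and so is not directly useful for a topological statement). Instead, I would write, for any $d \in \mathbb{Q}_{>0}$,
\[
\supp_{\cF}(\cR,d) = \{\pa \mid \ord_{\cF,\pa}(\cR) \geq d\} = \bigcap_{b>0} \{\pa \mid \ord_{\cF,\pa}(R_b) \geq db\} = \bigcap_{b>0} V\bigl(\cF^{\lceil db\rceil-1}(R_b)\bigr),
\]
where the last equality uses \eqref{eq:Prop1Forder} applied to the ideal $R_b$. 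Each set on the right is closed, so the intersection is closed, yielding upper semicontinuity for finite $d$; the case $d=\infty$ is handled by taking the further intersection over $n \in \NN$ of the closed sets $V(\cF^n(R_b))$ for all $b>0$.

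For item (1), if $\cR \subset \cR'$ then $R_b \subset R'_b$ for every $b$, so Lemma \ref{lem:BasicPropForder}(3) gives $\ord_{\cF,\pa}(R_b) \geq \ord_{\cF,\pa}(R'_b)$, hence $\ord_{\cF,\pa}(R_b)/b \geq \ord_{\cF,\pa}(R'_b)/b$; passing to the infimum over $b>0$ on both sides (and noting that pointwise inequality of functions is preserved under taking infimum) yields $\ord_{\cF,\pa}(\cR) \geq \ord_{\cF,\pa}(\cR')$. Item (2), which I read as $\ord_{\cF,\pa}(\cR) \geq \ord_{\cG,\pa}(\cR)$ when $\cF \subset \cG$ (the $\cR'$ appears to be a typo), is entirely analogous: apply Lemma \ref{lem:BasicPropForder}(4) to each $R_b$ and take the infimum.

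The only nontrivial point is the upper semicontinuity: one must resist the temptation to work with a single distinguished $a$ as in \eqref{eq:Prop1ForderRees}, since that $a$ can jump with $\pa$. The intersection presentation above bypasses this entirely and works for any finitely generated $\cR$. Everything else is a direct transfer from the ideal-theoretic statements already established.
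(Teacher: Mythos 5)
Your proof is correct and follows essentially the same line the paper has in mind (reduce everything to the ideal-level statements of Lemma \ref{lem:BasicPropForder} via a support description), and your observation that the $a$ in \eqref{eq:Prop1ForderRees} is point-dependent --- so that the displayed set equality, read naively, does not directly yield a closed support --- is a fair and accurate criticism of the paper's phrasing. Your replacement by the intersection over all $b>0$ is the right fix; the two approaches differ only in presentation, not in substance. One small point worth making explicit, because the lemma is asserted in both the algebraic and the analytic category: the intersection $\bigcap_{b>0} V\bigl(\cF^{\lceil db\rceil-1}(R_b)\bigr)$ is formally an infinite intersection, and in the analytic setting an infinite intersection of analytic subsets need not be an analytic subset. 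You should therefore record that it reduces to the \emph{finite} intersection $\bigcap_{i=1}^{k} V\bigl(\cF^{\lceil d a_i\rceil-1}(R_{a_i})\bigr)$ over the generating degrees $a_1,\ldots,a_k$ of $\cR$: indeed $R_b$ is a sum of products $\prod_i R_{a_i}^{n_i}$ with $\sum_i n_i a_i=b$, and by Lemma \ref{lem:BasicPropForder}(1) and the standard additivity of order under sums of ideals one gets $\ord_{\cF,\pa}(R_b)/b\geq \min_i \ord_{\cF,\pa}(R_{a_i})/a_i$ for every $b>0$ and every $\pa$, so the conditions for $b\notin\{a_1,\ldots,a_k\}$ are implied by the conditions for $b\in\{a_1,\ldots,a_k\}$. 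With that finiteness noted the argument is airtight in both categories; the proofs of items (1) and (2) are correct as written.
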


\subsection{$\cF$-aligned and $(\cR,\cF)$-admissible centers}
Let $(X,\cF,E)$ be a foliated logarithmic variety. We now establish the central notion of blow-up center which is well-posed with respect to the foliation $\cF$ and the divisor $E$:

\begin{definition}[$\cF$-aligned centers, compare {\cite[Section 4.1]{Belthesis}}]\label{def:Faligned} An adapted center $\cA_J$ is {\it $\cF$-aligned} at $\pa\in V(\cA_J)$ if there exist nested-regular coordinates $$(x,y,z)=(x_1,\dots x_l,y_1,\ldots,y_r,z_1,\ldots,z_s)\in \widehat{\cO}_{X,(x)}$$ adapted to $E$ (see Definition \ref{def:ForReg})   and a presentation
\[
\begin{aligned}
\cA_{J}\cdot \widehat{\cO}_{X,(x)} &=\widehat{\cO}_{X,(x)}[x_1t^{1/a_1},\dots x_lt^{1/a_l},y_1t^{1/b_1},\ldots,y_rt^{1/b_r}, z_1t^{1/c_1},\ldots,z_st^{1/c_s}]^{\Int}\\
& =:\widehat{\cO}_{X,(x)}[x_it^{1/a_i},y_jt^{1/b_j}, z_kt^{1/c_k}]_{l,r,s}
\end{aligned}
\]
where $(x,y)$ are free variables, $z$ are $E$-divisorial,
with $a_1\leq\ldots\leq a_l$, $b_1\leq\ldots\leq b_r$ and  $c_1\leq\ldots\leq c_s$, and a presentation
\[
\widehat{\cF}_{(x)}=\widehat{\cO}_{X,(x)}\cdot \cF=\spa(\partial_{x_1},\ldots,\partial_{x_l}, \nabla_1,\ldots,\nabla_m),
\]
called $\cF$-aligned presentation of $\cA_J$ and $\cF$, such that:
 \begin{enumerate}
\item The derivations $\nabla_j$ are independent of {$(x_i,\partial_{x_i})$}, see Definition \ref{def:DerIndependentx}.

\item The Rees algebra $\widehat{\cO}_{X,(x)}[y_jt^{1/b_j}, z_kt^{1/c_k}]_{r,s}^{\Int}$ is $\widehat{\cF}_{(x)}$-invariant, see Definition \ref{def:FinvariantIdeal}.

\end{enumerate}
If $X$ is a coherent analytic space, moreover, then we require the coordinate system to be Euclidean, instead of only nested-regular, see Remark \ref{rk:FormRegAnalytic}.
\end{definition}

Note the different roles of coordinates: the coordinates $x_i$ are $\cF$-transverse, detected by $\cF$-derivatives. The coordinates $y_i$ are the remaining free coordinates, detected by logarithmic derivatives. The coordinates $z_i$ are only detected by non-logarithmic derivatives.

\begin{definition}[$\cF$-invariant center]\label{def:Finvariant}
An $\cF$-aligned center $\cA_J$ at a point $\pa\in X$ is said to be \emph{$\cF$-invariant} at $\pa$ if there exist regular coordinates $(y,z)\in \cO_{X,\pa}$ adapted to $E$, and a presentation
\[
\cA_J=\cO_{X,\pa}[y_jt^{1/b_j},z_k t^{1/c_k}]_{r,s}^{\Int},
\]
with $b_1\leq\ldots\leq b_r$ and $c_1\leq\ldots\leq c_s$ such that $\cA_J$ is $\cF_{\pa}$-invariant.
\end{definition}

In other words, an $\cF$-invariant center is an $\cF$-aligned center with $l=0$.

It will be useful to define a weaker, formal version of $\cF$-aligned centers.

\begin{definition}[Formally $\cF$-aligned center]\label{def:FormalFaligned}
A center $\cA_J$ is \emph{formally $\cF$-aligned} if one can choose a coordinate system $(x,y,z) \in \widehat{\cO}_{X,\pa}$ satisfying the properties in Definition \ref{def:Faligned}. The corresponding presentation of $\cA_J$ will be called {\it formally $\cF$-aligned} at $\pa$.
\end{definition}

\begin{remark}[{On $\cF$-aligned centers}]\hfill
\begin{enumerate}
\item We saw in the introduction that $\cF$-aligned centers do not possess a presentation in regular coordinates. Our definition suffices to guarantee that the center is algebraic and can be blown up on the original variety.

\item We will also work with formal completions. For example, once existence of a center on $X$ is established we can check uniqueness after formal completion. This allows more flexibility with the choice of coordinates and transformations between them. In Corollary \ref{cor:CriteriaFormallyFaligned} below, we provide a simpler criterion for the existence of a formally $\cF$-aligned presentation.

\item Particular versions of  $\cF$-aligned centers were used for log resolution of morphisms $X\to B$  in \cite{ATW-relative}, where $\cF=\cD_{X/B}$, and weighted resolution of varieties \cite{ATW-weighted,Wlodarczyk-cobordant,Quek} for $\cF=\cD_{X}$.
For general foliations, the corresponding notion of $\cF$-aligned centers appeared in \cite{Belthesis,BelRACSAM,BelJA,BelRI}, under the name $\cF$-\emph{admissible}, in the case that $X$ is a smooth coherent analytic space and $\cA_J$ is a \emph{smooth} center.
\end{enumerate}
\end{remark}

We now show that an $\cF$-aligned center always admits a regular (not necessarily $\cF$-aligned) presentation, even if the original $\cF$-aligned presentation was not yet known to come from a regular center.

\begin{proposition}[Regular presentation of $\cF$-aligned centers]\label{prop:RegLiftingFAligned}
Consider a nested-regular coordinate system $(x,y,z)$ centered at a point $\pa$ and a Rees algebra:
\[
\cA = \widehat{\cO}_{X,(x)}[x_it^{1/a_i},y_jt^{1/b_j}, z_kt^{1/c_k}]^{\Int}_{l,r,s}
\]
satisfying properties (1) and (2) of definition \ref{def:Faligned}. There exist regular coordinates $\tilde{x}_i, \, \tilde{y}_j, \tilde{z}_k\in \cO_{X,\pa}$ such that
\begin{enumerate}
 \item $\tilde{x}_1=x_1$ is regular.
 \item $\tilde{x}_i-x_i\in (x_1,\ldots,x_{i-1})\subset \widehat{\cO}_{X,(x)}$ for all $i=2, \ldots,k$.
 \item  $\tilde{y}_j-y_j\in (x_1,\ldots,x_{k})^{\lceil a_k/b_1\rceil}$;
 \item  $\tilde{z}_k = z_k\xi_k(x,y,z)$, where $\xi(0)\neq 0$;
 \end{enumerate}
so that the center $\cA_J=\cO_{X,\pa} [\tilde{x}_it^{1/a_i},\tilde{y}_jt^{1/b_j}, \tilde{z}_kt^{1/c_k}]^{\Int}_{l,r,s}$ is adapted to $E$ and
\(
\cA_J \cdot \widehat{\cO}_{X,(x)}= \cA.
\)
 \end{proposition}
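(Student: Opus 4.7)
The plan is to construct $\tilde x,\tilde y,\tilde z$ in three successive stages using the nested-regular structure, and then verify that the integral closures agree. For the $x_i$'s, set $\tilde x_1 := x_1$ and proceed inductively: assuming regular liftings $\tilde x_1,\ldots,\tilde x_i\in\cO_{X,\pa}$ satisfying property (2) have been chosen, the algebraizability built into the nested-regular hypothesis provides a canonical isomorphism $\cO_{X,\pa}/I_i \cong \widehat{\cO}_{X, I_i}/I_i \widehat{\cO}_{X, I_i}$ with $I_i=(x_1,\ldots,x_i)=(\tilde x_1,\ldots,\tilde x_i)$; lifting the residue class of $x_{i+1}\in\widehat{\cO}_{X, I_i}$ produces the desired $\tilde x_{i+1}\in\cO_{X,\pa}$ with $\tilde x_{i+1}-x_{i+1}\in I_i\widehat{\cO}_{X,(x)}$, whose regularity modulo $I_i$ is inherited from that of $x_{i+1}$.

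For the $y_j$'s, set $N:=\lceil a_l/b_1\rceil$ and choose $\tilde y_j\in\cO_{X,\pa}$ with $\tilde y_j\equiv y_j\pmod{(x)^N\widehat{\cO}_{X,(x)}}$, using the isomorphism $\cO_{X,\pa}/(x)^N\cong\widehat{\cO}_{X,(x)}/(x)^N$; the perturbations lie in the maximal ideal, so $(\tilde x,\tilde y)$ remains a partial regular system. For the $z_k$'s, divisoriality forces $z_k$ to be a formal unit multiple of a regular equation of the corresponding component of $E$ at $\pa$; taking $\tilde z_k\in\cO_{X,\pa}$ to be that defining equation we obtain $\tilde z_k=z_k\xi_k$ with $\xi_k$ a unit as required by property (4), and adaptation of the resulting coordinate system to $E$ is then automatic.

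It remains to verify $\cA_J\cdot\widehat{\cO}_{X,(x)}=\cA$, for which (by symmetry) it suffices to check that each generator of $\cA_J$ lies in $\cA^{\Int}$. Viewing $\cA$ through its associated $\QQ$-ideal $J$, we have $x_j\in J^{1/a_j}\subseteq J^{1/a_i}$ for all $j<i$ because $a_j\leq a_i$, hence $(\tilde x_i-x_i)t^{1/a_i}\in\cA^{\Int}$; any monomial $\prod x_{i'}^{e_{i'}}$ of total degree $N$ lies in $J^{\sum e_{i'}/a_{i'}}\subseteq J^{N/a_l}\subseteq J^{1/b_j}$ thanks to the choice $N=\lceil a_l/b_1\rceil\geq a_l/b_j$, so $(\tilde y_j-y_j)t^{1/b_j}\in\cA^{\Int}$; and $\tilde z_k t^{1/c_k}$ differs from $z_k t^{1/c_k}$ by the unit $\xi_k$. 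The only subtle point (and the main potential obstacle) is precisely this weight bookkeeping that pins down $N=\lceil a_l/b_1\rceil$: the choice is optimal and must use the orderings $a_1\leq\cdots\leq a_l$ and $b_1\leq\cdots\leq b_r$ in a coordinated way, since a smaller $N$ would fail to absorb the $y_j$-perturbations into $\cA^{\Int}$. The rest of the argument reduces to routine applications of algebraizability for the $x$-coordinates and the rigidity of defining equations for the divisorial $z$-coordinates.
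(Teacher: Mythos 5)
Your proof is correct in substance and follows the same path as the paper's: the $\tilde x_i$ are constructed inductively via algebraizability of the nested ideals, the $\tilde y_j$ via $\cO_{X,\pa}/(x)^N\cong\widehat{\cO}_{X,(x)}/(x)^N$ with $N=\lceil a_l/b_1\rceil$ tuned to the weight orderings, and the $\tilde z_k$ via the rectification theorem; the key containments $(\tilde x_i-x_i)t^{1/a_i}\in\cA$ and $(\tilde y_j-y_j)t^{1/b_j}\in\cA$ are verified using monotonicity of integrally closed Rees algebras exactly as in the paper. The two arguments diverge only in the concluding step. The paper verifies these inclusions over $\widehat{\cO}_{X,\pa}$, then invokes the formal automorphism $\phi:x_i\mapsto\tilde x_i,\ y_j\mapsto\tilde y_j,\ z_k\mapsto\tilde z_k$ and uses the Noetherian ascending chain condition to upgrade $\phi(\cA_a)\subseteq\cA_a$ to $\phi(\cA_a)=\cA_a$, giving $\widehat\cA_J=\widehat\cA$ from a one-sided check. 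You instead work directly over $\widehat{\cO}_{X,(x)}$ and appeal to symmetry for the reverse inclusion $\cA\subseteq\cA_J\cdot\widehat{\cO}_{X,(x)}$. That symmetry is genuine but deserves a sentence: one should note that $(x_1,\ldots,x_i)=(\tilde x_1,\ldots,\tilde x_i)$ as ideals of $\widehat{\cO}_{X,(x)}$ (a one-line induction on the triangular perturbations), whence $x_i-\tilde x_i\in(\tilde x_1,\ldots,\tilde x_{i-1})$, $y_j-\tilde y_j\in(\tilde x)^N$, and $z_k=\tilde z_k\xi_k^{-1}$, so the containment argument applies verbatim with roles swapped. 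The automorphism trick buys the paper a one-sided verification; your direct route avoids the detour through $\widehat{\cO}_{X,\pa}$ and the implicit descent along the faithfully flat $\widehat{\cO}_{X,(x)}\to\widehat{\cO}_{X,\pa}$. Both are valid.
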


\begin{remark}[Regular, nested-regular and formal centers and their presentation]\label{rk:CentersPresentation}\hfill

The above Proposition explains why the name we have chosen in definition \ref{def:Faligned} should not lead to confusion. In principle, we must distinguish between the property of a \emph{center} being nested-regular or regular (that is, defined over $\cO_{X,\pa}$ or $\widehat{\cO}_{X,(x)}$) and of the \emph{$\cF$-aligned presentation} being nested-regular or regular. The above Proposition shows that every nested-regular center actually stems from a regular center, so the adjective nested-regular can be safely associated to the presentation.

We extend this convention to our use of the word ``formal": the centers in this work are always assumed to be \emph{regular} (unless we explicitly state otherwise) and the adjective \emph{formal} refers to the  presentation, as in Definitions \ref{def:FormalFaligned} and \ref{def:FCanonicalInvariantCenters}, see Lemmas \ref{lem:ReplacementFoliated} and \ref{lem:BasicPropertyFInvariantCenter}, and in proof of uniqueness in Case I of the inductive Claim \ref{claim:MainInduction}.
\end{remark}

\begin{proof}
First, note that, by Theorem \ref{thm:FRFlow} the divisorial formal coordinates $z_k$ can be replaced up to a unit by regular lifts $\tilde{z}$. Thus the considered lifting $\cA_J$ is adapted to $E$. Next, let us show that:
\[
\widehat{\cA}_J = \cA_J \cdot \widehat{\cO}_{X,\pa}=\mathcal{A} \cdot \widehat{\cO}_{X,\pa} = \widehat{\mathcal{A}},
\]
allowing us to conclude the proof, in particular the existence of $\tilde x_i, \tilde y_i$ follows since the rings have the same completions. In fact, start by noting that the monotonicity condition for integrally closed Rees algebras ($A_{a}\subseteq A_{b}$ if $a\geq b$) allows us to conclude that:

\begin{enumerate}
\item[(i)] $\tilde{x}_it^{1/a_i}-x_it^{1/a_i}\in (x_1,\ldots,x_{i-1})t^{1/a_i}\in \widehat{\cA}$ for $i=1,\ldots,k$.\\

\noindent
Indeed, note that $x_j t^{1/a_i}\in \widehat{\cA}$ for all $j<i$ since ${1/a_i}\leq {1/a_j}$. We conclude from the hypothesis that $\tilde{x}_it^{1/a_i}-x_it^{1/a_i}\in  (x_1,\ldots,x_{i-1})t^{1/a_i}\subset \widehat{\cA}$.

\smallskip

\item[(ii)] $\tilde{y}_jt^{1/b_j}-y_jt^{1/b_j}\in (x_1,\ldots,x_{i-1})^{\lceil a_{k}/b_1 \rceil} t^{1/b_j}\in \widehat{\cA}$ for all $1\leq j \leq s$.\\

\noindent
Indeed, by the above $(x_1,\ldots,x_{k})^{\lceil a_{k}/b_1 \rceil}t^{{\lceil a_{k}/b_1 \rceil}/a_k}\in \widehat{A}$, and since
\(
1/b_j\leq 1/b_1\leq {\lceil a_{k}/b_1 \rceil}/a_k
\)
we obtain from the hypothesis that
\(
\tilde{y}_jt^{1/b_j}-y_jt^{1/b_j}\in (x_1,\ldots,x_{k})^{\lceil a_{k}/b_1 \rceil} t^{1/b_j}\in \widehat{A}_{\pa}.
\)
\end{enumerate}

To finish, consider a formal automorphism $\phi: \widehat{\cO}_{X,\pa} \to \widehat{\cO}_{X,\pa}$ mapping  $x_i\mapsto \tilde{x}_i$, $y_j\mapsto \tilde{y}_j$, $z_k\mapsto \tilde{z}_k$. By (i) and (ii), we get that $\phi(\cA_{a})\subseteq \cA_{a}$, implying  that $\phi(\cA_{a})= \cA_{a}$ for every $a\in \mathbb{N}$; integral closure implies the same for $a\in \mathbb{Q}_{\geq 0}$. Thus it lifts to an automorphism of $\widehat{\cA}$ so that
\(
\widehat{\cA} = \phi(\widehat{\cA})= \widehat{\cO}_{X,\pa}[\tilde{x}_it^{1/a_i},\tilde{y}_jt^{1/b_j}, \tilde{z}_kt^{1/c_k}]^{\Int}_{l,r,s} = \widehat{\cA}_J,
\)
finishing the proof.
\end{proof}

Now that we have established the centers which we will allow, we may define the notion of admissibility with respect to $\cR$:

\begin{definition}[$(\cR,\cF)$-admissible centers]\label{def:FadmissibleCenter} We say that a center $\cA_J$ is \emph{admissible for $(\cR,\cF)$ at $\pa\in X$,} if $\cA_J$ is $\cF$-aligned and
\(
\cR \cdot \cO_{X,\pa} \subset \cA_J \cdot \cO_{X,\pa}.
\)
\end{definition}

Since every center $\cA_J$ is integrally closed, it follows that a center $\cA_J$ is $(\cR,\cF)$-admissible if and only if it is $(\cR^{\Int},\cF)$-admissible. It will be convenient to work with integrally closed Rees algebras later on.

\subsection{$\cF$-maximal contact and $\cF$-coefficient Rees algebras}\label{Sec:maximal-contact}

We now turn to the definition of two key objects in resolution of singularities, namely maximal contact hypersurfaces and the coefficient Rees algebra --- a replacement for coefficient ideals. We start by the maximal contact:

\begin{definition}[$\cF$-maximal contact for Rees algebras {(compare \cite[Section~3.6]{Wlodarczyk-cobordant})}]\label{def:FMaximalContactRees}
Let $\cR$ be a Rees algebra over $\cO_{X,\pa}$ such that $\ord_{\cF,\pa}(\cR) = a<\infty$. By an $\cF$-maximal contact we mean a local parameter $x_1$ with $\cF(x_1)_\pa = \cO_{X,\pa}$
 such that there exists $b\in \mathbb{N}$ such that $ab\in \mathbb{N}$ and $x_1 \in \cF^{ab-1}(R_b)$.
\end{definition}

This generalizes the notion of maximal contact of an ideal which appears in many previous works, beginning with \cite{Giraud, AHV-maximal}. Note that a maximal contact always exists (in characteristic 0) by the definition of $\ord_{\cF,\pa}$, see Definition \ref{def:ForderRees}. The pullback of a maximal contact under a smooth morphism or field extension is maximal contact by  Lemma \ref{Lem:functorial}. Moreover, its definition immediately implies that:

\begin{lemma}[Local property of $\cF$-maximal contact]\label{lem:FMaximalContactNeighborhood}
Let $\cR$ be a finitely generated Rees algebra over $\cO_X$ such that $\ord_{\cF,\pa}(\cR)<\infty$, and Let $x_1$ be a maximal contact element of $\cR_{\pa} = \cR \cdot \cO_{X,\pa}$ and consider an open neighborhood $U$ of $\pa$ over which $x_1$ admits a representative. Then
\[
\ord_{\cF,\pb}(\cR) < \ord_{\cF,\pa}(\cR),\quad \forall \pb\in U\setminus V(x_1).
\]
\end{lemma}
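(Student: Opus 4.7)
The plan is to unwind the definition of $\cF$-maximal contact and use the upper semicontinuity of the $\cF$-order (Lemma \ref{lem:BasicPropForder}, or rather its Rees-algebra avatar Lemma \ref{lem:BasicPropForderRees}), testing it on the single graded piece $R_b$ that witnesses $x_1$ being a maximal contact.

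Write $a:=\ord_{\cF,\pa}(\cR)$. By Definition \ref{def:FMaximalContactRees} there exists $b\in \NN$ with $ab\in \NN$ such that
\[
x_1 \in \cF^{ab-1}(R_b).
\]
After possibly shrinking $U$, this inclusion holds as sections over $U$: since $R_b$ and $\cF^{ab-1}(R_b)$ are coherent ideal sheaves and the witnessing equality $x_1 = \sum \partial_{i_1}\cdots\partial_{i_{ab-1}}(f_i)\cdot g_i$ that proves membership at the stalk $\cO_{X,\pa}$ is a finite expression, it extends to a neighborhood.

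Now fix any $\pb\in U\setminus V(x_1)$. Then $x_1$ is a unit in $\cO_{X,\pb}$, so the ideal $\cF^{ab-1}(R_b)_\pb$ contains a unit and therefore equals $\cO_{X,\pb}$. By Definition \ref{def:Forder} this gives
\[
\ord_{\cF,\pb}(R_b) \leq ab-1.
\]
Feeding this into Definition \ref{def:ForderRees} and using that $R_b$ is one of the graded pieces over which the minimum defining $\ord_{\cF,\pb}(\cR)$ is taken,
\[
\ord_{\cF,\pb}(\cR) \;\leq\; \frac{\ord_{\cF,\pb}(R_b)}{b} \;\leq\; \frac{ab-1}{b} \;<\; a \;=\; \ord_{\cF,\pa}(\cR),
\]
which is exactly the claim. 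The main point to be careful about—really the only one—is that the stalk-level membership $x_1\in \cF^{ab-1}(R_b)_\pa$ propagates to sections on a neighborhood; this is routine coherence of the sheaves involved, and we fold it into the choice of $U$ at the outset.
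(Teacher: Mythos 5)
Your proof is correct, and it is exactly the argument the paper has in mind: the paper states this lemma with the phrase ``its definition immediately implies that'' and gives no separate proof, so your job was to unwind the definitions, which you do carefully. The key chain — the membership $x_1\in\cF^{ab-1}(R_b)$ propagates to a neighborhood by coherence, at a point $\pb$ where $x_1$ is a unit the ideal $\cF^{ab-1}(R_b)_\pb$ contains a unit so $\ord_{\cF,\pb}(R_b)\leq ab-1$, hence $\ord_{\cF,\pb}(\cR)\leq(ab-1)/b<a$ — is exactly right and is the intended reading of ``immediately implies.''

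One very small remark for polish: you might note that the hypothesis $\ord_{\cF,\pa}(\cR)<\infty$ together with the existence of the maximal contact forces $ab\geq 1$, so $ab-1\in\mathbb{Z}_{\geq 0}$ and $\cF^{ab-1}$ is well-defined; this is implicit in Definition \ref{def:FMaximalContactRees} but worth flagging when unwinding it. Otherwise nothing to add.
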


Next, in order to work by induction with respect to maximal contact hypersurfaces, we need to consider:

\begin{definition}[$\cF$-coefficient Rees algebra, {see \cite[Def 3.5.2]{Wlodarczyk-cobordant}}]\label{def:CoeffRA}
Consider a Rees algebra
\[
\cR=\cO_{X,\pa}[f_1t^{b_1},\ldots, f_st^{b_s}]
\]
such that $\ord_{\cF,\pa}(\cR)= a<\infty$. We define the coefficient Rees algebra of $(\cR,\cF)$ by:
\[
\cC(\cR,\cF)= \cC(\cR,\cF,a) =\cO_{X,\pa}[ \partial_1 \cdots \partial_{\alpha}(f)t^{b-\alpha/a}\,\,\mid \,\partial_1,\ldots,\partial_{\alpha}\in \cF_{\pa},\, f\in R_b, \, \alpha< ab]^{\Int}.
\]
\end{definition}

This is a generalization of the differential Rees algebra appearing in  \cite{Encinas-Villamayor-Rees} as a replacement of the coefficient ideals.

Note that $\cC(\cR,\cF)$ is a finitely generated Rees algebra, since both $\cR$ and $\cF_{\pa}$ are finitely generated. By Lemma \ref{Lem:functorial} the formation of $\cC(\cR,\cF)$ is compatible with field extensions and smooth morphisms.

\subsubsection{Splitting of Coefficients}

We now establish three fundamental properties of coefficient Rees algebras, analogously to the usual properties of coefficient ideals. We start by:

\begin{lemma}[$\cF$-coefficient Rees algebra of centers, {compare {\cite[Lemma 3.4.3]{Wlodarczyk-cobordant}}}]\label{lem:CoeffcientReesCenters}
Let $\cA_J$ be an $\cF$-aligned center over $\cO_{X,\pa}$. Then $\cC(\cA_J, \cF) = \cA_J$.
\end{lemma}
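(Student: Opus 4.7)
My plan is to reduce everything to a single \emph{key estimate}: for every derivation $\partial \in \cF_\pa$ and every $b \in \QQ_{\geq 0}$,
\[
\partial\bigl( (\cA_J)_b \bigr) \;\subseteq\; (\cA_J)_{b - 1/a_1},
\]
where $a_1 := \min_i a_i$ is the smallest weight attached to the $\cF$-transverse coordinates in the $\cF$-aligned presentation. Granted this estimate, iteration will yield $\cF^n((\cA_J)_b) \subseteq (\cA_J)_{b - n/a_1}$, and together with the matching bound $\cF(R_{1/a_1}) \ni \partial_{x_1}(x_1) = 1$ this will compute $a := \ord_{\cF,\pa}(\cA_J) = a_1$. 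The inclusion $\cA_J \subseteq \cC(\cA_J,\cF)$ will then be trivial (take $\alpha = 0$ in Definition \ref{def:CoeffRA}), and for the reverse any iterated derivative $\partial_1 \cdots \partial_\alpha(f)$ with $f \in (\cA_J)_b$ and $\alpha < ab$ will lie in $(\cA_J)_{b - \alpha/a}$ by inductive application of the key estimate, so $\partial_1 \cdots \partial_\alpha(f)\, t^{b - \alpha/a} \in \cA_J$.

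To prove the key estimate, I would pass to $\widehat{\cO}_{X,\pa}$, where the Rees algebra becomes monomial. Fixing the $\cF$-aligned presentation
\[
\widehat{\cF}_{(x)} = \spa\bigl(\partial_{x_1}, \ldots, \partial_{x_l}, \nabla_1, \ldots, \nabla_m\bigr)
\]
from Definition \ref{def:Faligned}, I would consider the monomial valuation $v$ on $\widehat{\cO}_{X,\pa}$ given by $v(x_i) = 1/a_i$, $v(y_j) = 1/b_j$, $v(z_k) = 1/c_k$. By the standard description of integral closures of monomial Rees algebras,
\[
(\cA_J \cdot \widehat{\cO}_{X,\pa})_b \;=\; \{f \in \widehat{\cO}_{X,\pa} \mid v(f) \geq b\}.
\]
Since $\cO_{X,\pa} \to \widehat{\cO}_{X,\pa}$ is faithfully flat and respects the grading, it will suffice to verify the key estimate on the completion; and since every $\partial \in \cF_\pa$ extends to a derivation of $\widehat{\cO}_{X,\pa}$ lying in $\widehat{\cF}_{(x)}$ and so writes as a $\widehat{\cO}_{X,(x)}$-linear combination of the $\partial_{x_i}$ and $\nabla_k$ with $v \geq 0$ coefficients, I only need to verify the estimate on these generators.

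For $\partial_{x_i}$, a monomial $m = x^\alpha y^\beta z^\gamma$ with $\alpha_i \geq 1$ satisfies $v(\partial_{x_i}(m)) = v(m) - 1/a_i$, and by linearity $v(\partial_{x_i}(f)) \geq v(f) - 1/a_i \geq v(f) - 1/a_1$ for any $f$. For $\nabla_k$, the independence condition $\nabla_k(x_i) = 0$ from Definition \ref{def:DerIndependentx} lets me expand $f = \sum_\alpha x^\alpha f_\alpha(y,z)$ and write $\nabla_k(f) = \sum_\alpha x^\alpha \nabla_k(f_\alpha)$; the $\cF$-invariance of the sub-Rees algebra $\widehat{\cO}_{X,(x)}[y_j t^{1/b_j}, z_k t^{1/c_k}]^{\Int}$ (condition (2) of Definition \ref{def:Faligned}) says exactly that $\nabla_k$ preserves the $(y,z)$-part of $v$, so $v(\nabla_k(f)) \geq v(f)$.

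The hard part will be handling arbitrary (not just monomial) elements of the integrally closed ideals $(\cA_J)_b$ and keeping the bookkeeping honest across the nested rings $\cO_{X,\pa} \subset \widehat{\cO}_{X,(x)} \subset \widehat{\cO}_{X,\pa}$. Both concerns should dissolve via the monomial-valuation description of the integrally closed ideals on $\widehat{\cO}_{X,\pa}$ together with faithful flatness, which reduce everything to term-by-term monomial bookkeeping.
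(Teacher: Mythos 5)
Your proposal is correct and follows essentially the same route as the paper's proof: pass to the completion, use the $\cF$-aligned presentation, and verify that the $\partial_{x_i}$ shift the valuation by at most $1/a_1$ while the $\nabla_k$ preserve it. Your monomial-valuation bookkeeping makes explicit two points the paper treats tersely --- that $\ord_{\cF,\pa}(\cA_J)=a_1$, and that the $\nabla_k$ preserve the whole of $\widehat{\cA}_J$ (not just its $(y,z)$-part), via the expansion $f=\sum_\alpha x^\alpha f_\alpha(y,z)$ --- but the underlying argument is the same.
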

\begin{proof}
Note that it is enough to prove the result formally. By the definition of $\cF$-aligned center \ref{def:Faligned}, there exists formal presentations:
\[
\begin{aligned}
\widehat{\cA}_{J}&=\widehat{\cO}_{X,\pa}[x_it^{1/a_i},y_jt^{1/b_j}, z_kt^{1/c_k}]^\Int_{l,r,s}.\\
\widehat{\cF}_{\pa}&=\spa_{\widehat{\cO}_{X,\pa}}(\partial_{x_1},\ldots,\partial_{x_l},\nabla_1,\ldots,\nabla_m)
\end{aligned}
\]
where $\nabla_j$ are independent of {$(x_i,\partial_{x_i})$} and preserve $\widehat{\cA}_{J}$. Therefore, given an element $g(x,y,z)t^{b} \in \widehat{\cA}_{J}$, we know that $\nabla_j(g(x,y,z)t^{b}) \in \widehat{\cA}_{J}$, and we conclude that $t^{-1/a_1}\nabla_j(g(x,y,z)t^{b}) \in \widehat{\cA}_{J}$ since $-1/a_1$ is negative, see Remark \ref{rk:ReExtendedNonExtended}. Next, fixing $j=1,\ldots, k$, we claim that $t^{-1/a_1} \partial_{x_j}$ preserves $\widehat{\cA}_{J}$. In fact, it trivially preserves the $y$ and $z$-coordinates and all $x_it^{1/a_i}$ with $i\neq j$. Indeed, given a monomial $x^{\alpha} t^{b} = x_1^{\alpha_1} \cdots x_k^{\alpha_k} t^{b} \in \widehat \cA_{J,b}$, we have that
\(
\sum_{i=1}^{k} \alpha_i/a_i \geq b.
\)
Now, we have that
\(
t^{-1/a_1} \partial_{x_j} x^{\alpha} t^{b} = x^{\alpha}/x_j t^{b -1/a_1},
\)
which belongs to $\widehat \cA_J$ since:
\[
b - 1/a_1 \leq  b - 1/a_j \leq \sum_{i\neq j}^{k} \alpha_i/a_i  + (\alpha_j -1)/a_j.
\]
We conclude from the definition that $\widehat{\cA}_J = \cC(\widehat{\cA_J}, \cF)  = \widehat{\cC(\cA_J, \cF)} $.
\end{proof}

The following, more general  result is valid for general Rees algebras, and establishes a desired presentation of coefficient Rees algebras via $\cF$-maximal contacts.  {Compare \cite[Proposition 4.4.1]{ATW-weighted}} and {\cite[Section~3.5.4]{Wlodarczyk-cobordant}}:

\begin{proposition}[Splitting of coefficient Rees algebra] \label{prop:CoefficientSplitting}
Let $\cR$ be a finitely generated Rees algebra over $\cO_{X,\pa}$ such that $\ord_{\cF,\pa}(\cR)=a<\infty$. Let $x_1$ be a maximal contact element of $\cR$ (see Definition \ref{def:FMaximalContactRees}) and $\partial_1 \in \cF_{\pa}$ be a derivation such that $\partial_1(x_1)$ is a unit. Denote by $H = V(x_1)$. Then $\cC(\cR,\cF)$ contains the element $x_1t^{1/a}$ and the Rees algebra $\cC_{(x_1)} := \cC(\cR,\cF)\cdot \widehat{\cO}_{X,(x_1)}$ admits a splitting:
\[
\cC_{(x_1)} = \widehat{\cO}_{X,(x_1)}[ x_1t^{1/a} , \ell (\cC_{|H}) ]^{\Int},
\]
where $\ell=\ell_{x_1,\partial_1}$ is the lifting associated to $(x,\partial_1)$, see Definition \ref{def:LiftingAssociatedToD}.
\end{proposition}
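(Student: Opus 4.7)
My plan is as follows.

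For the first statement, I would unpack the definition of $\cF$-maximal contact: since $x_1 \in \cF^{ab-1}(R_b)$ and $\cF^{ab-1}(R_b)$ is the ideal generated by iterated derivatives $\partial_{j_1}\cdots\partial_{j_{ab-1}}(f)$ with $f\in R_b$ and $\partial_{j_i}\in\cF$, Definition \ref{def:CoeffRA} places each such term in graded piece $b-(ab-1)/a = 1/a$ of $\cC(\cR,\cF)$, so $x_1 t^{1/a}\in\cC(\cR,\cF)$.

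For the splitting I would apply Theorem \ref{thm:splittingFoliation} to $(x_1,\partial_1)$, producing nested-regular coordinates $(x_1,y)$ with $\partial_1 = u\,\partial_{x_1}$ for a unit $u$, the lifting $\ell = \ell_{x_1,\partial_{x_1}} = \ell_{x_1,\partial_1}$ (same kernel), and the decomposition $\widehat{\cF}_{(x_1)} = \spa(\partial_{x_1}, d\ell(\cF_{|H}))$ with lifts $\nabla_s := d\ell(\bar\nabla_s)$ that annihilate $x_1$ and commute with $\partial_{x_1}$. The identity $\partial_{x_1} = u^{-1}\partial_1$ will allow iterated $\partial_{x_1}$-derivatives of any $g$ to be rewritten as $\widehat{\cO}_{X,(x_1)}$-combinations of iterated $\partial_1$-derivatives.

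For the inclusion $\cC' := \widehat{\cO}_{X,(x_1)}[x_1 t^{1/a}, \ell(\cC_{|H})]^{\Int} \subseteq \cC_{(x_1)}$, I would prove by descending induction on $\alpha \in \{0,\ldots,ab-1\}$ that $\ell(\bar g)\,t^{b-\alpha/a} \in \cC_{(x_1)}$ for every generator $\bar g := [\partial_{i_1}\cdots\partial_{i_\alpha}(f)]_H$ of $\cC_{|H}$ at degree $b-\alpha/a$ (with $f\in R_b$, $\partial_{i_j}\in\cF$). Writing $\partial_{i_1}\cdots\partial_{i_\alpha}(f) = \ell(\bar g) + x_1\,g^*$ in $\widehat{\cO}_{X,(x_1)}$, one has
\[
\ell(\bar g)\,t^{b-\alpha/a} \;=\; \partial_{i_1}\cdots\partial_{i_\alpha}(f)\,t^{b-\alpha/a} \;-\; (x_1 t^{1/a})\cdot g^*\,t^{b-(\alpha+1)/a},
\]
the first summand lying in $\cC_{(x_1)}$ by construction. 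For $\alpha = ab-1$ the second summand is simply $(x_1 t^{1/a})\cdot g^*\in \cC_{(x_1)}$ as $g^*\in\widehat{\cO}_{X,(x_1)}$. For $\alpha<ab-1$, the Taylor coefficients of $g^*$ in $x_1$ are of the form $\partial_{x_1}^{k+1}(\partial_{i_1}\cdots\partial_{i_\alpha}(f))|_H/(k+1)!$, which via the identity $\partial_{x_1} = u^{-1}\partial_1$ become $\cO_{H,\pa}$-combinations of restrictions $\partial_1^j(\partial_{i_1}\cdots\partial_{i_\alpha}(f))|_H$ for $1\leq j\leq k+1$, i.e., of generators of $\cC_{|H}$ at strictly larger indices $\alpha+j > \alpha$; by the monotonicity of integrally closed Rees algebras these generators sit in every lower $t$-degree, so the descending-induction hypothesis, combined with $(x_1 t^{1/a})^k \in \cC_{(x_1)}$, absorbs the entire correction into $\cC_{(x_1)}$.

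For the reverse inclusion $\cC_{(x_1)}\subseteq\cC'$, a generator $\partial_{i_1}\cdots\partial_{i_\alpha}(f)\,t^{b-\alpha/a}$ is rewritten by expanding each $\partial_{i_j}\in\widehat{\cF}_{(x_1)}$ in the basis $\{\partial_{x_1},\nabla_s\}$; the independence of $\nabla_s$ from $(x_1,\partial_{x_1})$ and the involutivity of $\cF_{|H}$ force all commutators arising while normalising the order of derivations to lie in $\spa(\nabla_{s''})$, reducing everything to terms $\partial_{x_1}^{p}\nabla_{s_1}\cdots\nabla_{s_q}(f)\,t^{b-(p+q)/a}$ with $p+q\leq\alpha$, each of which is handled by the Taylor analysis of the previous paragraph applied to $\nabla_{s_1}\cdots\nabla_{s_q}(f)\in R_b$. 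The main obstacle will be the bookkeeping: tracking the non-leading terms produced by the relation $\partial_{x_1}^k = u^{-k}\partial_1^k + (\text{lower order in }\partial_1)$, controlling the commutator corrections in the basis change, and verifying that the integral closures on both sides absorb the residual terms so that the degree accounting aligns across all the inductive steps.
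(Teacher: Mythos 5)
Your strategy coincides with the paper's: invoke Theorem~\ref{thm:splittingFoliation} to split $\widehat\cF_{(x_1)}$ as $\spa(\partial_{x_1},\nabla_1,\ldots,\nabla_q)$ with the $\nabla_s$ independent of $(x_1,\partial_{x_1})$, then Taylor-expand in $x_1$ and separate the $x_1 t^{1/a}$-contribution from the $\ell(\cC_{|H})$-contribution. The first part (that $x_1 t^{1/a}\in\cC(\cR,\cF)$) and the overall architecture are the paper's. The extra care you devote to the inclusion $\cC'\subseteq\cC_{(x_1)}$ is not wrong (the paper treats it as clear), but beware that your descending induction re-introduces Taylor coefficients $\partial_{x_1}^{k+1}(\cdots)|_H$ of order $\alpha+k+1\geq ab$, so you need the same cut-off-and-absorb mechanism there as in the hard direction.

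In the hard direction $\cC_{(x_1)}\subseteq\cC'$ a few points need repair. First, the assertion that $\nabla_{s_1}\cdots\nabla_{s_q}(f)$ lies in $R_b$ is false --- it is a $q$-th order derivative of $f\in R_b$ and generally lies outside $R_b$; what actually enters are the flat Taylor coefficients $\nabla_{s_1}\cdots\nabla_{s_q}(f_j)$ from $f=\sum_j x_1^j f_j$. Second, the ``Taylor analysis of the previous paragraph'' does not transfer as stated: there it was applied to the correction $g^*$, while here the expansion must be of $f$ itself, and the finiteness comes from a different mechanism --- for $j-\beta\geq ab-\alpha$ the terms $x_1^{j-\beta}t^{b-\alpha/a}$ already lie in $\widehat\cO_{X,(x_1)}[x_1 t^{1/a}]^{\Int}$ and are discarded, after which one applies $(t^{-1/a}\partial_{x_1})^c$ for $c=0,\ldots,ab-\alpha-1$ and restricts to $H$ to identify each of the finitely many remaining $t^{b-(\alpha+c)/a}\nabla_{\beta+1}\cdots\nabla_\alpha(f_{\beta+c})$ as an element of $\cC_{|H}$. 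Third, expanding each $\partial_{i_j}$ in the basis $\{\partial_{x_1},\nabla_s\}$ and tracking the resulting coefficient derivatives and commutator corrections is an avoidable complication: by the Leibniz rule and the monotonicity of integrally closed Rees algebras (Remark~\ref{rk:ReExtendedNonExtended}), the algebra $\cC_{(x_1)}$ is already generated by $\partial_1\cdots\partial_\alpha(f)t^{b-\alpha/a}$ with each $\partial_i$ drawn directly from the fixed set $\{\partial_{x_1},\nabla_1,\ldots,\nabla_q\}$, so one may assume this from the outset, as the paper does, and the commutation $[\partial_{x_1},\nabla_s]=0$ then permits sorting into the normal form $\partial_{x_1}^\beta\nabla_{\beta+1}\cdots\nabla_\alpha(f)$ with no correction terms at all.
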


\begin{proof}
By definition of $\cF$-maximal contact, there exists $b\in \mathbb{N}$ such that $x_1 \in \cF^{ab-1}(R_b)$ and $\ord_{\cF,\pa}(x_1)=1$. The first claim of the Lemma now follows by observing that
\[
 x_1 t^{1/a}\in (\cF t^{-1/a})^{ab-1}(R_bt^b)\subset \cC(\cR,\cF).
\]
Next, by Theorem \ref{thm:splittingFoliation}, we may complete the coordinate $x_1$ into a nested-regular coordinate system $(x_1,y_1,\ldots,y_{n-1}) \in \widehat{\cO}_{X,(x_1)}$ such that $\partial_1 \sim \partial_{x_1} \in \cF \cdot \widehat{\cO}_{X,(x_1)}$ and there exists a lifting $\ell_{x_1,\partial_1}$ associated to $(x_1,\partial_1)$. Since $x_1t^{1/a}$ and $\ell_{x_1,\partial_1}(\cC_{|H})$ are clearly contained in $\cC_{(x_1)}$, we have the inclusion
\(
 \widehat{\cO}_{X,(x_1)}[ x_1t^{1/a} , \ell_{x_1,\partial_1}(\cC_{|H})  ]^{\Int} \subset \cC_{(x_1)}.
\)
We now  prove the other inclusion.

    By Theorem \ref{thm:splittingFoliation},  $\widehat{\cF}_{(x_1)}$ admits a system of generators $\{\partial_{x_1},\nabla_1,\ldots,\nabla_q\}$ where $\nabla_i$ is independent of $(x_1,\partial_{x_1})$. Since it is enough to show that a set of generators of $\cC_{(x_1)}$ belongs to $\widehat{\cO}_{X,(x_1)}[ x_1t^{1/a} , \ell_{x_1,\partial_1}(\cC_{|H})  ]^{\Int}$, we only need to show that every element:
\[
{t^{b-\alpha/a} \partial_1 \cdots \partial_{\alpha}(f)\,\text{ such that } \, f\in R_b, \, \alpha< ab,\, \partial_1, \ldots, \partial_{\alpha} \in \{\partial_{x_1},\nabla_1,\ldots,\nabla_q\}},
\]
belongs to $\widehat{\cO}_{X,(x_1)}[ x_1t^{1/a} , \ell_{x_1,\partial_1}(\cC_{|H})  ]^{\Int}$. {To this end, fix $b \in \mathbb{N}$, $\alpha <ab$, $f\in R_b$ and derivations $\partial_1, \ldots, \partial_{\alpha} \in \{\partial_{x_1},\nabla_1,\ldots,\nabla_q\}$. Since $[\partial_{x_1},\nabla_i] \equiv 0$, we may suppose without loss of generality that there exists $\beta \leq \alpha$ such that $\partial_i = \partial_{x_1}$ for $i\leq \beta$ and $\partial_i \in \{\nabla_1,\ldots,\nabla_q\}$ for $i>\beta$.} We claim that:
\[
g\,t^{b-\alpha/a}\  := \ \partial_{x_1}^{\beta}({\partial_{\beta+1} \cdots \partial_{\alpha}}(f))\, t^{b-\alpha/a} \ \ \in\ \  \widehat{\cO}_{X,(x_1)}[ x_1t^{1/a} , \ell_{x_1,\partial_1}(\cC_{|H})  ]^{\Int}.
\]
Indeed, since $f\in \widehat{\cO}_{X,(x_1)}$, it admits an expansion:
\[
f = \sum_{j=0}^{\infty} x_1^{j} f_j(y), \quad f_j(y) \in \ell_{x_1,\partial_1}(\cO_{H,\pa}).
\]
It follows that:
\[
\begin{aligned}
g\,t^{b-\alpha/a} &= t^{b-\alpha/a} \sum_{j=\beta}^{\infty} \frac{j!}{(j-\beta)!} x_1^{j-\beta} {\partial_{\beta+1} \cdots \partial_{\alpha}}(f_j(y))
\end{aligned}
\]
Note now that if $j-\beta \geq ab-\alpha$, then $x_1^{j-\beta}t^{b-\alpha/a} \in {\widehat{\cO}}_{X,(x_1)}[x_1t^{1/a}]^{\Int}$. Thus:
\[
\begin{aligned}
g\,t^{b-\alpha/a} &= t^{b-\alpha/a} \sum_{j=\beta}^{ab-\alpha + \beta-1} \frac{j!}{(j-\beta)!} x_1^{j-\beta} {\partial_{\beta+1} \cdots \partial_{\alpha}}(f_j(y)) + \widetilde{F},\\
&=  \sum_{j=\beta}^{ab-\alpha + \beta-1} \frac{j!}{(j-\beta)!} ({t^{1/a}}x_1)^{j-\beta} { t^{b-(\alpha+j)/a} \partial_{\beta+1} \cdots \partial_{\alpha}}(f_j(y)) + \widetilde{F},
\end{aligned}
\]
where $\widetilde{F} \in {\widehat{\cO}}_{X,(x_1)}[x_1t^{1/a}]^{\Int}$. {It, therefore, remains to show that:}
\[
{ t^{b-(\alpha+j)/a} \partial_{\beta+1} \cdots \partial_{\alpha}(f_j(y)) \in \widehat{\cO}_{X,(x_1)}[ \ell_{x_1,\partial_1}(\cC_{|H})  ]^{\Int},}
\]
{for $j=\beta,\ldots, ab-\alpha+\beta-1$. Indeed,} since ${\widehat{\cO}}_{X,(x_1)}[x_1t^{1/a}]^{\Int}$ is $t^{-1/a}\partial_{x_1}$-invariant:
\[
\begin{aligned}
(t^{-1/a}\partial_{x_1})^{c} g\,t^{b-\alpha/a} &= t^{b-(\alpha+c)/a} \sum_{j=\beta+c}^{ab-\alpha + \beta-1} \frac{j!}{(j-\beta-c)!} x_1^{j-\beta-c} {\partial_{\beta+1} \cdots \partial_{\alpha}}(f_j(y)) + \widetilde{F}_c
\end{aligned}
\]
for every $c \leq  ab-\alpha-1$, where $\widetilde{F}_c \in {\widehat{\cO}}_{X,(x_1)}[x_1t^{1/a}]^{\Int}$. Since each of these elements belong to $\cC_{(x_1)}$, by definition, we conclude that their restriction to $H$, that is,
\[
t^{b-(\alpha+c)/a}{\partial_{\beta+1} \cdots \partial_{\alpha}}(f_{\beta+c}(y)), \quad \text{ for } c=0,\ldots,ab-\alpha -1,
\]
belongs to $\cC_{(x_1)|H} = \cC_{|H}$. Therefore, \(g\,t^{b-\alpha/a} \in \widehat{\cO}_{X,(x_1)}[ x_1t^{1/a} , \ell_{x_1,\partial_1}(\cC_{|H})  ]^{\Int}\) finishing the proof.
\end{proof}

We finish this section by stating the third property of coefficient Rees algebras, which relates admissible centers of $\cR$ and $\cC(\cR,\cF)$,  compare {{\cite[Lemma 3.5.11]{Wlodarczyk-cobordant}}}. It is analogous to established results about coefficient ideals:

\begin{lemma}[Admissibility condition of coefficient Rees algebras]\label{lem:CoefficientAdmissible}
Let $\cR$ be a finitely generated Rees algebra over $\cO_{X,\pa}$ and $\cA_{J}$ be an $\cF$-aligned center. Suppose that $\ord_{\cF,\pa}(\cR) = \ord_{\cF,\pa}(\cA_{J}) = a <\infty$ and that the element $x_1$ is an $\cF$-maximal contact for $\cR$ and $\cA_{J}$. Let $H = V(x_1)$. Then:
\[
(1) \ \cR\subseteq  \cA_J \iff (2) \ \cC(\cR,\cF) \subseteq   \cA_J \iff (3) \ \cC(\cR,\cF)_{|H} \subseteq \cA_{J|H}.
\]
\end{lemma}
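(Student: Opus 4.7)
The plan is to establish the three inclusions as a chain $(1)\Rightarrow(2)\Rightarrow(3)\Rightarrow(2)\Rightarrow(1)$, where the two non-trivial steps are $(1)\Rightarrow(2)$ and especially $(3)\Rightarrow(2)$. Throughout, I use that by hypothesis $\ord_{\cF,\pa}(\cR)=\ord_{\cF,\pa}(\cA_J)=a$, so both coefficient algebras $\cC(\cR,\cF)$ and $\cC(\cA_J,\cF)$ are constructed using the \emph{same} rational exponent shift $1/a$ in Definition \ref{def:CoeffRA}.

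First I would dispose of the easy implications. The inclusion $\cR\subseteq\cC(\cR,\cF)$ is immediate from the defining formula (take $\alpha=0$), so $(2)\Rightarrow(1)$ is automatic. Similarly, restriction to $H=V(x_1)$ is order-preserving, giving $(2)\Rightarrow(3)$. For $(1)\Rightarrow(2)$, observe that the construction $\cR\mapsto\cC(\cR,\cF,a)$ is monotone in $\cR$ at a fixed $a$: every generator $\partial_1\cdots\partial_\alpha(f)t^{b-\alpha/a}$ with $f\in R_b$ also arises from $\cA_{J,b}$ if $R_b\subseteq \cA_{J,b}$. Thus $\cR\subseteq \cA_J$ forces $\cC(\cR,\cF)\subseteq \cC(\cA_J,\cF)$, and then Lemma \ref{lem:CoeffcientReesCenters} identifies $\cC(\cA_J,\cF)=\cA_J$, yielding $(2)$.

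The main obstacle is $(3)\Rightarrow(2)$: restriction to $H$ loses information along the $x_1$-direction, and we must recover the inclusion of ambient algebras from the restricted one. The key tool is the splitting result, Proposition \ref{prop:CoefficientSplitting}, applied to \emph{both} $\cR$ and $\cA_J$ using the common maximal contact $x_1$ and a derivation $\partial_1\in\cF_\pa$ with $\partial_1(x_1)$ a unit. This produces, after passing to the nested-regular completion $\widehat\cO_{X,(x_1)}$ and using the lifting $\ell=\ell_{x_1,\partial_1}$, the identities
\[
\cC(\cR,\cF)\cdot\widehat\cO_{X,(x_1)}=\widehat\cO_{X,(x_1)}\bigl[x_1t^{1/a},\,\ell(\cC(\cR,\cF)_{|H})\bigr]^{\Int},
\]
\[
\cA_J\cdot\widehat\cO_{X,(x_1)}=\cC(\cA_J,\cF)\cdot\widehat\cO_{X,(x_1)}=\widehat\cO_{X,(x_1)}\bigl[x_1t^{1/a},\,\ell(\cA_{J|H})\bigr]^{\Int},
\]
where for the second equality I again invoke Lemma \ref{lem:CoeffcientReesCenters} to rewrite $\cA_J$ as its own coefficient algebra before splitting. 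Granting $(3)$, namely $\cC(\cR,\cF)_{|H}\subseteq \cA_{J|H}$, the lifting $\ell$ transports this inclusion into $\widehat\cO_{X,(x_1)}$, so the generating sets on the right are nested, and passing to the generated integrally closed Rees algebras preserves the inclusion. This yields $\cC(\cR,\cF)\cdot\widehat\cO_{X,(x_1)}\subseteq \cA_J\cdot\widehat\cO_{X,(x_1)}$.

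To conclude $\cC(\cR,\cF)\subseteq\cA_J$ at the level of $\cO_{X,\pa}$, I would invoke faithful flatness of $\cO_{X,\pa}\to\widehat\cO_{X,(x_1)}$ (the completion with respect to the ideal $(x_1)$), applied separately in each graded piece: both $\cC(\cR,\cF)_b$ and $\cA_{J,b}$ are finitely generated ideals of $\cO_{X,\pa}$, and inclusion of ideals descends under faithfully flat base change. I expect the subtlety here to lie in verifying that the two splitting formulas can legitimately be compared --- that is, that both applications of Proposition \ref{prop:CoefficientSplitting} use the same exponent $1/a$ and the same lifting $\ell$ --- which is precisely why the hypothesis equates the two $\cF$-orders and fixes a common maximal contact $x_1$ for $\cR$ and $\cA_J$.
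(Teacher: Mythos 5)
Your proof is correct and follows essentially the same route as the paper's: the easy implications $(2)\Rightarrow(1)$ and $(2)\Rightarrow(3)$ are immediate, $(1)\Rightarrow(2)$ uses monotonicity of $\cC(\cdot,\cF,a)$ together with Lemma~\ref{lem:CoeffcientReesCenters}, and $(3)\Rightarrow(2)$ comes down to the splitting of Proposition~\ref{prop:CoefficientSplitting} applied to both $\cR$ and $\cA_J=\cC(\cA_J,\cF)$ with the common maximal contact $x_1$, followed by faithfully flat descent from $\widehat\cO_{X,(x_1)}$. You have merely spelled out the details that the paper compresses into a few citations.
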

\begin{proof}
It is enough to prove the result over the completion. The implication $(1)\Rightarrow  (2)$ follows from Lemma \ref{lem:CoeffcientReesCenters}, while $(2) \Rightarrow  (1)$ follows from the fact that $\cR \subset \cC(\cR,\cF)$. Next, $(2)\Rightarrow  (3)$ is immediate from the definition, and $(3)\Rightarrow (2)$ follows from Proposition \ref{prop:CoefficientSplitting}.
\end{proof}

\subsection{Foliated Replacement Lemma}\label{ssec:FolReplacementLemma}
As a first approximation, replacement Lemmas substitute technical results concerning the invariance of the resolution of singularities construction with respect to different maximal contact hypersurfaces (in particular, we won't need the notions of MC-invariant or homogenized ideals in this work). Our goal in this section is to extend the Replacement Lemma {\cite[Lemma 3.1.29]{Wlodarczyk-cobordant}} to the foliated case, see Lemma \ref{lem:ReplacementFoliated} below.

We start by two guiding remarks. First, in contrast with the usual construction, the process of constructing $\cF$-aligned centers, see Definition \ref{def:Faligned}, relies on two types of choices: the choices of the coordinates $x$ (and $y$) in the presentation of the centers and the choices of the derivatives $\partial\in \cF$ associated with $x$-coordinates. The latter choice is critical as it affects the whole coordinate system, in particular the $y$-coordinates. Second, we only need to prove Replacement Lemmas in \emph{formal} coordinates. In particular, we do not need to work with $\cF$-aligned centers, but only with formally $\cF$-aligned centers as in Definition \ref{def:FormalFaligned}.

The notion of formally $\cF$-aligned center is, nevertheless, very unstable for coordinate changes. In the inductive process of the Foliated Replacement  Lamma \ref{lem:ReplacementFoliated} below,  it is more convenient to work with \emph{weakly $\cF$-aligned centers}, an apparently more relaxed condition, see Corollary \ref{cor:CriteriaFormallyFaligned} below:

\begin{definition}[Weakly $\cF$-aligned] A center $\cA_J$ is  {\it weakly $\cF$-aligned } at a point $\pa \in V(\cA_J)$ if there exist formal coordinates $(x,y,z) =(x_1,\dots x_l,y_1,\ldots,y_r,z_1,\ldots,z_s)\in \widehat{\cO}_{X,\pa}$ adapted to a divisor $E$ and a formal presentation
\[
\widehat{\cA}_J=\widehat{\cO}_{X,\pa}\cdot \cA_J=\widehat{\cO}_{X,\pa}[x_it^{1/a_i},y_jt^{1/b_j}, z_kt^{1/c_k}]^\Int_{l,r,s}
\]
for which
\(
a_1\leq\ldots\leq a_l, \,b_1\leq \ldots \leq b_r \text{ and }c_1\leq\ldots \leq c_s,
\)
 such that
 \begin{enumerate}
\item $\widehat{\cF}=\widehat{\cO}_{X,\pa}\cdot \cF=\spa(\partial_{1},\ldots,\partial_{l}, \nabla_1,\ldots,\nabla_m),$ where  $\det(\partial_i(x_j))$  is invertible at $\pa$ (so, in particular $(x_1,\ldots,x_l)$ is $\cF$-transverse) and $\nabla_j$ are independent of $(x_i,\partial_{x_i})$, see Definition \ref{def:DerIndependentx};

\item The Rees algebra $\widehat{\cO}_{X,\pa}[y_jt^{1/b_j}, z_kt^{1/c_k}]^\Int_{r,s}$  is $\widehat{\cF}$-invariant, see Definition \ref{def:FinvariantIdeal}.
\end{enumerate}
\end{definition}

Note the difference from the formally $\cF$-aligned property: the derivatives $\partial_i$ are not associated to the individual coordinates $x_i$.

We are now ready to state the main result of this section, implying that the notions are nevertheless equivalent (see the corollary):

\begin{lemma}[Foliated Replacement Lemma] \label{lem:ReplacementFoliated}
Let
\[
\widehat{\cA}_{J}=\widehat{\cO}_{X,\pa}[x_i't^{1/a_i},y_j't^{1/b_j}, z_k't^{1/c_k}]^\Int_{l,r,s} ,
\]
with $l\geq 1$, be a  weakly $\cF$-aligned presentation of a  center $\cA_J$ at a point $\pa\in V(\cA_J)$.
Let $x_1$ be an $\cF$-maximal contact for $\cA_J$ (see Definition \ref{def:FMaximalContactRees}) and $\partial_1 \in \cF_{\pa}$ be a derivation such that $\partial_1(x_1)$ is a unit. Then there exist formal coordinates $(x,y,z)$ extending $x_1$ centered at $\pa$, giving a \emph{formally} $\cF$-aligned presentation of the same center
\[
\widehat{\cA}_{J}=\widehat{\cO}_{X,\pa}[x_it^{1/a_i},y_jt^{1/b_j}, z_kt^{1/c_k}]^\Int_{l,r,s}.
\]
Moreover, $\widehat{\cA}_{J}$ admits a splitting:
\[
\widehat{\cA}_{J,\pa} = \widehat{\cO}_{X,\pa}[x_1t^{1/a_1}, \ell (\widehat{\cA}_{J |H}) ]^{\Int},
\]
where $\widehat{\cA}_{J |H}$ is formally $\cF_{|H}$-aligned and $\ell$ is the lifting associated with $(x_1,\partial_1)$.
\end{lemma}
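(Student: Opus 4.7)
The plan is to induct on the number $l$ of $\cF$-transverse coordinates, using Theorem~\ref{thm:splittingFoliation} to peel off one direction at a time and Proposition~\ref{prop:CoefficientSplitting} to record the splitting at each step; the base case $l=0$ is vacuous since the center is already $\cF$-invariant.

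For the inductive step I would first apply Theorem~\ref{thm:splittingFoliation} to $(x_1,\partial_1)$, obtaining formal coordinates $(x_1,\bar y_1,\ldots,\bar y_{n-1})\in\widehat{\cO}_{X,(x_1)}$, a lifting $\ell=\ell_{x_1,\partial_1}\colon\cO_{H,\pa}\to\widehat{\cO}_{X,(x_1)}$, and a decomposition $\widehat{\cF}_{(x_1)}=\spa(\partial_{x_1},\,d\ell(\cF_{|H}))$. A short calculation from the weakly aligned data (using that some $\partial_i(x'_j)$ is a unit, so $\ord_{\cF,\pa}(x'_j)=1$) shows $\ord_{\cF,\pa}(\widehat{\cA}_J)=a_1$, so $x_1$ sits at weight $1/a_1$. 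Before invoking Proposition~\ref{prop:CoefficientSplitting} I would need to extend Lemma~\ref{lem:CoeffcientReesCenters} from formally aligned to weakly aligned centers. This is done by the change of basis $\partial''_i=(M^{-1})_{ij}\partial_j$ realizing $\partial''_i(x'_k)=\delta_{ik}$; using the monotonicity $A_a\subseteq A_b$ for $a\ge b$ in integrally closed Rees algebras together with the $\widehat{\cF}$-invariance of the $y,z$-part, one checks that each $t^{-1/a_i}\partial''_i$ preserves $\widehat{\cA}_J$, whence $\cC(\widehat{\cA}_J,\cF)=\widehat{\cA}_J$. Proposition~\ref{prop:CoefficientSplitting} then yields the splitting
\[
\widehat{\cA}_J=\widehat{\cO}_{X,\pa}\bigl[x_1t^{1/a_1},\,\ell(\widehat{\cA}_{J|H})\bigr]^{\Int}.
\]

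Next I would check that $\widehat{\cA}_{J|H}$ is weakly $\cF_{|H}$-aligned with $l-1$ transverse coordinates: the restrictions $x'_2|_H,\ldots,x'_l|_H$ are the transverse variables, the transverse derivations descend modulo $\partial_1$ with an invertible Jacobian, and $\widehat{\cF}$-invariance of the $y,z$-part restricts to $\cF_{|H}$-invariance. By induction, $\widehat{\cA}_{J|H}$ admits a formally $\cF_{|H}$-aligned presentation in coordinates $(x_2,\ldots,x_l,y_1,\ldots,y_r,z_1,\ldots,z_s)\in\widehat{\cO}_{H,\pa}$ with a decomposition $\cF_{|H}=\spa(\partial_{x_i},\nabla'_j)$. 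Setting $\tilde x_i:=\ell(x_i)$, $\tilde y_j:=\ell(y_j)$, $\tilde z_k:=\ell(z_k)$, the tuple $(x_1,\tilde x_i,\tilde y_j,\tilde z_k)\in\widehat{\cO}_{X,\pa}$ is a formal coordinate system; the partials $\partial_{\tilde x_i}=d\ell(\partial_{x_i})$ and the lifts $\widetilde\nabla_j:=d\ell(\nabla'_j)$ lie in $\widehat{\cF}_\pa$, are independent of $(x_1,\partial_{x_1})$ by construction of $\ell$, and are independent of each $(\tilde x_i,\partial_{\tilde x_i})$ by the inductive hypothesis; the lifted $y,z$-part is $\widehat{\cF}$-invariant because it is so on $H$. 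Together with the splitting above, this yields the required formally $\cF$-aligned presentation. The main obstacle is the first step — identifying $\cC(\widehat{\cA}_J,\cF)$ with $\widehat{\cA}_J$ from only weakly aligned data, since the given $\partial_i$ need not be the formal partials relative to any single coordinate system; it is precisely the monotonicity of integrally closed Rees algebras that lets the calculation close.
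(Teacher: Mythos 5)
Your approach tracks the paper's proof closely: split $\widehat{\cA}_J$ along $H=V(x_1)$, restrict, invoke the induction hypothesis, and lift back through $\ell_{x_1,\partial_1}$. Where you differ is in granularity, and one of your additions is genuinely valuable. The paper's proof invokes Lemma~\ref{lem:CoeffcientReesCenters} to conclude $\cC(\widehat{\cA}_J,\cF)=\widehat{\cA}_J$, but that lemma as stated and proved requires a \emph{formally} $\cF$-aligned presentation (with the $\partial_{x_i}$ being literal partials), whereas here $\widehat{\cA}_J$ is only assumed \emph{weakly} aligned --- precisely what the Replacement Lemma is supposed to establish. Your change-of-basis $\partial''_i=(M^{-1})_{ij}\partial_j$ combined with monotonicity of the integrally closed Rees algebra and $\widehat{\cF}$-invariance of the $(y,z)$-part is the right way to fill this in, and the computation is correct (one only needs $t^{-1/a_1}\partial''_i$ and $t^{-1/a_1}\nabla_j$ to preserve $\widehat{\cA}_J$, and this follows from your check since $a_1\le a_i$). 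So you have spelled out a step the paper leaves implicit.

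There is, however, a gap in your treatment of the restricted center. You assert that $\widehat{\cA}_{J|H}$ is weakly $\cF_{|H}$-aligned with transverse variables $x'_2|_H,\ldots,x'_l|_H$. But the given $x_1$ is a maximal contact supplied externally, not one of your presentation coordinates $x'_i$, so $x'_1|_H$ need not vanish and a priori contributes an $l$-th generator to the image of $\widehat{\cA}_J$ in $\cO_H[t^{1/w}]$. The paper handles this with a preliminary ``replacing $x'_1$'' step: since $x_1t^{1/a_1}\in\widehat{\cA}_J$ (which, again, needs the extension of Lemma~\ref{lem:CoeffcientReesCenters}), one shows after a linear change among the $x'_i$ with $a_i=a_1$ that $\ord_{\cF,\pa}(x_1-x'_1)>1$; the resulting formal automorphism $x'_1\mapsto x_1$, $x'_i\mapsto x'_i$ ($i\ge2$), $y'_j\mapsto y'_j$, $z'_k\mapsto z'_k$ preserves $\widehat{\cA}_J$ and gives a new weakly aligned presentation with $x'_1=x_1$ (one then corrects $\partial_2,\ldots,\partial_l$ by subtracting $\frac{\partial_i(x_1)}{\partial_1(x_1)}\partial_1$). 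Only then does $x'_1|_H=0$, so the restricted center manifestly has the $l-1$ transverse generators $x'_2|_H,\ldots,x'_l|_H$ and your induction closes. Without this step the assertion about $\widehat{\cA}_{J|H}$ is unjustified. You should insert it before the restriction step; once you do, your induction on $l$ and the paper's induction on $\dim X$ become equivalent, since restricting to $H$ drops both by one.
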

\begin{proof}
We prove the result by induction on the dimension of $X$, where we add to the statement of the Lemma the following immediate result: if $l=0$, then $\widehat{\cA}_{J,\pa}$ is weakly $\cF$-aligned if and only if it is formally $\cF$-invariant, if and only if, it is formally $\cF$-aligned. Moreover, note that if the dimension of $X$ is zero, the Lemma is trivial. We suppose the Lemma proved whenever the dimension of the variety is smaller than $n$, and we fix $X$ of dimension $n$ and a center $\widehat{\cA}_J$ with $l>0$.

\emph{Replacing $x_1'$:} We start by noting that we may suppose that $x_1 = x_1'$. Indeed, by Lemma \ref{lem:CoeffcientReesCenters} and Proposition \ref{prop:CoefficientSplitting}, we know that $x_1't^{1/a_1}\in \cA_J$. Now, after a linear coordinate change of ${x}_i$ adapted to $E$ we can assume that $\ord_{\cF,\pa}(x_1-x_1') >1$. This determines an automorphism of $\widehat{\cO}_{X,\pa}$,
\[
{x}_1'\mapsto {x}_1,\quad  {x}_i'\mapsto {x}_i',\quad i \geq 2, \quad  {y}_j'\mapsto {y}_j', \quad  {z}_k'\mapsto {z}_k',
\]
which takes $\widehat{\cA}_{J,\pa}$ onto $\widehat{\cA}_{J,\pa}$ leading to the new presentation
 \[
 \widehat{\cA}_{J,\pa}=\widehat{\cO}_{X,\pa}[x_1t^{1/a_1}, x_2't^{1/a_2},\ldots,x_l't^{1/a_l},y_j't^{1/b_j}, z_k't^{1/c_k}]^\Int_{r,s} ,\]
Moreover, let $\partial'_1,\ldots,\partial'_l$ be such that $\det(\partial'_i(x_j'))_{i,j\geq 1}$ is a unit. Without loss of generality, we may suppose that $\partial_1'(x_1)$ is a unit. Now, we consider the derivations
\[
\partial_i = \partial_i' - \frac{\partial_i'(x_1)}{\partial_1'(x_1)}\partial_1', \quad i=2,\ldots,l
\]
and note that $\partial_i(x_1) \equiv 0$ for $i=2,\ldots,k$. It follows that $\det(\partial_i(x_j'))_{i,j\geq 2}$ is a unit, so that $\det(\partial_i(x_j'))_{i,j\geq 1}$ is also a unit. We conclude that the new presentation with $x_1=x_1'$ is weakly $\cF$-aligned  with respect to the derivatives $\{\partial_1,\ldots,\partial_l\}$.

\emph{Induction:} Next, denote by $H=V(x_1)$ and let $\ell_{x_1,\partial_1}:\cO_{H,\pa} \to \widehat{\cO}_{X,(x_1)}$ be the lifting associated to $(x_1,\partial_1)$. By Theorem \ref{thm:splittingFoliation}, Lemma \ref{lem:CoeffcientReesCenters} and Proposition \ref{prop:CoefficientSplitting}:
\[
\widehat{\cF} =  \Span_{\widehat{\cO}_{X,\pa}} (\partial_{x_1}, \cF_{|H}) \quad \text{ and }\quad  \widehat{\cA}_J =  \widehat{\cO}_{X,\pa}[x_1t^{1/a_1}, \ell_{x_1,\partial_i}(\widehat{\cA}_{J|H}) ]^{\Int}.
\]
Now, let $x_i = (x_i')_{|H}$, $y_j = (y_j')_{|H}$ and $z_k = (z_k')_{|H}$. Note that $\det(\partial_i(x_j))_{i,j\geq 2}$ is a unit and
\[
\widehat{\cO}_{H,\pa}[y_jt^{1/b_j}, z_kt^{1/c_k}]^{\Int}_{r,s} = \left(\widehat{\cO}_{X,\pa}[y_j't^{1/b_j}, z_k't^{1/c_k}]_{r,s}\right)_{|H}^{\Int}
\]
is $\widehat{\cF}_{|H}$-invariant. It follows that $\widehat{\cA}_{J|H}$ is a weakly $\cF$-aligned center over $\widehat{\cO}_{H,\pa}$. By the induction hypothesis, it admits a formally $\cF$-aligned presentation. We conclude  by lifting the new presentation via $\ell_{x_1,\partial_1}$.
\end{proof}

Note that, since a formally $\cF$-aligned center is clearly weakly $\cF$-aligned, the above Lemma immediately implies that:

\begin{corollary}[Criteria for formally $\cF$-aligned center]\label{cor:CriteriaFormallyFaligned}
 A center $J$ is formally $\cF$-aligned at $\pa\in V(J)$ if, and only if, it is weakly $\cF$-aligned at $\pa$.
\end{corollary}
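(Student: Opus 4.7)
The plan is straightforward, since this Corollary is essentially a direct consequence of the Foliated Replacement Lemma \ref{lem:ReplacementFoliated}. The forward direction will be immediate from inspection of the definitions, so the real content lies in the reverse implication.

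First I would handle the easy direction: any formally $\cF$-aligned presentation $(x,y,z)$ is automatically weakly $\cF$-aligned. Setting $\partial_i := \partial_{x_i}$ for $i = 1, \ldots, l$ gives $\det(\partial_i(x_j)) = \det(\delta_{ij}) = 1$, which is a unit. The splitting $\widehat{\cF} = \spa(\partial_{x_1},\ldots,\partial_{x_l},\nabla_1,\ldots,\nabla_m)$ and the $\widehat{\cF}$-invariance of the sub-algebra $\widehat{\cO}_{X,\pa}[y_j t^{1/b_j}, z_k t^{1/c_k}]^{\Int}_{r,s}$ are hypotheses common to both definitions.

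For the reverse direction, suppose first that $l \geq 1$ in the weakly $\cF$-aligned presentation. I would check that $x_1$ serves as an $\cF$-maximal contact for $\cA_J$: indeed $x_1 t^{1/a_1} \in \widehat{\cA}_J$, and the derivation $\partial_1 \in \widehat{\cF}$ (from the weakly aligned data) satisfies $\partial_1(x_1)$ invertible, so $\ord_{\cF,\pa}(x_1) = 1$ and hence $\ord_{\cF,\pa}(\cA_J) = a_1 < \infty$. This is exactly the input required by Lemma \ref{lem:ReplacementFoliated}, which then produces new formal coordinates giving a formally $\cF$-aligned presentation of the same center $\widehat{\cA}_J$.

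The degenerate case $l = 0$ must be handled separately, but it is essentially tautological: with no $x$-variables, the weakly $\cF$-aligned condition reduces to $\widehat{\cA}_J$ being $\widehat{\cF}$-invariant, which coincides with the formally $\cF$-aligned condition in this setting (any generating set of $\widehat{\cF}$ plays the role of the $\nabla_j$'s, and the condition on derivations being independent of $(x_i, \partial_{x_i})$ is vacuous). This base case is already noted at the start of the proof of the Replacement Lemma, so no further work is needed. There is no significant obstacle beyond invoking that Lemma; the inductive coordinate adjustments, nested-regular rectification, and splitting along maximal contact have all been absorbed into its statement.
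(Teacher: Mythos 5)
Your proof is correct and follows the same route as the paper, which proves the corollary in a single sentence by appealing to the Foliated Replacement Lemma~\ref{lem:ReplacementFoliated}. Two small imprecisions are worth flagging, though neither affects the structure of the argument: first, the weak-alignment hypothesis only guarantees that $\det(\partial_i(x_j'))$ is invertible, not that $\partial_1(x_1')$ itself is a unit, so one must extract from the nondegenerate matrix that \emph{some} $\partial_i$ (or $\cO$-combination) has this property; second, an $\cF$-maximal contact in the sense of Definition~\ref{def:FMaximalContactRees} is required to be a genuine local parameter in $\cO_{X,\pa}$, whereas the $x_1'$ in a weakly aligned presentation is only formal, so the cleaner move is to use the finite $\cF$-order you correctly derive to invoke \emph{existence} of a maximal contact, rather than declaring $x_1'$ to be one. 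Your handling of the $l=0$ base case matches exactly the immediate result the paper builds into the induction at the start of the Replacement Lemma's proof.
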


\subsection{Canonical $\cF$-invariant}

 We shall now modify the invariant $\inv$, see Definition \ref{def:CanonicalInvGeneral}, to the context of foliated logarithmic varieties $(X,\cF,E)$. Just as in the case of $\inv$, we start by establishing the meaning of the new invariant over centers.

\begin{definition}[Canonical $\cF$-invariant for centers]
\label{def:FCanonicalInvariantCenters}
Let $\cA_{J}$ be a formally $\cF$-aligned center at $\pa$ with $\cF$-aligned presentation :
\[
\cA_{J} = \widehat{\cO}_{X,\pa}[x_it^{1/a_i},y_jt^{1/b_j}, z_kt^{1/c_k}]_{l,r,s}^{\Int}.
\]
We define:
\[
\inv_{\cF,\pa}(\cA_{J}) = (a_1,\ldots,a_l,\infty + b_1,\ldots,\infty+b_r, \infty+\infty + c_1,\ldots, \infty + \infty + c_s).
\]
\end{definition}

The following lemma shows that this invariant is well defined on centers; its very definition shows that it is compatible with smooth morphisms and field extensions.

\begin{lemma}[Basic property of $\inv_{\cF}$ for centers]\label{lem:BasicPropertyFInvariantCenter}
The canonical $\cF$-invariant for centers is well-defined, that is, it does not depend on the formal $\cF$-aligned presentation. Moreover, given a formally $\cF$-aligned center $\cA_J$ such that $\ord_{\cF,\pa}(\cA_{J}) < \infty$ and a maximal contact element $x_1$ of $\cA_J$,  set $H=V(x_1)$. We have that:
\[
\inv_{\cF,\pa}(\cA_J) =  (\ord_{\cF,\pa}(\cA_J),\inv_{\cF_{|H},\pa}(\cA_{J|H})).
\]
\end{lemma}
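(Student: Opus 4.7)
The plan is to argue both claims simultaneously by induction on $\dim X$, with the case $\dim X = 0$ as a trivial base.

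First I would establish that in any formally $\cF$-aligned presentation with $l \geq 1$, the leading entry coincides with the $\cF$-order: $a_1 = \ord_{\cF,\pa}(\cA_J)$. The inequality $\ord_{\cF,\pa}(\cA_J) \leq a_1$ is immediate from $x_1 t^{1/a_1} \in \cA_J$ together with $\partial_1(x_1) \in \cO_{X,\pa}^\times$, which gives $\ord_{\cF,\pa}(x_1) = 1$. For the reverse, property (2) of Definition \ref{def:Faligned} ensures that the Rees subalgebra generated by the $y_j t^{1/b_j}$ and $z_k t^{1/c_k}$ is $\widehat{\cF}_\pa$-invariant, so any monomial in $(\cA_J)_b$ containing at least one $y$- or $z$-factor lies in the $\widehat{\cF}_\pa$-invariant ideal $(y, z)$ and has infinite $\cF$-order. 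The remaining pure $x$-monomials $x^\alpha \in (\cA_J)_b$ satisfy $\sum \alpha_i / a_i \geq b$, and the unit-determinant condition on $\partial_i(x_j)$ yields $\ord_{\cF,\pa}(x^\alpha) = |\alpha|$ (apply an $\cO_{X,\pa}$-linear change of basis to reduce to $\partial_i'(x_j) = \delta_{ij}$ at $\pa$, so $\partial_1'^{\alpha_1}\cdots\partial_l'^{\alpha_l}(x^\alpha)$ is a unit; the lower bound follows since $x^\alpha \in \m^{|\alpha|}$ and $\cF(\m^k) \subset \m^{k-1}$). Minimizing $|\alpha|$ under the degree constraint concentrates weight on the smallest $a_i = a_1$, giving $\ord_{\cF,\pa}((\cA_J)_b) = \lceil a_1 b \rceil$ and hence $\ord_{\cF,\pa}(\cA_J) = a_1$. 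In particular $a_1$ is intrinsic.

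The recursive formula is then a direct application of the Foliated Replacement Lemma \ref{lem:ReplacementFoliated}. Given any $\cF$-maximal contact $x_1$ (which exists because $\ord_{\cF,\pa}(\cA_J) < \infty$) and $\partial_1 \in \cF_\pa$ with $\partial_1(x_1)$ a unit, that lemma produces a formally $\cF$-aligned presentation of $\cA_J$ whose leading generator is $x_1 t^{1/a_1}$, together with the splitting
\[
\widehat{\cA}_J = \widehat{\cO}_{X,\pa}[x_1 t^{1/a_1},\, \ell(\widehat{\cA}_{J|H})]^{\Int},
\]
where $\widehat{\cA}_{J|H}$ is formally $\cF_{|H}$-aligned and $\ell = \ell_{x_1,\partial_1}$. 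Lifting any $\cF_{|H}$-aligned presentation of $\cA_{J|H}$ through $\ell$ and prepending $x_1 t^{1/a_1}$ gives a presentation of $\cA_J$ whose invariant tuple is exactly $(a_1, \inv_{\cF_{|H},\pa}(\cA_{J|H}))$, establishing the recursion.

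Well-definedness when $\ord_{\cF,\pa}(\cA_J) < \infty$ then follows: the leading entry is intrinsic by the first step, and the tail equals $\inv_{\cF_{|H},\pa}(\cA_{J|H})$ on a smooth subvariety $H$ of strictly smaller dimension, for which the inductive hypothesis applies. The main obstacle will be the case $\ord_{\cF,\pa}(\cA_J) = \infty$, in which $l = 0$ in every presentation and no $\cF$-transverse coordinate is available to anchor the recursion. I plan to treat this by observing that $\cA_J$ is then $\cF$-invariant and monomial in $(y, z)$: the divisorial block $(c_k)$ is determined by the orders of vanishing of $\cA_J$ along the irreducible components of $E$ through $\pa$, while the free block $(b_j)$ is recovered by repeating the argument above after enlarging $\cF$ to $\cD_X^{\log}$, with respect to which the $y_j$ become transverse and the presentation has $l \geq 1$; the $\cF$-invariance of $\cA_J$ guarantees that $\inv_{\cD_X^{\log},\pa}(\cA_J) = (b_1,\dots,b_r,\infty+c_1,\dots,\infty+c_s)$ differs from $\inv_{\cF,\pa}(\cA_J)$ only by a uniform shift by $\infty$ on each entry, so well-definedness transfers back to $\inv_{\cF,\pa}$.
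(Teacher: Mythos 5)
Your proposal is correct and follows essentially the same route as the paper: induction on $\dim X$, with the finite-order case resolved by applying the Foliated Replacement Lemma to reduce both presentations to a common maximal contact hypersurface $H$ and invoking the inductive hypothesis for $\cA_{J|H}$, and the infinite-order case handled by the reduction $\cF \subset \cD_X^{\log} \subset \cD_X$ with the uniform shift by $\infty$. The only (welcome) addition is your explicit verification that $a_1 = \ord_{\cF,\pa}(\cA_J)$ via the $\cF$-order of pure $x$-monomials versus $(y,z)$-monomials, a fact the paper asserts without proof.
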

\begin{proof}
We prove the result by induction on the dimension of the ambient variety $X$. Note that the result is trivial if the dimension of $X$ is zero. We now divide the proof in two distinct cases:

\medskip
\noindent
\textbf{Case I: $\ord_{\cF,\pa}(\cA_J) < \infty$.} Fix a variety $X$ of dimension $n$ and a center $\cA_J$ such that $\ord_{\cF,\pa}(\cA_J)<\infty$. We consider two formally $\cF$-aligned presentations
\[
\begin{aligned}
\widehat{\cA}_{J} &= \widehat{\cO}_{X,\pa}[x_it^{1/a_i},y_jt^{1/b_j}, z_kt^{1/c_k}]_{l,r,s}\\
 &= \widehat{\cO}_{X,\pa}[x_i't^{1/\alpha_i},y_j't^{1/\beta_j}, z_k't^{1/\gamma_k}]^{\Int}_{l',r',s'}
\end{aligned}
\]
Note that $\ord_{\cF,\pa}(\cA_J) = a_1 = \alpha_1$, so $x_1$ and $x_1'$ are two $\cF$-maximal contact elements. By the foliated Replacement Lemma \ref{lem:ReplacementFoliated} we may assume without loss of generality that $x_1=x_1'$ and that the restricted center to $H= V(x_1)$ admits two formally $\cF$-aligned presentations:
\[
\begin{aligned}
\widehat{\cA}_{J|H} &= \widehat{\cO}_{H,\pa}[\tilde{x_2}t^{1/a_2}, \ldots, \tilde{x}_kt^{1/a_k},\tilde{y}_jt^{1/b_j}, \tilde{z}_kt^{1/c_k}]_{r,s}\\
 &= \widehat{\cO}_{H,\pa}[\tilde{x}_2't^{1/\alpha_2}, \ldots, \tilde{x}_l't^{1/\alpha_{l'}},\tilde{y}_j't^{1/b_j}, \tilde{z}_k't^{1/c_k}]_{r',s'}
\end{aligned}
\]
where $\tilde{x}_i = (x_i)_{|H}$, $\tilde{y}_i = (y_i)_{|H}$, $\tilde{z}_i = (z_i)_{|H}$, $\tilde{x}_i' = (x_i')_{|H}$, $\tilde{y}_i' = (y_i')_{|H}$ and $\tilde{z}_i' = (z_i')_{|H}$. We conclude by induction.

\medskip
\noindent
\textbf{Case II: $\ord_{\cF,\pa}(\cA_J) = \infty$.} In this case, we will argue by induction with respect to three cases: (i) $\cF = \cD_X$; (ii) $\cF = \cD_X^{\log}$ and (iii) $\cF\subset \cD_X^{\log}$.

In fact, note that in case (i), then we necessarily have $\ord_{\cF,\pa}(\cA_J) < \infty$ and the proof was finished in Case I. Next, if we assume that $(ii)$ holds true, then $\ord_{\cF,\pa}(\cA_J) = \infty$ implies that $\cA_J$ is $\cD^{\log}_X$ invariant, and only depends on divisorial components. The result now follows by noting that $\inv_{\cD^{\log}_X,\pa}(\cA_J) = \infty + \inv_{\cD_X,\pa}(\cA_J)$ and case $(i)$, where we used the convention that
\(
\infty + (e_1\ldots,e_r) = (\infty + e_1, \ldots,\infty + e_r).
\). Similarly, if assume that $(iii)$ holds true, then $\ord_{\cF,\pa}(\cA_J) = \infty$ implies that $\cA_J$ is $\cF$-invariant. The result then follows by noting that $\inv_{\cF,\pa}(\cA_J) = \infty + \inv_{\cD_X^{\log},\pa}(\cA_J)$ and case $(ii)$.
\end{proof}

We can now introduce a total order over the set of invariants by completing the vectors with $\infty+\infty+\infty$ at the end, similarly to what was done in Section~\ref{ssec:CanonicalInvariant}. We are ready to define:

\begin{definition}[Canonical $\cF$-invariant]\label{def:FCanonicalInvariantGeneral}
Given a foliated logarithmic variety $(X,\cF,E)$ and a finitely generated Rees algebra $\cR$, the {\it canonical invariant} $\inv_{\cF,\pa}(\cR)$ of $\cR$ at a point $\pa\in X$ is given by:
  \[
  \inv_{\cF,\pa}(\cR):=\max \{ \inv_{\cF,\pa}(\cA_J)  \mid \, \cA_J\text{ is } \text{$(\cR,\cF)$-admissible at }\pa\}.
\]
Any $(\cR,\cF)$-admissible center $\cA_J$ for which $\inv_{\cF,\pa}(\cA_J)=\inv_{\cF,\pa}(\cR)$ will be called the {\it $\inv_{\cF,\pa}$-maximal center} for $\cR$ at $\pa$.
\end{definition}

Note that the above invariant and center are not, a priori, well-defined. In fact, it is not clear that the set of $(\cR,\cF)$-admissible centers is non-empty, nor that  the set of invariants admits a maximal element instead of only a supremum (which would have entries in $\mathbb{R}_{>0}^{+}\cup \{\infty,\infty+\infty\}$), or that the center achieving the maximum is unique. This point will be adressed in Section~\ref{sec:algorithm}, following the construction of the resolution of singularities algorithm, which allows us to prove that:

\begin{proposition}[Basic properties of $\inv_{\cF}$]\label{prop:FCanonicalInvBasicProperties}
The canonical invariant $\inv_{\cF,\pa}(\cR)$ is well-defined for every $\pa\in X$.  The function $\inv_{\cF}(\cR)$ defined over $X$ takes values in a well-ordered set, is upper semicontinuous, and functorial for smooth morphisms and field extensions. 
 Furthermore, the $\inv_{\cF,\pa}$-maximal center at $\pa$ is unique and functorial for smooth morphisms and field extensions.

Finally, the following holds true:
\begin{enumerate}
\item If $\cR\subseteq \cR'$ then $\inv_{\cF,\pa}(\cR)\geq \inv_{\cF,\pa}(\cR')$.
\item If $\cF\subseteq \cF'$ then $\inv_{\cF,\pa}(\cR)\geq \inv_{p,\cF'}(\cR)$.	\qed
\end{enumerate}
\end{proposition}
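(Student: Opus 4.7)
The plan is to establish Proposition \ref{prop:FCanonicalInvBasicProperties} by recursive induction on the dimension of $X$, proving simultaneously that the invariant is well-defined, that a unique $\inv_{\cF,\pa}$-maximal center exists, and that both satisfy the stated recursive identity
\[
\inv_{\cF,\pa}(\cR) \;=\; \bigl(\ord_{\cF,\pa}(\cR),\; \inv_{\cF_{|H},\pa}(\cC(\cR,\cF)_{|H})\bigr),
\]
where $H = V(x_1)$ for any choice of $\cF$-maximal contact $x_1$ of $\cR$. The base of the recursion handles two degenerate cases: first, when $\dim X = 0$ everything is trivial; second, when $\ord_{\cF,\pa}(\cR) = \infty$ (so $\cR$ is $\cF$-invariant) we peel off $\cF$ by replacing it successively by $\cD_X^{\log}$ and then by $\cD_X$, as in Case II of the proof of Lemma \ref{lem:BasicPropertyFInvariantCenter}. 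In the generic case $\ord_{\cF,\pa}(\cR) = a < \infty$, we pick any $\cF$-maximal contact $x_1$ and any $\partial_1 \in \cF_\pa$ with $\partial_1(x_1)$ a unit (which exists by Definition~\ref{def:FMaximalContactRees}), form $\cC(\cR,\cF)$, restrict to $H$, and invoke the induction hypothesis on $(H,\cF_{|H})$.

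The main inductive claim — that the maximum in Definition~\ref{def:FCanonicalInvariantGeneral} is attained uniquely — requires four ingredients assembled in sequence. First, the existence of an admissible center: by induction the $\inv_{\cF_{|H},\pa}$-maximal center $\cA_{J_H}$ for $\cC(\cR,\cF)_{|H}$ exists, and by Proposition~\ref{prop:CoefficientSplitting} we may lift it via $\ell_{x_1,\partial_1}$ and attach $x_1 t^{1/a}$ to obtain a formally $\cF$-aligned center $\cA_J \supseteq \cC(\cR,\cF) \supseteq \cR$; by Proposition~\ref{prop:RegLiftingFAligned} this center is in fact algebraic (not only formal). Second, the bound $\inv_{\cF,\pa}(\cR) \le (\ord_{\cF,\pa}(\cR), \inv_{\cF_{|H},\pa}(\cC(\cR,\cF)_{|H}))$ follows from Lemma~\ref{lem:CoefficientAdmissible}: every $(\cR,\cF)$-admissible center $\cA_{J'}$ satisfying $\ord_{\cF,\pa}(\cA_{J'}) = a$ has $x_1$ as maximal contact (after applying the Foliated Replacement Lemma~\ref{lem:ReplacementFoliated}), and its restriction to $H$ is $\cC(\cR,\cF)_{|H}$-admissible with invariant equal to the tail of $\inv_{\cF,\pa}(\cA_{J'})$ by Lemma~\ref{lem:BasicPropertyFInvariantCenter}. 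Third, equality is achieved by the lift constructed above. Fourth, uniqueness reduces via Lemma~\ref{lem:ReplacementFoliated} to uniqueness of $\cA_{J_H}$ on $H$ and to the splitting of Proposition~\ref{prop:CoefficientSplitting}, so independence of the particular choice of $(x_1,\partial_1)$ follows from the fact that the splitting only depends on $\widehat{\cA}_J$, not on the lifting used to produce it.

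Functoriality for smooth morphisms $\phi: X' \to X$ and field extensions is then mechanical, since each recursive ingredient is functorial: the $\cF$-order by Lemma~\ref{Lem:functorial}, the notion of $\cF$-maximal contact by the same (a pullback of a maximal contact is maximal contact), the coefficient Rees algebra $\cC(\cR,\cF)$ by direct inspection of Definition~\ref{def:CoeffRA}, and the restriction to $H$ because the maximal contact and its zero locus pull back. The uniqueness part of the inductive claim guarantees that the pulled-back $\inv_{\cF',\pa'}$-maximal center equals the lift of the $\inv_{\cF,\pa}$-maximal center. The monotonicity statements (1) and (2) are immediate from the definitions: if $\cR \subseteq \cR'$ then every $(\cR',\cF)$-admissible center is $(\cR,\cF)$-admissible, and analogously for $\cF \subseteq \cF'$, since enlarging $\cF$ only adds constraints on $\cF$-aligned centers.

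The remaining global statements follow by standard arguments. Upper semicontinuity is proved by induction: by Lemma~\ref{lem:BasicPropForderRees}, $\ord_{\cF}(\cR)$ is upper semicontinuous, and by Lemma~\ref{lem:FMaximalContactNeighborhood} the locus $\{\ord_{\cF,\pb}(\cR) = a\}$ is contained in $V(x_1)$ on a neighborhood of $\pa$, so comparing with the inductive hypothesis applied to $\cC(\cR,\cF)_{|H}$ on $H$ yields semicontinuity of the full tuple in the lexicographic order. The well-ordering of the target: for fixed $\cR$, the denominators appearing in $\inv_{\cF}(\cR)$ are bounded (they divide $w_R$ after finitely many recursive restrictions, since at each step the $\cF$-order $\ord_{\cF,\pa}(\cC(\cR,\cF)_{|H})$ is a rational with denominator controlled by the preceding step), while the entries are bounded above by the maximal order occurring in $\cR$; thus the values lie in a finite product of discrete subsets of $\QQ_+$, and the lexicographic order on this finite-depth tuple set is well-ordered. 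The main obstacle among these is the uniqueness part: different choices of $(x_1,\partial_1)$ a priori produce different nested-regular coordinate systems, and it is only the Foliated Replacement Lemma~\ref{lem:ReplacementFoliated} together with Proposition~\ref{prop:CoefficientSplitting} that guarantees they all yield the same center $\cA_J$ on $X$.
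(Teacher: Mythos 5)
Your overall scheme---induction on $\dim X$, treating the finite-order case via maximal contact, coefficient Rees algebra, the Splitting Proposition~\ref{prop:CoefficientSplitting}, and the Foliated Replacement Lemma~\ref{lem:ReplacementFoliated}---matches the paper's proof in Section~\ref{sec:algorithm}, and your Case~I (finite $\cF$-order) is essentially correct. Your treatment of the infinite-order case, however, has a genuine gap, and a contributing minor error in its setup.

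The minor error first: the parenthetical ``$\ord_{\cF,\pa}(\cR)=\infty$ (so $\cR$ is $\cF$-invariant)'' is false. Infinite $\cF$-order means $\cF^n(R_a)\neq\cO_{X,\pa}$ for all $n$ and $a>0$; it does not mean $\cF(\cR)\subset\cR$. What is true, and what the paper actually uses, is that one must pass from $\cR$ to the saturation $\cR^\infty = \cF^\infty(\cR)$, which \emph{is} $\cF$-invariant. This distinction matters because the whole point of Case~II is to construct the admissible center out of $\cR^\infty$.

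The serious gap is what comes after. Having replaced $\cF$ by $\cD_X^{\log}$ (and then $\cD_X$) and found the $\inv_{\cD_X^{\log}}$-maximal center $\cA_J$ for $\cR^\infty$, one knows a priori only that $\cA_J$ is $\cD_X^{\log}$-aligned and contains $\cR^\infty\supseteq\cR$. But to declare it $(\cR,\cF)$-admissible in the sense of Definition~\ref{def:FadmissibleCenter} one must show $\cA_J$ is $\cF$-aligned --- and since $\ord_{\cF,\pa}(\cA_J)$ will be $\infty$, this means showing $\cA_J$ is $\cF$-invariant (Definition~\ref{def:Finvariant}). This does not follow from what you have said: $\cA_J$ was produced using $\cD_X^{\log}$ and need not, for any obvious reason, be preserved by the derivations in the smaller sheaf $\cF$. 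The paper closes this hole by strengthening the inductive statement to include $\delta$-invariance: if $\cF$ and $\cR$ are both invariant under a derivation $\delta$, then so is the canonical center. Applying this with $\delta$ ranging over local sections of $\cF$ (all of which preserve $\cR^\infty$ and $\cD_X^{\log}$) gives $\cF$-invariance of $\cA_J$. This $\delta$-invariance is not a formal consequence of smooth functoriality; the paper devotes a separate argument to it (Section~\ref{Sec:delta-invariance}), integrating the derivation to an analytic flow and invoking the Lefschetz principle. Your proposal omits this entirely, so Case~II as you have set it up does not actually produce an $(\cR,\cF)$-admissible center, and the induction does not close. You also cannot rescue the argument by pointing only to the Case~II argument inside Lemma~\ref{lem:BasicPropertyFInvariantCenter}: that lemma concerns the invariant of a \emph{given} center (where $\cF$-alignment is assumed), not the construction of the maximal one for a general Rees algebra.

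A smaller related remark: your well-ordering argument (bounding denominators and magnitudes to land in a finite set) is plausible but heavier than necessary; the paper simply uses the inductive hypothesis on $(H,\cF_{|H})$ together with the upper bound $n=\dim X$ on the tuple length, since a bounded-length lexicographic product of well-ordered sets is well-ordered.
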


Moreover that properties (1) and (2) are immediate from the definition \ref{def:FCanonicalInvariantGeneral}, as long as $\inv_{\cF,\pa}$ is well-defined. We only use this Proposition starting from Section~\ref{sec:BlowUp}, after proving it --- in more precise form --- in Section \ref{sec:algorithm}.

\section{Construction of the $\inv_{\cF}$-maximal admissible center}\label{sec:algorithm}

\subsection{Inductive statement} We prove the following result by induction on $\dim X$:

\begin{Inductiveclaim}\label{claim:MainInduction}
Let $(X,\cF,E)$ be a foliated logarithmic variety and consider a Rees algebra $\cR$ over $\cO_X$. For every $\pa\in X$, there exists a unique $\inv_{\cF,\pa}$-maximal $(\cR,\cF)$-admissible center $\cA_J$ defined in a neighborhood $U$ of $\pa$. This center satisfies the following conditions:
\begin{enumerate}
\item The center $\cA_J$ is also maximal with respect to formal $\cF$-aligned centers, that is, $\inv_{\cF,\pa}(\cA_J)$ is also equal to:
\[
\max \{ \inv_{\cF,\pa}(\cA_{J'})  \mid \, \cA_{J'}\text{ is } \text{formally $\cF$-aligned and }\cR\cdot \cO_{X,\pa} \subset \cA_{J'}\cdot \cO_{X,\pa} \}.
\]
\item\label{It:induction-wo-usc} The invariant $\inv_{\cF}$ takes values in a well-ordered set, and is upper semi-continuous.
\item\label{It:claim-functoriality}  The invariant $\inv_{\cF,\pa}(\cR) $ and the $\inv_{\cF}$-maximal $(\cR,\cF)$-admissible center $\cA_J$ are functorial under smooth morphisms, field extensions, group actions preserving $\cR$, and  derivations $\delta$ such that both $\cF$ and $\cR$ are $\delta$-invariant,  see Definitions \ref{def:MorphismLogFoliatedVariety}, \ref{def:GsemiinvariantFol}, \ref{def:deltainvariantFol}, and \ref{def:fieldextension}. Here $\cR$ is $\delta$-invariant if $\delta(\cR):=\{\delta(f)\mid f\in \cR \}\subseteq \cR$.

\end{enumerate}
\end{Inductiveclaim}

\begin{remark}
One can also prove the following variant of the $\delta$-invariance statement of Part (\ref{It:claim-functoriality}) in the inductive claim, which may be of value in applications, though we are not aware of one: given a \emph{distribution} $\Delta$ such that $[\Delta,\cF] \subset \cF$ and $\Delta(\cR) \subset \cR$, then $\Delta(\cA_J) \subset \cA_J$.   Note that the condition $[\Delta,\cF] \subset \cF$ is very strong.
\end{remark}

If the dimension of $X$ is $0$, the statement is trivial. So assume that the result is proved whenever 
$\dim (X)\leq\,n-1$ and consider a foliated logarithmic variety $(X,\cF,E)$ of dimension $n$. Fix a point $\pa \in X$; note that for proving property (1) it is enough to consider a Rees algebra $\cR$ over $\cO_{X,\pa}$. In addition, we may also suppose that $\cR$ is integrally closed by the following reason: on the one hand $\cR \subset \cR^\Int$, so every $(\cR^{\Int},\cF)$-admissible center is also $(\cR,\cF)$-admissible. On the other hand, given an $(\cR,\cF)$-admissible center $\cA_J$:
\[
\cR \subset \cA_{J} \implies \cR^{\Int} \subset \cA_{J}^{\Int} = \cA_{J}
\]
implying that it is also $(\cR^{\Int},\cF)$-admissible. The proof  is now divided into two cases.

\subsection{Case I: $\ord_{\cF,\pa}(\cR) < \infty$} 
\subsubsection{Existence of an $\inv_\cF$-maximal center} Suppose that $\ord_{\cF,\pa}(\cR) = a < \infty$. Let $x_1$ be an $\cF$-maximal contact element with respect to $\cR$, see Definition \ref{def:FMaximalContactRees}, and let $\partial_1 \in \cF_{\pa}$ be a derivation such that $\partial_1(x_1)$ is a unit. Set $H = V(x_1)$. By Theorem \ref{thm:splittingFoliation}, there exists a nested-regular coordinate system $(x_1,x_2,\ldots,x_k,y_1,\ldots,y_r) \in \widehat{\cO}_{X,(x_1)}$ such that
\[
\cF_{(x_1)} = \cF \cdot \widehat{\cO}_{X,(x_1)} = \Span_{\widehat{\cO}_{X,(x_1)}} (\partial_{x_1}, \cF_{|H})
\]
where $\cF_{|H}$ is {the foliation} over $\cO_{H,\pa}$ which we associate to a subset of $\cF_{(x_1)}$ via the lifting $\ell_{x_1,\partial_1}: \cO_{H,\pa} \to \widehat{\cO}_{X,(x_1)}$ associated to $(x_1,\partial_1)$, {see Section \ref{sec:Fol:LocalSplittingFoliations}}. In particular $\cF_{|H}$ is generated by derivations independent of {$(x_1,\partial_{x_1})$}. Now, consider the coefficient Rees Algebra, see Definition \ref{def:CoeffRA}, over $\widehat{\cO}_{X,(x_1)}$:
\[
\cC:= \widehat{\cO}_{X,(x_1)}\cdot \cC(\cR,\cF).
\]
By Proposition \ref{prop:CoefficientSplitting}, the Rees algebra $\cC$ admits a local splitting:
\[
\cC=\widehat{\cO}_{X,(x_1)}[{x}_1t^{1/{a_1}},{\ell}(\cC_{|H})]^{\Int},
\]
where $\ell=\ell_{x_1,\partial_1}$ is a lifting associated to $(x_1,\partial_1)$, see Definition \ref{def:LiftingAssociatedToD}.
Now, by induction on the dimensions, there exists an $(\cC_{|H},\cF_{|H})$-admissible center $\cA_{J'}$ defined over $\cO_{H,\pa}$ which is $\inv_{\cF_{|H}}$-maximal; in particular, it admits an $\cF_{|H}$-aligned presentation. Consider the $\widehat{\cO}_{X,(x_1)}$ Rees algebra:
\[
\mathcal{A} = \widehat{\cO}_{X,(x_1)}[{x}_1t^{1/{a_1}}, {\ell}(\cA_{J'})]^{\Int}
\]
By Proposition \ref{prop:RegLiftingFAligned}, there exists a regular center $\cA_J$ over $\cO_{X,\pa}$ such that $\cA_{J} \cdot \widehat{\cO}_{X,(x)} = \mathcal{A}$. We now claim that $\cA_{J}$ is $(\cR,\cF)$-admissible. Indeed, it is $\cF$-aligned by construction, and it follows from the definition of coefficient Rees algebra \ref{def:CoeffRA} and Lemma \ref{lem:CoefficientAdmissible} that:
\[
\cR \subset \cC(\cR,\cF) \subset \mathcal{A}_{J}.
\]
\subsubsection{Uniqueness and Statement (1)}
Let $\cA_{J'}$ be a formally $\cF$-aligned center such that $\cR \subset \cA_{J'}$ and $\inv_{\cF,\pa}(\cA_{J'}) \geq \inv_{\cF,\pa}(\cA_J)$; let us prove that $\cA_J=\cA_{J'}$. Note that we only need to prove the equality formally. 

We claim that $ \widehat{\mathcal{A}}_{{J'}}$ satisfies all of the conditions of the foliated Replacement Lemma \ref{lem:ReplacementFoliated}. Indeed, note that $\cR \subset \mathcal{A}_{{J'}}$ implies that $\ord_{\cF,\pa}(\mathcal{A}_{{J'}}) \leq \ord_{\cF,\pa}(\cR) = a$ by Lemma \ref{lem:BasicPropForder}. On the other hand, $\inv_{\cF,\pa}(\mathcal{A}_{{J'}}) \geq \inv_{\cF,\pa}(\mathcal{A}_{J})$ implies that $\ord_{\cF,\pa}(\mathcal{A}_{{J'}}) \geq \ord_{\cF,\pa}(\mathcal{A}_{J}) =a$ by Definition \ref{def:FCanonicalInvariantCenters}, therefore implying that $\ord_{\cF,\pa}(\mathcal{A}_{{J'}})=a$. Finally by Lemma \ref{lem:CoefficientAdmissible}:
\[
\cC(\widehat{\cR},\widehat{\cF}) \subset \widehat{\mathcal{A}}_{{J'}}
\]
which implies that $x_1t^{1/a} \in \widehat{\mathcal{A}}_{{J'}}$ by Proposition \ref{prop:CoefficientSplitting}. 

We may now apply the foliated Replacement Lemma \ref{lem:ReplacementFoliated} with respect to $x_1$ and $\partial_1$ --- that is, the same maximal contact element and the same derivation used in the construction of $\cA_{J}$, leading to the same lifting $\ell=\ell_{x_1,\partial_1}$ --- and suppose without loss of generality that $\cA_{{J'}}$ admits a formally $\cF$-aligned presentation:
\[
\widehat{\mathcal{A}}_{{J'}} = \widehat{\cO}_{X,\pa}[x_1t^{1/a_1},{\ell}(\mathcal{A}_{{J'}|H})]^{\Int}.
\]
Now, by Lemma \ref{lem:CoefficientAdmissible}, the restricted center $(\mathcal{A}_{{J'}})_{|H}$ is $(\cC_{|H},\cF_{|H})$-admissible. By the formal $\inv_{\cF_{|H},\pa}$-maximality of $\cA_{J'}$ (property (1) of the induction claim) and Lemma \ref{lem:BasicPropertyFInvariantCenter}, we conclude that $\mathcal{A}_{{J'}|H}=\mathcal{A}_{J'}$ over $\cO_{H,\pa}$. It follows that:
\[
\widehat{\mathcal{A}}_{{J'}} = \widehat{\cO}_{X,\pa}[x_1t^{a_1},{\ell}(\mathcal{A}_{{J'}|H})]^{\Int} = \widehat{\cO}_{X,\pa}[x_1t^{1/a_1},{\ell}(\mathcal{A}_{J'})]^{\Int} = \widehat{\mathcal{A}}_{J},
\]
proving that $\cA_J$ is unique and $\inv_{\cF,\pa}$-maximal. 

This construction, moreover, proves the following result, similar to  \cite[Section 5.1]{ATW-weighted} and \cite[Lemma 3.1.24]{Wlodarczyk-cobordant}:
\begin{lemma}[Inductive property I]\label{lem:FcanonicalInvForFiniteRees}
If $\ord_{\cF,\pa}(\cR)<\infty$ then
\[
\inv_{\cF,\pa}(\cR)=(\ord_{\cF,\pa}(\cR),\inv_{\cF_{H},\pa}(\cC(\cR,\cF)_{|H})),
\]
where $H$ is a locally defined $\cF$-maximal contact hypersurface.
\end{lemma}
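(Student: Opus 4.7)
My plan is to extract the identity directly from the inductive construction carried out in Case~I together with Lemma~\ref{lem:BasicPropertyFInvariantCenter}. Since $\ord_{\cF,\pa}(\cR)=a<\infty$, a locally defined $\cF$-maximal contact element $x_1$ exists, and Definition~\ref{def:FMaximalContactRees} guarantees a derivation $\partial_1\in\cF_\pa$ with $\partial_1(x_1)$ a unit. Set $H=V(x_1)$. The first step is to recall the splitting proved in Proposition~\ref{prop:CoefficientSplitting}: over $\widehat{\cO}_{X,(x_1)}$,
\[
\cC(\cR,\cF)\cdot\widehat{\cO}_{X,(x_1)} \;=\; \widehat{\cO}_{X,(x_1)}[\,x_1 t^{1/a},\;\ell\bigl(\cC(\cR,\cF)_{|H}\bigr)\,]^{\Int},
\]
with $\ell=\ell_{x_1,\partial_1}$ the associated lifting.

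Next, I invoke the inductive hypothesis of Claim~\ref{claim:MainInduction} on the lower-dimensional foliated variety $(H,\cF_{|H},E_{|H})$: there is a unique $(\cC(\cR,\cF)_{|H},\cF_{|H})$-admissible center $\cA_{J'}$ realizing $\inv_{\cF_{|H},\pa}(\cC(\cR,\cF)_{|H})$. Lifting via $\ell$ and applying Proposition~\ref{prop:RegLiftingFAligned}, one obtains a regular center $\cA_J$ over $\cO_{X,\pa}$ with
\[
\cA_J\cdot\widehat{\cO}_{X,(x_1)} \;=\; \widehat{\cO}_{X,(x_1)}[\,x_1 t^{1/a},\;\ell(\cA_{J'})\,]^{\Int},
\]
which is $\cF$-aligned and, by Lemma~\ref{lem:CoefficientAdmissible}, $(\cR,\cF)$-admissible. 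By the definition of $\inv_{\cF,\pa}$ on centers (Definition~\ref{def:FCanonicalInvariantCenters}) and the inductive structural formula in Lemma~\ref{lem:BasicPropertyFInvariantCenter},
\[
\inv_{\cF,\pa}(\cA_J) \;=\; \bigl(a,\,\inv_{\cF_{|H},\pa}(\cA_{J'})\bigr) \;=\; \bigl(\ord_{\cF,\pa}(\cR),\,\inv_{\cF_{|H},\pa}(\cC(\cR,\cF)_{|H})\bigr).
\]

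To upgrade this equality to the maximum over all admissible centers, I would use the uniqueness argument already appearing in Case~I: any formally $\cF$-aligned center $\cA_{J''}$ with $\cR\subset\cA_{J''}$ and $\inv_{\cF,\pa}(\cA_{J''})\geq \inv_{\cF,\pa}(\cA_J)$ must satisfy $\ord_{\cF,\pa}(\cA_{J''})=a$, so that $x_1$ is a maximal contact for it as well. The foliated Replacement Lemma~\ref{lem:ReplacementFoliated} then puts $\cA_{J''}$ in a formally $\cF$-aligned presentation with respect to the same pair $(x_1,\partial_1)$, and Lemma~\ref{lem:CoefficientAdmissible} forces $\cA_{J''|H}$ to be $(\cC(\cR,\cF)_{|H},\cF_{|H})$-admissible. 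The inductive hypothesis on $H$ then bounds $\inv_{\cF_{|H},\pa}(\cA_{J''|H})\leq\inv_{\cF_{|H},\pa}(\cC(\cR,\cF)_{|H})$, and Lemma~\ref{lem:BasicPropertyFInvariantCenter} converts this into $\inv_{\cF,\pa}(\cA_{J''})\leq\inv_{\cF,\pa}(\cA_J)$, completing the proof.

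The main point of care — and the only place where something subtle happens — is matching the restriction $\cA_{J''|H}$ on the nose with the recursively constructed $\cA_{J'}$: this is exactly what the Replacement Lemma buys us, allowing both centers to be read off from the same lifting $\ell_{x_1,\partial_1}$, so the formal comparison on $H$ really does transfer back to an equality of invariants on $X$. Beyond that, everything is a mechanical combination of Proposition~\ref{prop:CoefficientSplitting}, Lemma~\ref{lem:CoefficientAdmissible}, Lemma~\ref{lem:BasicPropertyFInvariantCenter}, and the inductive hypothesis.
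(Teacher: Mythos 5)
Your proposal is correct and follows essentially the same route as the paper: the lemma is exactly the formal record of the existence/uniqueness construction carried out in Case~I of Inductive Claim~\ref{claim:MainInduction}, combining Proposition~\ref{prop:CoefficientSplitting}, the inductive hypothesis on $H$, Proposition~\ref{prop:RegLiftingFAligned}, Lemma~\ref{lem:CoefficientAdmissible}, and Lemma~\ref{lem:BasicPropertyFInvariantCenter}, with the Replacement Lemma handling the normalization of competing centers. One small point worth tightening: the step ``$\ord_{\cF,\pa}(\cA_{J''})=a$, so that $x_1$ is a maximal contact for it'' is not immediate from order equality alone; you need first $\cR\subset\cA_{J''}$ and the order match to deduce $\cC(\cR,\cF)\subset\cA_{J''}$ via Lemma~\ref{lem:CoefficientAdmissible}(1$\Rightarrow$2), and then $x_1 t^{1/a}\in\cC(\cR,\cF)$ from Proposition~\ref{prop:CoefficientSplitting} to conclude $x_1 t^{1/a}\in\cA_{J''}$; only then is $x_1$ a maximal contact for $\cA_{J''}$ and the Replacement Lemma applicable, which is the order in which the paper argues.
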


\subsubsection{Semicontinuity, well-ordering, and functoriality}
 Let $\cR$ be a finitely generated Rees algebra over $\cO_X$ such $\ord_{\cF,\pa}(\cR) = a < \infty$. Denote by $U$ a neighborhood of $\pa$ where $x_1\in \cO_U$  is  maximal contact. Let $\cR_U=\cR \cdot \cO_U$. It follows from Lemma \ref{lem:FMaximalContactNeighborhood} that $\inv_{\cF,\pb}(\cR)<\inv_{\cF,\pa}(\cR)$ for every point $\pb\in U \setminus H$ (where $H = V(x_1) \subset U$). Next, note that over $H$, the invariant is given by:
\[
\inv_{\cF,\pb}(\cR) = (\ord_{\cF,\pb}(\cR),\inv_{\cF_{|H},\pb}(\cC_{|H})), \quad \forall \pb\in H
\]
The result now follows from the fact that $\ord_{\cF}(\cR)$ is upper semicontinous by Lemma \ref{lem:BasicPropForder} and $\inv_{\cF_{|H}}$ takes values in a well-ordered set and is upper semicontinous by induction. 

Similarly, the functoriality of $\inv_{\cF,\pa}(\cR)$ and the $\inv_{\cF}$-maximal admissible centers $\cA_J$ follows from the same argument, together with the functoriality of derivations, maximal contact, and coefficient Rees algebras, See Lemma \ref{Lem:functorial} and Section \ref{Sec:maximal-contact}.

\subsubsection{$\delta$-invariance}\label{Sec:delta-invariance} The idea here  is that $\delta$-invariance is in essence invariance under infinitesimal group actions. Ideally we would integrate such infinitesimal group action to a formal group action. However the present paper does not treat foliations on formal schemes, as these would introduce technicalities that go beyond simple generalization. {With further work one can show in Inductive Claim \ref{claim:MainInduction} the uniqueness of \emph{formal} centers of maximal invariant admissible for an ideal on an algebraic scheme.} Instead we integrate the action analytically, and show using the Lefschetz principle that the complex analytic case implies the algebraic case.

We therefore start with a \emph{complex analytic} manifold $X$ and consider the product ${X_\tau} := X \times \CC$ with the natural morphism $\pi:{X_\tau}\to X$. The local rings of $X_\tau$ are isomorphic to $\cO_X\{\tau \}$, power series in $\tau$ with coefficient in $\cO_X$ that are each convergent on an open set.  It carries an induced Rees algebra $\cR_\tau = \cR \cO_{X_\tau}$ and a fiberwise induced foliation $\cF_\tau = \cF \times \{0\} \subset \cD^{\log}_{X_\tau/\CC}$.

We associate with the derivation $\delta\in D^{\log}_{X,a}$ the  flow $\bar\phi_\tau^\delta$ defined in a neighborhood of $\pa\in X$, which in turn gives a locally-defined morphism from $X_\tau$ to itself, whose corresponding action on local rings is given by 
$$e^{\tau\delta}:=I+\delta \tau+\frac{1}{2!}\delta^2\tau^2+\ldots$$  
so that $(\bar\phi_\tau^\delta)^* (f) = e^{\tau\delta}(f)$, see e.g. \cite[Chapter 5.3]{SagleWalde}.

This automorphism  preserves the Rees algebra ${\cR_\tau}$. Given $\partial\in \cF$ we have $$(e^{\tau\delta})^*(\partial)=(e^{\tau\delta})(\partial)e^{-\tau\delta}=(I+[\delta,\,\cdot]\tau+1/2[\delta,[\delta,\,\cdot]]\tau^2+\cdots)(\partial)\in {\cF_\tau}$$ so ${\cF_\tau}$ is also preserved.

The center $\cO_{{X_\tau}}\cdot \cA_{J}=\cA_{J}\{\tau\}$ is maximal 	$\cF$-aligned at $\pa\in X_\tau$. It is  thus $e^{\tau\delta}$-invariant. The latter implies that $\delta(\cA_{J})\subseteq \cA_{J}$ since for any $ft^b\in \cA_{{J}}\subset \cA_{{J}}\{\tau\}$ we have $$e^{\tau\delta}(ft^b)=f+ \delta(ft^b)\tau+\ldots\in \cA_{{J}}\{\tau\},$$
whence $\delta(ft^b)\in \cA_{{J}}.$ This completes the complex analytic case.

Next, note that the complex algebraic case immediately follows from the
analytic case, since every complex algebraic variety admits an analytic
structure, and $\cA_J$ being $\delta$-invariant can be tested on analytifications.

        This implies the result when the base field is any subfield $K_0 \subset \CC$, since, given $(X,\cF,\cR,\delta)$ and the corresponding center $\cA$ over $K_0$, the statement that $\delta(\cR_a) = 0 \in \cO/\cR_a$ over $K_0$ is equivalent to the statement after extending to $\CC$. If $K$ is general, all the data $(X,\cF,\cR,\delta)$ are defined over a finitely generated subfield $K_0 \subset \CC$, and the same is true for the unique center $\cA$. Once again, the statement that $\delta(\cR_a) = 0 \in \cO/\cR_a$ over $K$ is equivalent to the statement over $K_0$.

\subsection{Case II: $\ord_{\cF,\pa}(\cR) = \infty$}\label{ssec:Case2} We proceed similarly to Case II of \ref{lem:BasicPropertyFInvariantCenter}.
 We argue by induction in three cases: (i) $\cF = \cD_X$; (ii) $\cF = \cD_X^{\log}$ and (iii) $\cF\subset \cD_X^{\log}$.

In fact, note that in case (i), we necessarily have $\ord_{\cF,\pa}(\cR) < \infty$ and the proof has finished in Case I. So suppose that we are either in the hypothesis (ii), or in hypothesis (iii) and that (ii) has been already proved. Suppose that $\ord_{\cF,\pa}(\cR) = \infty$ and consider the Rees algebra:
\begin{equation}\label{eq:Rinfty}
\cR^{\infty} = \cF^{\infty}(\cR) = \{ \partial^{k}(ft^{b});\, \partial \in \cF_{\pa}, \, ft^b\in R_bt^b,\, k\in \mathbb{N} \}.
\end{equation}
Note that $\cR \subset \cR^{\infty}$ by definition, and that $\cR^{\infty}$ is $\cF$-invariant. Now, under (ii) we consider the $\inv_{\cD_X}$-maximal center $\cA_{{J}}$ for $\cR^{\infty}$; under (iii), the $\inv_{\cD_X^{\log}}$-maximal center $\cA_{{J}}$ for $\cR^{\infty}$. In particular, $\cR \subset \cR^{\infty} \subset \cA_{{J}}$, and $\cA_{{J}}$ is $\cR$-admissible. 

By the property of $\delta$-invariance in the Case I --- with $\delta$ taken to be an arbitrary section of $\cF$, and $\cF$ taken to be $\cD$ or  $\cD^{\log}$ as the case may be ---  the center $\cA_{{J}}$ is $\cF$-invariant; in particular, it is $(\cR,\cF)$-admissible.

We conclude that there exists a $(\cR,\cF)$-admissible center. Now, let us prove that it is unique center that maximizes the invariant. Let $\cA_{J'}$ be another $(\cR,\cF)$-admissible center such that $\inv_{\cF,\pa}(\cA_{J'}) \geq \inv_{\cF,\pa}(\cA_{{J}})$. Since $\cA_{{J}}$ is $\cF$-invariant, its $\cF$-order is $\infty$ and we conclude that $\ord_{\cF,\pa}(\cA_{J'}) = \infty$ by Definitions \ref{def:FCanonicalInvariantCenters} and  \ref{def:FCanonicalInvariantGeneral}. Since $\cA_{J'}$ is $(\cR,\cF)$
-admissible, it is therefore $\cF$-invariant. Now:
\[
\cR \subset \cA_{{J'},\pa} \implies  \cF^{\infty}(\cR) \subset  \cF^{\infty}(\cA_{{J'},\pa}) = \cA_{{J'},\pa},
\]
and $\cA_{J'}$ is also admissible for $\cR^{\infty}$. Since $\cA_{{J}}$ was the unique maximal center for $\cR^{\infty}$, we conclude that $\cA_{{J}} =\cA_{J'}$. The construction, moreover, implies that:

\begin{lemma}[Inductive property II]\label{lem:FcanonicalInvForInvariantRees} If  $\ord_{\cF,\pa}(\cR)=\infty$ then
\[
\begin{aligned}
\inv_{\cF,\pa}(\cR)&=\infty+\inv_{\cD_X^{\log},\pa}(\cF^\infty(\cR)), &\quad &\text{ in case (iii),}\\
\inv_{\cD_X^{\log},\pa}(\cR)&=\infty+\inv_{\cD_X,\pa}(\cF^\infty(\cR)), & \quad & \text{ in case (ii).}
\end{aligned}
\]
where we used the convention that
\(
\infty + (e_1\ldots,e_n) = (\infty + e_1, \ldots,\infty + e_n).
\) The $\inv_{\cF}$-maximal $(\cR,\cF)$-admissible center is, moreover, $\cF$-invariant.
\end{lemma}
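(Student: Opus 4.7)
The plan is to follow the construction already outlined in Section~\ref{ssec:Case2} and extract the invariant formula from it, using in an essential way the $\delta$-invariance property established in Case~I of the Inductive Claim.

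First I would observe that under the hypothesis $\ord_{\cF,\pa}(\cR)=\infty$ the Rees algebra $\cR^\infty:=\cF^\infty(\cR)$ defined in \eqref{eq:Rinfty} is $\cF$-invariant and contains $\cR$. In case~(iii), let $\cA_{J_0}$ denote the $\inv_{\cD_X^{\log},\pa}$-maximal $(\cR^\infty,\cD_X^{\log})$-admissible center; it is produced either by Case~I of the Inductive Claim applied to $\cD_X^{\log}$ (when $\ord_{\cD_X^{\log},\pa}(\cR^\infty)<\infty$), or by a recursive call to case~(ii) of the present lemma, which in turn reduces to Case~I applied to the always-finite-order foliation $\cD_X$. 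Because $\cR^\infty$ is $\cF$-invariant, the $\delta$-invariance statement of Part~\ref{It:claim-functoriality} of the Inductive Claim, applied with $\delta$ taken to be an arbitrary local section of $\cF$, forces $\cA_{J_0}$ to be $\cF$-invariant; thus it is $\cF$-aligned with $l=0$ in its $\cF$-aligned presentation, and it is $(\cR,\cF)$-admissible since $\cR\subset\cR^\infty\subset\cA_{J_0}$.

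Next, I would prove the $\inv_{\cF,\pa}$-maximality and uniqueness of $\cA_{J_0}$, which simultaneously yields the $\cF$-invariance of the maximal center. The first entry of $\inv_{\cF,\pa}(\cA_{J_0})$ lies in $\{\infty+c\mid c\in\QQ_{\geq 0}\}$, whereas the first entry of the invariant of any $\cF$-aligned center with $l\geq 1$ is a finite rational number $a_1$. By the chosen total order on $(\QQ_+)^{\leq n}$, any competitor $\cA_{J'}$ with $\inv_{\cF,\pa}(\cA_{J'})\geq\inv_{\cF,\pa}(\cA_{J_0})$ must therefore have $l=0$ in its $\cF$-aligned presentation, i.e.\ be $\cF$-invariant; this is the $\cF$-invariance claim of the lemma. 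Admissibility $\cR\subset\cA_{J'}$ then promotes via iterated application of $\cF$ to $\cR^\infty\subset\cA_{J'}$, so $\cA_{J'}$ is $(\cR^\infty,\cD_X^{\log})$-admissible, and the uniqueness part of the inductive hypothesis applied to $\cD_X^{\log}$ gives $\cA_{J'}=\cA_{J_0}$.

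Finally, I would read off the invariant formula. Write the $\cD_X^{\log}$-aligned presentation of $\cA_{J_0}$ as $\widehat{\cO}_{X,\pa}[x_it^{1/a_i},z_kt^{1/c_k}]^{\Int}_{l,s}$, so that by Definition~\ref{def:CanonicalInvCenters} we have $\inv_{\cD_X^{\log},\pa}(\cR^\infty)=(a_1,\ldots,a_l,\infty+c_1,\ldots,\infty+c_s)$. The same center, viewed as $\cF$-aligned with no $\cF$-transverse coordinates, has the $x_i$ playing the role of the non-transverse free $y_j$'s with exponents $b_j=a_j$, so Definition~\ref{def:FCanonicalInvariantCenters} gives
\[
\inv_{\cF,\pa}(\cA_{J_0})=(\infty+a_1,\ldots,\infty+a_l,\infty+\infty+c_1,\ldots,\infty+\infty+c_s),
\]
which is precisely $\infty+\inv_{\cD_X^{\log},\pa}(\cR^\infty)$, proving case~(iii). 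Case~(ii) is obtained by the identical argument after replacing the pair $(\cF,\cD_X^{\log})$ throughout by $(\cD_X^{\log},\cD_X)$. The only delicate point I anticipate is carefully tracking the nested ``$\infty+$'' levels when translating between the $\cF$-aligned and $\cD_X^{\log}$-aligned presentations; the real content is the $\delta$-invariance mechanism of Section~\ref{Sec:delta-invariance}, which is what makes the inductively produced center automatically $\cF$-invariant.
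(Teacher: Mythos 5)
Your argument is correct and follows essentially the same route as the paper's Case~II of the Inductive Claim: form $\cR^\infty=\cF^\infty(\cR)$, take the maximal center for $(\cR^\infty,\cD_X^{\log})$ (resp.\ $\cD_X$), invoke $\delta$-invariance with $\delta$ ranging over local sections of $\cF$ to conclude this center is $\cF$-invariant, and deduce uniqueness from the observation that any competitor of at least equal $\inv_{\cF}$ must have $l=0$, hence be $\cF$-invariant and thus $\cR^\infty$-admissible. One small point to watch: in reading off the invariant you cite Definition~\ref{def:CanonicalInvCenters} (the $E$-adapted invariant $\inv$, which places divisorial exponents at level $\infty+c$) to compute $\inv_{\cD_X^{\log},\pa}$, whereas Definition~\ref{def:FCanonicalInvariantCenters} applied literally with $\cF=\cD_X^{\log}$ (which forces $r=0$) would place them at level $\infty+\infty+c$; the paper itself uses the convention that $\inv_{\cD_X}$ means the plain $\inv$ of Section~\ref{ssec:CanonicalInvariant}, so the levels should be tracked against that convention to make the ``$\infty+$'' bookkeeping come out consistently, exactly as you flag at the end of your proposal.
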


The {well-ordering of the value set and semicontinuity of $\inv_{\cF,\pa}(\cR)$ follow} by the induction with respect to (i), (ii) and (iii) and the above Lemma. Again, the functoriality of $\inv_{\cF,\pa}(\cR)$ and the $\inv_{\cF}$-maximal admissible centers ${J}$ is a consequence of the induction with respect to (i), (ii) and (iii), and the functoriality of derivations, orders, maximal contacts, and coefficient Rees algebras, See Lemma \ref{Lem:functorial} and Section \ref{Sec:maximal-contact}.

\section{The blown up space}\label{sec:BlowUp}

\subsection{Cobordant and stack-theoretic blow-ups}\label{ssec:CobordantBU}

We follow \cite[Section~2.4.6]{Wlodarczyk-cobordant}. Let $X$ be a smooth variety  and $E$ a SNC divisor over $X$. Consider an $E$-adapted center {given locally by:}
\[
\cA_J = \cO_X[x_1t^{1/a_1},\ldots, x_lt^{1/a_l}]^{\Int}.
\]
{Let $w$ be any integer multiple of  $\lcm(a_1,\ldots,a_l)$} and take the extended Rees algebra:
\[
\cA_J^\ext=\cO_{X}[t^{-1/w},{x}_1t^{1/a_1},\ldots,{x}_lt^{1/a_l}].
\] {The flexibility of choice of $w$ will be necessary when restricting to a maximal contact, see Theorem \ref{prop:DecreaseInvariant}.}

 Set $w_i = {w/a_i}$ and write $t_B = t^{1/w}$, in particular  \[
t^{-1/w} = t_B^{-1} \quad \text{ and } \quad t^{1/a_i} = t_B^{w_i}.
\]
We also introduce the notation $s_B = t_B^{-1}$. 

\begin{definition}[{Weighted blow-up cobordism}, {\cite[Section~2.4.6]{Wlodarczyk-cobordant}}]\label{CobordantBlow-up}\hfill
\begin{enumerate}
\item 
The {\it weighted blow-up cobordism}  of the center $\cA_J$  is given by the $X$-scheme:
\footnote{We recall  that in the analytic category the notation $\Spec_X$ stands for the analytic spectrum, sometimes written $\operatorname{Specan}_X$, see \cite[Definition 1.5.2]{AHV-analytic}, and that in this case $\GG_m$ stands for $\CC^*$. See Section \ref{sec:IntroAlgebraicAnalytic}.}
\[
\sigma:\  B:= \Spec_{X}(\cA_J^{\ext})
\quad \longrightarrow\quad X.
\]
We identify the new coordinates by $x_i' = {x}_i t_B^{w_i}={x}_i s_B^{-w_i}$, so this space is given by:
\begin{align*}
B &\ = \ \Spec_{X}\left(\cO_X[t_B^{-1},x_1',\ldots,x_l'] \ /\ (x_1- x_1' t_B^{-w_1}\ ,\ \ldots\ ,\  x_l- x_l' t_B^{-w_l})\right) \\
&\  =\  \Spec_{X}\left(\cO_X[s_B,x_1',\ldots,x_l'] \ /\ (x_1- x_1' s_B^{w_1}\ ,\ \ldots\ ,\ x_l- x_l' s_B^{w_l})\right).
\end{align*}

\item 
The {\it cobordant blow-up} of $\cA_{J}$ is the restriction $\sigma_+: B_+ \to X$ of $\sigma$ to
\(
B_+:=B\setminus \mbox{Vert}(B),
\)
where \(\mbox{Vert}(B):= V(x_1',\ldots,x_l')\) is the {\it vertex} of $B$.
\item 
The \emph{exceptional divisor} of $\sigma$ or $\sigma_+$  is given by $F_B = V(t^{-1}_B)  = V(s_B)$. If $(X,E)$ is a logarithmic variety, then $(B,E_B)$ is also a logarithmic variety, where $E_B = \sigma^{-1}(E) \cup F_B$, inducing a logarithmic structure on $B_+$.  In particular $\cD_B^{\log} = \cD_B(-\log E_B)$.

\end{enumerate}

\end{definition}

The name used in \cite{Wlodarczyk-cobordant} for $B$ is the \emph{full cobordant blow-up}. 

We note that $B$ comes along  with its natural projection $B  \to \AA^1:=\Spec K[s_B],$ identifying it with the \emph{degeneration to the weighted normal cone} of $J$. 
For every $s_B\neq 0$ the fiber of $B \to \AA^1$ is isomorphic to $X$, whereas the fiber over $s_B=0$ is an $\AA^l$-bundle, the \emph{weighted normal cone of $J$ in $X$.}

The grading of $\cA_J^{\ext}$, with $s_B$ having weight $-1$ and $x_i$ having weight $w_i$, induces an action of $\GG_m$ on $B$, equivariant for $B \to \AA^1$, stabilizing $V(x_1',\ldots,x_l')$ and $B_+$, and the action on $B_+$ has finite stabilizers:
\begin{equation*}
\forall \, \xi\in \GG_m, \quad \xi \cdot (t_B^{-1},x_1',\ldots,x_l') \ =\  (\xi^{-1} t_B^{-1}, \xi^{w_1} x_1',\ldots,\xi^{w_l} x_l').
\end{equation*}
Equivalently
\begin{equation*}
 \xi \cdot (s_B,x_1',\ldots,x_l')\  =\  (\xi^{-1} s_B, \xi^{w_1} x_1',\ldots,\xi^{w_l} x_l').
\end{equation*}

\begin{definition}[Stack-theoretic weighted blow-up]\label{Def:weighted-blow-up}\hfill
\begin{enumerate}
\item The stack-theoretic \emph{weighted blow-up} of $\cA_J$ is the stack $Bl_J(X) :=[B_+ / \GG_m]$ with its induced morphism, denoted $\tau:Bl_J(X) \to X$. 

Here $[\ \cdot\  / \ \cdot\ ]$ stands for the stack-theoretic quotient.

\item 
The \emph{exceptional divisor of $\tau$ is $F:= [F_B/\GG_m] \subset X':=Bl_J(X)$}. If $(X,E)$ is a logarithmic variety, then $(X',E')$ is a logarithmic stack, where $E' = \tau^{-1}(E) \cup F$.  In particular $\cD_{X'}^{\log} = \cD_{X'}(-\log E')$.
\end{enumerate}
\end{definition}

The  stack $Bl_J(X)$ is proper and tame over $X$. Since we are working in characteristic 0, the stack $Bl_J(X)$ is a Deligne--Mumford stack. We note that $\dim(B) = \dim(X)+ 1$ and, since $\GG_m$ is one-dimensional, $\dim(Bl_J(X)) = \dim (X)$. Moreover, since $\GG_m$ acts freely on $B\setminus F_B$ with quotient $[B\setminus F_B\ /\ \GG_m] \simeq X$, the blow-up $\tau$ is birational.

\begin{example}[Blow-up of smooth centers]
Consider the ideal sheaf $\cI = (x_1,\ldots,x_l)$ and the associated Rees algebra $\cA_{\cI} = \cO_{X}[x_1t,\ldots,x_l t]$. Let
\(
\sigma: B_{+} \to X
\)
be the cobordant blow-up by $\cA_{\cI}$. One shows, see \cite{Quek-Rydh}, \cite[Remark 2.4.9]{Wlodarczyk-cobordant},
\[
B_{+}/\GG_m = \Proj_X \left( \oplus_{n\geq 0} \cI^n \right)
\]
and the induced mapping of schemes $B_{+}/\GG_m \to X$ is the usual blow-up by $\cI$.  The scheme $B$ is simply the degeneration of $X$ to the normal cone of $I$.

Once again, in the analytic case the notation $\Proj$ stands for the analytic version, see \cite[Definition 1.7.1]{AHV-analytic}.

\end{example}

\subsection{Transforms of Rees algebras by cobordant and weighted blow-ups}\label{Sec:transforms-Rees}

Let $\cI$ be an ideal sheaf, and consider the cobordant {and weighted} blow-ups of an $\cI$-admissible center
\[
\cA_J^\ext=\cO_X[t^{-1/w},{x}_1t^{1/a_1},\ldots,{x}_lt^{1/a_l}].
\]
In particular, recall that $\cO_X[t \cI ] \subset \cA_J$. We now describe its transforms on the scheme $B= \Spec_X(\cA_J^\ext)$ and the stack $Bl_J(X)$, see \cite{Encinas-Villamayor-Rees}, \cite[Section 3.7.11]{Wlodarczyk-cobordant}. The transform on $B_+$ is obtained by restriction. It is $\GG_m$-equivariant, inducing the transform on the weighted blow-up $Bl_J(X)$.

When performing the cobordant blow-up, we will consider the variable $t$ from the Rees algebra gradation from $\cR$ and from the construction of $B$ as \emph{independent}, see e.g. \cite{Wlodarczyk-cobordant}. {To minimize confusion}, the pull-back of $t$ in the Rees algebra $\cR$ will be {indicated by $T$} for a new Rees algebra on $B$.

It is convenient to indicate $J = (x_1^{a_1},\ldots,x_l^{a_l})$ as a  $\QQ$-ideal, see Section \ref{sec:Rees-and-Q-ideals}.
We first note (see \cite[Definition 4.3.3(iv)]{Quek-Rydh}): 
\begin{lemma}\label{Lem:transformed-center} The center $J$ itself admits a natural transform $J' = ({x_1'}^{a_1},\ldots,{x_l'}^{a_l})$ on $B$, such that $J\cO_B = s_B^w J'$.
\end{lemma}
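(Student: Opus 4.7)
The plan is to carry out the verification directly in the local coordinates provided by Definition \ref{CobordantBlow-up}, and then note that the resulting formula is intrinsic, hence global.

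First I would unpack the description of $B$ locally. Recall from Definition \ref{CobordantBlow-up} that on $B = \Spec_X(\cA_J^\ext)$ the relations $x_i - x_i' s_B^{w_i} = 0$ hold for $i=1,\dots,l$, where $w_i = w/a_i$ and hence $w_i a_i = w$. Thus in $\cO_B$ we have the identity
\[
x_i^{a_i} \;=\; (x_i' s_B^{w_i})^{a_i} \;=\; s_B^{w_i a_i}(x_i')^{a_i} \;=\; s_B^{w}(x_i')^{a_i},
\]
independent of $i$ in the factor $s_B^w$. Consequently, setting $J' = ((x_1')^{a_1},\ldots,(x_l')^{a_l})$, one obtains
\[
J \cO_B \;=\; (x_1^{a_1},\ldots,x_l^{a_l})\cO_B \;=\; \bigl(s_B^w (x_1')^{a_1},\ldots,s_B^w (x_l')^{a_l}\bigr) \;=\; s_B^w J',
\]
which is the desired equality.

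The remaining point is that $J'$ is ``natural'', in the sense that it does not depend on the particular presentation of the $E$-adapted center. For this I would note two things. The variables $x_i' = x_i t_B^{w_i}$ are part of the canonical generating set of $\cA_J^\ext$ coming from the gradation, and a different choice of regular presentation of $J$ compatible with $E$ differs by a $\GG_m$-equivariant change of variables of the correct weight; such a change transports $J'$ to an ideal with the same generating relation $J\cO_B = s_B^w J'$, so $J'$ is uniquely characterized by this identity. Equivalently, one may observe that $J'$ is the ideal cutting out the proper transform of the vertex $V_J = V(x_1,\ldots,x_l)$ inside $B$ away from $V(s_B)$, and that $s_B^w$ captures the order of the scheme-theoretic pullback of $J$ along the exceptional divisor.

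Since the verification is a direct computation in the given coordinates, I do not anticipate a real obstacle; the only care needed is bookkeeping with the exponents $w_i$ and $a_i$ and confirming $w_i a_i = w$ for all $i$. Once the identity is established locally, it globalizes because both sides are defined intrinsically on $B$: $J\cO_B$ as the pullback of the ideal $J$, and $s_B^w J'$ via the canonical Cartier divisor $F_B = V(s_B)$ together with the transformed center $J'$ determined by the gradation of $\cA_J^\ext$.
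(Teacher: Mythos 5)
Your computation is exactly the paper's: substitute $x_i = x_i's_B^{w_i}$, use $a_iw_i = w$ to get $x_i^{a_i} = s_B^w(x_i')^{a_i}$, and conclude that $J'$ is well-defined because $s_B$ is the canonical section cutting out the exceptional locus. The proposal matches the paper's proof in both the local identity and the globalization argument.
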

Note that the factor $s_B^w$ is exceptional. We call $J'$ the \emph{transform} of $J$ --- it serves below as both \emph{strict} and \emph{controlled} transform of $J$. It naturally restricts to $B_+$, where $J'$ is trivial, and, since $J'$ is $\GG_m$-invariant, to $Bl_J(X)$.

\begin{proof}  
Indeed, writing $x_i = x_i's_B^{w_i}$ and noting that $a_i w_i = w$, we have
$$J\cO_B = (x_1^{a_1},\ldots,x_l^{a_l}) = \left((s_B^{w_1}x_1)^{a_1},\ldots,(s_B^{w_l}x_l)^{a_l}\right) = s_B^w ({x_1'}^{a_1},\ldots,{x_l'}^{a_l}).$$
This is a local statement depending on the choices of $x_i$. However, the element $s_B$ is the canonical section defining the exceptional locus, so the center $J'$ is well-defined across charts. 
\end{proof}

The lemma implies in particular that, if $f t^a \in \cA_J$ then $fT^a \in \cA_J \cO_B$ can be factored: $f = s_B^{a\cdot w} f'$ for some $f' \in J'$. Denote $a_f := \max\{c: t_B^c f = s_B^{-c} f \in \cO_B\}$. Note in particular that $a_f \geq a \cdot w$.

\begin{definition}[Transforms of elements] \label{def:ControlledTransformElements}\hfill
\begin{enumerate} 
\item  The  \emph{strict transform} of $f\in \cO_X$ is $\sigma^s(f) :=  {t_B^{b_f} f = s_B^{-b_f} f \in \cO_B.}$
\item If $f t^a \in \cA_J$, the \emph{controlled  transform} of $f$ is   $\sigma^c(f) :=  t_B^{a} f = s_B^{-a} f \in \cO_B$. 
\end{enumerate}
\end{definition}

With this we can transform Rees algebras and ideals, see {\cite{Encinas-Villamayor-Rees,Wlodarczyk-cobordant}}:
\begin{definition}[Total and controlled transform of Rees algebras and ideals]\label{def:ControlledTransformReesAlgebra}
\hfill

\begin{enumerate} 

\item Let $\cR$ be a Rees algebra such that $A_J$ is admissible:  $\cR \subset \cA_J$. 
The \emph{total} transform of $\cR$ is given by $\cO_B \cdot \cR$.
\item  The \emph{controlled transform} of $\cR$ by $\sigma$ is given by controlled transforms of all terms, noting that $R_as_B^{-a\cdot w} \cO_B = \{\sigma^c(f) : f \in R_a\}\cO_B$:
\[
\sigma^{c}(\cR) = \sigma^{c}\left(\bigoplus R_a t^a\right)\quad  :=\quad  \bigoplus \left( \cO_{B} \cdot R_a \cdot t_B^{a \cdot w} \right)   T^a\ = \ \bigoplus \left( \cO_{B} \cdot R_a \cdot s_B^{-a \cdot w} \right)   T^a. 
\]
\item Given an ideal sheaf $\cI$,  its controlled transform is:
\[
\sigma^{c}(\cI) := \mathcal{O}_B \cdot \mathcal{I} \cdot t_B^{w} \subset \cO_B.
\]
\end{enumerate}
\end{definition}

\begin{definition}[Strict transform of Rees {algebras} and ideals]\label{def:StrictTransformReesAlgebra}\hfill
\begin{enumerate} 
\item Let $\cR$, $\sigma: B \to X$, $\cA_J$ be as above. The \emph{strict} transform of $\cR$ is given by the strict transforms of all terms:
\[
\sigma^{s}(\cR) = \cO_B\cdot\{\sigma^s(f)\, T^a\mid f\in R_a\}
\]
\item {Similarly, given an ideal sheaf $\cI$, the strict transform of $\cI$}
under a weighted blow-up cobordism $\sigma$ is given by:
\[
\sigma^{s}(\cI) := \cO_B\cdot \{\sigma^s(f)\mid f\in \cI\},
\]
\item
Finally the \emph{strict transform} $\sigma^{s}(Y)=Y^s$ of a closed subscheme $Y$ of $X$ is given by $\cI_{Y^s}=\sigma^s(\cI_Y)$ (where $\cI_Z$ stands for the defining ideal of a subscheme $Z$).
\end{enumerate}
\end{definition}

Again $\sigma^c(\cI)$ is compatible with the transform of the corresponding algebra. The key property of these transforms, which follows immediately from Lemma \ref{Lem:transformed-center},  is:
\begin{proposition} Under the same assumptions,
$\cA_{J'}$ is admissible for both $\sigma^c(\cR)$ and $\sigma^s(\cR)$.
\end{proposition}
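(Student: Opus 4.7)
The statement splits into two inclusions of graded $\cO_B$-algebras: $\sigma^c(\cR)\subset \cA_{J'}$ and $\sigma^s(\cR)\subset \cA_{J'}$. Working degree by degree, it suffices to show, for every $f\in R_a$, that $\sigma^c(f)T^a\in\cA_{J'}$ and $\sigma^s(f)T^a\in\cA_{J'}$. The plan is to import the admissibility information from $X$ to $B$ by a careful accounting of the powers of the exceptional coordinate $s_B$, with Lemma~\ref{Lem:transformed-center} doing the bookkeeping.

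For the controlled transform, the assumption $\cR\subset \cA_J$ places $f$ in $\cA_{J,a}$, which is the integral closure of the ideal $I_a\subset\cO_X$ generated by the monomials $x^\alpha$ with $\sum_i\alpha_i/a_i\geq a$. Substituting $x_i=x_i's_B^{w_i}$ with $w_i=w/a_i$ turns each generator into $x^\alpha=s_B^{w\sum_i\alpha_i/a_i}(x')^\alpha$, and $w\sum_i\alpha_i/a_i\geq aw$, so $I_a\cO_B\subset s_B^{aw}I_a'$ where $I_a'=((x')^\alpha:\sum_i\alpha_i/a_i\geq a)$ is the analogous ideal for $J'$. Since $s_B$ is a non-zero-divisor in $\cO_B$, passing to integral closures preserves the principal factor and gives $I_a^{\Int}\cO_B\subset s_B^{aw}(I_a')^{\Int}$. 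Writing $f=s_B^{aw}f'$ identifies $f'\in(I_a')^{\Int}=\cA_{J',a}$ with $\sigma^c(f)=s_B^{-aw}f$, whence $\sigma^c(f)T^a\in\cA_{J'}$.

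For the strict transform, combining $f=s_B^{b_f}\sigma^s(f)$ with $f=s_B^{aw}\sigma^c(f)$ yields $\sigma^c(f)=s_B^{b_f-aw}\sigma^s(f)$ with $e:=b_f-aw\geq 0$. The controlled transform step gives $s_B^{e}\sigma^s(f)\in(I_a')^{\Int}$. Here enters the crucial transversality observation: the exceptional coordinate $s_B$ is coprime to $J'=((x_1')^{a_1},\dots,(x_l')^{a_l})$, since these are distinct coordinates in the local presentation of $B$; the ideal $(I_a')^{\Int}$ is monomial in the $x'$-variables, so its associated primes are generated by subsets of the $x_i'$, and in particular $s_B$ is a non-zero-divisor modulo $(I_a')^{\Int}$. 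Iterating the cancellation $e$ times (or equivalently invoking $(I_a')^{\Int}:s_B^\infty=(I_a')^{\Int}$) gives $\sigma^s(f)\in(I_a')^{\Int}=\cA_{J',a}$, so $\sigma^s(f)T^a\in\cA_{J'}$.

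The main technical subtlety is controlling integral closures under the pullback $\sigma\colon B\to X$, which is not flat in general, so that only the one-sided inclusion $I_a^{\Int}\cO_B\subset(I_a\cO_B)^{\Int}$ is available; fortunately, this is exactly what is needed once combined with the principal factorization $J\cO_B=s_B^wJ'$ of Lemma~\ref{Lem:transformed-center} and the coprimality of $s_B$ with the transformed center $J'$.
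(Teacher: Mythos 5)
Your argument is correct, and it fills in a proof the paper leaves entirely implicit: the paper asserts the proposition ``follows immediately from Lemma~\ref{Lem:transformed-center}'' and gives no further justification. You take exactly the route the paper is gesturing at — the factorization $J\cO_B = s_B^w J'$ on generators, pushed degree by degree through the graded pieces of the Rees algebra — but you actually carry it out. In particular you correctly isolate the point the paper glosses over for the strict transform: from $f = s_B^{aw}f'$ with $f'\in\cA_{J',a}$ one gets $\sigma^s(f) = s_B^{-e}f'$ with $e\geq 0$, and one then needs $s_B$ to be a non-zerodivisor modulo the (integrally closed, monomial) ideal $\cA_{J',a}$ in order to cancel the exceptional factor; this saturation step is genuine and your justification via associated primes of the monomial ideal in the $x'$-variables is the right one. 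Your handling of integral closure under the ring map $\cO_X\to\cO_B$ is also correct: only $I^{\Int}\cO_B\subset (I\cO_B)^{\Int}$ is used, together with the standard fact (valid here since $\cO_B$ is regular, hence normal) that integral closure commutes with multiplication by the principal nonzerodivisor $s_B$. No gaps; if anything your write-up is more explicit than what the paper supplies.
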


\begin{example}
Let $\cI=(x^4+y^7+z^{20} + z^{21})$ and consider the $\cI$-admissible center:
\[
\cA_{J}=\cO_X[xt^{1/4},yt^{1/7},zt^{1/20}]^\inte.
\]
Since $w_A=\lcm(4,7,20)=140$ we obtain that
\[
\cA^\ext=\cO_X[t^{-1/140}, xt^{1/4},yt^{1/7},zt^{1/20}],
\] 	
so the weighted blow-up cobordism has as source:
\begin{align*}
B&=\Spec(\cO_X[t^{-1}_B, xt_B^{35},yt_B^{20},zt_B^{7}])\\ &= \Spec\left(\cO_X[s_B, x',y',z']/(x-s_B^{35} x', y-s_B^{20}y', z-s_B^{7}z')\right)
\end{align*}
We may now consider the total transform of $\cI$:
\[
\begin{aligned}
\cO_B \cdot \cI &= ((x't_B^{-35})^4+(y't_B^{-20})^7+(z't_B^{-7})^{20} + (z't_B^{-7})^{21}) \\
&= t_B^{-140}\left((x')^4+(y')^7+(z')^{20} + (z')^{21}t_B^{-7} \right), \\
&=  s_B^{140} \left((x')^4+(y')^7+(z')^{20} + (z')^{21}s_B^{7} \right),
\end{aligned}
\]
as well as the controlled transform, which in this case agrees with the strict transform:
\[
\begin{aligned}
\sigma^{c}(\cI) = \cO_B\cdot \cI \cdot t_B^{140} &= \left((x')^4+(y')^7+(z')^{20} + (z')^{21}t_B^{-7} \right) \\
&= \left((x')^4+(y')^7+(z')^{20} + (z')^{21}s_B^{7} \right) \subset \cO_B. 
\end{aligned}
\]
\end{example}

\begin{remark}[Schematic closure] It is also possible to define the strict transform as a schematic closure. In fact, note that the restricted mapping
\[
\sigma_{|B_-}: B_-=B\setminus V(t_B^{-1})=B\setminus V(s_B)=X\times\GG_m\to X
\]
is trivial over $X$. Now, the strict transform $\sigma^s(\cI)$ of an ideal $\cI$ on $X$ under a weighted blow-up cobordism $\sigma: B\to X$ of $\cA_J$ on $X$ can be {equivalently} defined as the schematic closure on $B$ of
\[
(\cO_B\cdot \cI)_{|B_-}=\cO_{B_-}\cdot \cI{=\{f\in \cO_B\mid f_{|B_-} \in \cI_{|B_-}\}}
\]
Similarly, the strict transform of a closed subscheme $Y$ of $X$ is the schematic closure $Y^s$ on $B$ of
\[
Y\times \GG_m\subset B_-=X\times \GG_m.
\]
Finally, for the cobordant blow-up $\sigma_+: B_+\to X$, the induced strict transform coincides with the standard definition $\sigma_+^s(\cI)$ as the schematic closure on $B_+$ of
\[
\cO_{\sigma_+^{-1}(X\setminus V({J}))}\cdot \cI_{|X\setminus V({J})}=(\cO_B\cdot \cI)_{|{B_+\cap B_-}}.
\]
\end{remark}

\subsection{Controlled and strict transforms of foliations by {the} blow-ups}\label{ssec:TransformFoliations}\label{Sec:transform}

\subsubsection*{The transforms} 
A cobordant blow-up $\sigma_+ : B_+ \to X$ with center $\cA_J$ is a generically smooth morphism, since it factors through the smooth morphism $X \times \AA^1\to X$. We therefore can define the total, controlled, and strict transform of a foliation $\cF \subset \cD_X^{\log}$ following Definitions \ref{def:TotalTransformFoliation} and \ref{def:TransformFoliation}, but alternatively also the split, split-controlled, and split-strict transforms through Definition \ref{Def:split-transforms}. For the split constructions the following diagram encapsulates the situation:

\begin{equation} \label{Eq:bigdiagram}\xymatrix{
B_+ \ar[r]^q\ar[d]_{\sigma_0}\ar@/^1.5cm/[rrd]^{\sigma_+} & Bl_J(X) \ar[rd]^{\tau}\ar[d]_{\tau_0} \\
X \times \AA^1 \ar[r]\ar[d] &X \times \cA^1 \ar[r]_{\varpi_X}\ar[d] & X \ar[d] \\
\AA^1 \ar[r] &\cA^1 \ar[r] & \{pt\}  
}\end{equation}

Here $\AA^1=\Spec K[s_B] = \Spec K[t_B^{-1}] $ and $\cA^1 = [\AA^1 / \GG_m]$. The morphism $Bl_J(X) \to \cA^1$ corresponds to the exceptional divisor, and $B_+ \to \AA^1$ corresponds to its defining section $s_B$. Note that $\sigma_+ = \tau\circ q$. We also write $\pi_X: X \times \AA^1 \to X$.

Each cartesian square in the bottom rectangle falls within the setup of Section \ref{Sec:relative-pullback} regarding relative foliations. We are given a foliation $\cF \subset \cD^{\log}_X$, which pulls back to relative foliations $\cF_{\cA_1} \subset \cD^{\log}_{X\times \cA^1/\cA^1}$ and $\cF_{\AA_1} \subset \cD^{\log}_{X\times \AA^1/\AA^1}$, and the pullback of  $\cF_{\cA_1}$ to $X\times \cA^1$ is $\cF_{\AA_1}$. 

Considering now the right half of the diagram, we have defined $\tau^*{\cF},$ $ \tau^c{\cF}$ and $\tau^s{\cF}$ in Definition \ref{def:TransformFoliation}, which clearly coincides with the construction of Definition \ref{Def:split-transforms}. 

We similarly may consider the outer diagram, where we defined $\sigma_+^*{\cF},$ $ \sigma_+^c{\cF}$ and $\sigma_+^s{\cF}$ in Definition \ref{Def:split-transforms}.

Finally we have the left hand rectangle, which falls under Section \ref{Sec:relative-pullback}, which allows us to identify 
 $$\sigma_+^*(\cF) = q^*(\tau^*(\cF)),  \quad \sigma_+^c(\cF) = q^*(\tau^c(\cF)) \quad \text{ and }\quad \sigma_+^s(\cF) = q^*(\tau^s(\cF)).$$ In particular, $\tau^*(\cF),  \tau^c(\cF)$ and $\tau^s(\cF)$ can be obtained by descent  either from $\sigma_+^*(\cF),$ $  \sigma_+^c(\cF)$ and $\sigma_+^s(\cF)$, which, as it turns out, provides a convenient explicit description, or from the more ``standard" $$\sigma_0^*\left(\folpull{{(\pi_X)}}(\cF)\right)\quad  \sigma_0^c\left(\folpull{{(\pi_X)}}(\cF)\right) \quad \text{ and }\quad\sigma_0^s\left(\folpull{{(\pi_X)}}(\cF)\right). $$

\subsubsection*{Explicit description} Our goal in this section is to provide a local description of these transforms with respect to blow-ups along $\cF$-aligned centers. Since the morphisms $Bl_J(X) \to X \times \cA_1 \to \cA^1$ are the quotients of $B_+ \to X \times \AA_1 \to \AA^1$, it suffices to describe the outer diagram. Since $B_+ \subset B$ is open, we might as well compute things using the following digram instead:

$$\xymatrix@R=5mm{
B \ar[dd]\ar[rrd]^{\sigma} & \\
&& X \ar[d] \\
\AA^1 \ar[rr] & & \{pt\}.  
}$$
An explicit description on \emph{\'etale} charts for $Bl_J(X)$ is provided in Section \ref{Sec:etale-charts}.

\begin{definition}[Split controlled and strict transform of a derivation]\label{def:FullControlledTransformDerivation}
Given a derivation $\partial \in \cD^{\log}_{X}$, we define integers $a_\partial$ and $b_\partial$ as follows:
\[
\begin{aligned}
\sigma^{c}(\partial) &= s_B^{a_{\partial}} \sigma^{{\ast}}(\partial) \in \cD^{\log}_{B/\AA^1} \quad \text{ where } \quad a_{\partial}:= \min\{ a \in \ZZ_{\geq 0}\mid  s_B^{a} \partial \in \cD^{\log}_{B/\AA^1}\}, \\
\sigma^{s}(\partial) &= s_B^{b_{\partial}} \sigma^{{\ast}}(\partial) \in \cD^{\log}_{B/\AA^1} \quad \text{ where } \quad b_{\partial}:= \min\{ b \in \ZZ \mid  s_B^{b} \partial \in \cD^{\log}_{B/\AA^1}\}.
\end{aligned}
\]
In the language of Section \ref{Sec:birational-transform}, $a_\partial = r$ and $b_\partial = m$, for the unique exceptional divisor $D$. 

We note that, in contrast to the general situation of Definition \ref{def:TransformFoliation}, the section $s_B$ is uniquely defined and therefore the transforms are well defined, not only up to a unit. 
\end{definition}

\begin{lemma}[Local expression of transforms of derivations]\label{lem:LocalExpressionTransformDerivation}
Let $\sigma: B\to X$ be a {weighted blow-up cobordism} with center $\cA_J = \cO_X[t^{1/a_i}x_i]_{l}$. Then:
\[
\begin{aligned}
\sigma^{{\ast}}(\partial_{x_i}) &=s_B^{-w_i}\partial_{x_i'}= t_B^{w_i}\partial_{x_i'} &\quad& i=1,\ldots,l,\\
\sigma^{{\ast}}(\partial_{x_i}) &= \partial_{x_i'} & \quad &i=l+1,\ldots,n,\\
\sigma^{c}(\partial_{x_i}) &= \sigma^{s}(\partial_{x_i}) = \partial_{x_i'} & \quad  & i=1,\ldots,n.
\end{aligned}
\]
\end{lemma}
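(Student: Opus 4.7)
The plan is to work in the affine chart of $B$ whose coordinates are $(s_B, x_1', \ldots, x_l', x_{l+1}, \ldots, x_n)$. From Definition~\ref{CobordantBlow-up} the structure ring $\cO_B$ is obtained from $\cO_X[s_B, x_1', \ldots, x_l']$ by imposing $x_i = x_i' s_B^{w_i}$ for $i \le l$, so these functions form a regular system of parameters on $B$; moreover the log structure on $B$ is generated by $E_B = \sigma^{-1}(E) \cup V(s_B)$, so each $\partial_{x_i'}$ for $i=1,\ldots,n$ is a section of $\cD^{\log}_{B/\AA^1}$ that is not divisible by $s_B$ in $\cD_B$.

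The first two identities come from the chain rule applied to $\sigma$. For $i \le l$, the relation $x_i = x_i' s_B^{w_i}$ at fixed $s_B$ and fixed $x_j$ with $j\neq i$ gives $\partial x_i' / \partial x_i = s_B^{-w_i}$, while the remaining coordinates of $X$ are independent of $x_i$; this yields
$$\sigma^{*}(\partial_{x_i}) = s_B^{-w_i}\,\partial_{x_i'} = t_B^{w_i}\,\partial_{x_i'}.$$
For $i > l$, the coordinate $x_i$ pulls back to itself, so $\sigma^{*}(\partial_{x_i}) = \partial_{x_i'}$.

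For the controlled and strict transforms we apply Definition~\ref{def:FullControlledTransformDerivation}. When $i \le l$, the expression $s_B^{a}\sigma^{*}(\partial_{x_i}) = s_B^{a-w_i}\,\partial_{x_i'}$ lies in $\cD^{\log}_{B/\AA^1}$ if and only if $a \ge w_i$, giving $a_{\partial_{x_i}} = w_i$; the same bound holds for the minimum integer (allowed to be negative) in the strict transform, since $\partial_{x_i'}$ is not divisible by $s_B$ in $\cD_B$. Hence $\sigma^{c}(\partial_{x_i}) = \sigma^{s}(\partial_{x_i}) = \partial_{x_i'}$. When $i > l$, $\sigma^{*}(\partial_{x_i}) = \partial_{x_i'}$ is already a logarithmic section not divisible by $s_B$, whence $a_{\partial_{x_i}} = b_{\partial_{x_i}} = 0$ and again $\sigma^{c}(\partial_{x_i}) = \sigma^{s}(\partial_{x_i}) = \partial_{x_i'}$.

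The whole proof is essentially bookkeeping; the only point requiring attention is the verification that $\partial_{x_i'}$ is not divisible by $s_B$ in $\cD_B$, which guarantees that the minimum exponent is attained at $w_i$ (resp.\ at $0$) and hence that the controlled and strict transforms coincide for the coordinate derivations.
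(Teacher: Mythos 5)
Your proof is correct and takes the same approach as the paper: write $x_i = s_B^{w_i} x_i'$, pull back $\partial_{x_i}$ subject to the constraint that it annihilate $s_B$ (which you encode as "at fixed $s_B$"), and then cancel the denominators to get the controlled/strict transforms. The paper states this in a single sentence; your write-up simply makes the chain-rule computation and the minimality of the exponent $w_i$ explicit, including the (worth noting) observation that $\partial_{x_i'}$ is not divisible by $s_B$, which is precisely what forces $\sigma^c$ and $\sigma^s$ to agree here.
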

\begin{proof}
{This follows} from the expression $x_i = s_B^{w_i}x_i'$ for $i=1,\ldots,l$, and the {requirement} that $\sigma^{{\ast}}(\partial)(s_B) \equiv 0$.
\end{proof}

The difference between the controlled and the strict transform of derivations is that we allow in the latter all integral powers $b\in\ZZ$.
In general, the controlled and strict transform do not coincide:

\begin{example}
We consider the transforms of the derivation $\partial = x^2\partial_x -y^2 \partial_y$ under the weighted blow-up cobordism of the origin $\cA_J=\cO_{\mathbb{C}^2}[tx,ty]$:
\[
\begin{aligned}
\sigma^{s}(\partial) &= (x')^2\partial_{x'} -(y')^2 \partial_{y'} \\
\sigma^{c}(\partial) &= t_B^{-1} \left((x')^2\partial_{x'} -(y')^2 \partial_{y'}\right)
\end{aligned}
\]
which are not equal. In fact, {the two} transforms are distinct for every derivation in $\mathbb{C}^2$ such that $\partial(m_0) \subset m_0^2$, {as well as in some situations, known as nilpotent singularities,}  when $\partial(m_0) \subset m_0$  --- that is, where the mapping $\overline{\partial}: m_0/m^{2}_0\to m_0/m^{2}_0$ is nilpotent. {For more details, we refer the reader to \cite{IY,BelJA,McQPan} and the references therein.}
 \end{example}

\subsection{Compatibility of transforms of foliations and of invariant  Rees algebras}

\begin{proposition} \label{prop:FInvariantBlowup2} Let $\cR=\bigoplus R_a$ be an $\cF$-invariant Rees algebra on a foliated logarithmically smooth variety $(X,\cF,E)$, and consider an $\cF$-invariant center $\cA_J$. Consider the deformation to the normal cone $\sigma:B \to X$ with center $\cA_J$. Then:
\begin{enumerate}
\item The total and controlled transforms of $\cF$ coincide, i.e. $\sigma^{c}(\cF) = \sigma^{\ast}(\cF)$;
\item The controlled transform $\sigma^c(\cR)$ is $\sigma^{c}(\cF)$-invariant;
\item  The strict transform $\sigma^s(\cR)$ is $\sigma^{s}(\cF)$-invariant.
\end{enumerate}

\end{proposition}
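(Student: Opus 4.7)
The plan is to work locally in coordinates adapted to the $\cF$-invariant presentation $\cA_J = \cO_{X,\pa}[y_j t^{1/b_j}, z_k t^{1/c_k}]_{r,s}^{\Int}$ of Definition \ref{def:Finvariant}. Let $w$ be a common multiple of the $b_j, c_k$; then on $B$ we have $y_j = s_B^{w/b_j} y_j'$, $z_k = s_B^{w/c_k} z_k'$, and $\sigma^*(\nabla)(s_B) = 0$ for every $\nabla\in\cF$ by construction of the pullback. For (1), the key observation is that the $\cF$-invariance of $\cA_J$ forces $\nabla(y_j) \in (\cA_J)_{1/b_j}$; that is, $\nabla(y_j)$ lies in the integral closure of the ideal generated by $(y,z)$-monomials of weight $\geq 1/b_j$. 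After pullback, every such monomial carries a factor of $s_B$ of exponent at least $w/b_j$, so $\sigma^*(\nabla)(y_j') = s_B^{-w/b_j} \nabla(y_j) \in \cO_B$. For the divisorial coordinates $z_k$, the analogous computation combined with the logarithmic constraint $\nabla(z_k) \in (z_k)$ produces $\sigma^*(\nabla)(z_k') \in (z_k')$, as needed. Thus $\sigma^*(\nabla) \in \cD^{\log}_{B/\AA^1}$, forcing $a_\partial = 0$ in Definition \ref{def:FullControlledTransformDerivation} and hence $\sigma^c(\nabla) = \sigma^*(\nabla)$, proving (1).

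For (2), the identification $\sigma^c(\nabla) = \sigma^*(\nabla)$ reduces invariance to the direct computation
\[
\sigma^c(\nabla)(s_B^{-aw} f) = s_B^{-aw} \sigma^*(\nabla)(f) = s_B^{-aw} \nabla(f), \quad f \in R_a,
\]
using $\sigma^*(\nabla)(s_B) = 0$. Since $\cR$ is $\cF$-invariant, $\nabla(f) \in R_a$, so the right-hand side equals $\sigma^c(\nabla(f)) \in \sigma^c(\cR)_a = s_B^{-aw} \cdot \cO_B \cdot R_a$. The Leibniz rule extends this to all of $\sigma^c(\cR)_a$, giving (2).

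For (3), the analogous computation reads
\[
\sigma^s(\nabla)(\sigma^s(f)) = s_B^{b_\partial - a_f}\nabla(f) = s_B^{b_\partial - a_f + a_{\nabla(f)}}\sigma^s(\nabla(f)),
\]
which will lie in $\cO_B\cdot \sigma^s(\cR)_a$ precisely when the exponent $b_\partial - a_f + a_{\nabla(f)}$ is non-negative. This is the main obstacle: relating the $s_B$-adic order of $\sigma^*(\nabla)$ (captured by $b_\partial \leq 0$) with that of $\nabla(f)$ (captured by $a_{\nabla(f)}$). The key observation is that by (1) and the definition of the strict transform, $\sigma^s(\nabla) \in \cD^{\log}_{B/\AA^1}$ annihilates $s_B$, so for any $g\in\cO_B$ with $\ord_{F_B}(g) = k$, writing $g = s_B^k u$ gives $\sigma^s(\nabla)(g) = s_B^k\sigma^s(\nabla)(u)$, whence $\ord_{F_B}(\sigma^s(\nabla)(g))\geq k$. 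Applying this with $g = \sigma^*(f)$ (so $k = a_f$) and invoking $\sigma^*(\nabla(f)) = s_B^{-b_\partial}\sigma^s(\nabla)(\sigma^*(f))$, I obtain $a_{\nabla(f)} \geq a_f - b_\partial$, the required inequality. Combined with $\sigma^s(\nabla(f)) \in \sigma^s(\cR)_a$ and the Leibniz rule for $\cO_B$-multiples, this completes (3).
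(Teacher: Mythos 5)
Your arguments for parts (1) and (2) are correct and follow essentially the same route as the paper's single-derivation Proposition~\ref{prop:FInvariantBlowup}(1,2): one observes that for $\nabla\in\cF$ the $\cF$-invariance of $\cA_J$ makes $\sigma^*(\nabla)$ a regular derivation of $\cO_B=\cA_J^\ext$ killing $s_B$, hence $\sigma^c(\nabla)=\sigma^*(\nabla)\in\cD^{\log}_{B/\AA^1}$; and then the computation $\sigma^*(\nabla)(s_B^{-aw}\sigma^*(f)) = s_B^{-aw}\sigma^*(\nabla(f))$ with $\nabla(f)\in R_a$ gives (2). Your explicit coordinate computation for (1) does the same work a bit more laboriously, but it is fine.

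For (3), however, there is a genuine gap. What you prove is that $\sigma^s(\nabla)(\sigma^s(f))\in\sigma^s(\cR)_a$ for each \emph{individual} derivation $\nabla\in\cF$ --- this is precisely the content of the paper's Proposition~\ref{prop:FInvariantBlowup}(3), which it keeps as a preliminary lemma. It does \emph{not} by itself give Proposition~\ref{prop:FInvariantBlowup2}(3), because by Definition~\ref{def:StrictTransformFoliation} the strict transform $\sigma^s(\cF)$ is the sheaf-theoretic intersection
\[
\sigma^s(\cF)\ =\ \bigl[\sigma^*(\cF)\otimes\cO_B(\ast D)\bigr]\cap\cD^{\log}_B,
\]
which need not be generated over $\cO_B$ by $\{\sigma^s(\nabla):\nabla\in\cF\}$: one can have meromorphic combinations $\sum f_i\,\sigma^*(\partial_i)$, $f_i\in\cO_B(\ast D)$, that are regular in aggregate even when no individual summand $f_i\sigma^*(\partial_i)$ is, and your argument says nothing about those. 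The paper closes this gap by taking an arbitrary $\nabla\in\sigma^s(\cF)$, writing $\nabla=\sum f_i\,\sigma^*(\partial_i)$ with $\partial_i$ local generators of $\cF$ and $f_i\in\cO_B(\ast D)$, and computing
\[
\nabla(\sigma^s(h)) = \sum f_i\,\sigma^*(\partial_i)(s_B^{-a_h})\,\sigma^*(h) + \sum f_i\,s_B^{-a_h}\,\sigma^*(\partial_i(h)) \ \in\ \sigma^*(R_a)\otimes\cO_B(\ast D),
\]
for $h\in R_a$ (using $\partial_i(h)\in R_a$). Since $\nabla\in\cD^{\log}_B$ and $\sigma^s(h)\in\cO_B$, the left-hand side also lies in $\cO_B$, and one concludes with the identification $\sigma^s(R_a)=\cO_B\cap\bigl[\sigma^*(R_a)\otimes\cO_B(\ast D)\bigr]$ coming from the schematic-closure characterization of the strict transform. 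To make your argument complete you would need to justify why the $\cO_B$-span of single-derivation strict transforms exhausts $\sigma^s(\cF)$, or replace the last step by the paper's sheaf-theoretic computation.
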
 

We rely on the following result, concerning one local derivation  $\delta$:

\begin{proposition} \label{prop:FInvariantBlowup} 
Let $\delta\in \cD^{\log}_X$ be a derivation and let $\cR=\bigoplus R_a$ be a $\delta$-invariant Rees algebra on a foliated logarithmically smooth variety $(X,\cF,E)$, with $\delta$-invariant foliation $\cF$, and consider a $\delta$-invariant center $\cA_J$. Consider the deformation to the normal cone $\sigma:B \to X$ with center $\cA_J$. Then:
\begin{enumerate}
\item $\sigma^*(\delta)$ is a logarithmic derivation,  so $\sigma^{c}(\delta) = \sigma^{\ast}(\delta)$;
\item The controlled transforms $\sigma^c(\cR)$  and $\sigma^c(\cF)$ are $\sigma^{c}(\delta)$-invariant;
\item  The strict transforms $\sigma^s(\cR)$  and $\sigma^s(\cF)$ are $\sigma^{s}(\delta)$-invariant.
\end{enumerate}
\end{proposition}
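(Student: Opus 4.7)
The whole argument is local exponent-bookkeeping on $B = \Spec_X(\cA_J^{\ext})$, with coordinates $(s_B, x_1', \ldots, x_l')$ satisfying $x_i = s_B^{w_i} x_i'$ for $w_i = w/a_i$. I begin with part (1). I extend $\delta$ to a derivation of $\cO_B$ by the only consistent choice, declaring $\sigma^*(\delta)(s_B) := 0$ and $\sigma^*(\delta)(x_i') := \delta(x_i)\, s_B^{-w_i}$, and I check this is well-posed modulo the relations $x_i - s_B^{w_i} x_i'$. The key point is that $\delta(x_i)\, s_B^{-w_i}$ actually lies in $\cO_B$: since $\cA_J$ is $\delta$-invariant, $\delta(x_i)\, t^{1/a_i} \in \cA_J$, and this element of $\cA_J \subset \cO_B$ is precisely $\delta(x_i)\, s_B^{-w_i}$. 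Divisorial generators $z_k t^{1/c_k}$ pose no obstacle: $\delta$-invariance is automatic there from $\delta \in \cD_X^{\log}$. The resulting derivation $\sigma^*(\delta)$ annihilates $s_B$ and preserves $\sigma^{-1}(E)$, hence is logarithmic for $E_B = \sigma^{-1}(E) \cup V(s_B)$. Consequently $a_\delta = 0$ and $\sigma^c(\delta) = \sigma^*(\delta)$.

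\textbf{Controlled transforms.} For part (2), I apply $\sigma^c(\delta)$ to the degree-$a$ piece $\cO_B \cdot R_a \cdot s_B^{-aw}\, T^a$ of $\sigma^c(\cR)$: since $\sigma^c(\delta)$ kills $s_B$ (and $T$), only $\delta(R_a) \subseteq R_a$ contributes, giving invariance. For $\sigma^c(\cF)$, I take $\partial \in \cF$ and write $\sigma^c(\partial) = s_B^{a_\partial}\, \sigma^*(\partial)$; the Leibniz rule together with $\sigma^c(\delta)(s_B) = 0$ collapses the bracket to
\[
[\sigma^c(\delta),\, \sigma^c(\partial)] \ =\  s_B^{a_\partial}\, \sigma^*([\delta, \partial]),
\]
using the routine identity $[\sigma^*(\delta), \sigma^*(\partial)] = \sigma^*([\delta, \partial])$ which I verify on the generators $\cO_X$, $s_B$, $x_i'$ of $\cO_B$. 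By $\delta$-invariance of $\cF$ we have $[\delta, \partial] \in \cF$, so $\sigma^c([\delta, \partial]) = s_B^{a_{[\delta,\partial]}}\, \sigma^*([\delta, \partial]) \in \sigma^c(\cF)$. The necessary inequality $a_{[\delta,\partial]} \leq a_\partial$ comes for free: the bracket above is a commutator of two logarithmic derivations, hence logarithmic, so by minimality of $a_{[\delta,\partial]}$ the inequality holds, and the bracket equals $s_B^{a_\partial - a_{[\delta,\partial]}}\, \sigma^c([\delta, \partial]) \in \sigma^c(\cF)$. Extension to all of $\sigma^c(\cF)$ is by $\cO_B$-linearity.

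\textbf{Strict transforms.} Part (3) follows the same template with $b_\partial \in \ZZ$ (now possibly negative) in place of $a_\partial$; $\sigma^s(\delta) = s_B^{b_\delta}\sigma^*(\delta)$ is logarithmic by definition. For $\sigma^s(\cR)$: given $f \in R_a$ with $b_f = \max\{c : s_B^{-c} f \in \cO_B\}$, cancellation of $s_B$-derivatives gives $\sigma^s(\delta)(\sigma^s(f)) = s_B^{b_\delta - b_f}\, \delta(f)$. The left-hand side is in $\cO_B$ because $\sigma^s(\delta)$ is logarithmic, forcing $b_{\delta(f)} \geq b_f - b_\delta$; combined with $\delta(f) \in R_a$ I recognize a non-negative power of $s_B$ times $\sigma^s(\delta(f))$, which sits in $\sigma^s(\cR)$. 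For $\sigma^s(\cF)$, the same Leibniz cancellation yields
\[
[\sigma^s(\delta),\, \sigma^s(\partial)] \ =\  s_B^{b_\delta + b_\partial}\, \sigma^*([\delta, \partial]),
\]
which is logarithmic as a bracket of logarithmic derivations; minimality then gives $b_{[\delta,\partial]} \leq b_\delta + b_\partial$, and the bracket equals $s_B^{b_\delta + b_\partial - b_{[\delta,\partial]}}\, \sigma^s([\delta, \partial]) \in \sigma^s(\cF)$. The only technical obstacle throughout is the exponent bookkeeping, and the universal reason the inequalities fall on the correct side is the twin fact that $\sigma^*(\delta)$ annihilates $s_B$ and that the commutator of logarithmic derivations is logarithmic.
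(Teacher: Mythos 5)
Your proof follows the paper's approach essentially step for step: extend $\delta$ to $\cO_B = \cA_J^{\ext}$ via $\delta$-invariance of the center, then exploit the two facts that $\sigma^*(\delta)$ annihilates $s_B$ and that the bracket of two logarithmic derivations is logarithmic. Parts (1), (2), and the $\sigma^s(\cR)$ half of (3) are correct and match the paper (the paper proves (1) slightly more invariantly, noting that $\delta$ preserves each graded piece $\cA_{J,b}$ and hence automatically gives a derivation of $\cA_J^{\ext}$, whereas you verify the coordinate formula $\sigma^*(\delta)(x_i') = \delta(x_i)s_B^{-w_i} \in \cO_B$ directly — both are valid).

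There is, however, a gap in the $\sigma^s(\cF)$ half of part (3). You compute $[\sigma^s(\delta), \sigma^s(\partial)] = s_B^{b_\delta + b_\partial}\sigma^*([\delta,\partial]) \in \sigma^s(\cF)$ only for sections $\partial$ of $\cF$ and assert that the ``same template'' as part (2) applies. But that template — verify on generators, extend by $\cO_B$-linearity — fails here: by Definition \ref{def:StrictTransformFoliation}, $\sigma^s(\cF) = [\sigma^*(\cF)\otimes\cO(*D)]\cap\cD^{\log}_B$ is a \emph{saturation}, not the $\cO_B$-span of $\{\sigma^s(\partial)\}$. It can contain elements $\nabla = \sum_i f_i\,\sigma^*(\partial_i)$ with $f_i \in \cO(*D)$ having poles worse than $s_B^{-b_{\partial_i}}$ allows individually, compensated by cancellations in the sum. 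The paper handles this by working with a general such $\nabla$: one computes
\[
[\sigma^s(\delta),\nabla] \ =\ \sum_i \sigma^s(\delta)(f_i)\,\sigma^*(\partial_i)\ +\ \sum_i f_i\,s_B^{b_\delta}\,\sigma^*([\delta,\partial_i]),
\]
which visibly lies in $\sigma^*(\cF)\otimes\cO(*D)$ (using $[\delta,\partial_i]\in\cF$) and simultaneously in $\cD^{\log}_B$ as a bracket of logarithmic derivations, hence in the intersection $\sigma^s(\cF)$. Your single-generator computation is a correct special case, but it does not by itself give invariance of the whole sheaf; note by contrast that for $\sigma^s(\cR)$ your generator argument \emph{is} sufficient, since Definition \ref{def:StrictTransformReesAlgebra} explicitly presents $\sigma^s(\cR)$ as the $\cO_B$-module generated by $\{\sigma^s(f)T^a : f\in R_a\}$.
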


\begin{proof} We first prove (1). Since $\cA_J$ is $\delta$-invariant, $\delta(f) \in \cA_{J,b}$ for all $f\in  \cA_{J,b}$. In particular $\sigma^*(\delta)$ defines a derivation of $\cO_B$. Since $\sigma^*(\delta)(s) = 0$ and $\delta\in \cD^{\log}_X$, we have that $\sigma^*(\delta) \in \cD^{\log}_B$. It follows that $\sigma^{c}(\delta) = \sigma^{\ast}(\delta)$.

For (2), we first prove that $\sigma^c(R_b)$ is $\sigma^*(\delta)$-invariant, noting that $\sigma^c(R_b)$ is generated by $\{s_B^{-wb}  \sigma^*(f): f\in R_b\}$. We have 
\begin{align*}\sigma^*(\delta)(\sigma^c(f)) &= \sigma^*(\delta) (s_B^{-wb}  \sigma^*(f))\\ &= s_B^{-wb} \sigma^*(\delta)( \sigma^*(f))\\& = s_B^{-wb} \sigma^*(\delta(f)) \quad = \quad \sigma^c(\delta(f)) \quad  \in\quad  \sigma^c(R_b).\end{align*}

Next, since $\sigma^c(\cF)$ is spanned by $\{\sigma^c(\partial): \partial \in \cF\}$ and $[\delta,\partial]\in \cF,$ we can write $$\cD_B^{\log} \ \ni \ [\sigma^*(\delta),\sigma^c(\partial)] =  [\sigma^*(\delta),s_B^{a_\partial}\sigma^*(\partial)] =  s_B^{a_\partial}[\sigma^*(\delta),\sigma^*(\partial)]   = s_B^{a_\partial}\sigma^*([\delta,\partial]),$$ in particular $a_\partial \geq a_{[\delta,\partial]}$ as needed.

For (3) note first that \begin{align*}\cO_B  \ni  \sigma^s(\delta)(\sigma^s(f)) = s_B^{b_\delta} \sigma^*(\delta) (s_B^{-b_f}  \sigma^*(f)) = s_B^{b_\delta-b_f} \sigma^*(\delta)( \sigma^*(f)) = s_B^{b_\delta-b_f}  \sigma^*(\delta(f)),
\end{align*}
in particular $-b_{\delta(f)} \leq b_\delta - b_f,  $ as needed. 

For the invariance of $\cF$, consider a derivation $\nabla \in \sigma^{s}(\cF)$, which can be written as $\nabla = \sum_{i=1}^{r} s_B^{c_i}\sigma^{\ast}(\partial_i)$, where $\partial_1,\ldots,\partial_r$ are local generators of $\cF$ and $c_i \in \mathbb{Z}$. Noting again  that $\sigma^*(\partial)(s) = 0$ we can write $$
\cD_B^{\log} \ \ni \ [\sigma^s(\delta),\nabla] =  \left[s_B^{b_\delta}\sigma^*(\delta),\sum_{i=1}^{r} s_B^{c_i}\sigma^{\ast}(\partial_i) \right]
=  \sum_{i=1}^r s_B^{b_\delta+c_i}[\sigma^*(\delta),\sigma^*(\partial_i)]   =  \sum_{i=1}^r s_B^{b_\delta+c_i}\sigma^*([\delta,\partial_i]),
$$
implying that $[\sigma^s(\delta),\nabla] \in \sigma^s(\cF)$ as needed.
\end{proof}

\begin{proof}[Proof of Proposition \ref{prop:FInvariantBlowup2}]
Point (1) and (2) are immediate from Proposition \ref{prop:FInvariantBlowup}. To prove point (3), let $\partial_1,\ldots,\partial_r$ be local generators of $\cF$ and $\nabla \in \sigma^{s}(\cF)$. By definition \ref{def:StrictTransformFoliation}, this implies that there exists $f_1,\ldots, f_r \in \cO_X(\ast D)$ such that $\nabla = \sum f_i \phi^{\ast}(\partial_i)$. Now, fix $a \in \mathbb{N}$ and $h \in R_a$ and consider $\phi^{s}(h) = y^r \phi^{\ast}(h)$. It follows that:
\[
 \cO_{X} \quad \ni \quad \nabla(\phi^{s}(h)) = \sum \left[f_i \phi^{\ast}(\partial_i)(y^r)\right] \phi^{\ast}(h) +  \sum f_i y^r \phi^{\ast}(\partial_i(h)\quad \in \quad  \sigma^{\ast}(R_a) \otimes \cO_X(\ast D)
\]
allowing us to conclude that $\nabla(h) \in \sigma^s(R_a)$ as needed.
\end{proof}

We point out that the statements of (2) and (3) in the propositions are sharp:

\begin{example}\label{ex:ControlledVsStrict}
Let $X=\AA^2_{\mathbb{C}}$, $\cF=\spa((x^2+y^2)\partial_x + xy\partial_y)$ and $\cI=(xy,x^3,y^3)$. The maximal $\inv$-admissible center is $\cA_J=\cO_{X}[xt,yt]$. Let \[B=\Spec(\mathbb{C}[t^{-1},tx,ty]\to X\] be the weighted blow-up cobordism of $\cA_J$. Then:
\[
\begin{aligned}
\sigma^c(\cI)&=(x'y', t^{-1}_B(x')^3,t^{-1}_B(y')^3) = (x'y', s_B(x')^3,s_B(y')^3), \\
\sigma^s(\cI)&=(x'y', (x')^3,(y')^3), \\
\sigma^c(\cF) &= \spa\left({t^{-1}_B}((x')^2+(y')^2)\partial_{x'} + x'y'\partial_{y})\right) = \spa\left({s_B}((x')^2+(y')^2)\partial_{x'} + x'y'\partial_{y})\right),\\
\sigma^s(\cF) &= \spa\left((x')^2+(y')^2)\partial_{x'} + x'y'\partial_{y
}\right).
\end{aligned}
\]
and it is easy to see that $\sigma^{c}(\cI)$ and $\sigma^s(\cI)$ are $\sigma^{c}(\cF)$ and $\sigma^s(\cF)$-invariant respectively. Nevertheless, note that:
\[
\left((x')^2+(y')^2)\partial_{x'} + x'y'\partial_{y}\right)(x'y') = (y')^3 + 2(x')^2y'
\]
and, since $(y')^3\notin \sigma^{c}(\cI)$, we conclude that $\sigma^{c}(\cI)$ is not $\sigma^{s}(\cF)$-invariant.
\end{example}

\subsection{Transforms by $\cF$-aligned blow-ups}

\begin{proposition}[Pull-backs under $\cF$-aligned
blow-up]\label{prop:TransformFoliationAligned}
Let $\sigma: (B,E) \to (X,F)$ be a weighted blow-up cobordism of an $\cF$-aligned
blow-up.
Then:
\begin{enumerate}
\item The controlled transform of $\cF$ can be described as: 
\[
\sigma^{c}(\cF)\ =\ \folpull{\sigma}(\cF) \cap \cD_{B/\AA^1}^{\log}.
\]
\item Taking controlled transform commutes with completion, that is, given
points $\pa \in X$ and $\pb \in \sigma^{-1}(\pa)$, we have that:
\[
\widehat{\cO}_{B,\pb} \cdot \sigma^{c}(\cF) =
\widehat{\sigma}^{c}_\pb\left( \widehat{\cO}_{X,\pa} \cdot \cF \right)
\]
where $\widehat{\sigma}^{c}_\pb$ stands for the controlled transform
with respect to the homomorphism of local rings
$\widehat{\sigma}_\pb^{\ast}:\widehat{\cO}_{X,\pa} \to
\widehat{\cO}_{B,\pb}$.
\item\label{It:TransformFoliationAligned} Let \(
\widehat{\cF}_{(x)} =  \spa_{\widehat{\cO}_{X,(x)}}(\partial_{x_1},\ldots,\partial_{x_l},\nabla_1,\ldots,\nabla_m)
\)
be a presentation as in Definition \ref{def:Faligned},  
then we have 
\begin{align*}
&&&& \widehat{\sigma}^{c}_\pb(\partial_{x_i}) &= \partial_{x'_i}, &  i=&1,\ldots, l,&&& \\
&&&&\widehat{\sigma}^{c}_\pb(\nabla_j) &= \widehat{\sigma}^{*}_\pb(\nabla_j), &   j=&1,\ldots, m&&&
 \end{align*}
  at any point $\pb$ as above, and $\widehat{\cO}_{B,\pb} \cdot \sigma^{c}(\cF)$ is generated by these elements $$\partial_{x'_i}, \ \  i=1,\ldots, l \quad \text{and}\quad  \widehat{\sigma}^{*}_\pb(\nabla_j),  \ \  j=1,\ldots, m,$$ where $\widehat{\sigma}^{*}_\pb(\nabla_j)$ are independent of $(x_i,\partial_{x_i'})$.
  
\end{enumerate}
\end{proposition}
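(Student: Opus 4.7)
The plan is to reduce everything to an explicit computation on the generators coming from an $\cF$-aligned presentation. Fix $\pa \in V(\cA_J)$ and $\pb \in \sigma^{-1}(\pa)$, and use Definition \ref{def:Faligned} to choose a nested-regular coordinate system $(x,y,z)$ together with the presentation $\widehat{\cF}_{(x)} = \spa_{\widehat{\cO}_{X,(x)}}(\partial_{x_1},\ldots,\partial_{x_l},\nabla_1,\ldots,\nabla_m)$ where each $\nabla_j$ is independent of $(x_i,\partial_{x_i})$.

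The key observation is that every $\nabla_j$ preserves the center $\cA_J$. Indeed, $\nabla_j(x_i) \equiv 0$ by independence, so $\nabla_j(x_it^{1/a_i})=0$; and by clause (2) of Definition \ref{def:Faligned} the sub-algebra in $y$'s and $z$'s is $\widehat{\cF}_{(x)}$-invariant, so $\nabla_j$ preserves $y_j t^{1/b_j}$ and $z_k t^{1/c_k}$ as well. Proposition \ref{prop:FInvariantBlowup}(1) then gives $\sigma^c(\nabla_j) = \sigma^*(\nabla_j) \in \cD^{\log}_{B/\AA^1}$ (the relative logarithmic part follows because $\sigma^*(\nabla_j)$ annihilates $s_B$). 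Combined with $\sigma^c(\partial_{x_i})=\partial_{x'_i}$ from Lemma \ref{lem:LocalExpressionTransformDerivation}, this yields the explicit formulas of \eqref{It:TransformFoliationAligned}. The independence of $\widehat{\sigma}^*_\pb(\nabla_j)$ from $(x'_i,\partial_{x'_i})$ follows by a Leibniz computation: from $x_i = s_B^{w_i}x'_i$ and $\nabla_j(x_i)\equiv 0$ we obtain $\sigma^*(\nabla_j)(x'_i)\equiv 0$, while $[\sigma^*(\nabla_j),\partial_{x'_i}] = [\sigma^*(\nabla_j), s_B^{w_i}\sigma^*(\partial_{x_i})] = s_B^{w_i}\sigma^*([\nabla_j,\partial_{x_i}]) + \sigma^*(\nabla_j)(s_B^{w_i})\sigma^*(\partial_{x_i}) = 0$ using $[\nabla_j,\partial_{x_i}]=0$ and $\sigma^*(\nabla_j)(s_B)=0$.

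For compatibility with completion (2), observe that both $\widehat{\cO}_{B,\pb}\cdot \sigma^c(\cF)$ and $\widehat{\sigma}^c_\pb(\widehat{\cO}_{X,(x)}\cdot\cF)$ are generated by the same finite set $\{\partial_{x'_i}\} \cup \{\sigma^*(\nabla_j)\}$ by the previous step; hence they coincide. For identity (1), the inclusion $\sigma^c(\cF) \subseteq \folpull{\sigma}(\cF)\cap\cD^{\log}_{B/\AA^1}$ is tautological. For the reverse, take $\nabla$ in the right-hand side and write it formally at $\pb$ as $\nabla=\sum a_i\sigma^*(\partial_{x_i}) + \sum b_j\sigma^*(\nabla_j)$ with $a_i,b_j$ in the fraction field, using that the $\partial_{x_i}, \nabla_j$ generate $\widehat{\cF}_{(x)}$ and the $\sigma^*$ of these remain linearly independent (as derivations on a dense open of $B$). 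Substituting $\sigma^*(\partial_{x_i}) = s_B^{-w_i}\partial_{x'_i}$, the condition $\nabla \in \cD^{\log}_{B/\AA^1}$ forces $a_i \in s_B^{w_i}\widehat{\cO}_{B,\pb}$, and the combination visibly lies in $\widehat{\cO}_{B,\pb}\cdot \sigma^c(\cF)$. Faithful flatness of completion then yields the inclusion on the nose.

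The main obstacle is the linear-independence step in the proof of (1): one must argue that the pulled-back generators are free over a sufficiently large open of $B$ and handle the passage through the partial completion $\widehat{\cO}_{X,(x)}$ carefully, since $\pb$ may lie on the strict transform of $V(x_1,\ldots,x_l)$. The rest is a direct translation of the $\cF$-aligned structure across the elementary blow-up formula $x_i = s_B^{w_i}x'_i$.
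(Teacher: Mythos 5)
Your formulas and independence computation for Part (3) are fine, but you never actually prove the generation claim that is part of the statement of (3) --- namely that $\widehat{\cO}_{B,\pb}\cdot\sigma^c(\cF)$ is generated by $\partial_{x_i'}$ and $\sigma^*(\nabla_j)$. Computing the transforms of the presentation generators does not automatically give this: a general section $\delta=\sum g_i\partial_{x_i}+\sum f_j\nabla_j$ of the foliation has controlled transform $s_B^{a_\delta}\sigma^*(\delta)$ where the power $a_\delta$ depends on $\delta$, and one must check the result stays in the $\cO_B$-span of the new generators. The paper does this by applying $\sigma^c(\delta)$ to each coordinate $x_i'$, which forces the coefficient of $\partial_{x_i'}$ to be regular; you have no analogue of this step. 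As a consequence your Part (2) is circular: the assertion that ``both are generated by the same finite set by the previous step'' appeals to precisely the generation claim that was not established.

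There is a second gap, which you flag in your final paragraph but do not close. The presentation $\widehat{\cF}_{(x)}=\spa(\partial_{x_1},\ldots,\partial_{x_l},\nabla_1,\ldots,\nabla_m)$ lives over $\widehat{\cO}_{X,(x)}$, whereas $\sigma^c(\cF)$ is built from sections of $\cF$ over $\cO_X$; a regular section decomposes in the presentation generators only with nested-regular coefficients. The paper bridges this with an approximation argument: replace $\partial_{x_i},\nabla_j$ by \emph{regular} derivations $\partial_i^d,\nabla_j^d$ in $\cF_\pa$ agreeing with them modulo $(x)^d\widehat{\cF}_{(x)}$; by Nakayama these generate $\cF_\pa$, and for $d\gg 0$ the total transform of the correction term already lies in $\cD^{\log}_{B/\AA^1}$, so its controlled and total transforms coincide and $\cO_{B,\pb}\cdot\sigma^c(\cF)$ is generated by $\sigma^c(\partial_i^d),\sigma^c(\nabla_j^d)$, whose formal completions recover $\partial_{x_i'},\sigma^*(\nabla_j)$. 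Without this, nothing ties the algebraically defined $\sigma^c(\cF)$ to your formal computation. Your ``main obstacle'' remark senses the difficulty but misattributes it to linear independence; the missing idea is this approximation step, which is what actually lets you move between $\cO_{X,\pa}$ and $\widehat{\cO}_{X,(x)}$.
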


\begin{proof}

To prove part (3) 
note that, {by Lemma \ref{lem:LocalExpressionTransformDerivation} and Proposition \ref{prop:FInvariantBlowup2}:}
\begin{equation}\label{Eq:formal-transform-derivatives}
\begin{aligned}
\widehat{\sigma}^{c}_\pb(\partial_{x_i}) &= \partial_{x_i'} = {t_B^{w_i} \widehat\sigma_{\pb}^*(\partial_{x_i})} &\quad & i=1,\ldots, l,\\
\widehat{\sigma}^{c}_\pb(\nabla_{j}) &= \widehat{\sigma}^{{\ast}}_\pb(\nabla_j)=:\nabla'_j, &\quad &j=1,\ldots,m,
\end{aligned}
\end{equation}
giving the required formulas.

{
 In particular $\sigma^* (\nabla_j) (x_i') = \sigma^* (\nabla_j) (x_i/s^{w_i}) = 0$. Moreover \[
[\partial_{x_i'},\nabla_j'] = [t_B^{-w_i}\widehat{\sigma}_{\pb}^{{\ast}}(\partial_{x_i}),\widehat{\sigma}_\pb^{{\ast}}(\nabla_j)] = t_B^{-w_i}\widehat{\sigma}_\pb^{{\ast}}\left([\partial_{x_i},\nabla_j]\right) \equiv 0.
\]
giving the required independence.}

To show that these generate  $\widehat{\cO}_{B,\pb} \cdot \sigma^{c}(\cF)$ consider a section $\delta :=\sum g_i \partial_{x_i} + \sum f_j \nabla_j$ of $\cF$ with controlled transform $\sigma^c(\delta) = s^{w} \sigma^*(\delta)\in \cD^{\log}_B$. Write $g_i = g_i's^{w_{g_i}} $ and $f_j = f_i's^{w_{f_j}},$ with $g_i'$ and $f_j'$ the controlled transforms. Applying this to $x_i'$ we obtain
$$\cO_X \ \ni\ \sigma^c(\delta)(x_i') = (\sum g_i' s^{w_{g_i}+ w-w_i} \partial_{x_i'} + \sum f_j' s^{w+ w_{f_j}} \sigma^*\nabla_j)(x_i') = g_i' s^{w_{g_i}+ w-w_i}.$$ Thus $w_{g_i}+ w-w_i\geq 0$, 
so that $\sigma^c(\delta)$ is in the span of the indicated elements, as needed.

We prove Part (2). For every $d>0$, we will consider regular derivations $\partial_i^{d}$ for {$i=1,\ldots,l$} and $\nabla_j^d$ for $j=1,\ldots,m$ such that:
\[
\partial_i^d - \partial_i \in (x)^d \,\widehat{\cF}_{(x)}, \quad \nabla_i^d - \nabla_i \in (x)^d \,\widehat{\cF}_{(x)}.
\]
{By Nakayama and  the faithful flatness of $\widehat{\cO}_{X,\pa}$, for any $d>0$} the finitely generated module $\cF_{\pa}$ is generated by {$\partial_1^d,\ldots,\partial_k^d,\nabla^d_1,\ldots,\nabla_r^d$}, {as does its completion}. Now, for $d$ sufficiently big, the total transform of an element in $(x)^d \widehat{\cF}_{(x)}$ {has no poles, hence it equals} its controlled transform. From the formal equations \eqref{Eq:formal-transform-derivatives} we deduce
\[
\sigma^{c}(\partial_i^d) = t^{-w_i} \sigma^{{\ast}}(\partial_i^d), \quad \sigma^{c}(\nabla_j^d) = \sigma^{{\ast}}(\nabla_j^d),
\]
and $\cO_{B,\pb}\cdot \sigma^{c}(\cF)$ is generated by the controlled transforms of $\partial_1^d,\ldots,\partial_k^d,\nabla^d_1,\ldots,\nabla_r^d$, and therefore also  its completion, as needed.

By Part (2), to prove Part (1) we may work formally. We proceed as in Part (3). Write  $\delta :=\sum g_i \sigma^*\partial_{x_i} + \sum f_j \sigma^*\nabla_j$ for a general element of $\folpull{\sigma}(\cF) \cap \cD^{\log}_{B/\AA^1}$. Factoring $g_i = g_i's^{w_{g_i}}$ with $s \nmid g_i'$, we have $\delta =  \sum g_i's^{w_{g_i}-w_i} \partial_{x_i'} + \sum f_j \sigma^*\nabla_j$. Applying $\delta$ to $x_i'$ we obtain $g_i's^{w_{g_i}-w_i}  \in \cO_B$, hence $w_{g_i}-w_i\geq 0$ as needed.\end{proof}

\begin{lemma}\label{Lem:poles-of-transform} Let $\cA_J$ be an $\cF$-aligned center.
We have the inclusions $$s^{w_1}\sigma^*(\cF)\ \subseteq\ \sigma^c(\cF)\ \subseteq\  \sigma^*(\cF).$$
\end{lemma}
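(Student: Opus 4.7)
The plan is to reduce both inclusions to the explicit local computations already recorded in Lemma \ref{lem:LocalExpressionTransformDerivation} and Proposition \ref{prop:TransformFoliationAligned}(3), then verify that the weight $w_1$ is large enough to clear all the $s_B$-poles that arise in $\sigma^*(\cF)$. Since the inclusions are local statements on $B$ and the $\cF$-aligned hypothesis gives, at each point $\pa \in V(\cA_J)$, a presentation
\[
\widehat{\cF}_{(x)} \;=\; \spa\bigl(\partial_{x_1},\ldots,\partial_{x_l},\nabla_1,\ldots,\nabla_m\bigr),
\]
we can check everything locally and then appeal to the functoriality already established in Proposition \ref{prop:TransformFoliationAligned}.

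For the easy inclusion $\sigma^c(\cF)\subseteq \sigma^*(\cF)$: by Proposition \ref{prop:TransformFoliationAligned}(\ref{It:TransformFoliationAligned}), the sheaf $\sigma^c(\cF)$ is generated locally by the derivations $\partial_{x_i'}=\sigma^c(\partial_{x_i})$ and $\sigma^c(\nabla_j)=\sigma^*(\nabla_j)$. By Lemma \ref{lem:LocalExpressionTransformDerivation} we have $\sigma^c(\partial_{x_i})=s_B^{w_i}\sigma^*(\partial_{x_i})$, so each generator of $\sigma^c(\cF)$ is an $\cO_B$-multiple of an element of $\sigma^*(\cF)$, hence lies in $\sigma^*(\cF)$.

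For the main inclusion $s_B^{w_1}\sigma^*(\cF)\subseteq \sigma^c(\cF)$: an arbitrary local section of $\sigma^*(\cF)$ can be written as
\[
\sigma^*(\delta)\;=\;\sum_{i=1}^l g_i\,\sigma^*(\partial_{x_i})+\sum_{j=1}^m f_j\,\sigma^*(\nabla_j),\qquad g_i,f_j\in\cO_B,
\]
since $\partial_{x_i}$ and $\nabla_j$ generate $\cF$ locally. Using $\sigma^*(\partial_{x_i})=s_B^{-w_i}\sigma^c(\partial_{x_i})$ and $\sigma^*(\nabla_j)=\sigma^c(\nabla_j)$, multiplication by $s_B^{w_1}$ yields
\[
s_B^{w_1}\sigma^*(\delta)\;=\;\sum_{i=1}^l g_i\,s_B^{w_1-w_i}\,\sigma^c(\partial_{x_i})+\sum_{j=1}^m f_j\,s_B^{w_1}\,\sigma^c(\nabla_j).
\]
Because $a_1\le a_i$ and $w_i=w/a_i$, we have $w_1\ge w_i$ for all $i$, so each exponent $w_1-w_i$ is a non-negative integer and every coefficient is regular on $B$. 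Consequently $s_B^{w_1}\sigma^*(\delta)$ is an $\cO_B$-linear combination of the generators of $\sigma^c(\cF)$, proving the inclusion.

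The only point that needs attention is verifying that an element of $\sigma^*(\cF)$ really does admit the displayed $\cO_B$-linear expansion in the given local generators; this is immediate from the $\cF$-aligned presentation together with $\cO_B$-linearity of the total transform, so no new argument is needed. Away from $V(s_B)$ the morphism $\sigma$ is an isomorphism, both transforms coincide, and $s_B$ is invertible, so the claim holds trivially there; the content of the lemma is concentrated along the exceptional divisor.
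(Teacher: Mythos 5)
Your proof is correct and follows essentially the same route as the paper: both arguments invoke Proposition \ref{prop:TransformFoliationAligned}\eqref{It:TransformFoliationAligned} to obtain the generators $\partial_{x_i'} = s_B^{w_i}\sigma^*(\partial_{x_i})$ and $\sigma^*(\nabla_j)$ of $\sigma^c(\cF)$, then use $w_1 \geq w_i$ to absorb the $s_B$-poles. The only cosmetic difference is that you spell out the $\cO_B$-linear expansion of a general section and the easy inclusion in detail, whereas the paper argues generator-by-generator and dismisses the second inclusion as immediate from the definitions.
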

\begin{proof} We need to prove the first inclusion, the other follows from the definitions. We now use Proposition \ref{prop:TransformFoliationAligned}\eqref{It:TransformFoliationAligned}. Since $w_1 \geq w_i$ for $i=1,\ldots l$, we have $s^{w_1} \sigma^*(\partial_{x_i}) \in \sigma^c(\cF)$, and since $\nabla_j'=\sigma^*(\nabla_j)$ we have $s^{w_1} \sigma^*(\nabla_j) \in \sigma^c(\cF)$, as needed. 
\end{proof}

\begin{lemma} \label{Lem:transform-FR}
Let $\cA_J$ be an $(\cR,\cF)$-admissible center of order $a_1$, with resulting deformation to the normal cone $\sigma:B \to X$. Then $$\sigma^c((t^{-1/a} \cF)(\cR)) \subseteq (T^{-1/a} \sigma^c(\cF))(\sigma^c(\cR)).$$
\end{lemma}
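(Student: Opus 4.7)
The plan is to work locally on the weighted blow-up cobordism using an $\cF$-aligned presentation of $\cA_J$, reducing the inclusion to a computation on generators. Since both sides are Rees algebras and $\sigma^c$ is $\cO_B$-linear in each graded piece, it suffices to show that for each generator $\partial$ of $\cF$ and each $ft^b \in \cR$, the element $\sigma^c(\partial(f)\,t^{b-1/a})$ lies in $(T^{-1/a}\sigma^c(\cF))(\sigma^c(\cR))$. The admissibility $\cR \subset \cA_J$, combined with Lemmas \ref{lem:CoefficientAdmissible} and \ref{lem:CoeffcientReesCenters}, ensures $(t^{-1/a}\cF)(\cR) \subset \cA_J$, so its controlled transform is a well-defined Rees algebra on $B$.

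Passing to the completion at a point over $\pa$, Proposition \ref{prop:TransformFoliationAligned}\eqref{It:TransformFoliationAligned} provides generators $\partial_{x_1},\ldots,\partial_{x_l},\nabla_1,\ldots,\nabla_m$ of $\widehat{\cF}_{(x)}$ with explicit controlled transforms $\sigma^c(\partial_{x_i}) = \partial_{x_i'} = s_B^{w_i}\sigma^*(\partial_{x_i})$ and $\sigma^c(\nabla_j) = \sigma^*(\nabla_j)$; in the notation of Definition \ref{def:FullControlledTransformDerivation} this reads $a_{\partial_{x_i}} = w_i$ and $a_{\nabla_j} = 0$. Writing $\sigma^c(f) = s_B^{-bw}\sigma^*(f)$ for $f\in R_b$ and $\sigma^c(\partial(f)\,t^{b-1/a}) = s_B^{-(b-1/a)w}\sigma^*(\partial(f))\,T^{b-1/a}$, a direct computation using $\sigma^*(\partial)(s_B)=0$ yields
\[
\sigma^c\bigl(\partial(f)\,t^{b-1/a}\bigr) \ =\  s_B^{\,w_1 - a_\partial}\,\cdot\,\sigma^c(\partial)\bigl(\sigma^c(f)\bigr)\,T^{b-1/a},
\]
with $w_1 = w/a$. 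Since $a_i \geq a_1 = a$ forces $w_i \leq w_1$, the exponent $w_1 - a_\partial$ is non-negative in both cases, so $s_B^{w_1-a_\partial}\in\cO_B$ and the right-hand side lies in the $\cO_B$-span of $\sigma^c(\partial)(\sigma^c(f))\,T^{b-1/a}$, which is an element of $(T^{-1/a}\sigma^c(\cF))(\sigma^c(\cR))$.

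The main obstacle is precisely the bookkeeping of these powers of $s_B$: the two types of generators contribute different pole orders under the controlled transform, and the inequality $w_i \leq w_1 = w/a$, which expresses that $a = a_1$ is the order of $\cA_J$, is exactly what ensures non-negativity in all cases. A minor subtlety is that the $\cF$-aligned presentation is formal, but Proposition \ref{prop:TransformFoliationAligned}(2) guarantees the controlled transform commutes with completion, so the formal verification suffices to establish the containment of coherent Rees algebras.
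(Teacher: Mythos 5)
Your argument is correct and is essentially the paper's proof: the bound $a_\partial\leq w_1$ that you derive from the $\cF$-aligned generators $\partial_{x_i},\nabla_j$ is exactly what Lemma \ref{Lem:poles-of-transform} records as $s_B^{w_1}\sigma^*(\cF)\subseteq\sigma^c(\cF)$, after which the same cancellation of powers of $s_B$ against $\sigma^c(f)=s_B^{-bw}\sigma^*(f)$ yields the inclusion. (Minor point: Lemma \ref{lem:CoeffcientReesCenters} alone already gives $(t^{-1/a}\cF)(\cR)\subseteq(t^{-1/a_1}\cF)(\cA_J)\subseteq\cC(\cA_J,\cF)=\cA_J$, so invoking Lemma \ref{lem:CoefficientAdmissible} is unnecessary and its hypotheses need not hold here.)
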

\begin{proof}
Let $f\in \cR_b$ and $\partial \in \cF$ be local sections. By Lemma \ref{Lem:poles-of-transform}, $\partial' =s^{w_1}\sigma^*(\partial) \in \sigma^c(\cF)$. Also $\partial'(s) = 0$, so $$\sigma^c((\partial f) t^{b-1/a}) = s_B^{-w(b-1/a)} \sigma^* (\partial) \sigma^*(f) T^{b-1/a} = s^{-wb}\partial' (s^{bw} f') T^{b-1/a} = \partial'(f') T^{b-1/a}, $$
where $f' \in \sigma^c(\cR)_b,$ as needed.
\end{proof}

 Lemma \ref{Lem:transform-FR} is used in a critical point when we prove that invariants drop, see Theorem \ref{prop:DecreaseInvariant}.

\subsection{The invariant $\inv_{\cF}$ drops under cobordant blow-ups}

The main result of this section is the following:

\begin{theorem}[{The invariant drops, see {\cite[Prop.3.7.29]{Wlodarczyk-cobordant}}}]\label{prop:DecreaseInvariant} Let $\cR=\bigoplus R_a$ be a Rees algebra on a foliated logarithmically smooth variety $(X,\cF,E)$, {which we assume nontrivial: $R_a\neq \cO_X$ for some $a>0$.} Let $\cA_J$ be the $\inv_{\cF}$-maximal admissible center for $\cR$, with $\sigma: B \to X$ the associated weighted blow-up cobordism, {and $\tau:X'\to X$ the associated weighted blow-up.}  

Then
\(
\max\inv_{\sigma^c(\cF)}(\sigma^c(\cR)) = \max \inv_{\cF}(\cR),
\)
and, for all  $\pb\in B_{+}$ with image $\pa \in X'$, 
\[
\inv_{\tau^c(\cF),\pa}(\tau^c(\cR)) = \inv_{\sigma^c(\cF),\pb}(\sigma^c(\cR)) \ < \ \max \inv_{\cF}(\cR).
\]
\end{theorem}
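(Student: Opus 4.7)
The plan is to split the argument into a reduction to the exceptional divisor, followed by an induction on the well-ordered value of the invariant $\inv_{\cF,\pa}(\cR)$ supplied by Inductive Claim \ref{claim:MainInduction}\eqref{It:induction-wo-usc}.

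\emph{Reduction.} First, $\max \inv_{\sigma^c(\cF)}(\sigma^c(\cR)) = \max \inv_{\cF}(\cR)$: on the open locus $B\setminus V(s_B) \cong X\times\GG_m$ the morphism $\sigma$ is smooth and $\sigma^c$ agrees with $\sigma^*$, so the functoriality part of Inductive Claim \ref{claim:MainInduction}\eqref{It:claim-functoriality} shows that any $\pa \in V(\cA_J)$ at which the maximum of $\inv_{\cF}(\cR)$ is attained lifts to points of $B$ with the same invariant. For the strict inequality on $B_+$, fix $\pb \in B_+$ with image $\pa$. If $s_B(\pb)\neq 0$, then since $x_i = s_B^{w_i}x_i'$ and $\pb\notin V_J = V(x_1',\ldots,x_l')$, some $x_i'(\pb)\neq 0$, hence $\pa\notin V(\cA_J)$. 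But by Theorem \ref{Th:the center} the maximum of $\inv_{\cF}(\cR)$ is attained exactly on $V(\cA_J)$, so $\inv_{\cF,\pa}(\cR)<\max$, and functoriality again gives $\inv_{\sigma^c(\cF),\pb}(\sigma^c(\cR))<\max$. It therefore suffices to treat $\pb \in V(s_B)\cap B_+$ lying over a point $\pa\in V(\cA_J)$ with $\inv_{\cF,\pa}(\cR)=\max$.

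\emph{Induction and local setup.} We argue by induction on the well-ordered value $\max\inv_{\cF}(\cR)$. Choose the $\cF$-aligned presentation from Definition \ref{def:Faligned}:
\[
\cA_J\cdot\widehat\cO_{X,(x)}=\widehat\cO_{X,(x)}[x_it^{1/a_i},y_jt^{1/b_j},z_kt^{1/c_k}]^{\Int}_{l,r,s},\quad \widehat\cF_{(x)}=\spa(\partial_{x_1},\dots,\partial_{x_l},\nabla_1,\dots,\nabla_m).
\]
By Proposition \ref{prop:TransformFoliationAligned}\eqref{It:TransformFoliationAligned}, $\sigma^c(\cF)\cdot\widehat\cO_{B,\pb}$ is generated by $\partial_{x_i'}$ together with $\widehat\sigma^*_{\pb}(\nabla_j)$, the latter still independent of $(x_i',\partial_{x_i'})$. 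Together with the transformed center $\cA_{J'}$ of Lemma \ref{Lem:transformed-center} and the unchanged $y,z$-directions, this exhibits a $\sigma^c(\cF)$-aligned admissible center for $\sigma^c(\cR)$ at $\pb$ whose invariant is bounded by that of $\cA_J$. The task is to show that no admissible center at $\pb$ achieves the original maximum.

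\emph{Case analysis and conclusion.} Split according to the first component of the invariant. If $\ord_{\cF,\pa}(\cR)=a_1<\infty$, take $H=V(x_1)$ a maximal contact, so that by Lemma \ref{lem:FcanonicalInvForFiniteRees} one has $\inv_{\cF,\pa}(\cR)=(a_1,\inv_{\cF_{|H},\pa}(\cC(\cR,\cF)_{|H}))$. At points of $V(s_B)\cap B_+$ where $x_1'(\pb)\neq 0$, the derivation $\partial_{x_1'}\in \sigma^c(\cF)$ together with the admissibility $\sigma^c(\cR)\subseteq \cA_{J'}$ forces $\ord_{\sigma^c(\cF),\pb}(\sigma^c(\cR))<a_1$ (via Lemma \ref{lem:FMaximalContactNeighborhood} applied to the transformed maximal contact $x_1'$), so the invariant drops in its first coordinate already. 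At the remaining points $\pb\in V(x_1')\cap V(s_B)\cap B_+$, the strict transform $H'=V(x_1')$ is a maximal contact for $\sigma^c(\cR)$; Lemma \ref{Lem:transform-FR} combined with Lemma \ref{lem:CoefficientAdmissible} and the splitting of Proposition \ref{prop:CoefficientSplitting} reduces the strict drop of the tail of the invariant to the corresponding statement for the induced blow-up $\sigma_{|H}\colon B_{|H}\to H$ with the Rees algebra $\cC(\cR,\cF)_{|H}$, whose invariant is strictly smaller, so the inductive hypothesis applies. When instead $\ord_{\cF,\pa}(\cR)=\infty$, Lemma \ref{lem:FcanonicalInvForInvariantRees} writes $\inv_{\cF,\pa}(\cR)=\infty+\inv_{\cD^{\log}_X,\pa}(\cF^\infty(\cR))$ and the maximal center is $\cF$-invariant; Proposition \ref{prop:FInvariantBlowup2} ensures that invariance and the formation of $\cF^\infty(\cR)$ are preserved by $\sigma^c$, reducing the problem to the non-foliated situation $\cF = \cD^{\log}_X$ (and then $\cD_X$), where the argument above applies. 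The main obstacle is the bookkeeping of the foliation transform alongside the Rees algebra transform: the independence of $\widehat\sigma^*_{\pb}(\nabla_j)$ from $(x_i',\partial_{x_i'})$ in Proposition \ref{prop:TransformFoliationAligned}\eqref{It:TransformFoliationAligned}, and the compatibility of coefficient Rees algebras with controlled transforms via Lemma \ref{Lem:transform-FR}, are the precise inputs ensuring that the recursive structure of $\inv_{\cF}$ through maximal contact and coefficient algebras transforms compatibly under $\cF$-aligned cobordant blow-ups, closing the induction.
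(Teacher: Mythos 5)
Your decomposition broadly follows the paper's: reduce to points over the vertex, handle the finite-order case via maximal contact $H=V(x_1)$ and the coefficient Rees algebra, handle the infinite-order case via the chain $\cF\subseteq\cD^{\log}_X\subseteq\cD_X$, and feed the transform-compatibility results (Lemma \ref{Lem:transform-FR}, Propositions \ref{prop:TransformFoliationAligned}, \ref{prop:CoefficientSplitting}, \ref{prop:FInvariantBlowup2}) into the recursion. The opening observations (equality of maxima via the isomorphism over $X\times\GG_m$; strict drop when $s_B(\pb)\neq 0$ because the image leaves $V(\cA_J)$) are correct and match the paper's remark about the invariant being constant on $B$ but dropping on $B_+$.

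However, the induction variable is wrong, and this is a genuine gap. You propose to induct on the well-ordered value $\max\inv_{\cF}(\cR)$, and to close the induction by observing that the induced problem for $\cC(\cR,\cF)_{|H}$ on $H$ has ``strictly smaller'' invariant. It does not. By Lemma \ref{lem:FcanonicalInvForFiniteRees}, $\inv_{\cF_{|H},\pa}(\cC(\cR,\cF)_{|H})$ is precisely the truncation of $\inv_{\cF,\pa}(\cR)$ obtained by deleting the first coordinate $a_1$. Under the convention that shorter vectors are completed with the maximal symbol $\infty+\infty+\infty$ and compared lexicographically (see the footnote after Definition \ref{def:CanonicalInvCenters}, where $(2,3)<(2)$), and using the monotonicity $a_1\leq a_2\leq\cdots$ of the entries, the truncated vector is always $\geq$ the original, and strictly $>$ in all nontrivial cases. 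For instance, truncating $(2,3)$ gives $(3)$, and $(3) > (2,3)$. So you cannot invoke an inductive hypothesis indexed by invariant value --- the quantity you are inducting on does not decrease along the maximal-contact reduction. The paper instead inducts on $\dim X$, which genuinely drops by one when passing from $X$ to $H$ (and uses a secondary finite induction through $\cF\subseteq\cD^{\log}_X\subseteq\cD_X$ in the $\ord=\infty$ case). Replacing your induction variable by dimension and keeping the rest of the plan would repair the argument.
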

This implies, by Proposition \ref{prop:FCanonicalInvBasicProperties}, that also
\(
\inv_{\tau^s(\cF),\pa}(\tau^s(\cR))
 <  \max \inv_{\cF}(\cR).
\)
\begin{proof}
{The result for $\tau$ follows from the result for $\sigma$,} which we prove  by induction on the dimension of $X$; the case that $\dim (X) =0$ being trivial. So assume that the result is proved whenever $\dim X\leq\,n-1$
and consider a foliated logarithmic variety $(X,\cF,E)$ of dimension $n$ and a Rees algebra $\cR$. Since the result is trivially true outside $V(\cA_J)$, we fix a point $\pa \in X$ and we may assume that $\cR$ is a Rees algebra $\cR$ over $\cO_{X,\pa}$. The proof is now divided in two cases.

\medskip
\noindent
{\bf Case 1: $\ord_{\cF,\pa}(\cR)<\infty$.} Suppose that $\ord_{\cF,\pa}(\cR)=a<\infty$. Let $x_1$ be an $\cF$-maximal contact element with respect to $\cR$, see Definition \ref{def:FMaximalContactRees}. By Lemma \ref{lem:FcanonicalInvForFiniteRees} we know that there exists a nested-regular coordinate system $(x,y,z)$, starting with $x_1$, such that:
\[
\cA_J\cdot \cO_{X,(x)} =  \cO_{X,(x)}[x_it^{1/a_i},y_jt^{1/b_j},z_kt^{1/c_k}]_{l,r,s}
\]
where $a_1=a$, and
\(
\inv_{\cF,\pa}(\cR)=(\ord_{\cF,\pa}(\cR),\inv_{\cF_{H},\pa}(\cC(\cR,\cF)_{|H}))\) with $H=V(x_1)$.

\medskip
\noindent
\textbf{Claim:} Given a point $\pb\in \sigma^{-1}(\pa)$, let $\cR^c_\pb=\sigma^{c}(\cR) \cdot \cO_{B,\pb}$ {and $\cF^c:=\sigma^{c}(\cF)$.} Then
\[
\ord_{\cF^c,\pb}(\cR^c_\pb) \leq a
\]
and if the equality holds, then $x_1'$ is a maximal contact for $\cR^c_\pb$.

\begin{proof}By the definition of $\cF$-order, there exists $b\in \mathbb{N}$, $f\in R_b$ and derivations $\partial_{1}, \ldots,\partial_{ab} \in \cF_{\pa}$,  such that:
\[
\partial_1 \cdots \partial_{ab} f \text{ is a unit in }{\cO_{X,\pa}.}
\]
Using Lemma \ref{Lem:transform-FR} inductively we have
\[
{\cO_B^\times\ \ni\ } \sigma^{\ast}\left(  \partial_1 \cdots \partial_{ab} ft^b  \right)  \in (T^{-1/a}\cF^c)^{ab}(\cR^c_bT^b)
\]
and we conclude that $\ord_{\cF^c,\pb}(\cR^c_\pb) \leq a$. Next, by the definition of maximal contact, we may {as well} suppose that $\partial_1 = \partial_{x_1}$ and that
\[
\partial_2 \cdots \partial_{ab} f = x_1.
\]
{So, using Lemma  \ref{Lem:transform-FR} again we have}
\[
\begin{aligned}
x_1'T^{1/a} &  \in (T^{-1/a}\cF^c)^{ab-1}(\cR^c_bT^{b}),
\end{aligned}
\]
implying that $x_1'$ is an $\cF^c$-maximal contact for $\cR^c_\pb$, and proving the Claim.
\end{proof}

Denote by $H'$ the strict transform of $H$ by the weighted blow-up cobordism. As a consequence of the claim, if $q\notin H'$ then $\ord_{\cF^c,\pb}(\cR^c_{\pb}) < a$ and the result holds true. So we may suppose that $\pb\in H'$. Consider now the restriction $\sigma_H: H' \to H$, which is a weighted blow-up cobordism with center $\cA_{J|H}$.\footnote{Note that  $w$ is still an integer multiple of $\lcm(a_2,\ldots,a_k),$ as required.}  

By Lemmas \ref{lem:CoefficientAdmissible} and \ref{lem:FcanonicalInvForFiniteRees}, $\cA_{J|H}$ is the $\inv_{\cF_{|H}}$-maximal admissible center for $\cC(\cR,\cF)_{|H}$. Because of the induction assumption, it remains only to show that:
\[
\sigma_{H}^{c}(\cC(\cR,\cF)_{|H}) \quad \subseteq \quad  \cC(\cR^c_\pb,\cF^c_\pb)_{|H'}
\]
which follows from Lemma  \ref{Lem:transform-FR}  and Proposition \ref{prop:CoefficientSplitting}.

\begin{remark} {While the maximum of the  invariant drops on $B_+$, it remains the same on $B$ itself, as it contains $X \times \GG_m$.}
{The argument above shows} that the invariant satisfies the resolution criterion of \cite{Wlodarczyk-cobordant}, and remains constant over the vertex $V(B)=V(x_1',\ldots,x_l')$.
\end{remark}

\medskip
\noindent
{\bf Case 2: $\ord_{\cF,\pa}(\cR)=\infty$.} We follow a similar inductive proof in terms of the foliation we consider, just as in Section~\ref{ssec:Case2}: (i) $\cF = \cD_X$, (ii) $\cF=\cD_X^{\log}$ and (iii) $\cF \subset \cD_X^{\log}$. Note that (i) is already finished by case 1, and recall the definition of $\cF^{\infty}(\cR)$ given in equation {\eqref{eq:Rinfty}, page \pageref{eq:Rinfty}}. {As $\cA_J$ is $\cF$-invariant, by Proposition \ref{prop:FInvariantBlowup} we have $\cF^c = \sigma^*(\cF)$ and} we have that:
\[
\sigma^c(\cF^{\infty}(\cR)) = (\cF^c)^{\infty}(\sigma^c(\cR)).
\]
We conclude by induction of the Proposition in terms of cases $(i-iii)$, combined with Lemma \ref{lem:FcanonicalInvForInvariantRees}.
\end{proof}

\section{$\cK$-monomial, {smooth}, and logarithmically smooth foliations}\label{Sec:nice-foliations}

As it is well-known in birational geometry of foliations, it is usually impossible to find a birational model of a foliation which is non-singular, and the goal of a resolution of singularities is to find birational models satisfying some ``as simple as possible" normal forms (in fact, some references prefer the term ``reduction of singularities" instead of resolution of singularities, eg. \cite{CRS}). In this section, we discuss two notions of singularities, namely log-smooth and monomial singularities, which play a key role in our applications, as they form thick classes, see Definition \ref{def:thick}.
We equally relate the notion of log-smoothness with smoothness after cobordant blow-ups and comment on the definition of Darboux first integrals.

\subsection{Log-smooth foliations}\label{ssec:LogSmooth}
We start our discussion by log-smooth foliations, see Definition \ref{Def:foliations}\eqref{ItDef:LogSmooth}. 

\begin{example}\label{ex:LogSmooth}
Let $X= \mathbb{C}^2$, and consider the complex-analytic category:
\begin{enumerate}
\item Consider the saddle-node foliation $\cF$, that is, the foliation generated by:
\[
\partial = x \partial_x + y^2\partial_y
\]
and suppose that $E= \{x=0\}$. Then $\cF$ is a log-smooth foliation. In fact, one can check that $\cD_{X}^{\log}/\cF$ is generated by the image of $\partial_y$ by the quotient $\cD_{X}^{\log} \to \cD_{X}^{\log}/\cF$.
\item Consider once again the saddle-node foliation $\cF$, but suppose that $E = \emptyset$. In this case, $\cF$ is \emph{not} log-smooth. We can, nevertheless, transform it into a log-smooth foliation by blowing-up the smooth center $(x=0)$; note that such a center contains points where $\cF$ is smooth, so this choice is not functorial with respect to smooth morphisms. In fact, it is not possible to transform $\cF$ into a log-smooth foliation via a functorial choice of center, as this can not be accomplished by blow-ups with support in the singular locus of $\cF$.
\item Consider the Euler foliation $\cF$ generated by the vector-field \eqref{eq:Euler}, page \pageref{eq:Euler}, and suppose that $E = \emptyset$. There is no blow-up that transforms $\cF$ into a log-smooth foliation. 
\end{enumerate}
\end{example}

Our goal in this section is to prove that triples $(X,\cF,E)$ where $\cF$ is log-smooth forms a thick class, see Definition \ref{def:thick} and Theorem \ref{Th:aligned center log-smooth}.
 To this end, we make use of the following well-known result about sub-bundles, which we specialize to our case:

\begin{lemma}\label{lem:LocalPresentationLogSmooth}
Let $(X,\cF,E)$ be a smooth foliated logarithmic variety. Let $\pa \in X$, let $\{\partial_1,\ldots, \partial_n\}$ be a basis of the free $\cO_{X,\pa}$-module $\cD^{\log}_{X,\pa}$ (respectively, $\cD_{X,\pa}$), and $\{\nabla_1,\ldots,\nabla_t\}$ be a system of generators of $\cF_{\pa}$. Let $A_{ij} \in \cO_{X,\pa}$ be functions such that:
\[
\nabla_i = \sum_{j=1}^{n} A_{ij}\partial_j, \quad i=1,\ldots,t.
\]
The foliation $\cF$ is log-smooth (respectively smooth) at $\pa$ if, and only if, the matrix $A(\pa) = [A_{ij}](\pa)$ has rank equal to $\rank(\cF)$ and, up to re-ordering,  $\{\nabla_1,\ldots,\nabla_{r}\}$, with $r=\rank(\cF)$, generates $\cF_{\pa}$.
\end{lemma}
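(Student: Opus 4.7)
The plan is to recognize this as a standard criterion for a submodule to be a direct summand of a free module over a local ring; the logarithmic and non-logarithmic cases are verbatim identical, so I would treat only the logarithmic one. The core ingredients are Nakayama's lemma and the observation that a matrix over a local ring has a given rank modulo the maximal ideal if and only if it admits an invertible minor of that size.

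For the forward implication, I would start from log-smoothness, which says $\cD^{\log}_{X,\pa}/\cF_\pa$ is locally free. The short exact sequence
\[
0 \to \cF_\pa \to \cD^{\log}_{X,\pa} \to \cD^{\log}_{X,\pa}/\cF_\pa \to 0
\]
then splits, so $\cF_\pa$ itself is free of rank $r$, and every minimal generating set has exactly $r$ elements. Reducing the $\nabla_i$ modulo $\m_\pa$, their residues span the $r$-dimensional $k(\pa)$-space $\cF_\pa/\m_\pa\cF_\pa$; after re-ordering, $\nabla_1(\pa),\ldots,\nabla_r(\pa)$ form a basis, and Nakayama then implies that $\nabla_1,\ldots,\nabla_r$ generate $\cF_\pa$. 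In these bases, the matrix $A(\pa)$ represents the injection $\cF_\pa/\m_\pa\cF_\pa \hookrightarrow \cD^{\log}_{X,\pa}/\m_\pa\cD^{\log}_{X,\pa}$, so its rank is exactly $r$.

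For the converse, the key move is to use the rank-$r$ hypothesis on $A(\pa)$ to extract an invertible $r \times r$ minor from the first $r$ rows of $A$ (the rows corresponding to $\nabla_1,\ldots,\nabla_r$). After re-ordering the $\partial_j$, I may assume this is the top-left block $A'$. The change-of-basis matrix from $(\partial_1,\ldots,\partial_n)$ to $(\nabla_1,\ldots,\nabla_r,\partial_{r+1},\ldots,\partial_n)$ is then block upper-triangular with invertible diagonal blocks $A'$ and $I_{n-r}$, so the new tuple is again an $\cO_{X,\pa}$-basis of $\cD^{\log}_{X,\pa}$. Consequently $\cD^{\log}_{X,\pa}/\cF_\pa$ is freely generated by the classes of $\partial_{r+1},\ldots,\partial_n$, proving log-smoothness.

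The main obstacle is purely bookkeeping: the statement conflates three distinct pieces of data---freeness of the quotient, a rank condition on the matrix representing all $t$ generators, and the possibility of discarding $t-r$ redundant generators---and one has to carefully pair the correct $r$ rows of $A$ with the correct minimal set of $\nabla_i$'s in each direction of the equivalence. Once this is done, both implications reduce to one application each of Nakayama's lemma and the rank criterion for matrices over local rings.
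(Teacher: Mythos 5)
Your proof is correct. The paper states this lemma without proof, presenting it as a well-known fact about sub-bundles, and your argument --- splitting the exact sequence $0 \to \cF_\pa \to \cD^{\log}_{X,\pa} \to \cD^{\log}_{X,\pa}/\cF_\pa \to 0$ and applying Nakayama in one direction, and an invertible block-triangular change of basis in the other --- is exactly the standard argument being invoked; the only micro-step you leave implicit is that the rank-$r$ minor of $A(\pa)$ can indeed be taken inside the first $r$ rows, which follows because the generation hypothesis makes the remaining rows $\cO_{X,\pa}$-linear combinations of those $r$ rows.
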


\begin{theorem}
\label{Th:aligned center log-smooth}
The class of triples $(X,\cF,E)$ where $\cF$ is log-smooth is thick. Moreover, let $(X,\cF,E)$ be a smooth foliated logarithmic variety.
\begin{enumerate}
\item Suppose that $\cF$ is log-smooth and that $\tau: (X',\cF',E') \to (X,\cF,E)$ is an $\cF$-aligned weighted (or cobordant) blow-up. The controlled and strict transforms of $\cF$ coincide and are log-smooth.
\item Suppose that $\cF$ is \emph{smooth} and that $\sigma: (X'',\cF'',E'') \to (X,\cF,E)$ is an $\cF$-aligned cobordant blow-up. The controlled and strict transforms of $\cF$ coincide and are \emph{smooth}.
\end{enumerate}
\end{theorem}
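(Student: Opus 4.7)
The plan is to verify thickness by establishing the three axioms of Definition \ref{def:thick} in turn, from which parts (1) and (2) will follow. The product closure \eqref{defItem:thickProduct} is immediate from $\cD^{\log}_{X\times Y} = \pi_X^*\cD^{\log}_X \oplus \pi_Y^*\cD^{\log}_Y$, which forces $\cD^{\log}_Z / \cF_Z \cong \pi_Y^*(\cD^{\log}_Y / \cF_Y)$, locally free as a pullback of a locally free sheaf. For the restriction closure \eqref{defItem:thickRestriction}, I will work in $\cF$-aligned coordinates $(x, y, z)$ at a point of $X \subset Y$: since $X \not\subset E$, the subvariety $X$ is cut out by a subset of the free variables $(x, y)$, and the independence of the $\nabla_j$ from $(x_i, \partial_{x_i})$ allows me to transfer a complementary basis of $\cD^{\log}_Y / \cF$ (which exists by Lemma \ref{lem:LocalPresentationLogSmooth}) to one of $\cD^{\log}_X / \cF_{|X}$, showing that the saturation in the saturated restriction is in fact unnecessary.

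The main technical content is the blow-up closure \eqref{defItem:thickFaligned}, which is part (1). I will fix a point $\pb$ of the exceptional fiber above $\pa \in V(J)$ and apply Proposition \ref{prop:TransformFoliationAligned}\eqref{It:TransformFoliationAligned}. Log smoothness of $\cF$ at $\pa$ lets me assume that $\{\partial_{x_1}, \ldots, \partial_{x_l}, \nabla_1, \ldots, \nabla_m\}$ is an actual $\widehat\cO$-basis of $\widehat\cF_{(x)}$, and extends to a basis $\{\partial_{x_i}, \nabla_j, \theta_k\}$ of $\widehat\cD^{\log}_{X,(x)}$ with each $\theta_k$ adjusted modulo $\cF$ (by subtracting multiples of the $\partial_{x_i}$, which lie in $\cF$) so that $\theta_k(x_i) \equiv 0$. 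The proposition then identifies $\widehat{\sigma^c(\cF)} = \spa(\partial_{x'_i}, \widehat\sigma^*(\nabla_j))$ at $\pb$; the complementary family $\{\widehat\sigma^*(\theta_k)\}$ lies in $\widehat\cD^{\log}_{B/\AA^1,(x')}$ precisely because the $\theta_k$ do not involve $\partial_{x_i}$, so their pullbacks carry no negative power of $s_B$ and automatically kill $s_B$. A rank count gives $l + m + s = \dim X$ candidate generators, and linear independence is verified by applying candidate relations to $x_{i_0}'$ (using $\nabla_j(x_i) = \theta_k(x_i) = 0$) and to the remaining coordinates. This yields log smoothness of $\widehat{\sigma^c(\cF)}$ at $\pb$; descent to the quotient stack $Bl_J(X) = [B_+/\GG_m]$ is immediate by $\GG_m$-equivariance, and the algebraic statement over $\cO_{B,\pb}$ follows by faithful flatness of completion.

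Controlled and strict transforms then coincide because the controlled transform is now known to be log smooth, hence saturated in $\cD^{\log}_{B/\AA^1}$, and the strict transform is by construction the saturation of the controlled one (see Appendix \ref{App:transforms}). For part (2), the same argument runs with $\cD^{\log}$ replaced throughout by $\cD$: smoothness of $\cF$, meaning $\cD_X / \cF$ locally free, lifts by the identical splitting construction to local freeness of $\cD_{B_+/\AA^1} / \sigma_+^s(\cF)$, and the direct-sum decomposition $\cD_{B_+} = \cD_{B_+/\AA^1} \oplus \cO_{B_+}\partial_{s_B}$ --- valid because $s_B$ is a global coordinate on $B_+$ --- propagates local freeness to $\cD_{B_+} / \sigma_+^s(\cF)$, giving that the split transform is \Sm\ on the cobordant blow-up. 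The step I expect to require the most care is the restriction property \eqref{defItem:thickRestriction}: the saturated restriction is a priori a non-trivial operation, and it takes the structural form of the $\cF$-aligned presentation --- together with the hypothesis $X \not\subset E$ --- to argue that saturation is unnecessary and that log smoothness is preserved intact; the blow-up step, by contrast, reduces to bookkeeping once Proposition \ref{prop:TransformFoliationAligned} is in hand.
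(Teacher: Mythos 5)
Your overall architecture matches the paper's: the product axiom is immediate, the restriction axiom is handled by showing that the non-saturated restriction is already log smooth in an aligned presentation (so that saturation is vacuous), and the blow-up axiom reduces to Proposition \ref{prop:TransformFoliationAligned}. The restriction step and the reduction in part (2) via $\cD_{B_+}=\cD_{B_+/\AA^1}\oplus\cO_{B_+}\partial_{s_B}$ are sound.

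The gap is in the blow-up step, in the claim that the pullbacks $\widehat\sigma^*(\theta_k)$ of your complementary basis ``carry no negative power of $s_B$'' because the $\theta_k$ do not involve the $\partial_{x_i}$. An $\cF$-aligned center in general contains free $\cF$-invariant variables $y_j$ with weights $1/b_j$ (and divisorial ones $z_k$), not only the transverse $x_i$. Writing $\nabla_i=\sum_j A_{ij}\partial_{y_j}+\sum_k B_{ik}z_k\partial_{z_k}+\sum_h C_{ih}\partial_h$ as in Definition \ref{def:Faligned}, the invariance condition on the $(y,z)$-part of the center forces $A_{ij}(\pa)=0$, so the images of the $\nabla_i$ in $\cD^{\log}_{X,\pa}\otimes k(\pa)$ avoid the $\partial_{y_j}$-directions; consequently \emph{every} complement $\{\theta_k\}$ of $\cF$ in $\cD^{\log}_X$ must contain derivations whose $\partial_{y_j}$-coefficient is a unit. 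Since $y_j=s_B^{w/b_j}y_j'$, one has $\sigma^*(\partial_{y_j})=s_B^{-w/b_j}\partial_{y_j'}$, so these $\sigma^*(\theta_k)$ do acquire poles along the exceptional divisor and cannot serve as a regular complement of $\sigma^c(\cF)$ inside $\cD^{\log}_{B/\AA^1}$. (Your claim is correct only when the center is purely transverse, i.e.\ $r=s=0$.) The conclusion is salvageable by replacing $\sigma^*(\theta_k)$ with controlled transforms such as $\partial_{y_j'}$, but then the spanning statement must be re-proved; this is essentially what the paper does instead: it expands the generators $\sigma^c(\nabla_i)=\sum_j\sigma^*(A_{ij})s_B^{-w/b_j}\partial_{y_j'}+\sum_k\sigma^*(B_{ik})z_k'\partial_{z_k'}+\sum_h\sigma^*(C_{ih})\partial_h'$ in the natural frame downstairs, observes that the coefficients $\sigma^*(A_{ij})s_B^{-w/b_j}$ are regular precisely because of the admissibility of the center, and checks that the matrix $[\sigma^*(B),\sigma^*(C)]$ retains rank $\rank(\cF)-l$ at every point of the exceptional fiber, which is the rank criterion of Lemma \ref{lem:LocalPresentationLogSmooth}.
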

\begin{proof}
Note that condition \eqref{defItem:thickProduct} from Definition \ref{def:thick} is immediate. We concentrate in conditions \eqref{defItem:thickFaligned} and \eqref{defItem:thickRestriction}. Both statements are local, so we may work over a point $\pa \in V(\cA_J)$. Moreover, by faithful-flatness of the completion, it is enough to verify that a foliation is log-smooth (respectively, smooth) in the level of formal power series. We argue, therefore, formally. We start by presenting the center $\cA_J$ in a unified way before specializing to the proof of each one of the two properties.

Let $\rho$ be the rank of $\cF$. Consider a formally $\cF$-aligned presentation of $\cA_J$, that is, there are coordinate systems $(x,y,z) \in \widehat{\cO}_{X,\pa}$ adapted to $E$, such that:
\[
\begin{aligned}
\widehat{\cA}_{J} &= \widehat{\cO}_{X,\pa}[x_it^{1/a_i},y_jt^{1/b_j}, z_kt^{1/c_k}]_{l,r,s}\\
\widehat{\cF}_{\pa} & =\spa_{\widehat{\cO}_{X,\pa}}\{\partial_{x_1},\ldots,\partial_{x_l}, \nabla_1,\ldots,\nabla_m  \}
\end{aligned}
\]
satisfying the conditions from Definition \ref{def:Faligned}. Note that:
\[
\cD^{\log}_{X} \cdot \widehat{\cO}_{X,\pa} = \spa_{\widehat{\cO}_{X,\pa}}\{\partial_{x_i}, \partial_{y_j}, z_k\partial_{z_k}, \partial_{h}\}.
\]
where $i=1,\ldots,l$, $j=1,\ldots,r$, $k=1,\ldots,s$, $h=1,\ldots,n-(l+r+s)=:t$ and the derivatives $\partial_h$ may be assumed to be independent of $(x_i,\partial_{x_i})$, $(y_j,\partial_{y_j})$ and $(z_k,\partial_{z_k})$. Moreover, as $\nabla_j$ are independent of $(x_i,\partial_{x_i})$, we get that:
\[
\nabla_i = \sum_{j=1}^{r} A_{ij} \partial_{y_j} + \sum_{k=1}^{s} B_{ik} z_k\partial_{z_k} + \sum_{h=1}^{t} C_{ih}\partial_h, \quad i=1,\ldots,m.
\]
where, from the second condition of Definition \ref{def:Faligned}, we conclude that $A_{ij}(\pa) = 0$ for all $i$ and $j$. We conclude from Lemma \ref{lem:LocalPresentationLogSmooth} that the matrix $[B_{ik},C_{ik}](\pa)$ has rank equal to $\rho-l$.

\medskip
\noindent
\emph{On $\cF$-aligned blow-ups and Definition \ref{def:thick}~\eqref{defItem:thickFaligned}:} We address the cobordant situation,  the weighted blow-up situation follows as log smoothness can be tested after a smooth pullback. We note that (2) implies, for the associated weighted blow-up, that $\tau^c(\cF)$ is a subbundle of $\cD_{X'/\cA^1} = \cD^{\log}_{X'}$, so it is logarithmically smooth but not necessarily smooth.

Denote $\sigma: B \to X$ the cobordant blow-up. By Proposition \ref{prop:TransformFoliationAligned}, the controlled transform $\sigma^{c}(\cF)$ is generated by the derivations 
$\sigma^{c}(\partial_{x_i})$ and $\sigma^{c}(\nabla_j)$. Now, by Lemma \ref{lem:LocalExpressionTransformDerivation} and Proposition \ref{prop:FInvariantBlowup}, we know that:
\[
\begin{aligned}
\sigma^{c}(\partial_{x_i}) &=  \partial_{x_i'}\\
\sigma^{c}(\nabla_i)= \sigma^{\ast}(\nabla_i) &= \sum_{j=1}^{r} \sigma^{\ast}(A_{ij}) s_{B}^{-b_i}\partial_{y_j'} + \sum_{k=1}^{s} \sigma^{\ast}(B_{ik}) z_k'\partial_{z_k'} +  \sum_{h=1}^{t} \sigma^{\ast}(C_{ih})\partial_h',
\end{aligned}
\]
where $\partial_h' = \sigma^{c}(\partial_h)$. In particular, $[\sigma^{\ast}(B_{ik}),\sigma^{\ast}(C_{ih})](\pb) = [B_{ik},C_{ih}](\pa)$ has rank $\rho-l$ at every point $\pb$ in the fiber $\sigma^{-1}(\pa)$. Therefore, by Lemma \ref{lem:LocalPresentationLogSmooth}, $\sigma^{c}(\cF)$ is log-smooth at every point $\pb \in \sigma^{-1}(\pa)$. Since $\sigma^{c}(\cF)$ is saturated, moreover, it must be equal to the strict transform. 

Finally, if $\cF$ was smooth, the same proof shows that $\sigma^{c}(\cF)$ is smooth {as well} after the cobordant blow-up.

\medskip
\noindent
\emph{On Definition \ref{def:thick}~\eqref{defItem:thickRestriction}:} In this case, the center is smooth and not contained in $E$, which implies that
\(
\widehat{\cA}_{J} = \widehat{\cO}_{X,\pa}[x_it ,y_jt]_{l,r}.
\)
It follows that the non-saturated restriction of the foliation to $Y:=V(\cA_J)$, see Section \ref{Sec:nonsaturated-restriction}, is formally generated by:
\[
\widehat{\cF}_{\pa\, |Y} =\spa_{\widehat{\cO}_{Y,\pa}}\{\nabla_{1|Y},\ldots,\nabla_{m|Y}  \}
\quad \text{where} \quad
\nabla_{i|Y} = \sum_{h=1}^{t} C_{ih|Y}\partial_h, \quad i=1,\ldots,m.
\]
Recall that $[C_{ih}(\pa)]$ has rank $\rho-\ell$. By Lemma \ref{lem:LocalPresentationLogSmooth}, $\cF_{|Y}$ is log-smooth at $\pa$; in particular it is saturated. This implies that the saturated restriction (see Definition \ref{Def:saturated-pullback}) $\cF_{ \boldsymbol | \boldsymbol | Y}$ equals $\cF_{|Y}$ and is, therefore, log-smooth as needed.
\end{proof}

\subsection{$\cK$-monomial foliations}\label{ssec:ResTotallyIntegrable}  We now define the final normal forms for Darboux totally integrable foliations\, and we will show that they form a thick class, see Theorem \ref{Th:aligned center new}. We start by the definition in the analytic category:

\begin{definition}[Analytic $\mathcal{K}$-monomial foliations, {see \cite[Def 3.1]{BelRACSAM}, \cite[Def 3.1]{BelRI}}]\label{def:KMonomialFoliation}

Let {$\mathcal{K} \subset \CC$ be a sub-field and consider a complex} analytic foliated logarithmic variety $(X,\cF,E)$. We say that the foliation $\cF$ is $\mathcal{K}$-monomial at a point $\pa \in X$ if it is saturated at $\pa$, see Definition \ref{Def:foliations}, and there exists Euclidean coordinates
\(
(v,w) = (v_1,\ldots,v_p,w_1,\ldots,w_{n-p})
\)
centered at $\pa$ and adapted to $E$, such that:
\[
\begin{aligned}
\cF \cdot \cO_{X,\pa} &= \spa_{\cO_{X,\pa}}(\vec{\partial}_v,\vec{\nabla}) := \spa_{\cO_{X,\pa}}\left( \partial_{v_1},\ldots, \partial_{v_p}, \nabla_{1},\ldots, \nabla_{q-p}   \right)
\end{aligned}
\]
where the derivations $\nabla_i$ are given by:
\begin{equation}\label{eq:NormalFormKmonomial}
\begin{aligned}
\nabla_j& = \sum_{k=1}^{n-p} a_{jk}w_k \partial_{w_k},  \quad j=1,\ldots,q-p.
\end{aligned}
\end{equation}
{with} $a_{jk}\in \mathcal{K}$. We say that $(v,w)$ is a monomial coordinate and $\{\vec{\partial}_v,\vec{\nabla}\}$ is a monomial presentation of $\mathcal{F}$ at $\pa$. We say that $\mathcal{F}$ is $\mathcal{K}$-monomial if it is $\mathcal{K}$-monomial at every point.
\end{definition}

In the algebraic category, we will work with a slightly weaker definition:

\begin{definition}[Formally $\mathcal{K}$-monomial foliations]\label{def:KMonomialFoliationFormal}
Consider now {an algebraic foliated logarithmic variety $(X,\cF,E)$ over $K$.} We say that the foliation $\cF$ is \emph{formally $\mathcal{K}$-monomial} at a point $\pa \in X$ if, {possibly after a field extension of the residue field of $\pa$,} the conditions of Definition \ref{def:KMonomialFoliation} are satisfied with respect to \emph{formal} coordinates $(u,w) \in \widehat{\cO}_{X,\pa}$. 
\end{definition}
We note that a residue field extension (or passing to \'etale charts) is necessary to work on stacks. Working formally is a compromise necessary for Theorem \ref{Th:aligned center new} to hold.

\begin{notation}
In order to uniformize the treatment, given a point $\pa \in X$, in this section we work with the convention that $\cO = \cO_{X,\pa}$ in the analytic category, and $\cO = \widehat{\cO}_{X,\pa}$ in the algebraic.

\emph{Also to make terminology uniform, in the main results we drop the adjective ``formally" and refer to ``$\cK$-monomial foliations" even in the algebraic case.}
\end{notation}

{It is useful to}  define:

\begin{definition}[Homogeneous with respect to the monomial presentation]\label{def:HomogenousMonomialPresentation} Let $\cF$ be a $\cK$-monomial foliation at $\pa \in X$. A function $h\in \cO$ is said to be \emph{homogeneous with respect to the monomial presentation} $\{\vec{\partial}_v,\vec{\nabla}\}$ of $\cF\cdot \cO$ if:
\[
\begin{aligned}
\partial_{v_i}(h) &\equiv 0 \quad & &i=1,\ldots,p, & \text{ and } & &
\nabla_j(h) &= \xi_{j} h, \quad \xi_{j} \in \mathcal{K}\quad & j=1,\ldots ,q-p.
\end{aligned}
\]
\end{definition}

\begin{remark}[On $\cK$-monomial foliations]\label{rk:InvariantDerivations}\label{rk:BasicKmonomial}\hfill
\begin{enumerate}

\item\label{rk:BasicKmonomialForm} It follows from the definition that a foliation $\cF$ is $\mathcal{K}$-monomial at $\pa$ if, and only if, the derivations in $\cF \cdot \cO$ can be characterized by 
\[
\left(\prod_{j=1}^{n-p} w_j\right)\sum_{j=1}^{n-p}\lambda_{ij} \frac{d w_j}{w_j}(\partial) \equiv 0, \quad i=1,\ldots,r,
\]
where the vectors $\lambda_i = (\lambda_{i1}, \ldots, \lambda_{i,n-p})\in \mathcal{K}^{n-p}$ generate the orthogonal sub-space to the linear space generated by $\vec{a}_j=(a_{j1}, \ldots,a_{j,n-p})$ from Definition \ref{def:KMonomialFoliation}.
\item\label{rk:BasicKmonomialOpen} {Restricting} to the complex-analytic category, it follows from \cite[Lemma 3.4]{BelRI} that a foliation $\cF$ is $\mathcal{K}$ monomial if, and only if, {there exist} locally defined multi-valued monomial functions:
\[
w^{\beta_i} = w_1^{\beta_{i1}}\cdots w_{n-p}^{\beta_{i,n-p}}, \quad \beta_{ij}\in \mathcal{K}, \quad i=1,\ldots, n-q
\]
such that $[\beta_{ij}]$ is a matrix of maximal rank, and
\[
\cF_{\pa} = \spa_{\cO_{X,\pa}}\{\partial \in \cD_{X,\pa}^{\log} ;\, \partial(w^{\beta_i}) \equiv 0, \, i=1,\ldots,n-q\}.
\]
Thus the property is Euclidean-open \cite[Lemma 3.5]{BelRI}.
\item  {Monomial foliations locally look like log smooth foliations, but lack the necessary divisor to make them log smooth --- though such divisors exist locally.  Example \ref{ex:LogSmooth}(1) gives a log smooth foliation which is not monomial, and Example \ref{ex:LogSmooth}(3) shows that the existence of a local divisor is a subtle question.}
\end{enumerate}
\end{remark}

Our objective is to prove Theorem \ref{Th:aligned center new}. In particular, we must show that $\cF$-aligned blow-ups preserve $\cK$-monomiality. But the notion of monomial coordinates and presentation is not necessarily coherent with $\cF$-aligned presentations, see Definition \ref{def:Faligned}. Our first goal is to show that one can chose coordinates coherently for both Definitions. We start by recalling a technical result:

\begin{lemma}[{see \cite[Cor. 3.8]{BelRACSAM}, \cite[Lemma 5.14]{BelBmon}}]\label{lem:homogenousGenerators}
Suppose that $\mathcal{F}$ is a {$\mathcal{K}$-monomial} foliation at a point $\pa\in X$. Let $\cR$ be an $\cF$-invariant Rees algebra. Given monomial coordinates $(v,w)$ and monomial presentation $\{\vec{\partial}_{v},\vec{\nabla}\}$ for $\cF$ at $\pa$, there exists $h_1,\ldots, h_m \in \cO$ and $e_1,\ldots,e_m \in \mathbb{N}$ such that
\[
\cR \cdot \cO =\cO [h_1 t^{1/e_1}, \ldots, h_m t^{1/e_m}]^{\inte}
\]
where each $h_j$ is 
homogeneous with respect to the monomial presentation, see Definition \ref{def:HomogenousMonomialPresentation}. Finally, if $\cR= \cA_J$ is a weighted $E$-adapted center, then $(h_1,\ldots,h_m)$ can be chosen so that it can be completed into a system of $E$-adapted coordinates.
\end{lemma}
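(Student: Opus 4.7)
The plan is to produce homogeneous generators by decomposing any finite generating set of $\cR$ into its character components and using the $\cF$-invariance of each homogeneous piece of $R_b$ to stay inside $\cR$. Throughout I work in $\cO$, which is $\cO_{X,\pa}$ in the analytic case and $\widehat{\cO}_{X,\pa}$ in the algebraic case, and I exploit that ideals of $\cO$ are closed in the appropriate topology (analytic Zariski or $m$-adic). Since $\cR\cdot\cO$ is finitely generated as a Rees algebra, say by $f_i\in R_{b_i}$, it suffices to show the following local statement: for any $\cF$-invariant ideal $I\subset\cO$ and any $f\in I$, every component of $f$ that is homogeneous with respect to $\{\vec{\partial}_v,\vec{\nabla}\}$ again lies in $I$.

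I would prove this local statement in two stages. Stage (a): $\partial_{v_i}$-invariance forces $I=(I\cap\cO_w)\cdot\cO$, where $\cO_w$ consists of $v$-independent series. For this I exponentiate, writing the formal flow $\sigma_t(f):=\sum_{k\geq 0} \tfrac{t^k}{k!}\partial_{v_i}^k(f)\in I[[t]]$, which lies term-by-term in $I$. Because $v_i$ is topologically nilpotent, the substitution $t=-v_i$ is legitimate and produces $\sigma_{-v_i}(f)=f|_{v_i=0}$; since $I$ is closed, this element lies in $I$. Iterating over $i=1,\ldots,p$ gives $f|_{v=0}\in I$. Stage (b): the induced ideal $J:=I\cap\cO_w$ is $\vec{\nabla}$-invariant, and since each $\nabla_j$ acts diagonally on monomials $w^\alpha$ with eigenvalue $\chi_j(\alpha):=\sum_k a_{jk}\alpha_k\in\cK$, I extract each character component of $g\in J$ by Lagrange interpolation in the operators. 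For each $N$ the truncation $g\bmod m^N$ involves only finitely many characters $c_1,\ldots,c_{s_N}\in\cK^{q-p}$, and one constructs a polynomial $P_c(\vec T)$ with $P_c(c_j)=\delta_{c,c_j}$; then $P_c(\vec{\nabla})(g)$ agrees with the character-$c$ component $g_c$ modulo $m^N$ and lies in $J$ by $\vec{\nabla}$-invariance. Passing to the limit and using closedness of $J$ yields $g_c\in J$ exactly.

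Applying these two stages to each $f_i$ gives a (possibly infinite) family of homogeneous elements $\{f_{i,c}\}\subset R_{b_i}$ generating $\cR\cdot\cO$, and Noetherianity extracts a finite subfamily, providing the desired $h_1,\ldots,h_m$ with $e_j=b_{i(j)}$. For the last clause concerning $\cR=\cA_J$: the divisorial components of $E$ are $\cF$-invariant (as $\cF\subset \cD_X^{\log}$), so each $z_k$ can be taken to be a homogeneous eigenvector equation for its component (after multiplication by a unit); the free part of the center corresponds to homogeneous generators in $\cO_w$, from which a linear-algebra argument, applied to their images in $m/m^2$, picks a regular partial system of parameters completing $(z_1,\ldots,z_s)$ to an $E$-adapted coordinate system.

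The main obstacle is Stage (b): the character decomposition $\cO_w=\prod_c (\cO_w)_c$ is generally only a topological product rather than an algebraic direct sum, because the character set $\chi(\NN^{n-p})$ can be dense in $\cK^{q-p}$. One therefore cannot write down a single projector onto the character-$c$ piece; instead one must work modulo $m^N$, where only finitely many characters appear, construct the interpolating polynomial there, and pass to the limit using closedness of $J$. Ensuring that this procedure yields a genuinely well-defined eigenvector $g_c\in J$ (and not just a formal limit outside $\cO$) is the technical heart of the argument, and is what ultimately justifies the passage from general $\cF$-invariant generators to homogeneous ones.
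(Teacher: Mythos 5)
Your proposal is correct, and it is in substance the argument that the paper \emph{outsources}: the paper's own proof merely reduces the first assertion to Claim~\ref{cl:HomMonomialPresentation} (extract homogeneous elements $h_1,\ldots,h_m\in R_a\cdot\cO$ with $(f)\cdot\cO\subset(h_1,\ldots,h_m)$) and then cites \cite[Cor.~3.8]{BelRACSAM} and \cite[Lemma~5.15]{BelBmon} for that claim, whereas you actually prove it. Your two-stage decomposition --- first killing the $v$-dependence via the flow of $\partial_{v_i}$ and closedness of ideals, then extracting $\vec\nabla$-eigencomponents by interpolating in the commuting operators $\nabla_j$ modulo $m^N$ and passing to the limit --- is exactly the mechanism of the cited references, and your treatment of the density of the character set (only finitely many characters occur below each degree, so Lagrange interpolation applies truncation by truncation, and Krull's theorem $\bigcap_N(J+m^N)=J$ closes the argument) is the genuine technical point and is handled correctly. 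The final clause about $E$-adapted centers is also argued at the same level as the paper: since $\cF\subset\cD_X^{\log}$ forces the divisorial generators to be units times $w$-coordinates (hence homogeneous), and smoothness of the center lets one select, degree by degree, homogeneous generators whose images in $m/m^2$ complete a partial coordinate system.

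One step is under-argued, though easily repaired. In Stage~(a) you only establish $f|_{v=0}\in I$ and then assert $I=(I\cap\cO_w)\cdot\cO$; the element $f-f|_{v=0}$ lies in $(v)$ but not obviously in $(I\cap\cO_w)\cdot\cO$. The fix is to apply your flow argument to \emph{all} Taylor coefficients: each $f_\alpha=\tfrac{1}{\alpha!}(\partial_v^\alpha f)|_{v=0}$ lies in $I\cap\cO_w$ because $\partial_v^\alpha f\in I$ by $\cF$-invariance, and then $f=\sum_\alpha v^\alpha f_\alpha$ lies in the closure of $(I\cap\cO_w)\cdot\cO$, which equals that ideal by the same closedness you invoke elsewhere. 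With that elaboration the proof is complete.
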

\begin{proof}
The first part of the Lemma follows from the Claim below applied to a system of generators of $\cR \cdot \cO$:

\begin{claim}
\label{cl:HomMonomialPresentation} 
Suppose that $\cR$ is $\cF$-invariant. Fix $a \in \mathbb{Q}_{>0}$ and
{$f \in R_a$. There exist} 
elements $h_1,\ldots, h_m \in R_a \cdot \cO$ which are homogeneous with respect to the monomial presentation, such that $(f) \cdot \cO \subset (h_1,\ldots, h_m)$.
\end{claim}
\begin{proof}
This result has been proved in the analytic case in \cite[Cor. 3.8]{BelRACSAM}; and in the formal case with $\mathcal{K}= \mathbb{Q}$ in \cite[Lemma 5.15]{BelBmon}. Both proofs can be adapted directly to cover the formal case with a general field $\mathcal{K}$.
\end{proof}

{Next}, suppose that $\cR = \cA_J$ is a weighted $E$-adapted center. In this case, $\cR$ admits a system of generators:
\[
\cR \cdot \cO = \cO[f_1t^{1/a_1},\ldots,f_lt^{1/a_l},g_1 t^{1/c_1},\ldots,g_st^{1/c_s}]
\]
where $(f_1,\ldots,f_l,g_1, \ldots, g_s)$ can be completed into $E$-adapted coordinates of $\cO$, the $f_i$ are free variables and the $g_k$ are $E$-divisorial. First, {up to} re-ordering the variables $(w_1,\ldots,w_{n-p})$ and units, we may suppose that $g_k = w_k $,  which are homogeneous. Next, we apply the claim 
to the free variables $f_i$. We conclude from the fact that $\cR =\cA_J$ is a smooth center.
\end{proof}

We are ready to show that both presentations are compatible; {compare with \cite[Lemma 4.5]{BelRACSAM}:}

\begin{lemma}[Presentation of $\mathcal{F}$-aligned center for $\mathcal{K}$-monomial foliations]\label{lem:PresentationMonomialAligned}
Suppose that $\mathcal{F}$ is a {$\mathcal{K}$-monomial} foliation at a point $\pa\in X$. Let $\cA_J$ {be an} 
$\cF$-aligned center. There exists $\cO$-coordinates $(x,y,z) = (v,w)$ centered at $\pa$ and a presentation:
\[
\cF \cdot\cO=\spa_{\cO}(\partial_1,\ldots,\partial_{p+q})
\]
such that:
\begin{itemize}
\item The coordinate system $(v,w)$ is monomial and $(\partial_1,\ldots,\partial_{p+q}) = (\vec{\partial}_v,\vec{\nabla})$ is a monomial presentation of $\cF \cdot \cO$ at $\pa$.
\item The generators $(\partial_1,\ldots,\partial_{p+q})$ form a $\cF$-aligned $\cO$ presentation, and the center admits a presentation 
\[
{\cA_{J} = \cO[x_it^{1/a_i},y_jt^{1/b_j}, z_kt^{1/c_k}]_{l,r,s}}
\]
satisfying all properties of Definition \ref{def:Faligned} (resp.  \ref{def:FormalFaligned}).
\end{itemize}
\end{lemma}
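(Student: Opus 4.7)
My plan is to combine two tools already established: Lemma \ref{lem:homogenousGenerators}, which produces homogeneous generators for an $\cF$-invariant Rees algebra in a monomial presentation, with the foliated Replacement Lemma \ref{lem:ReplacementFoliated}, which allows us to substitute the transverse coordinates in an $\cF$-aligned presentation. First I would fix monomial coordinates $(v', w')$ and a monomial presentation $\{\vec{\partial}_{v'}, \vec{\nabla}'\}$ of $\cF\cdot \cO$, and start from an arbitrary $\cF$-aligned presentation of $\cA_J$, say
\[
\cA_J = \cO[\tilde x_i t^{1/a_i}, \tilde y_j t^{1/b_j}, \tilde z_k t^{1/c_k}]_{l,r,s}.
\]
The goal is to first replace the $\tilde x_i$ by appropriate $v'_i$, and then replace $(\tilde y_j, \tilde z_k)$ by homogeneous functions of $w'$.

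For the first step, I would apply the foliated Replacement Lemma iteratively. Since each $\partial_{v'_i} \in \cF$ satisfies $\partial_{v'_i}(v'_i) = 1$, after reordering we can take $v'_1$ to be an $\cF$-maximal contact of $\cA_J$. Lemma \ref{lem:ReplacementFoliated} applied with $x_1 = v'_1$ and $\partial_1 = \partial_{v'_1}$ produces a new formally $\cF$-aligned presentation whose first transverse coordinate is $v'_1$, together with a splitting $\cA_J = \cO[v'_1 t^{1/a_1}, \ell(\cA_{J|H_1})]$, where $H_1 = V(v'_1)$. The restricted foliation $\cF_{|H_1}$ is again $\mathcal{K}$-monomial with presentation $\{\partial_{v'_2}, \ldots, \partial_{v'_p}, \vec{\nabla}'_{|H_1}\}$, since each $\nabla'_j$ only involves $w'\partial_{w'}$ and descends unchanged. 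Iterating $l$ times replaces the transverse block by $(v'_1, \ldots, v'_l)$ (possibly after reordering the $v'$'s), leaving a restricted center $\cA_{J|H}$, with $H = V(v'_1, \ldots, v'_l)$, that is $\cF_{|H}$-invariant.

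Next I would apply Lemma \ref{lem:homogenousGenerators} to the $\cF_{|H}$-invariant restricted center $\cA_{J|H}$, which is still an $E$-adapted weighted center. This yields homogeneous generators $h_1, \ldots, h_m$ that may be chosen as an $E$-adapted coordinate system $(y_1, \ldots, y_r, z_1, \ldots, z_s)$ of $\cO/(v'_1, \ldots, v'_l)$, with the $z_k$ being divisorial. Lifting them back to $\cO$ via the lifting map supplied by Lemma \ref{lem:ReplacementFoliated}, and setting $(x_i, y_j, z_k) := (v'_i, y_j, z_k)$, gives a presentation of the required form and shows the second bullet of the statement; the $\cF$-aligned conditions of Definition \ref{def:Faligned} follow by construction of the Replacement Lemma.

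To verify that $(x, y, z)$ extends to a monomial coordinate system for $\cF$, note that $x_i = v'_i$ already satisfies $\partial_{x_i} \in \cF$, while each $y_j$ and $z_k$ depends only on $w'$ and satisfies $\nabla'_h(y_j) = \xi_{hj} y_j$ and $\nabla'_h(z_k) = \eta_{hk} z_k$ with $\xi_{hj}, \eta_{hk} \in \mathcal{K}$ by homogeneity. I would then complete $(y, z)$ to a full coordinate system on the $w'$-directions by choosing the additional coordinates among monomials in the original $w'$'s transverse to the span of $(y, z)$; this is possible because the commuting diagonal operators $\vec{\nabla}'$ are simultaneously diagonalised by monomials, so there is a full basis of $\mathcal{K}$-eigenvectors. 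In these coordinates $\vec{\nabla}'$ retains the diagonal form \eqref{eq:NormalFormKmonomial}, producing the monomial presentation. The main obstacle will be bookkeeping: ensuring that the two replacement procedures (for $\tilde x$ and for $\tilde y, \tilde z$) are compatible, that the completing coordinates respect the $E$-adapted structure, and that the construction works uniformly in the analytic and formal cases. The former compatibility reduces to the splitting statement in Lemma \ref{lem:ReplacementFoliated}; the latter is handled by Remark \ref{rk:FormRegAnalytic}.
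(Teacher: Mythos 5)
The approach has a genuine gap in its first step. You propose to replace the transverse coordinates $\tilde x_i$ of a given $\cF$-aligned presentation of $\cA_J$ by the monomial coordinates $v'_i$ via the foliated Replacement Lemma \ref{lem:ReplacementFoliated}, justifying this with the observation that $\partial_{v'_i}(v'_i)=1$. But the Replacement Lemma requires its input $x_1$ to be an $\cF$-\emph{maximal contact of $\cA_J$} (Definition \ref{def:FMaximalContactRees}), which in particular forces $x_1$ to lie in some $\cF^{ab-1}(R_b)$ --- roughly, $x_1 t^{1/a_1}$ must belong to (the differential saturation of) $\cA_J$. Being transverse to $\cF$ is not sufficient. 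For instance, with $\cA_J$ the algebra of $J=(x_1)$, $\cF=\langle\partial_{x_1},\partial_{x_2}\rangle$ and $v'_1=x_2$, the coordinate $v'_1$ satisfies $\partial_{v'_1}(v'_1)=1$ yet is not contained in $\cF^{0}(R_1)=(x_1)$, so it is not a maximal contact and cannot initiate the Replacement process. The assertion ``after reordering we can take $v'_1$ to be an $\cF$-maximal contact of $\cA_J$'' is therefore unjustified, and the whole first stage of your argument collapses.

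The paper resolves this by going in the opposite direction: it does \emph{not} try to replace the center's coordinates by pre-chosen monomial coordinates; instead, it keeps the center's transverse generator $f_1$ (which is automatically a valid maximal contact) and \emph{changes the monomial coordinate system} to $\tilde v_1 = f_1$, $\tilde v_i = v_i$ ($i\ge 2$), $\tilde w = w$, then verifies that the resulting system is still a monomial coordinate system after taking suitable $\cO$-linear combinations of the $\nabla_j$. Modifying monomial coordinates while staying monomial is easy; modifying the center's transverse coordinates while staying admissible is constrained by the maximal-contact condition, which is why the direction matters. Your treatment of the $\cF$-invariant case ($\ell=0$) via Lemma \ref{lem:homogenousGenerators} matches the paper, but the induction for $\ell>0$ needs to be restructured along the lines above: use $f_1$ itself as the maximal contact, pick $\partial_{v_1}\in\cF$ with $\partial_{v_1}(f_1)$ a unit, set $\tilde v_1=f_1$, and restrict to $H=V(\tilde v_1)$ where the induced foliation is still $\cK$-monomial and the restricted center has smaller transverse rank.
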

\begin{proof}
Fix a monomial coordinate system $(v,w)$ and a monomial basis $\{\vec{\partial_{v}}, \vec{\nabla}\}$. The Rees algebra associated to the center of blow-up is given by:
\[
\mathcal{A}_J \cdot \cO = \cO[f_1 t^{1/d_1},f_2 t^{1/d_2}, \ldots, f_m  t^{1/d_m}],
\]
where $d_i \in \mathbb{N}$ and the collection $(f_1,\ldots,f_{m})$ can be completed to form a system of
{regular parameters of $\cO$ adapted to $E$.}
  By hypothesis, $\mathcal{A}_J \cdot \cO$ is $\mathcal{F}$-aligned, so we may suppose that there exists $0 \leq \ell \leq m$ such that $(f_1,\ldots,f_{\ell})$ is transverse to $\mathcal{F}$, see {Definition \ref{def:TransverseSection}}, and $\ell$ is maximal in this respect.

We argue by induction on $\ell$. First suppose that $\ell=0$, that is, $\cA_J$ is $\cF$-invariant. By Lemma \ref{lem:homogenousGenerators}, we have
\[
\cA_{J}\cdot \cO=\cO [h_1 t^{1/e_1},h_2 t^{1/e_2}, \ldots, h_m  t^{1/e_m}]
\]
where each $h_i$ is 
{homogeneous} with respect to the monomial basis, and $(h_1,\ldots,h_m)$ may be completed into $E$-adapted coordinates. Now, since the center is a smooth weighted center, {up to} re-indexing the variables $(w_1,\ldots,w_{n-p})$, we may suppose that 
\[
\tilde{w}_1 = h_1, \quad \tilde{w}_i = w_i,\, i=2,\ldots,n-p, \quad \tilde{v}=v,
\] 
is a well-defined {$\cO$-change of coordinates.} 
This implies that  
\[
\cA_{J}\cdot \cO=\cO [\tilde{w}_1 t^{1/e_1},h_2 t^{1/e_2}, \ldots, h_m  t^{1/e_m}]
\] 
and, {at the} level of vector-fields:
\[
\begin{aligned}
\partial_{v_i} &= \partial_{\tilde{v}_i} \quad & i=1,\ldots,p,\\
\nabla_i & =\sum_{j=2}^{n-p} a_{ij} \tilde{w}_j \partial_{\tilde{w}_j}   + \xi_{i1} \tilde{w}_1 \partial_{\tilde{w}_1}, \quad & i=1,\ldots,q-p,
\end{aligned}
\]
where we recall that $\nabla_i(h_1) = \xi_{i1}h_1$ with $\xi_{i1} \in \mathcal{K}$. Therefore $\{\vec\partial_{v}, \vec{\nabla}\} = \{\vec\partial_{{\tilde v}}, \vec{\nabla}\}$ 
is a monomial basis. We conclude the base case $\ell=0$ by recursively applying the above process to $h_2,\ldots,h_m$.

Next, let $\ell>0$ be fixed and suppose the result proved whenever $\ell'<\ell$. We may suppose, without loss of generality, that $f_1$ is a 
{vertical coordinate,} that is, $\cF(f_1) = \cO$. Without loss of generality, we may suppose that $\partial_{v_1}(f_1)$ is a unit. We now perform the $\cO$-change of coordinates 
\[
\tilde{v}_1 = f_1,\quad  \tilde{v}_i = v_i, \, i=2,\ldots,p, \quad \tilde{w} = w.
\]
At the level of vector-fields, we obtain:
\[
\begin{aligned}
\partial_{v_1} &= \partial_{\tilde{v}_1}, \quad & \\
\partial_{v_i} & = \partial_{\tilde{v}_i} + \partial_{v_i}(g) \partial_{\tilde{v}_1} , \quad & i=2,\ldots,q\\
\nabla_i & =  \sum_{j=1}^{n-p} a_{ij} \tilde{w}_j \partial_{\tilde{w}_j}   + \nabla_i(g)  \partial_{\tilde{v}_1} , \quad & i=1,\ldots,k-q,
\end{aligned}
\]
and we conclude that:
\[
\begin{aligned}
\spa_{\cO}\{\vec{\partial}_{v},\vec{\nabla}\} &= \spa_{\cO}\{\partial_{\tilde{v}_1}, \partial_{\tilde{v}_2}, \ldots, \partial_{\tilde{v}_p}, \nabla_1 - \nabla_1(g)\partial_{\tilde{v}_1}, \ldots, \nabla_{q-p} - \nabla_{q-p}(g)\partial_{\tilde{v}_1}\} \\
&= \spa_{\cO}\{\partial_{\tilde{v}_1}, \partial_{\tilde{v}_2}, \ldots, \partial_{\tilde{v}_p}, \ \sum_{j=1}^{n-p} a_{1j} \tilde{w}_j \partial_{\tilde{w}_j} , \ \ldots, \ \sum_{j=1}^{n-p} a_{(q-p)j} \tilde{w}_j \partial_{\tilde{w}_j} \} 
,
\end{aligned}
\]
where the generators in the right hand side form of a monomial presentation. Denote by $H = V(\tilde{v}_1)$ and {consider $\cF_{|H}$, see Section \ref{Sec:nonsaturated-restriction}}, and $\mathcal{A}_{J|H}=\cO_{|H} [f_2 t^{1/e_2}, \ldots, f_m  t^{1/e_m}]$; note that $\cF_{|H}$ is $\cK$-monomial and that $\mathcal{A}_{J|H}$ is formally $\mathcal{F}_{|H}$-aligned with $\ell'=\ell-1$. We conclude by induction and by using the lifting associated to $(\tilde{v}_1,\partial_{\tilde{v}_1})$, see Definition \ref{def:LiftingAssociatedToD}.
\end{proof}

\begin{corollary}\label{lem:PresentationMonomialAligned1} The restriction $\cF_{|V({J})}$ of $\cK$-monomial foliation to a smooth $\cF$-aligned center $\cA_J$ is $\cK$-monomial.
\end{corollary}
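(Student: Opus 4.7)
Fix $\pa\in V({J})$. My first step is to apply Lemma \ref{lem:PresentationMonomialAligned} to obtain a coordinate system $(x,y,z)=(v,w)\in\cO$ centered at $\pa$ and generators $(\vec{\partial}_v,\vec{\nabla})$ that form \emph{both} a monomial presentation of $\cF$ and an $\cF$-aligned presentation of the smooth center, so that
\[
\cA_J\cdot\cO=\cO[x_it,y_jt,z_kt]_{l,r,s},\qquad V({J})=V(x_1,\dots,x_l,y_1,\dots,y_r,z_1,\dots,z_s),
\]
and $\nabla_j=\sum_k a_{jk}w_k\partial_{w_k}$ with $a_{jk}\in\cK$. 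Complete $(x,y,z)$ to a full system by adjoining the remaining coordinates $u=(u_1,\dots,u_t)$, $t=n-l-r-s$.

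The next step is an elementary compatibility observation between the two partitions $(v,w)$ and $(x,y,z,u)$. Since each $v$-coordinate $v_i$ satisfies $\partial_{v_i}(v_i)=1$, the $\cF$-invariance of the subalgebra $\cO[y_jt,z_kt]_{r,s}$ forces $v\cap(y\cup z)=\emptyset$. Hence $v\subseteq x\cup u$ and we may write $u=v_u\sqcup w_u$ with $v_u=v\cap u$ and $w_u=w\cap u$; correspondingly $w=y\sqcup z\sqcup w_u$.

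The third step is to compute the restriction $\cF_{|V({J})}$ via Lemma \ref{Lem:restriction-and-ideal} (cf.\ the proof of Theorem \ref{Th:aligned center log-smooth}\eqref{defItem:thickRestriction}). The generators of $\cF$ that are tangent to $V({J})$, i.e.\ preserve the ideal $(x,y,z)$, are exactly $\{\partial_{v_i}:v_i\in v_u\}\cup\{\nabla_1,\dots,\nabla_{q-p}\}$: the derivations $\partial_{x_i}$ are excluded because $\partial_{x_i}(x_i)=1$, the $\partial_{v_i}$ with $v_i\in v_u$ are excluded from the excluded list because they do not differentiate any of the $x,y,z$, and each $\nabla_j$ preserves $(y,z)$ because $\nabla_j(w_k)\in(w_k)$. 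Restricting these derivations to $V({J})$, where $y=z=0$, gives
\[
\partial_{v_i|V({J})}=\partial_{v_i}\ (v_i\in v_u),\qquad \nabla_{j|V({J})}=\sum_{w_k\in w_u}a_{jk}\,w_k\,\partial_{w_k}.
\]

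The final step is to read off the conclusion: these generators are precisely a monomial presentation on $V({J})$ with respect to the monomial coordinate system $(v_u,w_u)$ and the same field $\cK$ (the coefficient matrix is the submatrix of $[a_{jk}]$ obtained by keeping only the columns indexed by $w_u$). Since saturated restriction agrees with non-saturated restriction once the latter is itself saturated --- and log smoothness, hence saturation, of the restricted presentation is automatic by inspection --- we conclude that $\cF_{|V({J})}$ is $\cK$-monomial at $\pa$. Since $\pa\in V({J})$ was arbitrary, the claim follows. No step of this plan presents a serious obstacle: the only subtle point is the combinatorial matching $v\subseteq x\cup u$ in the second paragraph, and that is settled by the $\cF$-invariance requirement on the $(y,z)$-part of the aligned center.
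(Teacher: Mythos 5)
Your Steps 1--3 are correct: Lemma \ref{lem:PresentationMonomialAligned} does give a compatible coordinate system, the observation $v\cap(y\cup z)=\emptyset$ is right (and is a useful point that the paper does not spell out), and the computation via Lemma \ref{Lem:restriction-and-ideal} of the non-saturated restriction $\cF_{|V(J)}$ as $\spa\bigl(\{\partial_{v_i}:v_i\in v_u\}\cup\{\nabla_{j|V(J)}\}\bigr)$ is exactly right.

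The gap is in the last paragraph. Having a presentation of the monomial shape $\nabla'_j=\sum_{w_k\in w_u}a_{jk}w_k\partial_{w_k}$ does \emph{not} imply the module is saturated, and Definition \ref{def:KMonomialFoliation} makes saturation an explicit requirement. Your claim that ``log smoothness, hence saturation, of the restricted presentation is automatic by inspection'' is false, and in fact the non-saturated restriction $\cF_{|V(J)}$ can fail to be saturated, hence fail to be $\cK$-monomial as stated. Concretely: take $X=\AA^3$ with coordinates $w_1,w_2,w_3$, $E=\emptyset$, and the $\QQ$-monomial foliation $\cF=\spa(w_1\partial_{w_1}+w_2\partial_{w_2})$ (so $p=0$, $\vec a_1=(1,1,0)$; it is saturated, as the relevant determinantal ideal $(w_1,w_2)$ has codimension $2$). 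The smooth center $\cA_J=\cO[w_2t]$ is $\cF$-aligned, since $\nabla_1(w_2)=w_2$. Your formula gives $\cF_{|V(w_2)}=\spa(w_1\partial_{w_1})$ on $\AA^2_{w_1,w_3}$, which has monomial shape with $\vec a'=(1,0)$ but is \emph{not} saturated: the quotient by it has torsion along the codimension-one locus $V(w_1)$. The saturated restriction is $\cF_{\boldsymbol|\boldsymbol|V(w_2)}=\spa(\partial_{w_1})$, where $w_1$ has been \emph{promoted} from a $w$-coordinate to a $v$-coordinate. That saturated object is what the thick-class condition of Definition \ref{def:thick}\eqref{defItem:thickRestriction} requires, and it is $\QQ$-monomial --- but it is not what your Step 3 produces.

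The paper's proof avoids the issue by working on the dual side: by Remark \ref{rk:BasicKmonomial}\eqref{rk:BasicKmonomialForm} a $\cK$-monomial foliation is the annihilator of log-monomial forms, and restricting $\cF^\perp$ to $V(J)$ before taking the second perpendicular gives the saturated restriction directly, which is still cut out by log-monomial forms and hence $\cK$-monomial. To salvage your derivation-side argument you would need an additional step: compute the saturation of $\spa\bigl(\{\partial_{v_i}\}\cup\{\nabla_{j|}\}\bigr)$ and show it again has monomial form, identifying which $w_k$'s get promoted to $v$-coordinates (those on which $\cG^\perp$ forces the corresponding coefficient to vanish). Without that, the proof does not establish the corollary.
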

\begin{proof}
By Remark \ref{rk:BasicKmonomial}\eqref{rk:BasicKmonomialForm}, a $\cK$-monomial foliation $\cF$ is described by the dual sheaf of differential forms generated by:
\[
\left(\prod_{i =1}^{n-p} w_i\right) \sum_{j=1}^{n-p}\lambda_{ij} \frac{d w_j}{w_j}
\]
where $(v,w)$ are monomial coordinates. By Lemma \ref{lem:PresentationMonomialAligned}, the restriction to the $\cF$-aligned center  {preserves} this form.
\end{proof}

\begin{theorem}[Compare {\cite[Prop. 4.4]{BelRACSAM}}]\label{Th:aligned center new}
\noindent
The class of triples $(X,\cF,E)$ where $\cF$ is $\cK$-monomial is thick. Moreover, let $(X,\cF,E)$ be a smooth foliated logarithmic orbifold such that $\cF$ is $\cK$-monomial. Consider an $\cF$-aligned blowing up $\pi: (X',\cF',E') \to (X,\cF,E)$. Then the controlled and strict transforms of $\cF$ coincide and are $\cK$-monomial.
\end{theorem}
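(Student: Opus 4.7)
The plan is to verify the three conditions of Definition \ref{def:thick} (with condition (2) simultaneously giving the ``moreover'' statement about transforms). Condition (1) on products is immediate: given log-smooth $(X,E_X)$ and $\cK$-monomial $(Y,\cF_Y,E_Y)$ with monomial coordinates $(v_Y,w_Y)$ and presentation $(\vec\partial_{v_Y},\vec\nabla^Y)$, take any local coordinates $u$ on $X$ (all free) and form $(u,v_Y,w_Y)$ on $Z=X\times Y$; the generators $(\partial_u,\partial_{v_Y},\vec\nabla^Y)$ give a monomial presentation of $\pi_X^*\cD_X^{\log}\oplus\pi_Y^*\cF_Y$ with respect to $E_Z$. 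Condition (3) on restrictions is exactly Corollary \ref{lem:PresentationMonomialAligned1}.

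For condition (2), I work at a point $\pa\in V(\cA_J)$ and invoke Lemma \ref{lem:PresentationMonomialAligned} to obtain a coordinate system $(x,y,z)=(v,w)$ together with a generating set $(\partial_{x_i},\nabla_j)$ of $\cF\cdot\cO$ that is simultaneously monomial and $\cF$-aligned. A preliminary observation rules out the free invariant $y$-part in the $\cK$-monomial setting: if $y_j\notin\{x_i\}$ were a free coordinate, it would be a $v$-coordinate, hence $\partial_{y_j}\in\cF$; but then $\partial_{y_j}(y_jt^{1/b_j})=t^{1/b_j}$ would need to lie in the $(y,z)$-part of $\cA_J$, which is impossible since $1\notin\sum(y_i,z_k)$ at the center. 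Thus $\cA_J=\cO[x_it^{1/a_i},z_kt^{1/c_k}]_{\ell,s}$, the $x_i$'s are a subset of the $v$'s, the $z_k$'s are among the $w$'s, and each $\nabla_j$ is either one of the remaining $\partial_{v_h}$ (for $v_h\notin\{x_i\}$) or of the monomial form $\sum_k a_{jk}w_k\partial_{w_k}$ with $a_{jk}\in\cK$.

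Applying Proposition \ref{prop:TransformFoliationAligned}\eqref{It:TransformFoliationAligned} to the cobordant blow-up $\sigma:B\to X$, the controlled transform $\widehat{\cO}_{B,\pb}\cdot\sigma^c(\cF)$ at any point $\pb\in\sigma^{-1}(\pa)$ is generated by $\partial_{x_i'}$ together with $\widehat{\sigma}^*_\pb(\nabla_j)$. The key identity is $\sigma^*(z_k\partial_{z_k})=z_k'\partial_{z_k'}$: writing $z_k=s_B^{w/c_k}z_k'$ and imposing $\sigma^*(\partial)(s_B)=0$ forces this equality. Consequently each monomial $\nabla_j$ transforms to $\sum_k a_{jk}w_k'\partial_{w_k'}$ with the \emph{same} $\cK$-linear coefficients, while the $\nabla_j=\partial_{v_h}$ for untouched $v$-coordinates simply pull back to $\partial_{v_h}$. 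The new exceptional coordinate $s_B$ is added to the divisorial coordinates in $E_B$, but no generator of $\sigma^c(\cF)$ involves $s_B\partial_{s_B}$; hence $(x',v_{>\ell},z',s_B,w_{\text{free}})$ forms a monomial coordinate system and we obtain a $\cK$-monomial presentation of $\sigma^c(\cF)$ on $B$. Since each generator $\sigma^*(\nabla_j)$ and $\sigma^*(\partial_{x_i})$ already lies in $\cD^{\log}_{B/\AA^1}$ with no $s_B$-poles, we have $\sigma^c=\sigma^s$ on generators, hence $\sigma^c(\cF)=\sigma^s(\cF)$.

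The descent to the stack $X'=[B_+/\GG_m]$ (and to the weighted blow-up) goes through \'etale charts from Section \ref{Sec:etale-charts}: on a chart where some $x_i'$ is invertible, the section $s_B$ is replaced by an appropriate root times a unit, which multiplies the exponent $w/c_k$ by a unit factor but preserves the $\cK$-linearity of the monomial coefficients, and the $\GG_m$-action rescales coordinates while preserving all logarithmic derivations $w'\partial_{w'}$. The main obstacle in this plan is this last bookkeeping step---carefully tracking how $s_B$ is presented on each \'etale chart and verifying the monomial form is preserved by the root extraction and $\GG_m$-equivariance---but the $\cK$-linearity of the coefficients makes this routine.
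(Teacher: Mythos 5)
Your overall architecture matches the paper's: condition (1) of thickness is immediate, condition (3) is Corollary \ref{lem:PresentationMonomialAligned1}, and condition (2) goes through Lemma \ref{lem:PresentationMonomialAligned} and the explicit transformation formulas for the generators. However, there are two genuine problems.

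First, your ``preliminary observation'' that the center has no free invariant part, i.e.\ that $\cA_J=\cO[x_it^{1/a_i},z_kt^{1/c_k}]_{\ell,s}$, is false. You assume a free coordinate must be a $v$-coordinate, but in Definition \ref{def:KMonomialFoliation} the $w$-coordinates are merely adapted to $E$ and may well be free: for $\cF=\spa(w_1\partial_{w_1}-w_2\partial_{w_2})$ on $\AA^2$ with $E=\emptyset$, the center $\cO[w_1t,w_2t]$ is $\cF$-aligned with two free invariant $y$-coordinates. This error is not fatal to your strategy (since $\sigma^*(w_k\partial_{w_k})=w_k'\partial_{w_k'}$ holds for free and divisorial coordinates alike), but the observation should simply be dropped.

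Second, and more seriously, you only exhibit the monomial normal form in the coordinate system $(x',\dots,w',s_B)$, which is centered at a point of the \emph{vertex} $V(x_1',\dots,x_l')$ --- a point that does not lie in $B_+$ at all. The definition of $\cK$-monomiality is pointwise and requires coordinates \emph{centered at each point} $\pb$ of the exceptional fiber. At any $\pb\in B_+\cap\sigma^{-1}(\pa)$ some primed center coordinate is nonzero; translating $\tilde w_k=w_k'-b$ with $b\neq 0$ turns $a_{jk}w_k'\partial_{w_k'}$ into $a_{jk}(\tilde w_k+b)\partial_{\tilde w_k}$, which is \emph{not} of the form \eqref{eq:NormalFormKmonomial}. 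Restoring the normal form at such points is the actual content of the paper's chart-by-chart argument: it requires the further (in general only formal, or Euclidean in the analytic case) substitution $w_j=\tilde w_j(\tilde x+b)^{-a_{1j}/a_{1,n-p}}$ together with replacing the $\tilde\nabla_j$ by suitable linear combinations. You have misplaced the difficulty --- it is not the $s_B$-bookkeeping on \'etale charts but the re-centering away from the chart origin --- and this step is missing from your proposal. Relatedly, the identity $\sigma^c(\cF)=\sigma^s(\cF)$ should not be argued ``on generators'' (the strict transform is defined sheaf-theoretically and could a priori be strictly larger); the clean argument, used in the paper, is that once $\sigma^c(\cF)$ is known to be $\cK$-monomial it is saturated, hence coincides with the strict transform.
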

\begin{proof}
Note that condition \eqref{defItem:thickProduct} from Definition \ref{def:thick} is immediate, while condition \eqref{defItem:thickRestriction} follows from Corollary \ref{lem:PresentationMonomialAligned1}. We now check the statement about $\cF$-aligned blow-ups and, in particular, we check condition \eqref{defItem:thickFaligned}. We check that the controlled transform is $\cK$-monomial, hence saturated, therefore it equals the strict transform.

Let $\pa \in V(\cA_J)$ and consider the coordinate systems $(x,y,z)=(v,w)$ and presentation $\cF=\spa \{\partial_1,\ldots,\partial_{p+q}\}$ given by Lemma \ref{lem:PresentationMonomialAligned}. Since the result is essentially local, we may {work over $\pa$.} 
Consider the weighted blow-up with center $\cA_J$ and let {$\tau_i:W_i \to X$}
be the induced generically finite morphisms of varieties, see {Section~\ref{Sec:stack-descent}}. To simplify the notation, we will assume that $V(\cA_J)= \{\pa\}$; the general case follows from the same argument, but the notation is heavier. We consider each chart separately --- note that it is enough to consider only the $v_1$ and $w_1$-charts, all other charts having a symmetric treatment. 

\medskip
\noindent
\emph{Chart $v_1$:} Following the construction from {Section~\ref{Sec:stack-descent},} we get:
\[
\begin{aligned}
v_1 = u^{d_1}, \quad v_i = u^{d_i} \tilde{v}_i,\, i=2,\ldots,p, \quad w_j = u^{e_j}\tilde{w}_j, \, j=1,\ldots n-p,
\end{aligned}
\]
and, by applying expressions {\eqref{eq:TransformDerivationsWeighted}, page \pageref{eq:TransformDerivationsWeighted}}, we get {(denoting the morphism $\tau_{v_1}$)}:
\[
\begin{aligned}
\tau_{v_1}^{c}(\partial_{v_1})&= \frac{1}{d_1}\left(u\partial_{u} - \sum_{i=2}^p d_i \tilde{v}_i\partial_{\tilde{v}_i} - \sum_{i=1}^{n-p} e_i \tilde{w}_i \partial_{z_i}\right)  &  &\\
\tau_{v_1}^{c}(\partial_{v_i}) &=  \partial_{\tilde{v}_i}, &\quad &i=2,\ldots,p\\
\tau_{v_1}^{c}(\nabla_{j}) &= \sum_{k=1}^{n-p} a_{jk}\tilde{w}_k \partial_{\tilde{w}_k},  & \quad & j=1,\ldots,q-p,
\end{aligned}
\]
which generates $\tau_i^{c}(\cF)$ by Proposition \ref{prop:RegLiftingFAligned}. To simplify the notation, denote by $\tilde{\nabla}_j:=\tau_{v_1}^{c}(\nabla_{j})$. Now, $\tau_i^{c}(\cF) $ is generated by the vector-fields:
\(
\{\vec{\partial}_{\tilde{v}}, u\partial_u, \tilde{\nabla}_{j}) \}
\)
with $j=1,\ldots,q-p$ at every point $\pb$ which is in the pre-image of of $\pa$.\footnote{In the analytic category we may claim more, that is, the derivations are defined in a neighborhood of the exceptional divisor. But, in the algebraic category we must work formally, so the above derivations are only defined in the pre-image of $\pa$.} It follows that $\tau_i^{c}(\cF)$ is $\cK$-monomial at the origin of  the $v_1$-chart, and we claim that this is true at every point $\pb \in \tau_{v_1}^{-1}(\pa)$.

In fact, note that every $\pb \in \tau_{v_1}^{-1}(\pa)$ is written as $(u,\tilde{v},\tilde{w})= (0,\pb_{\tilde{v}},\pb_{\tilde{w}}) \in K^n$. We will prove the claim for a single translation, that is, when only one of the entries of $(0,\pb_{\tilde{v}},\pb_{\tilde{w}})$ is non-zero; the general case will follow by iterating this argument, one translation at a time. Note that translations in the $\tilde{v}$ variable trivially preserve all properties of a monomial presentation and coordinates, so we only need to deal with translations in $\tilde{w}$.

Without loss of generality, we assume that $\pb$ is obtained by a translation by $b\in K$ in the $\tilde{w}_{n-p}$ variable. Let $\tilde{x}= \tilde{w}_{n-p} - b$ and let $\tilde{w} = (\tilde{w}_1,\ldots, \tilde{w}_{n-p-1})$ denote all $\tilde{w}$ coordinates with the exception of $\tilde{w}_{n-p}$, so that $(u,\tilde{x},\tilde{v},\tilde{w})$ is centered at $\pb$. 
Consider now the expressions of the vector-fields:
\[
\begin{aligned}
\tilde{\nabla}_{j}& = \sum_{k=1}^{n-p-1} a_{jk}\tilde{w}_k \partial_{\tilde{w}_k} + a_{j,n-p}(\tilde{x}+b)\partial_{\tilde{v}}, \quad j=1,\ldots ,q-p
\end{aligned}
\]
If $a_{j,n-p} =0$ for every $j$, then $\{\vec{\partial}_v,\vec{\nabla}\}$ is a monomial basis and we are done. Otherwise, we may suppose without loss of generality that $a_{1,n-p} \neq 0$. Consider the change of coordinate:
\footnote{Note that this is a $\widehat{\cO}_{X,\pb}$-change of coordinates in general. But, it is actually stronger in two situations: in the analytic category, the change is Euclidean; in the algebraic category, the change is \'{e}tale provided that $\mathcal{K} = \mathbb{Q}$.} 
\[
w_j = \tilde{w}_j \left(\tilde{x} + b\right)^{ - a_{1,j}/a_{1,n-p}}, \quad j=1,\ldots, n-p-1,
\]
which yields:
\[
\begin{aligned}
\tilde{\nabla}_{1} &= a_{j,n-p}(\tilde{x}+b)\partial_{\tilde{x}}\\
\tilde{\nabla}_{j} &= \sum_{k=1}^{n-p-1} \left(a_{jk} - a_{j,n-p} \frac{a_{1k}}{a_{1,n-p}} \right)w_k \partial_{w_k} + a_{j,n-p}(\tilde{x}+b)\partial_{\tilde{x}}, \quad j=2,\ldots ,q-p.
\end{aligned}
\]
{Replacing $\tilde{\nabla}_j$ by a} {linear combination of $\tilde{\nabla}_j$ and $\tilde{\nabla}_1$ we}
obtain a {$\cK$-monomial} presentation, proving the claim.

\medskip
\noindent
\emph{Chart $w_1$:} Following the construction from {Section~\ref{Sec:stack-descent}}, we get:
\[
\begin{aligned}
v_i = u^{d_i} \tilde{v}_i,\, i=1,\ldots,p, \quad w_1 = u^{e_1}, \quad
w_j = u^{e_j}\tilde{w}_j, \, j=2,\ldots n-p,
\end{aligned}
\]
and, {again} applying expressions \eqref{eq:TransformDerivationsWeighted}, we get:
\[
\begin{aligned}
\tau_{w_1}^{c}(\partial_{v_i}) &= 
\partial_{\tilde{v}_i}, \quad i=1,\ldots,p\\
\tau_{w_1}^{c}(\nabla_{j}) &= \frac{1}{e_1}\left(a_{j1} u\partial_u  + \sum_{k=2}^{n-p} \left(e_1 a_{jk}-a_{j1} e_k \right)\tilde{w}_k \partial_{\tilde{w}_k}  - \sum_{k=1}^{p} a_{j1}d_k\tilde{v}_k\partial_{\tilde{v}_k} \right),
\end{aligned}
\]
for $j=1,\ldots,q-p$, which generates $\tau_{w_1}^{c}(\cF)$ by Proposition \ref{prop:RegLiftingFAligned}. We easily check that $\tau_{w_1}^{c}(\cF)$ is $\cK$-monomial at the origin, and we may argue as in the $v_1$-chart to conclude that it is $\cK$-monomial everywhere in the pre-image of $\pa$.
\end{proof}

\subsection{{Smooth} cobordant resolution and log-smooth weighted resolution of $\mathcal{K}$-monomial foliations}\label{ssec:LogSmoothnessMonomial}
A priori, the notion of log-smoothness and $\cK$-monomiality are independent. On the one hand, the saddle-node foliation in Example \ref{ex:LogSmooth}(1) is log-smooth, but not monomial. At the other hand, monomial foliations include singular foliations even when the logarithmic structure is empty. Nevertheless, we can prove that every $\cK$-monomial foliation can be transformed into a log-smooth $\cK$-monomial foliation:

\begin{theorem} \label{thm:LogSmoothnessMonomial}
Let $(X,\cF,E)$ be a smooth logarithmic variety and suppose that $\cF$ is $\cK$-monomial. There exists a sequence of weighted (or cobordant) blow-ups
\[
(X,\cF,E)\stackrel{\sigma_0}\leftarrow (X_1,\cF_1,E_1)\stackrel{\sigma_{1}}\leftarrow\cdots\stackrel{\sigma_{k-1}}\leftarrow (X_k,\cF_k,E_k)=(X',\cF',E'),
\]
where $\cF_{i+1}$ is the strict transform of $\cF_i$, such that:
\begin{itemize}
\item[(1'')] The foliation $\cF'$ is $\cK$-monomial and log-smooth. In the case of cobordant blow-ups, moreover, the induced foliation $\cF''$ is smooth.
\item[(2'')] The center ${J}_i$ of the blow-up $\sigma_i$ is $\cF_i$-invariant.
\item[(4'')] Property \eqref{It:functorial} from Theorem \ref{thm:PrincipalizationFoliated} hold true. 
\end{itemize}
 \end{theorem}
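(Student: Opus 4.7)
The plan is to apply the principalization Theorem \ref{thm:PrincipalizationFoliated} to a canonical $\cF$-invariant ideal sheaf whose principalization automatically enforces log-smoothness. The architecture combines three ingredients: functoriality of the principalization (yielding (4'')); the fact that an $\cF$-invariant input ideal produces $\cF$-invariant and hence $\cF$-aligned centers, Theorem \ref{thm:PrincipalizationFoliated}\eqref{It:is-F-invariant} (yielding (2'')); and Theorem \ref{Th:aligned center new}, which preserves $\cK$-monomiality and makes controlled and strict transforms coincide throughout the process.

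I would take $\cI$ to be the reduced coherent ideal sheaf of the logarithmic singular locus $\mathrm{Sing}^{\log}(\cF) \subseteq X$. Since this locus is intrinsically determined by $(\cF, E)$, it is scheme-theoretically $\cF$-invariant, so $\cF(\cI) \subseteq \cI$. Apply Theorem \ref{thm:PrincipalizationFoliated} to the foliated logarithmic variety $(X, \cF, E)$ and the ideal $\cI$. By item \eqref{It:is-F-invariant}, all centers $J_i$ are $\cF_i$-invariant; by Theorem \ref{Th:aligned center new}, each $\cF_i$ remains $\cK$-monomial. After the sequence, $\cI \cdot \cO_{X'}$ is locally principal and monomial, whence $V(\cI \cdot \cO_{X'}) \subseteq E'$.

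To conclude (1''), I prove the local lemma: a $\cK$-monomial foliation $\cG$ on $(Y, F)$ is log-smooth if and only if $\mathrm{Sing}^{\log}(\cG) \subseteq F$. Unpacking Definition \ref{def:KMonomialFoliation}, if $\cG$ is not log-smooth at some point $\pa$ then in any monomial presentation $\cG \cdot \cO_{Y,\pa} = \spa(\partial_{v_i}, \sum_k a_{jk} w_k \partial_{w_k})$ some free coordinate $w_k$ must have nonzero coefficient $a_{jk}$; the hypersurface $V(w_k)$ then lies in $\mathrm{Sing}^{\log}(\cG)$ but not in the SNC divisor $F$, a contradiction. Applying the lemma to $\cF'$: on the open set where the composite $\pi: X' \to X$ is an isomorphism, $(\cF', E')$ is identified with $(\cF, E)$ over $X \setminus V(\cI)$, so $\mathrm{Sing}^{\log}(\cF')$ is contained in $V(\cI \cdot \cO_{X'}) \subseteq E'$ there; on the total exceptional locus the inclusion into $E'$ is automatic. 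Hence $\mathrm{Sing}^{\log}(\cF') \subseteq E'$, and the lemma yields log-smoothness.

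The cobordant smoothness of the split transform $\cF''$ would then follow from the identification $\cD_{B_+/\AA^1} \simeq \cD^{\log}_{X'}$ descended along $B_+ \to X'$, under which log-smoothness of $\cF'$ (with respect to $E'$ including the exceptional divisor) translates to smoothness of $\cF''$ on $B_+$; this is the structural phenomenon alluded to in the footnote accompanying Theorem \ref{thm:ResDarbouxTotallyIntegrable}. The main obstacle I anticipate is the local lemma characterising log-smoothness of $\cK$-monomial foliations via containment of their log-singular locus in the divisor: identifying the obstructions precisely with the hypersurfaces $V(w_k)$ requires careful accounting inside the monomial presentation when $\cF$ has rank greater than one and the matrix $[a_{jk}]$ has nonzero entries across coordinates of mixed types, but once this is in place the rest of the argument closes cleanly.
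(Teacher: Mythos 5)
Your strategy is genuinely different from the paper's, and clever: the paper defines an auxiliary lower semicontinuous invariant $\nsrank_\pa(\cF)$ (respectively $\nsrank^{\log}_\pa(\cF)$ for the weighted case), blows up the locus where it is minimal --- a smooth $\cF$-invariant center of the explicit form $J\cdot\widehat\cO_{X,\pa}=(w_k : a_{jk}\neq 0 \text{ for some }j)$ --- and shows the rank increases on $B_+$; whereas you propose to feed the ideal $\cI_{\mathrm{Sing}^{\log}(\cF)}$ into Theorem~\ref{thm:PrincipalizationFoliated} and conclude via a local characterization of log-smoothness for $\cK$-monomial foliations. The high-level architecture is sound: (2'') and (4'') do follow as you say once $\cI$ is $\cF$-invariant (an assertion you should back up, e.g.\ via the $\delta$-invariance argument of Section~\ref{Sec:delta-invariance} applied to sections $\delta\in\cF$), and Theorem~\ref{Th:aligned center new} keeps $\cF_i$ $\cK$-monomial. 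But there are two concrete gaps.

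First, the proof of your local lemma is wrong as stated. You claim that if some free coordinate $w_k$ has $a_{jk}\neq 0$ then $V(w_k)\subseteq\mathrm{Sing}^{\log}(\cG)$. This is false: take $\cG=\spa(x\partial_x+y\partial_y)$ on $\AA^2$ with $F=\emptyset$; this is $\QQ$-monomial with $a_{11}=a_{12}=1$, yet $\mathrm{Sing}^{\log}(\cG)=\{(0,0)\}$, which does not contain either $V(x)$ or $V(y)$. The lemma itself is still true, but the correct argument is to perturb \emph{off} the divisor rather than along it: if the divisorial submatrix of $[a_{jk}]$ has rank $<q-p$ at $\pa$, take the nearby point $\pb$ with all divisorial $w_l=\epsilon\neq 0$ and all free $w_l=0$. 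Then $\pb\notin F$, so $\cD^{\log}_\pb=\cD_\pb$, and the image of $\nabla_j$ in $\cD_\pb\otimes k(\pb)$ has coefficient $a_{jl}\epsilon$ on $\partial_{w_l}$ for divisorial $l$ and $0$ for free $l$; the rank is again $p+\mathrm{rank}(\text{div.\ submatrix})<q$, so $\pb\in\mathrm{Sing}^{\log}(\cG)\setminus F$, giving the needed contradiction. You should also make explicit the baseline fact you are using, namely that log-smoothness of a saturated $\cK$-monomial foliation at $\pa$ is exactly the condition that the divisorial submatrix of $[a_{jk}]$ has full row rank.

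Second, the cobordant smoothness claim does not follow from your construction. Principalizing $\cI_{\mathrm{Sing}^{\log}(\cF)}$ only removes the log-singular locus; if $E\neq\emptyset$ and $\cF$ is already log-smooth but not smooth, then $\cI=\cO_X$, no blow-ups occur at all, yet $\cF''=\cF$ is not smooth --- so the cobordant part of (1'') fails outright. More structurally, the identification $\cD_{B_+/\AA^1}\simeq\cD^{\log}_{X'}$ you appeal to is only valid for the log structure generated by the \emph{exceptional} divisor $V(s_B)$; the strict transform of the original $E$ is still a genuine divisor on $B_+$, and a foliation log-smooth relative to $E'$ need not be smooth there. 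This is exactly the ``extra care'' the paper flags, and why the paper's cobordant argument uses the non-logarithmic rank $\nsrank$, blowing up centers that deliberately include divisorial coordinates $w_k$ of $E$ (not just the locus $\mathrm{Sing}^{\log}$). Replacing $\cI_{\mathrm{Sing}^{\log}}$ by $\cI_{\mathrm{Sing}}$ does not repair this either: for $\cF=\spa(w_1\partial_{w_1},w_2\partial_{w_2})$ one gets $\mathrm{Sing}(\cF)=V(w_1w_2)$, already a monomial divisor, so principalization would do nothing while $\cF$ remains singular. The cobordant part genuinely needs an iterative stratification by smooth-rank (or an equivalent), not a one-shot principalization of a fixed ideal.
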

\begin{proof}
We prove the cobordant statement. If $E=\emptyset$, then the weighted statement follows from the cobordant case, Theorem \ref{Th:aligned center new}, and the fact that log-smoothness after a cobordant sequence implies log-smoothness over the orbifold. When $E \neq \emptyset$, some extra care is needed (for instance, if $\cF$ is already log-smooth, but not smooth, a direct use of the cobordant statement would lead to non-canonical blow-ups): this can be easily done by modifying the considered invariant, as indicated below.

To prove the cobordant statement, consider an auxiliary invariant which we call the \emph{\Sm -rank} of a foliation $\mathcal{F}$ at a point $\pa$, denoted by $\nsrank_{\pa}(\mathcal{F})$, which is defined as the dimension of the image of the mapping  $\mathcal{F} \otimes \cO_{X,\pa}/\mathfrak{m}_{X,\pa}  \to  \cD_{X} \otimes \cO_{X,\pa}/\mathfrak{m}_{X,\pa}$.\footnote{To obtain the weighted statement, consider instead $\nsrank^{\log}_{\pa}(\mathcal{F})$, that is, the dimension of the image of the mapping  $\mathcal{F} \otimes \cO_{X,\pa}/\mathfrak{m}_{X,\pa}  \to  \cD_{X}^{\log} \otimes \cO_{X,\pa}/\mathfrak{m}_{X,\pa}$.}

We start by noting three key properties:

\begin{itemize}
\item By Nakayama's Lemma, the \Sm -rank is lower semicontinuous;
\item The invariant $\nsrank(\mathcal{F})$ may take only a finite number of values, as it is an integer between $0$ and $\rank(\cF)$; 
\item If $\nsrank_{\pa}(\cF) = \rank_{\pa}(\cF)$ and $\cF$ is saturated, then $\cF$ is smooth.
\end{itemize}

We note that $\nsrank_{\pa}(\cF)$ carries the same information as the \emph{dimensional type} $\tau(\cF,\pa)$ of \cite[Page 919]{Cano}, specifically $\nsrank_{\pa}(\cF)+\tau(\cF,\pa) = \dim(X)$.

As $\cK$-monomial foliations are saturated foliations, in order to prove the Theorem it is enough to define a cobordant blow-up that increases the value of $\nsrank(\mathcal{F})$.  Therefore, let $\rho = \min\{ \nsrank_{\pa}(\cF);\, \pa\in X \}$, denote by $J$ the reduced ideal whose support is the locus of points where the \Sm -rank is equal to $\rho$. Fix a point $\pa \in X$ such that $\nsrank_{\pa}(\mathcal{F}) = \rho$. We check that $J$ is a smooth center. To this end, consider a monomial presentation $(u,w)$ and $\{\vec{\partial}_v,\vec{\nabla}\}$ of $\cF$, see Definition \ref{def:KMonomialFoliation}. As the \Sm -rank can be checked formally, we conclude that $\nsrank_{\pa}(\cF) = p$ (that is, the number of $v$ coordinates). Moreover, we claim that:
\[
J \cdot \widehat{\cO}_{X,\pa}= \left(w_k ;\, \exists j \in \{1,\ldots,q-p\} \text{ such that } a_{jk}\neq 0 \right),
\]
where the constants $a_{jk}$ are given in equation \eqref{eq:NormalFormKmonomial} in page \pageref{eq:NormalFormKmonomial}. In fact, the claim follows from the local description of the derivatives $\nabla_j = \sum_{k=1}^{n-p} a_{jk}w_k \partial_{w_k}$ combined with the fact that $J$ can also be defined as the support of the mapping:
\[
\bigwedge^{\rho+1}\left( \mathcal{F} \otimes \cO_{X,\pa}/\mathfrak{m}_{X,\pa}\right) \to \bigwedge^{\rho+1}\left( \cD_{X} \otimes \cO_{X,\pa}/\mathfrak{m}_{X,\pa}\right).
\]
It follows that $J$ defines a smooth center. Note, moreover, that $J$ is $\cF$-invariant, as it is formally $\cF$-invariant. To prove the Theorem, it now remains to check that the \Sm -rank increases after blowing-up $J$. Indeed, it is sufficient to check this locally (and formally) at the point $\pa$. Consider the associated weighted blow-up cobordism $\sigma : B \to X$. Without loss of generality, we may suppose that $J = (w_1,\ldots,w_l)$, where $l\leq n-p$, so that $B =\Spec_X\mathcal{O}_X[s,w_1', \ldots, w_l']/(w_1-tw_1',\ldots,w_r-tw_l')$. As this is a $\cF$-invariant blow-up and $\cF$ is $\cK$-monomial, the controlled and strict transforms coincide by Theorem \ref{Th:aligned center new}. Moreover, by Proposition \ref{prop:TransformFoliationAligned}, $\sigma^{c}(\cF)$ is generated by $\sigma^{c}(\partial_{v_i}) = \partial_{v_i}$ and:
\[
 \sigma^c(\nabla_j) = \sum_{k=1}^{n-p} a_{jk} w'_k \partial_{w'_k}.
    \]
    Thus on every point of $B_+ = B \setminus V(w'_1, \ldots, w'_l)$, at least one of the derivations $\sigma^c(\nabla_j)$ (maybe different for each point) is smooth. We conclude that the \Sm -rank is at least $p+1 = \rho+1$ everywhere over $B_{+}$, as needed.
\end{proof}

\begin{corollary}\label{cor:LogSmoothnessSmooth}
Let $(X,\cF,E)$ be a smooth logarithmic variety and suppose that $\cF$ is log-smooth. There exists a sequence of cobordant blow-ups
\[
(X,\cF,E)\stackrel{\sigma_0}\leftarrow (X_1,\cF_1,E_1)\stackrel{\sigma_{1}}\leftarrow\cdots\stackrel{\sigma_{k-1}}\leftarrow (X_k,\cF_k,E_k)=(X',\cF',E'),
\]
where $\cF_{i+1}$ is the strict transform of $\cF_i$, such that:
\begin{itemize}
\item[(1'')] The foliation $\cF'$ is smooth.
\item[(2'')] The center ${J}_i$ of the blow-up $\sigma_i$ is $\cF_i$-invariant.
\item[(4'')] Property \eqref{It:functorial} from Theorem \ref{thm:PrincipalizationFoliated} hold true. 
\end{itemize}
\end{corollary}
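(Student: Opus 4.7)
The strategy is to adapt the proof of Theorem~\ref{thm:LogSmoothnessMonomial} essentially verbatim, keeping the same invariant $\nsrank_{\pa}(\cF) = \dim \text{image}(\cF \otimes k(\pa) \to \cD_X \otimes k(\pa))$ but using log-smoothness in place of $\cK$-monomiality to control the local structure. This invariant is lower semicontinuous (Nakayama), takes finitely many values in $\{0,1,\ldots,\rank(\cF)\}$, and since log-smoothness makes $\cF$ locally free, $\nsrank_{\pa}(\cF)=\rank(\cF)$ is equivalent to $\cF$ being smooth at $\pa$. Thus it suffices to produce $\cF$-aligned cobordant blow-ups that strictly increase the global minimum of $\nsrank$, and iterate until $\nsrank = \rank(\cF)$ everywhere.

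To construct the center at a point $\pa$ of minimum $\nsrank=\rho$, I would use log-smoothness to pick local coordinates $(v,w)$ adapted to $E$ and a basis $\{\zeta_1,\ldots,\zeta_\rho,\eta_1,\ldots,\eta_{r-\rho}\}$ of $\cF_\pa$ such that the $\zeta_i$ have linearly independent images in $\cD_X\otimes k(\pa)$ (after Gauss elimination, $\zeta_i = \partial_{v_i}+(\text{higher order}))$, while $\eta_j = \sum_i a_{ji}\partial_{v_i} + \sum_k b_{jk} w_k\partial_{w_k}$ with all $a_{ji}\in m_\pa$. The minimum $\nsrank$ locus near $\pa$ is then the zero locus of the ideal $\cI = (a_{ji}) + (b_{jk}w_k)$. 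Crucially, $\cI$ is $\cF$-invariant: since $\cF \subset \cD_X^{\log}$, any derivation in $\cF$ preserves the divisorial ideals $(w_k)$, while the Leibniz rule applied to the $a_{ji}$ (combined with the log-smooth splitting of $\cD_X^{\log}/\cF$) keeps them inside $\cI$. I would then apply the foliated principalization algorithm (Theorem~\ref{thm:PrincipalizationFoliated}) to $\cI$ using cobordant blow-ups. The centers produced are $\cF$-aligned and $\cF$-invariant, so log-smoothness is preserved by Theorem~\ref{Th:aligned center log-smooth}.

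The core local computation, using Proposition~\ref{prop:TransformFoliationAligned} and Lemma~\ref{lem:LocalExpressionTransformDerivation}, is that $\sigma^c(w_k\partial_{w_k})=w'_k\partial_{w'_k}$, which at any point $\pb\in B_+$ with $w'_{k,0}\neq 0$ defines a nonzero element of $\cD_{B_+/\AA^1}\otimes k(\pb)$ (since $B_+$ excludes the vertex $V(w'_1,\ldots,w'_l)$, such a $k$ always exists). Combined with the $\rho$ transverse directions coming from $\sigma^c(\zeta_i)=\zeta_i$, this gives $\nsrank_{\pb}(\sigma^c\cF)\geq \rho+1$ on the exceptional fiber. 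Functoriality under smooth morphisms and field extensions is inherited from the corresponding functoriality of $\nsrank$, the local construction of $\cI$, and the foliated principalization itself.

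The main obstacle, and the point where this proof diverges most from Theorem~\ref{thm:LogSmoothnessMonomial}, is the presence of the ``free-but-small'' contributions $a_{ji}\partial_{v_i}$ with $a_{ji}\in m_\pa$, which are absent in the $\cK$-monomial case (where the coefficients are constants in $\cK$, vanishing unless they vanish identically). Under a single cobordant blow-up these terms do not disappear --- for instance, for the saddle-node derivation $x\partial_x+y^2\partial_y$ the $y^2\partial_y$ component is transformed but still vanishes on part of the exceptional fiber. The remedy is to build the ideal $\cI$ so that it captures these contributions together with the log-divisorial ones, so that the foliated invariant $\inv_\cF(\cI)$ (Theorem~\ref{Th:principalization}) strictly decreases with each cobordant blow-up in the principalization sequence; termination of $\inv_\cF$ forces $\cI=\cO$ after finitely many steps, at which point the minimum value of $\nsrank$ strictly exceeds $\rho$. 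Iterating finitely many times lifts $\nsrank$ to $\rank(\cF)$ globally, yielding the desired smooth foliation $\cF'$.
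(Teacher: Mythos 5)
The paper's proof of this corollary is a two-line reduction: since $\cF$ is log-smooth, it is a sub-bundle of $\cD_X^{\log}$, and $\cD_X^{\log}$ is itself a $\QQ$-monomial foliation (locally $\spa(\partial_{v_i}, w_k\partial_{w_k})$). Theorem \ref{thm:LogSmoothnessMonomial} applied to $\cD_X^{\log}$ produces a cobordant sequence with $\cD_X^{\log}$-invariant (hence a fortiori $\cF$-invariant) centers, after which the strict transform of $\cD_X^{\log}$ is a sub-bundle of $\cD_{X'}$. By Theorem \ref{Th:aligned center log-smooth}(1), $\cF'$ remains log-smooth, i.e.\ a local direct summand of the (now smooth) transform of $\cD_X^{\log}$; the chain of locally free quotients $\cF'\subset \cD_{X'}^{\log}\subset \cD_{X'}$ then forces $\cD_{X'}/\cF'$ to be locally free, i.e.\ $\cF'$ is smooth. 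You missed this reduction, which is the entire content of the corollary's proof.

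Your approach --- directly mimicking the proof of Theorem \ref{thm:LogSmoothnessMonomial} by introducing an auxiliary ideal $\cI=(a_{ji})+(b_{jk}w_k)$ and principalizing it --- has a genuine gap at its crux, namely the claim that once $\cI$ has been principalized, the minimum \Sm-rank of the transformed foliation has strictly increased. These two things are \emph{not} tracked by the same transforms. The ideal $\cI$ is transformed by the controlled transform of an ideal (multiplication by $s_B^{-w}$ as in Definition \ref{def:ControlledTransformReesAlgebra}), whereas the coefficient functions $a_{ji}$, $b_{jk}$ appearing in a generator $\eta_j$ transform as part of a derivation, which after the blow-up is multiplied by $s_B^{a_{\eta_j}}$ where $a_{\eta_j}$ is the pole order of $\sigma^*(\eta_j)$ along the exceptional divisor (Definition \ref{def:FullControlledTransformDerivation}). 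These exponents differ in general, so the locus where the transformed derivations degenerate is not $V(\sigma^c(\cI))$, and principality of $\sigma^c(\cI)$ gives no direct information about $\nsrank$ of $\sigma^c(\cF)$. Your own saddle-node example does happen to work in one step, but this is because the single $\cF$-aligned weighted blow-up at $(x^1,y^2)$ is precisely the one the invariant $\inv_\cF$ would not select in general (indeed for $\cI=(x,y^2)$ and $\cF=\spa(x\partial_x+y^2\partial_y)$ one computes $\ord_\cF(\cI)=\infty$, so the algorithm enters Case II and picks a $\cD_X^{\log}$-controlled center, not the one you want). You would need a substantially different argument to tie the principalization history to the geometry of the transformed foliation. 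Additionally, you assert but do not prove that your local ideal $\cI$ patches to a well-defined global ideal independent of the chosen coordinates and generating set, and that it is $\cF$-invariant --- the one-line appeal to ``the Leibniz rule combined with the log-smooth splitting'' is not a proof, since the $\partial_{v_i}$ coefficients $a_{ji}$ mix under $\cF$-derivatives in a way that is not visibly confined to $\cI$.
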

\begin{proof}
Since $\cF$ is a sub-bundle of $\cD_X^{\log}$, it is enough to prove the result for $\cF=\cD_X^{\log}$. As $\cD_X^{\log}$ is a $\mathbb{Q}$-monomial foliation, the statement follows from Theorem \ref{thm:LogSmoothnessMonomial}.
\end{proof}

\begin{remark}
Corollary \ref{cor:LogSmoothnessSmooth} can be proved in a more direct way via combinatorial methods. In fact, the sequence of cobordant blow-ups corresponding to the barycentric sub-division of the fan provides such a resolution.
\end{remark}

\subsection{Presentation of Darboux first integrals}
\label{rk:BasicKmonomialDarboux}
Definition \ref{def:DarbouxTI} described Darboux totally integrable foliations as pullbacks of monomial foliations. Here we show that the definition agrees with a local description commonly found in the literature, see e.g. \cite[Page 968]{BelRI} {and references therein}. 
We say that a foliation   $\cF$  on a smooth variety $X$  admits global meromorphic (respectively rational) $\mathcal{K}$-\emph{Darboux first integrals}  if there exist  meromorphic ({respectively,} rational) functions $f_{ij} \in K(X)$ and constants $\beta_{ij} \in \mathcal{K}^{\ast}$ for $i=1,\ldots,r$ and $j=1,\ldots,s$ such that a derivation $\partial \in \cD_{X,\pa}^{\log}$ belongs to $\cF_{\pa}$ if and only if 
\[
\sum_{j=1}^{n_i}\beta_{ij} \frac{df_{ij}}{f_{ij}}(\partial) \equiv 0, \quad i=1,\ldots,r
.\] 
{We may, and do,} assume here that $\sum_{j=1}^{n_i}\beta_{ij}\ne 0$. {We can always reduce to this case by repeatedly replacing} the form $\sum_{j=1}^{n_i}\beta_{ij} \frac{df_{ij}}{f_{ij}}$ by $\sum_{j=2}^{n_i}\beta_{ij} \frac{df'_{ij}}{f'_{ij}}$ with $f'_{ij}:=\frac{f_{ij}}{f_{i1}}$. 
Consider a {meromorphic (respectively, rational) map} $$\phi= \prod^r_{i=0}[1:f_{i1}:\ldots:f_{in_i}]:  X\to \prod^r_{i=0} \PP^{n_i}=\prod^r_{i=0}\Proj(K[x_{i0}:\ldots:x_{in_i}])$$ on $\prod^r_{i=0} \PP^{n_i}$ such that $\phi^*(x_{ij}/x_{i0})=f_{ij}.$ 

Define regular logarithmic forms \[
\omega_i:=\sum_{j=1}^{n_i}\beta_{ij} \frac{dx_{ij}}{x_{ij}}+\beta_{i0}\frac{dx_{i0}}{x_{i0}}, \quad i=1,\ldots,r,\]
where $\beta_{i0}:=-\sum_{j=1}^s\beta_{ij}$. The forms $\omega_i$ on $\prod \PP^{n_i}_{x_{ij_i}}$ determine the {$\cK$-monomial} foliation 
 $${\cG}:=\{\partial\mid \omega_i(\partial)=0\}.$$ Indeed on each {affine} space $\prod \PP^{n_i}_{x_{ij_i}}:=\prod^r_{i=0} (\PP^{n_i}\setminus V({x_{ij_i}}))$ the form $\omega_i$ can be represented as $$\omega_i:=\sum_{j=0,j\neq j_i}^{n_i}(\beta_{ij}-\beta_{ij_i})\frac{dx_{ij}}{x_{ij}}=\sum_{j=0,j\neq j_i}^{n_i}(\beta_{ij}-\beta_{ij_i})\frac{dx_{ij}}{x_{ij}}=\sum_{j=0,j\neq j_i}^{n_i}\beta_{ij}\frac{du_{ij}}{u_{ij}},$$  where $u_{ij}:=\frac{x_{ij}}{x_{ij_i}}$ are regular functions on $\prod \PP^{n_i}_{x_{ij_i}}$.

{The saturated pullback $\cF:=\satpull{\phi}(\cG)$} of $\cG$ can be represented as the restriction {$\satpull{\pi}(\cG)_{{\boldsymbol | \boldsymbol |}{\Gamma_\phi}}$ of the pullback $\satpull{\pi}(\cG)$} of $\cG$ on $X\times \prod \PP^{n_i}_{x_{ij_i}}$ to the (closed) graph $\Gamma_\phi\subset X\times \prod \PP^{n_i}_{x_{ij_i}}$ of $\phi: X\to \prod^r_{i=0} \PP^{n_i}$. Moreover the morphism $\Gamma_\phi\to X$ is proper being a composition of a closed embedding followed by the projection along a projective variety.

\section{Main results}\label{sec:Final}
  
\subsection{Functorial principalization over foliated logarithmic manifolds}\label{ssec:FunctorialPrincipalization}

We are ready to prove the main result of the paper.

\begin{proof}[Proof of Theorem \ref{thm:PrincipalizationFoliated}]
{We construct a sequence as in Parts \eqref{It:principalization} and \eqref{It:is-aligned} of the theorem, satisfying \eqref{It:functorial}.}
By taking $\cR = \cO_X[t\cI]$, we reduce the Theorem to Rees algebras instead of ideal sheaves. We define a sequence {of weighted or cobordant blow-ups}:
\[
\sigma_i : (X_{i+1},\cF_{i+1},E_{i+1}) \to (X_{i},\cF_i,E_i),
\]
where $\cF_{i+1} = \sigma_i^c(\cF_i)$ and $\cR_{i+1} = \sigma^c(\cR_i)$ {(respectively, $\cF_{i+1} = \sigma_i^s(\cF_i)$ and $\cR_{i+1} = \sigma^s(\cR_i)$)}, by blowing up the $\inv_{\cF_i}$-maximal $(\cR_i,\cF_i)$-admissible center $\cA_{J_i}$. This center is well-defined by Proposition \ref{prop:FCanonicalInvBasicProperties}. Functoriality follows from \ref{claim:MainInduction}(3). 
{The invariant is upper semicontinuous on $X_i$; in the analytic case of weighted blow-ups, $X_i$ is proper over the relatively compact $X_0\subset X$, and therefore the invariant $\inv$ attains a maximum on $X_i$ along the closed  $\cF_i$-aligned center $J_i$. The same holds true on the cobordant blow-ups by functoriality.
 After each} blow-up the invariant {of the controlled transform} drops by Theorem \ref{prop:DecreaseInvariant}; {the same holds for the strict transform by monotonicity of the invariant, see Propositions \ref{prop:FCanonicalInvBasicProperties}(1,2)}.

{Since the invariant takes values in a well-ordered set, see \ref{claim:MainInduction}\eqref{It:induction-wo-usc}, the sequence is finite, in other words, after finitely many steps the controlled transform of $\cR$ is trivial. This gives Parts {\eqref{It:principalization} and \eqref{It:is-aligned}} of the theorem, satisfying functoriality, Part {\refeq{It:functorial}}.}

{We now consider the other statements.}
Property {\eqref{It:non-singular}} follows from Theorem \ref{Th:aligned center}. {Next, i}f $\cI$ is $\cF$-invariant, then Lemma \ref{lem:FcanonicalInvForInvariantRees} and Proposition \ref{prop:FInvariantBlowup}(2) guarantees that the sequence of blow-ups is $\cF_i$-invariant, proving Part \eqref{It:is-F-invariant}.\footnote{This also follows from $\delta$-invariance, applied to all local sections of $\cF$.} {To prove Part \eqref{It:non-compact}, note that t}he procedure is finite in the algebraic case and over a relatively compact $X_0 \subset X$ in the analytic cases. For a  noncompact coherent analytic space $X$, consider any collection $X_0^{i}$ of relatively compact open subsets such that $\bigcup X_0^{i} = X$ with corresponding modification $X'^{i} \to X_0^{i}$. By functoriality $X'^{i} \times_{X_0^{i}}(X_0^{i} \cap X_0^{j}) = X'^{j} \times_{X_0^{i}}(X_0^{i} \cap X_0^{j})$. These glue together to form  a modification $X' \to X$ giving a principalization of $\cI$. 
\end{proof}

By combining Theorem \ref{thm:PrincipalizationFoliated} with a classical idea in resolution of singularities, we deduce just as in \cite{ATW-weighted,Wlodarczyk-cobordant}:

\begin{proof}[Proof of Theorem \ref{thm:EmbeddedDesingularization}]
We apply Theorem \ref{thm:PrincipalizationFoliated} to the ideal $\cI^Y$ using strict transforms, 
but stop the principalization of $\cI^Y$ as soon as  the invariant along the center is of the form
\[
\inv_{\cF_i,\pa}(\cI^{Y_i})=(1,\ldots,1, \infty+1,\ldots, \infty+1, \infty+\infty + 1,\ldots, \infty+\infty +1).
\]
At this point, the center is supported on an open and closed component $Y^{(j)}_i$ of $Y_i$. Note that this component is isolated from the rest of $Y_i$ since, otherwise, there would be a point $\pa$ where the invariant $\inv_{\cF_i,\pa}(\cI^{Y_i})$ would have an entry $>1$. Therefore we do not blow-up $Y_i^{(j)}$, and we continue the principalization process over the open set $X_i\setminus Y_i^{(j)}$ --- or a relatively compact open in the analytic case. The process stops by compactness, at which point (1') is satisfied as each component is the support of a center, and the other properties follow as they hold in Theorem  \ref{thm:PrincipalizationFoliated}.
\end{proof}

\subsection{Resolution of transverse sections: Proof of Theorem \ref{thm:ResTransverseSections}}\label{ssec:Proofthm:ResTransverseSections}

We start by the following Lemma, which shows that the invariant $\inv_{\cF}$, see Definition \ref{def:FCanonicalInvariantGeneral}, allows us to characterize transverse sections, see Definition \ref{def:TransverseSection}:

\begin{lemma}[Invariant characterization of transverse sections]\label{lem:InvariantTransverseSection} A subvariety $Y \subset X$ of codimension $p$ is transverse to $\cF$ at a point $\pa$ if, and only if:
\[
\inv_{\cF,\pa}(\cO_X[t\cI^{Y}]) = (1,\ldots, 1),
\]
where invariant has $p$ entries. If additionally $\cF$ is of rank $p$ then $Y$ is a transverse section of $\cF$ at $\pa$.
\end{lemma}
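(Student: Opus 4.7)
The plan is to proceed by induction on the codimension $p$ of $Y$, proving both directions simultaneously by matching the structure of the $\cF$-aligned maximal center to the transverse structure. The base case $p=0$ is trivial ($\cI^Y = \cO_X$, empty invariant, and transversality is vacuous). First I would note the common normalization: whenever a transversality datum $(x_1,\dots,x_p;\partial_1,\dots,\partial_p)$ is given, a linear change of the $\partial_i$'s lets me assume $\partial_i(x_j) = \delta_{ij}$, and in particular $\partial_1(x_1) = 1$, so $\cF(\cI^Y) = \cO_X$ and $\ord_{\cF,\pa}(\cR) = 1$ where $\cR := \cO_X[t\cI^Y]$. Conversely, if the invariant has first entry $1$, then $\ord_{\cF,\pa}(\cR) = 1$ by Lemma \ref{lem:FcanonicalInvForFiniteRees}, giving some $\partial \in \cF_\pa$ and $f \in \cI^Y$ with $\partial(f)$ a unit.

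For the forward direction, assuming $Y$ transverse with the above normalization, I would exhibit the explicit center $\cA_J := \cO_{X,\pa}[x_1 t, \dots, x_p t]^{\Int}$ and argue it is $\cF$-aligned with invariant $(1,\dots,1)$, using iterated application of the splitting Theorem \ref{thm:splittingFoliation} to produce derivations $\partial_{x_1}, \dots, \partial_{x_p} \in \widehat{\cF}_{(x)}$ together with complementary generators $\nabla_j$ independent of each $(x_i, \partial_{x_i})$; the Rees algebra invariance condition in Definition \ref{def:Faligned} is vacuous since there are no $(y,z)$-generators. This gives admissibility and shows $\inv_{\cF,\pa}(\cR) \geq (1,\dots,1)$. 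For the opposite inequality I would compute $\cC(\cR,\cF) = \cR^{\Int}$: by Leibniz and $\cF(\cI^Y) = \cO_X$, every derivative of $f \in (\cI^Y)^b$ lies in $(\cI^Y)^{b-1}$. Restricting along the maximal contact $H = V(x_1)$ gives $\cC(\cR,\cF)_{|H} = \cO_H[t\, \cI^{Y\cap H}]^{\Int}$, and since $Y\cap H$ is transverse to $\cF_{|H}$ of codimension $p-1$, the inductive hypothesis combined with Lemma \ref{lem:FcanonicalInvForFiniteRees} yields the desired invariant.

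For the backward direction, assume $\inv_{\cF,\pa}(\cR) = (1,\dots,1)$. By the Inductive Claim \ref{claim:MainInduction}, the unique $\inv_{\cF,\pa}$-maximal $(\cR,\cF)$-admissible center admits a formal $\cF$-aligned presentation $\cA_J = \widehat{\cO}_{X,(x)}[x_1 t, \dots, x_p t]^{\Int}$ with $\widehat{\cF}_{(x)} = \spa(\partial_{x_1},\dots,\partial_{x_p},\nabla_1,\dots,\nabla_m)$. Applying Proposition \ref{prop:RegLiftingFAligned} I would replace the $x_i$ by regular parameters preserving the center. Evaluating $\partial_{x_i} \in \widehat{\cF}_{(x)}$ at $\pa$, a Nakayama lemma argument applied to the finitely generated module $\cF_\pa$ produces actual lifts $\tilde\partial_i \in \cF_\pa$ with $\tilde\partial_i(x_j)(\pa) = \delta_{ij}$, making $\det(\tilde\partial_i(x_j))$ a unit. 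The admissibility $\cR \subset \cA_J$ gives $\cI^Y \subset (x_1,\dots,x_p)$, and since $(x_1,\dots,x_p)$ is prime of height $p$ in the regular local ring $\cO_{X,\pa}$, $Y$ has codimension $p$, and $\cI^Y$ is reduced, comparing supports forces $\cI^Y = (x_1,\dots,x_p)$. The ``transverse section'' statement follows by counting ranks: if $\rank \cF = p$, then $\cF_\pa = \spa_{\cO_{X,\pa}}(\tilde\partial_1, \dots, \tilde\partial_p)$.

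The main obstacle I anticipate is the passage between formal (or nested-regular) coordinates appearing in $\cF$-aligned presentations and honest regular coordinates of $\cO_{X,\pa}$; this hinges on Proposition \ref{prop:RegLiftingFAligned} together with the Nakayama lift of derivations from $\widehat{\cF}_{(x)}$ down to $\cF_\pa$. A secondary technical point in the induction is verifying that $Y \cap H$ remains transverse to $\cF_{|H}$ in $H = V(x_1)$, which follows from the splitting of Theorem \ref{thm:splittingFoliation}: the derivations $\partial_{x_2},\dots,\partial_{x_p}$, being independent of $(x_1,\partial_{x_1})$, descend to $\cF_{|H}$ and provide the required transversality datum in one lower codimension.
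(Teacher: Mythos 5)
Your forward direction is essentially the paper's argument, just unpacked: you exhibit the center $\cO_{X,\pa}[x_1t,\dots,x_pt]^{\Int}$ via the splitting theorem, observe the invariance condition of Definition \ref{def:Faligned}(2) is vacuous, and prove the upper bound by computing $\cC(\cR,\cF)=\cR^{\Int}$ and applying Lemma \ref{lem:FcanonicalInvForFiniteRees} together with the induction; the paper's citation of Lemma \ref{lem:BasicPropertyFInvariantCenter} and Inductive Claim \ref{claim:MainInduction}(1) encapsulates exactly this reasoning.

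The backward direction takes a genuinely different, non-inductive route, and it has a real gap in the step ``comparing supports forces $\cI^Y=(x_1,\dots,x_p)$.'' From admissibility you obtain $\cI^Y\subset(x_1,\dots,x_p)$, and you know that $(x_1,\dots,x_p)$ is prime of height $p$ while $\cI^Y$ is radical of height $p$. But this only forces $(x_1,\dots,x_p)$ to be \emph{one of the minimal primes} of $\cI^Y$; it does not rule out $Y$ having further branches through $\pa$, in which case the inclusion is strict. For instance $\cI^Y=(x_1,\dots,x_{p-1},x_px_{p+1})=(x_1,\dots,x_p)\cap(x_1,\dots,x_{p-1},x_{p+1})$ is radical of height $p$ and contained in $(x_1,\dots,x_p)$. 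The hypothesis $\inv_{\cF,\pa}(\cR)=(1,\dots,1)$ \emph{does} exclude such configurations --- in that example $\cO_X[x_1t,\dots,x_{p-1}t,x_pt^{1/2},x_{p+1}t^{1/2}]$ is admissible with the strictly larger invariant $(1,\dots,1,2,2)$, so the maximal center would not have the form you derived --- but your support comparison never invokes anything beyond the shape of the maximal center, so it cannot detect this. The paper closes the gap by inducting on dimension: since $\cF$-order $1$ gives $\cC(\cR,\cF)=\cR$, Lemma \ref{lem:FcanonicalInvForFiniteRees} yields $\inv_{\cF_{|H},\pa}(\cO_H[t\,\cI^{Y\cap H}])=(1,\dots,1)$ with $p-1$ entries on $H=V(x_1)$, hence by induction $\cI^{Y}\cdot\cO_H=(x_2,\dots,x_p)$, and then $\cI^Y=(x_1,\dots,x_p)$ as the preimage in $\cO_{X,\pa}$. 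Your Nakayama argument producing actual derivations $\tilde\partial_i\in\cF_\pa$ from the formal ones is fine, and is also needed (implicitly) to upgrade the inductive transversality datum; that part of your plan would slot into the paper's proof unchanged.
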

\begin{proof}
First, suppose that $Y$ is transverse to $\cF$ at $\pa$. By Theorem \ref{thm:splittingFoliation},
there exists a partial system of nested-regular coordinates $(x_1,\ldots,x_p)$ such that:
\[
\begin{aligned}
\cF \cdot \cO_{X,(x)} &\  \supseteq \  \spa_{\cO_{X,(x)}}(\partial_{x_1},\ldots, \partial_{x_p}), \quad \text{ and } \quad
\cI^{Y} \cdot \cO_{X,(x)} &=(x_1,\ldots,x_p).
\end{aligned}
\]
In particular, we conclude that $\widehat{\cO}_{X,\pa}[t\cI^Y]$ is a formally $\cF$-aligned center. It follows from Definition \ref{def:FCanonicalInvariantCenters}, Lemma \ref{lem:BasicPropertyFInvariantCenter} and the inductive claim \ref{claim:MainInduction} (1), that $\inv_{\cF,\pa}(\cI^{Y}) = (1,\ldots, 1)$, {with} $p$ entries.

Conversely, let $\cR = \cR[t\cI^{Y}]$ and assume that $\inv_{\cF,\pa}(\cR[t\cI^{Y}]) = (1,\ldots, 1)$, with $p$ entries. Note that, from the faithful flatness of $\widehat{\cO}_{X,\pa}$ over $\cO_{X,\pa}$, it is enough to prove the result formally. We now prove the Lemma by induction on the {dimension of the} variety. 
{We have $\ord_{\cF,\pa}(\cI^Y) = 1$, so let $x_1\in \cI^Y$} be an $\cF$-maximal contact element with respect to $\cR$, see Definition \ref{def:FMaximalContactRees}, and let $\partial_1 \in \cF_{\pa}$ be a derivation such that $\partial_1(x_1)$ is a unit. Set $H = V(x_1)$. By Theorem \ref{thm:splittingFoliation}, \( \cF \cdot \widehat{\cO}_{X,\pa} = \Span_{\widehat{\cO}_{X,\pa)}} (\partial_{x_1}, \cF_{|H})
\), where $\cF_{|H}$ is a foliation over $\widehat{\cO}_{H,\pa}$. {Since the $\cF$-order is 1 we have $\cC(\cR,\cF) =\cR$.}  It now follows from Lemma \ref{lem:FcanonicalInvForFiniteRees} that
\[
\inv_{\cF,\pa}(\cR)=(\ord_{\cF,\pa}(\cR),\inv_{\cF_{H},\pa}(\cC(\cR,\cF)_{|H})) = (1,\inv_{\cF_{H},\pa}(\cO_H(t\cI^{Y\cap H}))).
\]
 We conclude using the inductive assumption applied to $Y\cap H$.
\end{proof}

We may now turn to the proof of the theorem:

\begin{proof}[Proof of Theorem \ref{thm:ResTransverseSections}]
We follow the algorithm given by Theorem \ref{thm:EmbeddedDesingularization} up until the invariant is equal to $(1,\ldots,1)$, where the vector has $\codim(Y)(\leq \mbox{rank}(\cF)$) entries. By Lemma \ref{lem:InvariantTransverseSection}, we have not {blown up} points in $Y^{tr}$. Moreover, by Lemma \ref{lem:InvariantTransverseSection} and Proposition \ref{prop:FCanonicalInvBasicProperties}, the maximal locus of the invariant is transverse to $\cF$ and, therefore, a union $Z$ of connected components of the strict transform of $Y$. Applying the theorem  to the complement of $Z$   --- or a relaticely compact open in the analytic case --- the result follows by compactness.
\end{proof}

A complementary situation where $\cF$ is transverse to $Y$ at a point, suggested to us by J. V. Pereira, is the following: 		

\begin{proposition}\label{Prop:sub-transverse} Assume $Y$ irreducible, and \( Y \subset X \) and \( \mathcal{F} \) are smooth at  some point \( x \in Y \). Assume further that  there exists an $E$-adapted partial coordinate system \( x_1, \dots, x_\ell \) at \( x \) such that 
\[
\hat{\mathcal{F}}_x = \text{span}(\partial_{x_1}, \dots, \partial_{x_k}) \quad \text{and} \quad \hat{I}_{Y,x} = (x_1, \dots, x_\ell) \qquad \text{with}\quad k \leq \ell. 
\]
 Then, after resolution, the property is satisfied for all points of the strict transform \( \mathcal{F}' , {Y'} \) on \( X' \).
\end{proposition}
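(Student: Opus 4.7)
The strategy parallels the proof of Theorem \ref{thm:ResTransverseSections}: encode the hypothesis at $x$ as a specific value of the canonical invariant $\inv_{\cF}(\cO_X[t\cI^Y])$ there, and then run the algorithm of Theorem \ref{thm:PrincipalizationFoliated} with strict transforms, halting precisely when this value is attained along all of $Y'$.  The first step, analogous to Lemma \ref{lem:InvariantTransverseSection}, is an invariant characterization of the hypothesis.  Since $\partial_{x_i} \in \widehat{\cD}_X^{\log}$ forces $x_i$ to be free for $i=1,\dots,k$, one splits the remaining $x_{k+1},\dots,x_\ell$ into $b$ free and $c$ divisorial ones, with $b+c=\ell-k$.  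The Rees algebra $\widehat{\cO}_{X,x}[t\cI^Y]$ then has a formally $\cF$-aligned presentation in which every weight equals $1$ and no ``$\nabla_j$'' terms appear, since $\widehat{\cF}_x=\spa(\partial_{x_1},\dots,\partial_{x_k})$.  Lemma \ref{lem:BasicPropertyFInvariantCenter} together with the uniqueness in Inductive Claim \ref{claim:MainInduction} then computes
\[
v\ :=\ \inv_{\cF,x}(\cO_X[t\cI^Y])\ =\ (\underbrace{1,\dots,1}_{k},\ \underbrace{\infty+1,\dots,\infty+1}_{b},\ \underbrace{\infty+\infty+1,\dots,\infty+\infty+1}_{c}).
\]
Conversely, at any point $y$ where $\cF$ is smooth of rank $k$, $Y$ is smooth of codimension $\ell$, and $\inv_{\cF,y}(\cO_X[t\cI^Y])=v$, Proposition \ref{prop:RegLiftingFAligned} promotes the $\inv_{\cF}$-maximal admissible center to a regular $E$-adapted coordinate system of the form prescribed by the hypothesis.

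Next, I would apply the algorithm of Theorem \ref{thm:PrincipalizationFoliated} to $\cR=\cO_X[t\cI^Y]$, using strict transforms of both $\cF$ and $\cI^Y$, and halt as soon as the maximum of $\inv_{\cF_i}$ along $Y_i$ is $\le v$.  Each blown-up center has invariant strictly greater than $v$, so by upper semicontinuity and functoriality (Proposition \ref{prop:FCanonicalInvBasicProperties}, Inductive Claim \ref{claim:MainInduction}) the iterated image of $x$ is never blown up; Theorem \ref{prop:DecreaseInvariant} gives strict drop of the invariant on the non-vertex locus, and well-ordering of the invariant set forces termination in finitely many steps.  At that point every $y'\in Y'$ satisfies $\inv_{\cF',y'}(\cI^{Y'})\le v$.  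The remaining task is the reverse inequality, which together with the characterization above yields the required coordinates at every $y'$: since $\cF$ has generic rank $k$ and $\cF$-aligned blow-ups preserve log-smoothness of the smooth locus (Theorem \ref{Th:aligned center log-smooth}), the sheaf $\cF'$ retains generic rank $k$ and $Y'$ retains codimension $\ell$; a case analysis via the inductive presentation of the invariant (Lemmas \ref{lem:FcanonicalInvForFiniteRees}, \ref{lem:FcanonicalInvForInvariantRees}), subject to $\rk(\cF')\le k$ and $\codim(Y')=\ell$, excludes any $\cF'$-aligned admissible center for $\cI^{Y'}$ producing an invariant lex-smaller than $v$.

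The principal obstacle is this minimality step: the delicate case arises when $\cF'_{y'}$ fails to be locally free --- with torsion lifting the fiber dimension above the generic rank $k$ --- at points of $Y'$ not directly touched by the $\cF$-aligned sequence run on $\cI^Y$.  If this occurs, I would supplement the resolution by running Theorem \ref{thm:PrincipalizationFoliated} on a further functorial ideal cutting out $\Sing(\cF')\cap Y'$, returning to the smooth regime in which the characterization of Step~1 applies and delivers the stated $E'$-adapted coordinates.
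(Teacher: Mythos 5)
Your strategy matches the paper's (very terse) proof: characterize the hypothesis at a point by the invariant value $v = (1^k,(\infty+1)^b,(\infty+\infty+1)^c)$ with $b+c=\ell-k$, run the embedded resolution with strict transforms, and conclude that upon termination $\inv_{\cF'}(\cI^{Y'})=v$ along all of $Y'$. Your lower-bound argument (each entry of the invariant is bounded below by $1$, $\infty+1$, $\infty+\infty+1$ according to type; the number of ``type one'' entries is $\le\rank(\cF')$; there are exactly $\ell$ entries because $\codim Y'=\ell$) is the correct way to get $\inv\ge v$ everywhere.

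Your closing worry --- that $\cF'$ could fail to be locally free at a point $y'\in Y'$, forcing a supplementary resolution of $\Sing(\cF')\cap Y'$ --- is, however, unnecessary, and in fact already ruled out by the rank constraint you invoke for the lower bound. Since $\cF'\subset\cD^{\log}_{X'}$ is a subsheaf of a locally free sheaf on the irreducible $X'$, it is torsion-free of constant rank $k$; the jump you fear would come not from torsion (there is none) but from the fiber dimension $\dim(\cF'\otimes\kappa(y'))$ exceeding $k$. But this cannot happen once $\inv_{\cF',y'}(\cI^{Y'})=v$: the formally $\cF'$-aligned presentation then has $k$ transverse derivations $\partial_{x_1},\dots,\partial_{x_k}$ spanning a rank-$k$ free direct summand of $\widehat\cD^{\log}_{X',y'}$, and any further generator $\nabla_j$, being independent of $(x_i,\partial_{x_i})$ and lying together with the $\partial_{x_i}$ in a rank-$k$ torsion-free module, satisfies an $\widehat\cO$-linear relation $b\nabla_j=-\sum a_i\partial_{x_i}$ with $b\ne 0$; evaluating on each $x_i$ gives $a_i=0$, hence $b\nabla_j=0$, hence $\nabla_j=0$. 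Thus $\widehat\cF'_{y'}=\spa(\partial_{x_1},\dots,\partial_{x_k})$ automatically, so $\cF'$ is smooth of rank $k$ at $y'$ and the required $E'$-adapted coordinates exist with no extra blow-up.
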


\begin{proof}
Note that $Y$ is smooth of codimension $\ell$ and in normal crossings with $E$. The above hypothesis is satisfied if and only if 
\[
\text{inv}_x(\mathcal{I}_Y,\mathcal{F}) = (\underbrace{1, \dots, 1}_{k}, \underbrace{\infty+1 , \dots, \infty+\infty+1}_{\ell - k}).
\]\end{proof}

\subsection{Thick resolution of singularities}
\label{ssec:Proofthm:ResTotallyIntegrable}
\begin{proof}[Proof of Theorem \ref{thm:ResThickClasses}]\hfill

{\sc Step 1: Resolution of indeterminacies.}  Note that, {up to} resolution of indeterminacy and usual resolution of singularities, we can suppose that $\varphi : X \to B$ is {a morphism} and $X$ {and $B$ are} smooth.

From here on we consider in here the algebraic case; the analytic case follows from the same argument applied to relatively compact open sets $X_0 \subset X$ and the functoriality of the construction.

We also consider here the result for the weighted blow-ups, the arguments for cobordat blow-ups being identical.

{\sc Step 2: Embedding.} 
We  follow the idea of the proof of \cite[Theorem 1.2.12]{ATW-relative} combined with the methods from Section~\ref{Sec:nice-foliations}. Consider the product $Y = X \times B$ with projection $p_B:Y \to B$, the graph $Z = \Gamma(\varphi) \subset Y$ and the SNC divisor $F = E \times B \subset Y$; denote by $\iota= id\times \varphi : X \to Y$ the graph embedding of $X$ with image $Z$, a closed analytic subset. 
Consider the {foliation $\cH$} given by the saturated pullback $\cH=\satpull{(p_B)}(\cG)=(TX \times {\cG} )\cap \cD_{Y}^{\log}$, where we recall that $(B,\cG,E)$ belongs to a thick class $\cC$.
Note that $(Y,\cH,F) \in \cC$ by Definition \ref{def:thick}\eqref{defItem:thickProduct}.
Since  $\varphi= p_B \circ \iota$, we have $\satpull{\phi}(\cF) = i^*(\cH_{{\boldsymbol | \boldsymbol |}Z})$, see Definition \ref{Def:saturated-pullback}\eqref{DefItem:saturated-pullback-Restriction}.

{\sc Step 3: Foliated embedded resolution.} 
Next, we consider the resolution process given by Theorem \ref{thm:EmbeddedDesingularization} applied to the embedding $Z \subset Y$. In other words, there exists a locally finite weighted resolution by aligned centers
\[
\sigma': (Y', {\cH}',F') \to (Y,{\cH},F)
\]
where $\cH'$ is the strict transforms of ${\cH}$. By Definition \ref{def:thick}\eqref{defItem:thickFaligned}, the triple $(Y', {\cH}',F') \in \cC$. Moreover the strict transform $Z'\subset Y'$ of $Z$ is smooth, has normal crossings with the divisor $F'$, and forms an aligned center for $\cH'$. As $Z'$ has normal crossings with $F'$, it is not contained in $F'$.

It follows from Definition \ref{def:thick}\eqref{defItem:thickRestriction} that the restriction $(Z', {\cH}'_{{\boldsymbol | \boldsymbol |}Z'}, F'_{| Z'}) \in \cC$.

{\sc Step 4: Identifying the foliations.} 
Write $\pi':Z' \to X$ for the induced morphism. Since $Z\subset Y$ is unaffected at generic points
we conclude that $\cF':= \satpull{(\pi')}(\cF) = {\cH}'_{{\boldsymbol | \boldsymbol |}Z'}$. It follows that $(Z',\cF',F'_{| Z'})$ belongs to class $\cC$, as needed.
\end{proof}

\appendix

\section{Transforms of singular foliations} \label{App:transforms}
\subsection{Saturated foliations via differential forms}
\subsubsection{Pfaffian presentation} 
Denote by $\Omega_X^{\log}=\Omega_X(\log E)$ the sheaf of logarithmic forms. It is  generated locally by the forms $dx_i$ associated with  free coordinates,  along with $\frac{dx_j}{x_j}$ associated with divisorial coordinates. It is the dual of  $\cD^{\log}_{X}$, and is compatible with the notions introduced in \cite{Kato-log}.

{Fixing a point} $\pa\in X$, recall that a $1$-form {$\omega \in \Omega^{\log}_{X,\pa}$} can be identified with a homormophism $\cD^{\log}_{X}\to \mathcal{O}_{\pa}$, which we may further restrict to a homeomorphism $\mathcal{F}_{\pa} \to \mathcal{O}_{\pa}$. There exists, therefore, a canonical map $$\Omega^{\log}_{X} \to \mbox{Hom}(\mathcal{F}, \mathcal{O}_{X}).$$ 

We denote by
$N_\cF^*=\mathcal{F}^{\perp}$
the kernel of this map, which is a coherent sub-sheaf of $\Omega^{\log}_{X}$.
This gives the exact sequence 
$$0\to \cF^\perp\to \Omega^{\log}_{X}\to \cF^*.$$
 The fact that $\mathcal{F}$ is closed under Lie bracket operations, moreover, implies {that $\mathcal{F}^{\perp}$ satisfies} the following property, for every $\pa \in X$: 
\begin{equation} \label{Eq:pfaffian involutive}
d\omega \wedge \omega_1 \wedge \cdots \wedge \omega_d \equiv  0, \quad \forall \omega,\, \omega_1,\ldots,\omega_d \in {\mathcal{F}^{\perp}_{\pa}} \quad \text{where $d  =n-\mbox{rank}(\mathcal{F})$.}
\end{equation}

Similarly for any coherent subsheaf $N^*\subset \Omega_X^{\log}$ satisfying Equation \eqref{Eq:pfaffian involutive} we denote $\cF_N={N^*}^\perp$. It is the kernel of the map $$D_X^{\log}=Hom(\Omega_X^{\log},\cO_X)\to (N^*)^*=Hom(N^*,\cO_X)$$ so that we have the exact sequence
$$0\to \cF_N\to \cD^{\log}_X\to (N^*)^*.$$
The assumption that $N^*$ satisfies Equation \eqref{Eq:pfaffian involutive} implies that $\cF_N$ is a foliation.

We denote by $\cF^{sat}:=\mathcal{F}^{\perp \perp}$ the {\it saturation of} $\cF$, again a coherent sub-sheaf of $\cD^{\log}_X$ which can be shown to be closed under the Lie-bracket operation and saturated.

If $\cF$ is {\Sm} at $\pa$ then it is locally a direct summand of $\cD_{X,\pa}^{\log}=\cF_\pa\oplus (\cF^\perp_{\pa})^*$, and thus $\cF=(\cF^\perp)^\perp$ is saturated. Since  any foliation $\cF$ is saturated over its {\Sm} locus $U$, we have
\begin{lemma}\label{Lem:saturated-U} The saturation of a foliation $\cF$ is determined by its restriction to the  {\Sm} locus $U \subset (X,\cF)^\nsing$, as follows:
 $$\cF^\sat=\{\partial\in \cD^{\log}_X\mid \partial_{|U}\in \cF_{|U} \}\subseteq \cD^{\log}_X.$$
\end{lemma}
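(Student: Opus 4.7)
\bigskip

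\noindent\textbf{Proof plan.} Denote the right-hand side by
\[
\cG \ :=\ \{\partial\in \cD^{\log}_X\mid \partial_{|U}\in \cF_{|U}\}.
\]
The plan is to prove the two inclusions $\cF^{\sat}\subseteq \cG$ and $\cG\subseteq \cF^{\sat}$ separately. The first inclusion rests on the observation that $\cF$ agrees with $\cF^{\sat}$ on $U$, and the second on the fact that $U$ is dense together with the perpendicular definition of saturation.

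First, for $\cF^{\sat}\subseteq \cG$: over the {\Sm} locus $U$, the exact sequence $0\to \cF\to \cD^{\log}_X \to \cD^{\log}_X/\cF\to 0$ splits, so $\cF_{|U}$ is a local direct summand of $\cD^{\log}_{X|U}$. In particular $\cF_{|U}$ is already saturated, so $\cF^{\sat}_{|U} = (\cF^{\perp\perp})_{|U} = (\cF_{|U})^{\perp\perp}=\cF_{|U}$, using that the perpendicular operation commutes with restriction to an open. Thus any section of $\cF^{\sat}$ restricts to a section of $\cF$ on $U$, giving $\cF^{\sat}\subseteq \cG$.

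Next, for $\cG\subseteq \cF^{\sat}$: take $\partial\in \cG$ and an arbitrary local section $\omega\in \cF^\perp\subseteq \Omega^{\log}_X$. The pairing $\omega(\partial)\in \cO_X$ is a regular (or holomorphic) function on $X$. By construction $\partial_{|U}\in \cF_{|U}$, and by definition of $\cF^\perp$ the form $\omega$ vanishes on $\cF$, so $\omega(\partial)_{|U}= \omega_{|U}(\partial_{|U})=0$. Now $U=X\setminus \Sing(\cF)$ is open and dense, since the singular locus is a proper subvariety (as noted immediately after Definition \ref{Def:foliations}) and $X$ is smooth, hence reduced and irreducible. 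The vanishing of $\omega(\partial)$ on a dense open of a reduced scheme or coherent analytic space forces $\omega(\partial)=0$ globally. Therefore $\partial$ pairs to zero with every section of $\cF^\perp$, i.e.\ $\partial\in (\cF^\perp)^\perp=\cF^{\sat}$.

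The argument is essentially formal; the only point requiring care is the density step in the second inclusion, and there the key input is that the singular set of a coherent subsheaf is a proper closed subset of the smooth ambient variety, which has already been recorded in the paper. No further technicalities are anticipated.
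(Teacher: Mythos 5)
Your proof is correct and follows the same idea as the paper: the paper records that $\cF=\cF^{\perp\perp}$ on the smooth locus (since the exact sequence splits there) and immediately states the lemma as a consequence, leaving the second half — the density argument for the inclusion $\cG\subseteq \cF^{\sat}$ — implicit. Your two-inclusion write-up simply makes that implicit step explicit; the density of $U$ is indeed guaranteed by the standing assumption that $X$ is irreducible and that $\Sing(\cF)$ is a proper closed subvariety, so there is no gap.
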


\subsubsection{Saturated pullback of foliations}\label{Sec:pullback-foliation} 
Pullbacks of foliations in the algebraic and analytic literature are typically considered in the saturated case, as we recall now, taking into consideration the logarithmic structure. The literature typically restricts to dominant morphisms, an assumption we do not require.  Saturated pullbacks will be used in Section \ref{ssec:Proofthm:ResTotallyIntegrable} and Lemma \ref{lem:PresentationMonomialAligned1}.

For any morphism $\phi: (X,E)\to (Y,F)$ consider the natural pullback of forms 
    $$d\phi^*: \ \phi^{*}(\Omega^{\log}_Y)  \quad \to \quad \Omega^{\log}_X.$$

\begin{definition}[Saturated pullback of foliation]\label{Def:saturated-pullback}  

Given a saturated foliation $\cG$ on $Y$, we assume that $\phi$ satisfies the following: Let $V \subset  Y$ be a Zariski-open set where $\cG$ is regular and $U\subset \phi^{-1} V$ be a Zariski-open set where $\phi$ is  regular. Assume $U$ is dense in $X$.

\begin{enumerate}
\item 
Define the {\it logarithmic saturated pullback of $\cG$ }
under $\phi$ as above to be the sheaf of the logarithmic derivations 
$$\satpull{\phi}(\cG):=d\phi^*(\cG^\perp)^\perp \subset \cD^{\log}_X.$$
We will suppress the adjective ``logarithmic" from here on.

\item\label{DefItem:saturated-pullback-Restriction} If $i: (X,E) \to (Y,F)$ is a  closed embedding of a smooth subvariety $X$  into a smooth variety $Y$ then the saturated pullback ${\satpull{i}(\cG)}$ will be called the {\it saturated restriction} and denoted by
$\cG_{{\boldsymbol | \boldsymbol |}X}$.

\item More generally, let  $\phi: (X,E)\dashrightarrow (Y,F)$ be a rational or meromorphic map, and $\cG$ a rational or  meromorphic foliation. Let $V \subset  Y$ be a Zariski-open set where $\cG$ is regular and $U\subset \phi^{-1} V$ be a Zariski-open set where $\phi$ is  regular.
Assume $U$ is dense in $X$.
 Then we define $$\satpull{\phi}(\cG)\quad :=\quad \{\ \partial\in \cD^{\log}_X\ \ \mid\ \  \partial_{|U}\in \satpull{(\phi_{|U})}(\cG)\ \}.$$

\end{enumerate}
	\end{definition}

We note that part (3) is well defined by Lemma \ref{Lem:saturated-U}.

\begin{lemma}\label{lem:SatPullBack} The saturated pullback $\satpull{\phi}(\cG)$ of a  rational or meromorphic foliation is a foliation.
\end{lemma}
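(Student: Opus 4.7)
The plan is to first reduce to the case where $\phi$ is a morphism and $\cG$ is regular, and then verify involutivity via Cartan's formula combined with the Pfaffian integrability of $\cG^\perp$.

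For the reduction, I would exploit the very definition in Definition \ref{Def:saturated-pullback}(3): a section $\partial \in \cD^{\log}_X$ lies in $\satpull{\phi}(\cG)$ iff $\partial_{|U} \in \satpull{(\phi_{|U})}(\cG)$, where $U \subset \phi^{-1}V$ is the assumed dense open subset on which both $\phi$ and $\cG$ are regular. Since the Lie bracket is a local differential operation and $U$ is dense in $X$, it would suffice to prove that $\satpull{(\phi_{|U})}(\cG)$ is involutive on $U$: indeed, given $\partial_1,\partial_2 \in \satpull{\phi}(\cG)$, their restrictions to $U$ would lie in $\satpull{(\phi_{|U})}(\cG)$, hence so would $[\partial_1,\partial_2]_{|U}$, which by the defining property would put $[\partial_1,\partial_2]$ in $\satpull{\phi}(\cG)$. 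Coherence of $\satpull{\phi}(\cG)$ follows, at least in the regular case, from its construction as the annihilator $d\phi^*(\cG^\perp)^\perp$ of a coherent subsheaf of $\Omega^{\log}_X$; the rational/meromorphic case reduces to this one after extending across the complement of $U$, using saturation in the locally free sheaf $\cD^{\log}_X$.

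The core computation occurs in the morphism case on $U$. Given $\partial_1,\partial_2 \in \satpull{\phi}(\cG)$ and an arbitrary $\omega \in \cG^\perp$, we want $d\phi^*(\omega)([\partial_1,\partial_2]) = 0$. By the standard Cartan identity
\[
d\phi^*(\omega)([\partial_1,\partial_2]) = \partial_1\bigl(d\phi^*(\omega)(\partial_2)\bigr) - \partial_2\bigl(d\phi^*(\omega)(\partial_1)\bigr) - d\bigl(d\phi^*(\omega)\bigr)(\partial_1,\partial_2),
\]
the first two terms would vanish by the defining property that $\partial_i$ annihilates $d\phi^*(\cG^\perp)$. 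Since pullback commutes with exterior differentiation, the third term equals $d\phi^*(d\omega)(\partial_1,\partial_2)$, reducing everything to computing $d\omega$ applied to vectors in the image of $d\phi$.

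The hard step, and really the content of the lemma, is handling this last term: it requires translating the Pfaffian involutivity \eqref{Eq:pfaffian involutive} of $\cG^\perp$ into a usable Frobenius decomposition. Since we have reduced to the locus where $\cG$ is saturated and regular, $\cG^\perp$ is locally free of rank $d=\dim Y - \rank(\cG)$, and a local frame $\omega_1,\ldots,\omega_d$ exists. The condition $d\omega \wedge \omega_1 \wedge \cdots \wedge \omega_d \equiv 0$, together with local freeness, gives by standard exterior-algebra the integrability form $d\omega = \sum_{i=1}^d \theta_i \wedge \omega_i$ for some logarithmic $1$-forms $\theta_i$ on $Y$. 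Pulling back,
\[
d\phi^*(d\omega)(\partial_1,\partial_2) = \sum_{i=1}^d \Bigl( d\phi^*(\theta_i)(\partial_1)\, d\phi^*(\omega_i)(\partial_2) - d\phi^*(\theta_i)(\partial_2)\, d\phi^*(\omega_i)(\partial_1)\Bigr),
\]
and each factor $d\phi^*(\omega_i)(\partial_j)$ vanishes since $\partial_j \in \satpull{\phi}(\cG) = d\phi^*(\cG^\perp)^\perp$. Hence $d\phi^*(\omega)([\partial_1,\partial_2]) = 0$ for every $\omega \in \cG^\perp$, giving $[\partial_1,\partial_2] \in \satpull{\phi}(\cG)$ as required. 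The main subtlety worth double-checking is compatibility of $d$ with the logarithmic structure when $\phi$ is merely a morphism of log varieties (as opposed to a log-smooth one), but since $d: \Omega^{\log}_Y \to \Omega^{2,\log}_Y$ is defined intrinsically and $\phi$ sends the log structure on $Y$ into the one on $X$ by hypothesis, the commutativity $d \circ d\phi^* = d\phi^* \circ d$ poses no issue.
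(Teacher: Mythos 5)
Your argument for involutivity is correct and takes a route that is essentially a cleaner, more self-contained version of the paper's. Both proofs reduce to the morphism case over the dense open $U$ where $\phi$ and $\cG$ are regular, and both rest on the same key fact: on the regular locus $V$ of $\cG$, the Pfaffian condition \eqref{Eq:pfaffian involutive} upgrades to the Frobenius decomposition $d\omega = \sum \theta_i \wedge \omega_i$ with $\omega_i$ a local frame of $\cG^\perp_{|V}$, and this decomposition pulls back along $d\phi^*$. You derive the decomposition by elementary exterior algebra from local freeness, whereas the paper cites the De Rham--Saito division theorem; on the regular locus the elementary argument suffices, so your version is arguably more transparent. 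Similarly, you close the loop with the Cartan identity, while the paper instead observes that the pulled-back decomposition gives the Pfaffian condition for $d\phi^*(\cG^\perp)$ and then appeals to the general fact that the annihilator of a Pfaffian-involutive subsheaf of $\Omega^{\log}_X$ is a foliation. These are two ways of saying the same thing, and your route is the one most readers would reconstruct.

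The genuine gap is coherence. You write that coherence in the rational/meromorphic case ``reduces to this one after extending across the complement of $U$, using saturation in the locally free sheaf $\cD^{\log}_X$,'' but this is not automatic. By Definition \ref{Def:saturated-pullback}(3), $\satpull{\phi}(\cG)$ is the subsheaf of $\cD^{\log}_X$ consisting of sections that land in $\satpull{(\phi_{|U})}(\cG)$ upon restriction to $U$; this is $j_*\bigl(\satpull{(\phi_{|U})}(\cG)\bigr)\cap \cD^{\log}_X$ for $j\colon U\hookrightarrow X$. In the algebraic category this is a quasi-coherent subsheaf of a coherent sheaf on a Noetherian scheme, hence coherent. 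In the analytic category, however, $j_*$ of a coherent sheaf under an open immersion need not be coherent, and the intersection with $\cD^{\log}_X$ does not rescue this without a further argument. The paper's second paragraph is aimed precisely at this point: it first replaces $\cG$ by its saturated holomorphic extension, eliminates indeterminacies by a proper bimeromorphic $\pi\colon X_1\to X$, forms the (coherent) foliation $\cG_1 = \satpull{(\phi_1)}(\cG)$ on $X_1$, and then takes the saturation of the pushforward $\pi_*\cG_1$ --- coherence of $\pi_*\cG_1$ follows from properness of $\pi$ (Grauert's direct image theorem in the analytic setting), and saturation of a coherent subsheaf of the locally free $\cD^{\log}_X$ is again coherent. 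Your proof should spell out this step, or at least invoke the Remmert--Stein / proper-pushforward machinery that makes the extension coherent.
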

\begin{proof}
First, assume that $\phi$ is a regular map and $\cG$ is a regular foliation. We do retain the assumption that the preimage $U$ of the {\Sm} locus $V$ of $\cG$ is dense. By construction, the sheaf $\satpull{\phi}(\cG)$ is coherent.
We  use the De Rham--Saito theorem \cite[Theorem 1]{Saito-deRham}:
Since $\cG^\perp$ satisfies Equation \eqref{Eq:pfaffian involutive} on page \pageref{Eq:pfaffian involutive}, the restriction $\cG_{|V}^\perp$ has the stronger property that every local section $\omega$ can be written as $d\omega = \sum \eta_i \wedge \omega_i$, for some sections $\eta_i$ of $\Omega_V^{\log}$ and $\omega_i$ of $\cG^\perp_{|V}$. Pulling back, 
 the same holds for $d\phi^*(\cG^\perp)_{|U}$, which implies Equation \eqref{Eq:pfaffian involutive} in general. It follows that $\satpull{\phi}(\cG) = (d\phi^*(\cG^\perp))^\perp$ is a foliation. 

In the general case, we first replace $\cG$ by the unique saturated holomorphic foliation which coincides with $\cG$ along $V$. Eliminating indeterminacies, there is a proper bimeromorphic $\pi:X' \to X$ such that $\phi_1 = \phi\circ \pi: X_1 \to Y$ is holomorphic. So $\satpull{(\phi_1)}(\cG)$ is a foliation $\cG_1$ on $X_1$. The saturation of its pushforward in $X$ is coherent, and agrees with $\satpull{\phi}(\cG)$, as they agree on a dense open $U$. 
 It is involutive on the dense open set $U$, hence it is involutive.
\end{proof}

\subsection{Pullbacks of distributions in the nonsaturated case}\label{sec:PullBackNonSaturated}

\subsubsection{Lift of derivations}\label{ssec:LiftDerivation}
For a morphism $\phi: (X,E)\to (Y,F)$ we again
consider the natural pull back of forms 
    $d\phi^*: \ \phi^{*}(\Omega^{\log}_Y)  \to \Omega^{\log}_X$ and its dual $d\phi: \cD_X^{\log} \to \phi^*(\cD_Y^{\log})$. 

Consider an element $\partial \in \cD^{\log}_Y(U)$ over some open $U \subset Y$. It gives rise to a section 
\[
\phi^*(\partial) \in \phi^*(\cD^{\log}_Y)(\phi^{-1}(U)).
\]
We say that $\partial$ \emph{lifts to $X$} if $\phi^*(\partial)$ is in  the image of $d\phi: \cD_X^{\log} \to \phi^*(\cD_Y^{\log})$ over $\phi^{-1}(U)$.

    \subsubsection{Nonsaturated  distribution pullback}  
    Let $\cG\subset \cD_Y^{\log}$ be a distribution. Pulling back, we have a morphism of coherent sheaves $j:\phi^*(\cG) \to \phi^*(\cD_Y^{\log})$ with image the sheaf 
    $j(\phi^*\cG)$.

    \begin{definition}[Nonsaturated  distribution pullback]\label{Def:fol-pullback} \hfill
    \begin{enumerate}
    \item For any morphism $\phi: (X,E)\to (Y,F)$ and any distribution $\cG\subseteq \cD^{\log}_Y$ on $Y$ we define the \emph{distribution pullback} $\dispull{\phi}(\cG) = j(\phi^*(\cG)) \times_{\phi^*(\cD_Y^{\log})} \cD_X^{\log} = (d\phi)^{-1} (j(\phi^*(\cG))) $ as in the following Cartesian diagram:
    $$
    \xymatrix{ & \phi^*\cG \ar[d] \\ 
    \dispull{\phi}(\cG) \ar[r] \ar@{^(->}[d] & j(\phi^*(\cG)) \ar@{^(->}[d] \\ \cD_X^{\log} \ar[r]^{d\phi} & \phi^*(\cD_Y^{\log}).
 }$$
 \item Let $i: (X,i^{-1}(F))\to (Y,F)$ be a closed embedding of a smooth subvariety $X\not\subset F$ into the variety $Y$ and let 
$\cG$ be a distribution $\cG\subseteq \cD^{\log}_Y$ on $Y$. Then the distribution pullback $\dispull{i}(\cG)$ will be called the {\it nonsaturated restriction} and denoted by
$\cG_{|X}$. 
\item Suppose, furthermore, that both $\cG$ and $\dispull{\phi}(\cG)$ are foliations. Then we may say that $\dispull{\phi}(\cG)$ is the \emph{foliation pullback}, and denote it by $\folpull{\phi}(\cG)$.
 \end{enumerate}
  \end{definition}

Note that $\dispull{\phi}(\cG)$ contains the relative sheaf  of derivations $\cD^{\log}_{X/Y}$ as we have  the exact sequence
  $$
  0\to \cD^{\log}_{X/Y}\to \cD^{\log}_X \to \phi^*(\cD_Y^{\log}).
  $$

\begin{remark}\label{Rem:non-saturated-restriction} The nonsaturated restriction plays {a role} in the construction of the center and in the algorithm, specifically the restriction to a maximal contact hypersurface  (see Section \ref{sec:Fol:LocalSplittingFoliations}). This notion does come with its challenges:  it is not clear in what generality the distribution pullback of a foliation is a foliation. It is not clear in what generality it is functorial. And it is in general not the case that sections of $\dispull{\phi}(\cG)$ are generated by lifts of sections in $\cG$, as $\phi^*(\cG)$ consists of $\cO_X$-linear combinations of such sections. We do verify these properties in the key case of smooth morphisms (Lemma \ref{Lem:smooth-pullback}). One can also verify this for transversal restriction (Lemma \ref{Lem:restriction-and-ideal} and an inductive application of Theorem \ref{thm:splittingFoliation}).
\end{remark}

\begin{lemma}[Criterion for $\dispull{\phi}(\cG)$ being a foliation]\label{lem:NonSaturatedPullbackIsFoliation} 
Let $\phi: (X,E)\to (Y,F)$ be a morphism and $\cG\subseteq \cD^{\log}_Y$ be a foliation. Suppose that:
\[
\dispull{\phi}(\cG) = \spa_{\cO_X}\{ \partial' \ \mid \ \ \text{$\partial'$ is a lift of a derivation from $\cG$} \}.
\]
then the distribution $\dispull{\phi}(\cG)$ is a foliation.
\end{lemma}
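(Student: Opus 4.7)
The plan is to verify that $\dispull{\phi}(\cG)$ is closed under the Lie bracket; coherence is automatic from the fiber-product definition, and the sheaf is already contained in $\cD^{\log}_X$, so involutivity is the only missing ingredient.

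The key computational input is the standard $\phi$-relatedness identity for lifts: if $\partial'_1,\partial'_2\in\cD^{\log}_X$ are lifts of $\partial_1,\partial_2\in\cG$, meaning $d\phi(\partial'_i)=\phi^*(\partial_i)$, then I would first check that $[\partial'_1,\partial'_2]$ is a lift of $[\partial_1,\partial_2]$. Concretely, for any local section $f$ of $\cO_Y$ one computes
\[
[\partial'_1,\partial'_2](\phi^*f)\ =\ \partial'_1\bigl(\phi^*(\partial_2 f)\bigr)-\partial'_2\bigl(\phi^*(\partial_1 f)\bigr)\ =\ \phi^*\bigl([\partial_1,\partial_2](f)\bigr),
\]
which means $d\phi([\partial'_1,\partial'_2])=\phi^*([\partial_1,\partial_2])$. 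Since $\cG$ is involutive, $[\partial_1,\partial_2]\in\cG$, and hence $[\partial'_1,\partial'_2]\in\dispull{\phi}(\cG)$ by the very definition of the distribution pullback as the fiber product.

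Next, I would upgrade this from bare lifts to $\cO_X$-linear combinations of lifts by the Leibniz-type identity
\[
[f\partial'_1,g\partial'_2]\ =\ fg\,[\partial'_1,\partial'_2]\ +\ f\,\partial'_1(g)\,\partial'_2\ -\ g\,\partial'_2(f)\,\partial'_1,
\]
for $f,g\in\cO_X$. Each term on the right lies in the $\cO_X$-submodule of $\cD^{\log}_X$ generated by lifts of sections of $\cG$. By the standing hypothesis, this submodule coincides with $\dispull{\phi}(\cG)$, so the bracket belongs to $\dispull{\phi}(\cG)$.

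Finally, since both the bracket operation and the module $\dispull{\phi}(\cG)$ are compatible with restriction to open subsets, and the hypothesis guarantees that every local section of $\dispull{\phi}(\cG)$ is an $\cO_X$-linear combination of lifts, the involutivity established on a generating set propagates to all local sections by bilinearity of the bracket. This gives the conclusion. The only substantive point — and the reason the hypothesis is needed — is the last one: without the assumption that lifts generate, a general section of $\dispull{\phi}(\cG)$ need not be expressible through sections of $\cG$, and the Leibniz identity above would not suffice to close the bracket inside $\dispull{\phi}(\cG)$.
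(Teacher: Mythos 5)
Your proof is correct and follows the same route as the paper: you verify the $\phi$-relatedness identity showing that the bracket of two lifts is a lift of the bracket (hence lands in $\dispull{\phi}(\cG)$ by involutivity of $\cG$). The Leibniz computation you add to extend from bare lifts to arbitrary $\cO_X$-linear combinations is a standard step the paper leaves implicit, so you have simply been more explicit, not taken a different approach.
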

\begin{proof}
In fact, given two lifted derivations $\partial_1$ and $\partial_2$, we verify directly
\[
[\partial_1,\partial_2](\phi^{\ast}_{\pa}(f)) = \phi^{\ast}_{\pa}([\partial_1',\partial_2'](f)), \quad \forall f\in \cO_{Y,\phi(\pa)}.
\]
implying that $[\partial_1,\partial_2]$ is a lifted derivation {as well.}
\end{proof}

\subsubsection{Nonsaturated restriction}\label{Sec:nonsaturated-restriction}
Let $\phi$ be a closed embedding with ideal $\cI_X$. Then we have an exact sequence of $\cO_Y$-modules
$$\xymatrix{0 \ar[r] & \cD_X^{\log} \ar[r]^{d\phi} & \phi^*(\cD_Y^{\log}) \ar[r] & \cHom(\cI_X, \cO_X)}.$$ Also the arrow $\cD^{\log}_Y \to \phi^*(\cD^{\log}_Y)$ is a surjective morphism of $\cO_Y$-modules. Thus we get the following well-known formula:
\begin{lemma}\label{Lem:restriction-and-ideal} If $i: (X,i^{-1}(F))\to (Y,F)$ is a closed embedding then $$\cG_{|X}=\spa_{\cO_X}\{\,\partial_{|X}\ \mid \ \ \partial \in \cG, \quad\partial(\cI_X)\subseteq \cI_X\}.$$
If $\cG$ is a foliation, then $\cG_{|X}$ is a foliation.
 \end{lemma}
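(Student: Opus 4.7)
The plan is to unwind the definition of $\cG_{|X} = \dispull{i}(\cG)$ via the fundamental exact sequence
$$0 \to \cD_X^{\log} \xrightarrow{di} i^*(\cD_Y^{\log}) \to \cHom(\cI_X, \cO_X),$$
already displayed before the statement. By construction, $\cG_{|X}$ consists of those $\partial \in \cD_X^{\log}$ whose image $di(\partial) \in i^*(\cD_Y^{\log})$ lies in the image of $j : i^*(\cG) \to i^*(\cD_Y^{\log})$.

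The inclusion $\supseteq$ of the claimed equality is essentially tautological: if $\partial \in \cG$ satisfies $\partial(\cI_X) \subseteq \cI_X$, then by the exact sequence its image in $i^*(\cD_Y^{\log})$ comes from a unique $\partial_{|X} \in \cD_X^{\log}$; this image lies in $j(i^*(\cG))$ by definition, so $\partial_{|X} \in \cG_{|X}$. For the main inclusion $\subseteq$, I would argue locally: given $\partial \in \cG_{|X}$, pick $\sigma \in i^*(\cG) = \cG \otimes_{\cO_Y} \cO_X$ with $j(\sigma) = di(\partial)$, and write $\sigma = \sum \bar f_k \otimes \partial'_k$ with $\partial'_k \in \cG$, $\bar f_k \in \cO_X$. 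Lifting each $\bar f_k$ to some $f_k \in \cO_Y$ (possible since $\cO_X = \cO_Y/\cI_X$), the single section $\tilde\partial := \sum f_k \partial'_k \in \cG$ has image $j(\sigma) = di(\partial)$ in $i^*(\cD_Y^{\log})$. By the exact sequence this image already lies in $di(\cD_X^{\log})$, which forces $\tilde\partial(\cI_X) \subseteq \cI_X$, and by injectivity of $di$ we conclude $\tilde\partial_{|X} = \partial$. Note this in fact writes $\partial$ as a \emph{single} such restriction, so the $\cO_X$-span is not needed, but the stated form is what we want for the sequel.

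For the second assertion, suppose $\cG$ is a foliation. By the equality just proved, every local section of $\cG_{|X}$ is an $\cO_X$-linear combination of restrictions $\tilde\partial_{|X}$ with $\tilde\partial \in \cG$ satisfying $\tilde\partial(\cI_X) \subseteq \cI_X$. But such restrictions are exactly the lifts of sections of $\cG$ along $i$ in the sense of Section~\ref{ssec:LiftDerivation}: the defining relation $di(\tilde\partial_{|X}) = i^*(\tilde\partial)$ holds by construction. Therefore Lemma \ref{lem:NonSaturatedPullbackIsFoliation} applies, giving that $\cG_{|X}$ is involutive and hence a foliation.

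I do not foresee a serious obstacle; the argument is essentially a diagram chase combined with lifting coefficients from $\cO_X$ to $\cO_Y$. The only point requiring minor care is the lifting of the coefficients $\bar f_k$, which is a local operation and hence compatible with the sheaf-theoretic statement.
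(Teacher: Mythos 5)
Your proof is correct and follows the same route the paper intends, namely the conormal exact sequence $0 \to \cD_X^{\log} \to i^*(\cD_Y^{\log}) \to \cHom(\cI_X,\cO_X)$ together with surjectivity of $\cD^{\log}_Y \to i^*(\cD^{\log}_Y)$ (which you implement by locally lifting coefficients from $\cO_X$ to $\cO_Y$, and which equally lets you express $\partial$ as a \emph{single} restriction as you remark). Your deduction of the second statement from the formula via Lemma \ref{lem:NonSaturatedPullbackIsFoliation} is the intended one — the paper's sentence after the lemma appears to contain a typo (it self-refers to Lemma \ref{Lem:restriction-and-ideal} instead of Lemma \ref{lem:NonSaturatedPullbackIsFoliation}), and you correctly identified the reference.
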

 The second statement follows from the formula and Lemma \ref{Lem:restriction-and-ideal}.
 
 This formula is used in the construction of the center in Section \ref{sec:Fol:LocalSplittingFoliations}.\footnote{Functoriality is not true without transversality: $\cF = (\partial_z + x\partial_x)$ and $Y = V(x+y)$, $X = V(x,y)$.}

\subsubsection{Nonsaturated distribution pullback {and descent} under a smooth morphism}\label{Sec:nonsat-smooth} 
\begin{lemma}\label{Lem:smooth-pullback} The {distribution pullback} of a {distribution} $\cG$   over $Y$ under the smooth morphism $\phi:(X,\phi^{-1}(E))\to (Y,E)$ is the subsheaf of $\cD_X^{\log}$ {given by}
\[
\dispull{\phi}(\cG) = \spa_{\cO_{X}}\{ \partial' \in \cD_{X}^{\log} \mid \text{$\partial'$ is a lift of a derivation from $\cG$} \}.
\]
In particular, if $\cG$ is a foliation, then $\folpull{\phi}{(\cG)}=\dispull{\phi}{(\cG)}$ is a foliation.

If $\phi= \phi_1\circ\phi_2$ is a composition of smooth morphisms, then $\dispull{\phi}(\cG) = \dispull{(\phi_1)}(\dispull{(\phi_2)}(\cG))$.
\end{lemma}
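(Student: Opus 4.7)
\textbf{Proof proposal for Lemma \ref{Lem:smooth-pullback}.}

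My plan is to reduce the lemma to two local inputs about smooth morphisms of logarithmic varieties and then verify the three bullet points in order. The key local input is that for a smooth morphism $\phi: (X,\phi^{-1}(E))\to (Y,E)$ the differential
\[
d\phi:\cD_X^{\log}\ \longrightarrow\ \phi^*(\cD_Y^{\log})
\]
is surjective with kernel $\cD_{X/Y}^{\log}$, so locally every section of $\phi^*(\cD_Y^{\log})$ is hit by a logarithmic derivation on $X$, and in particular every section $\partial\in\cG$ admits (Zariski-locally on $X$) a lift $\partial'\in\cD_X^{\log}$ with $d\phi(\partial')=\phi^*(\partial)$.

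First I would prove the span description of $\dispull{\phi}(\cG)$. The inclusion $\supseteq$ is immediate: if $\partial'$ lifts $\partial\in\cG$, then $d\phi(\partial')=\phi^*(\partial)\in j(\phi^*\cG)$, and $\cO_X$-combinations of such elements land in $j(\phi^*\cG)$ as well because $j(\phi^*\cG)$ is an $\cO_X$-submodule. For $\subseteq$, take $\delta\in\dispull{\phi}(\cG)$. Locally write $d\phi(\delta)=\sum f_i\,\phi^*(\partial_i)$ with $f_i\in\cO_X$ and $\partial_i\in\cG$, choose local lifts $\partial_i'\in\cD_X^{\log}$ by smoothness, and set $\delta_0=\sum f_i\partial_i'$. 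Then $d\phi(\delta-\delta_0)=0$, so $\delta-\delta_0\in\cD_{X/Y}^{\log}$; but every section of $\cD_{X/Y}^{\log}$ is a lift of $0\in\cG$, so $\delta-\delta_0$ lies in the span of lifts as well. Thus $\delta$ does too.

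Next I would prove involutivity when $\cG$ is a foliation using Lemma \ref{lem:NonSaturatedPullbackIsFoliation}. It suffices, by the span description, to check that $[\partial_1',\partial_2']$ is again a lift whenever $\partial_i'$ lifts $\partial_i\in\cG$. The verification is the standard one:
\[
[\partial_1',\partial_2'](\phi^*g)\ =\ \partial_1'(\phi^*(\partial_2 g))-\partial_2'(\phi^*(\partial_1 g))\ =\ \phi^*([\partial_1,\partial_2]g)
\]
for $g\in\cO_Y$, so $[\partial_1',\partial_2']$ is a lift of $[\partial_1,\partial_2]\in\cG$. The Leibniz expansion $[f_1\partial_1',f_2\partial_2']=f_1f_2[\partial_1',\partial_2']+f_1\partial_1'(f_2)\partial_2'-f_2\partial_2'(f_1)\partial_1'$ then reduces involutivity of arbitrary $\cO_X$-combinations of lifts to the case of the lifts themselves.

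Finally, for the composition $\phi=\phi_1\circ\phi_2$ with $\phi_2:X\to Z$ and $\phi_1:Z\to Y$ smooth, I would use the span description and the fact that liftability chains: any local lift of $\partial\in\cG$ through $\phi$ factors as a lift through $\phi_1$ to some $\partial''\in\cD_Z^{\log}$ (which by the span description lies in $\dispull{(\phi_1)}(\cG)$) followed by a lift of $\partial''$ through $\phi_2$, hence lies in $\dispull{(\phi_2)}(\dispull{(\phi_1)}(\cG))$; conversely, a local lift of a section of $\dispull{(\phi_1)}(\cG)$ through $\phi_2$ is, after expanding the section as an $\cO_Z$-combination of lifts of $\cG$ via $\phi_1$, visibly an $\cO_X$-combination of lifts of $\cG$ through $\phi$. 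The main technical obstacle I anticipate is a careful bookkeeping in this last step: one must move $\cO_Z$-coefficients of the $\phi_1$-lifts to $\cO_X$-coefficients after pullback by $\phi_2$ and verify that the resulting elements are still lifts in the required sense; this is a formal but slightly tedious verification using the factorization $d\phi=\phi_2^*(d\phi_1)\circ d\phi_2$ and the surjectivity of both $d\phi_1$ and $d\phi_2$ afforded by smoothness.
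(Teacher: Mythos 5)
Your argument is correct and follows essentially the same route as the paper's: both rely on the local exactness of $0\to \cD^{\log}_{X/Y}\to \cD^{\log}_X\to \phi^*(\cD^{\log}_Y)\to 0$ for smooth $\phi$. The paper packages this by choosing \'etale coordinates $u_1,\ldots,u_k$ and the resulting canonical splitting $\psi$ (determined by $\psi(\partial)(u_i)=0$), which makes the direct-sum decomposition $\dispull{\phi}(\cG)=\cD^{\log}_{X/Y}\oplus\psi(\phi^*\cG)$ and the compatibility with composition immediate; you instead use surjectivity of $d\phi$ directly and absorb kernel terms as lifts of $0$. These are two presentations of the same local fact, so I would not call the approach genuinely different.

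One small point on the composition step: your phrasing ``any local lift of $\partial\in\cG$ through $\phi$ factors as a lift through $\phi_1$ followed by a lift through $\phi_2$'' is not literally true. If $\delta$ lifts $\partial$ through $\phi$, then $d\phi_2(\delta)$ is a section of $\phi_2^*(\cD^{\log}_Z)$ but need not be of the form $\phi_2^*(\partial'')$ for a single $\partial''\in\cD^{\log}_Z$. What is true is that $d\phi_2(\delta)-\phi_2^*(\partial'')$ lies in $\ker(\phi_2^*(d\phi_1))=\phi_2^*(\cD^{\log}_{Z/Y})$ for any lift $\partial''$ of $\partial$ through $\phi_1$, and since $\cD^{\log}_{Z/Y}\subset\dispull{(\phi_1)}(\cG)$ this still places $d\phi_2(\delta)$ in $j(\phi_2^*(\dispull{(\phi_1)}(\cG)))$, giving the needed inclusion. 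You flag that bookkeeping is required and have the right ingredients (the factorization $d\phi=\phi_2^*(d\phi_1)\circ d\phi_2$ and surjectivity), but the actual correction is this kernel argument rather than moving $\cO_Z$-coefficients; the reverse inclusion is the unproblematic one where coefficients do move cleanly. Also note the paper's indexing convention for the composition ($\phi_1\circ\phi_2$ read left-to-right) is the opposite of yours, so be careful which of $\phi_1,\phi_2$ is the inner map when comparing against the printed statement.
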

\begin{proof} 
We may work locally, therefore we may assume given elements $u_1,\ldots,u_k\in \cO_{X}$ inducing an \'etale morphism $X \to Y \times \AA^k$. This provides a splitting $\psi: \phi^*(\cD^{\log}_Y) \to \cD_X$ of 
$$0\to \cD^{\log}_{X/Y}\to \cD^{\log}_{X}\to \phi^*(\cD^{\log}_Y)\to 0,$$ so  we may write  $\cD^{\log}_{X} = \cD^{\log}_{X/Y} \oplus  \phi^*(\cD^{\log}_Y)$. It is determined by $\psi(\partial) (u_i) = 0$ for all $\partial \in  \cD^{\log}_Y$.
In particular, every section of $\cD^{\log}_Y$ lifts. Applying this to the restricted exact sequence 
$$0\to \cD^{\log}_{X/Y}\to \dispull{\phi}(\cG) \to \phi^*(\cG)\to 0,$$ we see that $\dispull{\phi}(\cG) =  \cD^{\log}_{X/Y} \oplus \phi^*(\cG)$, where $\phi^*(\cG)$ is generated by liftings of derivations on $Y$ of the form $\psi(\partial)$ and   $\cD^{\log}_{X/Y}$, the lifts of 0.

The composition statement follows by locally splitting $\cD^{\log}_{X/Y}$ as $\cD^{\log}_{\phi_1} \oplus \cD^{\log}_{\phi_2}$.
\end{proof}

In the opposite direction, we have the following: 
\begin{lemma}\label{Lem:smooth-descent} Assume $\phi$ is smooth and surjective, and write $X_2 = X\times_YX$ with diagram  
 $$\xymatrix{ X_2 \ar[r]^{\pi_2}\ar[d]_{\pi_1}\ar[dr]|-{\phi_2} & X \ar[d]^\phi \\ X \ar[r]_{\phi} & Y.}$$
 Let $\cG_X\subset \cD^{\log}_X$ be a foliation \emph{containing $\cD^{\log}_{X/Y}$}. Assume $\folpull{{\pi_1}}(\cG_X) = \folpull{{\pi_2}}(\cG_X) \subset \cD^{\log}_{X_2}$. Then there is a unique $\cG\subset \cD^{\log}_Y$ such that $\folpull{\phi}(\cG) = \cG_X$.
 \end{lemma}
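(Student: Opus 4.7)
The key observation is that the hypothesis $\cD^{\log}_{X/Y} \subset \cG_X$ lets us form the quotient subsheaf
\[
\cH \ := \ \cG_X / \cD^{\log}_{X/Y} \ \subset \ \cD^{\log}_X/\cD^{\log}_{X/Y} \ = \ \phi^*(\cD^{\log}_Y),
\]
using the short exact sequence from Lemma~\ref{Lem:smooth-pullback}. The plan is to descend $\cH$ along the faithfully flat map $\phi$ to a coherent subsheaf $\cG \subset \cD^{\log}_Y$, and then to verify that $\cG$ is involutive and that $\folpull{\phi}(\cG) = \cG_X$. Uniqueness is forced by the faithful flatness of $\phi$: any $\cG$ with $\folpull{\phi}(\cG) = \cG_X$ satisfies $\phi^*(\cG) = \cH$ inside $\phi^*(\cD^{\log}_Y)$, which determines $\cG$.

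\textbf{Translating the bracket hypothesis into a descent datum.} First I would show that, for $i=1,2$, we have the identification
\[
\folpull{\pi_i}(\cG_X) \big/ \cD^{\log}_{X_2/Y} \ = \ \pi_i^*(\cH) \ \subset \ \phi_2^*(\cD^{\log}_Y),
\]
where $\phi_2 = \phi\circ\pi_i$. For this one checks (a) that $\cD^{\log}_{X_2/Y}$ already lies inside $\folpull{\pi_i}(\cG_X)$ — indeed $\cD^{\log}_{X_2/X,\pi_i} \subset \folpull{\pi_i}(\cG_X)$ trivially, and the other relative factor $\cD^{\log}_{X_2/X,\pi_j}$ maps via $d\pi_i$ into $\pi_i^*(\cD^{\log}_{X/Y}) \subset \pi_i^*(\cG_X)$ by base change applied to smoothness of $\phi$; and (b) that the successive quotient agrees with the pullback of $\cH$, using Lemma~\ref{Lem:smooth-pullback} for the smooth morphism $\pi_i$ and flatness of $\pi_i$. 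The assumption $\folpull{\pi_1}(\cG_X) = \folpull{\pi_2}(\cG_X)$ then reads $\pi_1^*(\cH) = \pi_2^*(\cH)$ as subsheaves of $\phi_2^*(\cD^{\log}_Y)$, which is precisely the cocycle condition for faithfully flat descent of the subsheaf $\cH$ along $\phi$. Invoking descent (available in both the algebraic and coherent analytic settings, the latter by Kiehl, as recalled in Section~\ref{sec:IntroAlgebraicAnalytic}), we obtain a unique coherent subsheaf $\cG \subset \cD^{\log}_Y$ with $\phi^*(\cG) = \cH$, and then $\folpull{\phi}(\cG) = (d\phi)^{-1}(\cH) = \cG_X$ by construction.

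\textbf{Involutivity.} It remains to show $[\cG,\cG] \subset \cG$. Given local sections $\delta_1,\delta_2$ of $\cG$, by Lemma~\ref{Lem:smooth-pullback} one may lift them to local sections $\widetilde\delta_1,\widetilde\delta_2$ of $\folpull{\phi}(\cG) = \cG_X$ over suitable étale/smooth charts. Involutivity of $\cG_X$ gives $[\widetilde\delta_1,\widetilde\delta_2] \in \cG_X$, and by compatibility of Lie bracket with $d\phi$ this section projects to $[\delta_1,\delta_2] \in \phi^*(\cG) = \cH$; descending, $[\delta_1,\delta_2]$ is a section of $\cG$. The main technical obstacle is the first step (verifying the identification of $\folpull{\pi_i}(\cG_X)/\cD^{\log}_{X_2/Y}$ with $\pi_i^*(\cH)$ as subsheaves of $\phi_2^*(\cD^{\log}_Y)$), which requires carefully using the decomposition $\cD^{\log}_{X_2/Y} = \cD^{\log}_{X_2/X,\pi_1} + \cD^{\log}_{X_2/X,\pi_2}$ provided by smoothness together with the base-change identifications; once this is in place, the rest is standard faithfully flat descent.
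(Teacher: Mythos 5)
Your proof is correct and follows essentially the same route as the paper: pass to the quotient $\cH=\cG_X/\cD^{\log}_{X/Y}\subset\phi^*(\cD^{\log}_Y)$, check the descent datum, apply faithfully flat descent, and then verify involutivity. The only cosmetic difference is in the involutivity step: the paper uses the explicit splitting $\psi$ from Lemma~\ref{Lem:smooth-pullback} and the identity $[\psi(\partial_1),\psi(\partial_2)]=\psi([\partial_1,\partial_2])$, whereas you lift arbitrarily and push the bracket down via $d\phi$; these are equivalent (an arbitrary lift differs from $\psi(\delta_i)$ by an element of $\cD^{\log}_{X/Y}$, and the cross-bracket terms remain in $\cD^{\log}_{X/Y}$, so they die under $d\phi$). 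Your verification that $\cD^{\log}_{X_2/Y}\subset\folpull{\pi_i}(\cG_X)$ and that the quotients identify with $\pi_i^*(\cH)$ spells out a step the paper leaves implicit; that added detail is sound.
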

 \begin{proof} We obtain an equality $$\pi^*(\cG_X/\cD^{\log}_{X/Y}) = \pi_2^*(\cG_X/\cD^{\log}_{X/Y}) \quad \subset \quad \cD^{\log}_{X_2}/ \cD^{\log}_{X_2/Y}= \phi_2^*(\cD^{\log}_Y),$$ which, by faithfully flat descent, gives a unique subsheaf $\cG \subset \cD^{\log}_Y$. This sheaf is automatically a foliation: using the splitting $\psi$ in the proof of the previous lemma, one checks that for $\partial_i \in \cG$, we have $[\psi(\partial_1),\psi(\partial_2)] = \psi([\partial_1,\partial_2])$. 
 \end{proof}

\subsection{Transforms of {generically \'{e}tale} morphisms} \label{Sec:birational-transform} 
We now consider {\it  generically \'etale}  morphisms $\phi: (X,E) \to (Y,F)$ between smooth logarithmic varieties satisfying the extra hypothesis that:
\begin{equation}\label{Eq:divisor-D} \text{$\phi$ is \'{e}tale everywhere outside of a SNC divisor $D \subset E$.}
\end{equation}
Generically \'etale morphisms {satisfying this property} arise as charts for stack-theoretic blow-ups, where $D$ arises as exceptional divisors, or, more generally. birational morphisms. Our goal is to  specify how to transform foliations in this key setting and, in particular, to define a non-saturated version of the \emph{strict} transform.

Consider again the morphism $d\phi: \cD_X^{\log} \to \phi^*(\cD_Y^{\log})$. It is an isomorphism away from the divisor $D$, so we have an inclusion $\phi^*(\cD_Y^{\log}) \subset  \cD_X^{\log}(*D)$, where $\cD_X^{\log}(*D)$ is the quasi-coherent sheaf of meromorphic sections of $\cD_X^{\log}$ with poles of arbitrary order restricted to $D$.

\begin{definition}[Total transform of a foliation]\label{def:TotalTransformFoliation}
The {\it total} transform of a foliation $\cG \subset \cD^{\log}_Y$ by $\phi: (X,E) \to (Y,F)$ is the $\cO_X$-sub-sheaf $\phi^*(\cG)\subset \cD_X^{\log}(*D)$.
\end{definition}

The total transform is meromorphic, and rarely involutive --- only $\phi^*(\cG)(*D)$ is guaranteed to be involutive.  In analogy with the transforms of ideals, we will work with two  transforms, the \emph{controlled} and \emph{strict} transform, which are holomorphic and involutive. They are defined in somewhat different terms.

We start by considering how derivations may be transformed.
A derivation $\partial \in \cD^{\log}_{{Y}}$ on $Y$ determines a unique meromorphic derivation  $\phi^*(\partial) \in \cD^{\log}_{{X}}(*D)$, which is again called the  {\it total transform}. There is a minimal effective divisor $r_1D_1+ \cdots + r_k D_k$ such that  
$$
\phi^*(\partial)\,\in \, \cD^{\log}_{{X}}\left(\sum r_iD_i\right) \subset  \cD^{\log}_{{X}}(*D).
$$ 
Choosing local defining sections $y_i \in \cO_X(-D_i)$ there is a minimal power  $r=(r_1,\ldots,r_k) \in  \ZZ_{\geq 0}^k$ (respectively $m=(m_1,\ldots,m_k) \in \ZZ^k$) such that 
$$
\phi^c(\partial):=y^r_{D}\cdot \partial=y_1^{r_1}\cdot\ldots\cdot y_k^{r_k} \partial
$$ 
respectively 
$$
\phi^s(\partial):=y^m_{D}\cdot \partial=y_1^{m_1}\cdot\ldots\cdot y_s^{m_k} \partial 
$$ 
determines a regular derivation in   $\cD_X^{\log}$. We call $\phi^c(\partial)$ the \emph{controlled transform} of $\partial$, and  $\phi^s(\partial)$ {\it strict transform} of $\partial$.
Note that, in general the  controlled and strict transforms of the derivation are defined up to a unit. 

Strictly speaking, the strict transform of a derivation depends on the choice of the divisor $D$, since $\partial$ itself may vanish along a divisor. In all applications $D$ will be uniquely determined as the exceptional locus of $\sigma$.

\begin{definition}[Controlled  transform] \label{def:TransformFoliation} The {\it controlled} transform $\phi^{c}(\cG)$  
of a foliation $\cG \subset \cD^{\log}_Y$ by $\phi: (X,E) \to (Y,F)$ is the $\cO_X$-subsheaf of $\cD_X^{\log}$ whose stalks are  generated by the controlled  
transforms
of  $\partial\in \cD_Y^{\log}$.
\end{definition}

\begin{lemma}\label{lem:ControlledStrictFoliaitons}
The controlled transform $\phi^{c}(\cG)$ is a  foliation.
\end{lemma}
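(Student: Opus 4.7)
\medskip

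\noindent\textbf{Proof plan for Lemma \ref{lem:ControlledStrictFoliaitons}.}
The plan is to verify the two properties of a foliation: coherence of $\phi^c(\cG)$ as a subsheaf of $\cD_X^{\log}$, and closure under the Lie bracket. Throughout, I will work locally, fix local defining sections $y_1,\dots,y_k$ for the components of $D$, write $y_D^r := y_1^{r_1}\cdots y_k^{r_k}$, and think of $\phi^*(\cG)$ inside the quasi-coherent sheaf $\cD_X^{\log}(*D)$, where $\phi^c(\cG) = (\cO_X\cdot \phi^*(\cG))\cap \cD_X^{\log}$.

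For coherence, I will choose local generators $\partial_1,\dots,\partial_s$ of $\cG$ and the associated controlled transforms $\phi^c(\partial_i) = y_D^{r_i}\phi^*(\partial_i)\in \cD_X^{\log}$. Setting $R = (\max_i r_{i,1},\dots,\max_i r_{i,k})$, one sees that for any local section $\partial = \sum f_i\partial_i$ of $\cG$, the element $y_D^R\phi^*(\partial)$ lies in the $\cO_X$-span of $\phi^c(\partial_1),\dots,\phi^c(\partial_s)$; the actual controlled transform $\phi^c(\partial)$ differs from this by an $\cO_X$-multiple and still lies in that span. Hence $\phi^c(\cG)$ is finitely generated, and a coherent subsheaf of $\cD_X^{\log}$.

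For involutivity, I will take two generators $\delta_i := \phi^c(\partial_i) = y_D^{r_i}\phi^*(\partial_i)$ with $\partial_1,\partial_2\in\cG$, and compute the Lie bracket in $\cD_X^{\log}(*D)$:
\[
[\delta_1,\delta_2] \;=\; y_D^{r_1}\phi^*(\partial_1)(y_D^{r_2})\,\phi^*(\partial_2)\;-\;y_D^{r_2}\phi^*(\partial_2)(y_D^{r_1})\,\phi^*(\partial_1)\;+\;y_D^{r_1+r_2}\,\phi^*([\partial_1,\partial_2]),
\]
where the last equality uses that pullback of derivations respects the bracket and $\cG$ is involutive. The key computation is that since each $\delta_i \in \cD_X^{\log}$ is logarithmic on $D\subseteq E$, one has $\delta_i(y_j)\in (y_j)$, hence $\delta_i(y_D^{r_j}) = y_D^{r_j}\cdot g_{ij}$ for some $g_{ij}\in\cO_X$. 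Substituting, the first two terms become $g_{12}\cdot \phi^c(\partial_2) - g_{21}\cdot \phi^c(\partial_1)$, which manifestly lie in $\phi^c(\cG)$.

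The main (and only delicate) step is to handle the third term $y_D^{r_1+r_2}\phi^*([\partial_1,\partial_2])$. Let $r_{12}$ be the minimal exponent vector such that $\phi^c([\partial_1,\partial_2]) = y_D^{r_{12}}\phi^*([\partial_1,\partial_2])$ is holomorphic. Because $[\delta_1,\delta_2]\in \cD_X^{\log}$ (as $\cD_X^{\log}$ is a sheaf of Lie algebras) and the first two terms are already in $\cD_X^{\log}$, the third term is in $\cD_X^{\log}$ as well; minimality of $r_{12}$ then forces $r_1+r_2\ge r_{12}$ componentwise. Thus
\[
y_D^{r_1+r_2}\phi^*([\partial_1,\partial_2]) \;=\; y_D^{r_1+r_2-r_{12}}\cdot \phi^c([\partial_1,\partial_2]) \;\in\; \phi^c(\cG),
\]
since $y_D^{r_1+r_2-r_{12}}\in\cO_X$ and $[\partial_1,\partial_2]\in\cG$. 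Combining, $[\delta_1,\delta_2]\in\phi^c(\cG)$. Finally, the standard identity $[f\delta_1,g\delta_2] = fg[\delta_1,\delta_2] + f\delta_1(g)\delta_2 - g\delta_2(f)\delta_1$ extends involutivity from these generators to arbitrary $\cO_X$-linear combinations, completing the proof.
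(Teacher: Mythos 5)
Your involutivity argument is essentially the paper's own proof: the same Leibniz expansion of $[\phi^c(\partial_1),\phi^c(\partial_2)]$, the same observation that $\phi^c(\partial_i)(y_D^{r_j})\in (y_D^{r_j})$ because the controlled transforms are logarithmic along $D\subseteq E$, and the same minimality argument identifying $y_D^{r_1+r_2}\phi^*([\partial_1,\partial_2])$ as an $\cO_X$-multiple of $\phi^c([\partial_1,\partial_2])$. That part is correct and complete (the final reduction to $\cO_X$-combinations of generators is also implicit in the paper).

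The coherence step you add, however, contains a reversed divisibility. For $\partial=\sum f_i\partial_i$ you correctly note that $y_D^R\phi^*(\partial)=\sum \phi^*(f_i)\,y_D^{R-r_i}\phi^c(\partial_i)$ lies in the span of the $\phi^c(\partial_i)$, but then claim that $\phi^c(\partial)$ ``differs from this by an $\cO_X$-multiple.'' It is the other way around: $y_D^R\phi^*(\partial)=y_D^{R-r_\partial}\,\phi^c(\partial)$ with $R-r_\partial\ge 0$, so $y_D^R\phi^*(\partial)$ is an $\cO_X$-multiple of $\phi^c(\partial)$, and membership in the span does not descend to $\phi^c(\partial)$. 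When cancellations among the $\phi^*(\partial_i)$ make $r_\partial$ strictly smaller than $R$, your argument gives no reason for $\phi^c(\partial)$ to lie in the span of the $\phi^c(\partial_i)$; indeed, were your step valid it would prove that $\phi^c(\cG)$ is always generated by the controlled transforms of any generating set of $\cG$, which is not what Definition \ref{def:TransformFoliation} asserts. Relatedly, the identification $\phi^c(\cG)=\bigl(\cO_X\cdot\phi^*(\cG)\bigr)\cap\cD_X^{\log}$ that you posit at the outset is not the definition and only the inclusion $\subseteq$ is clear. Coherence of $\phi^c(\cG)$ does hold, but for the softer reason that it is a quasi-coherent $\cO_X$-submodule of the coherent sheaf $\cD_X^{\log}$ (with the usual Oka-type argument in the analytic category), not by the finite-generation reduction you propose.
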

\begin{proof}
We show that $\phi^{c}(\cG)$ is closed by Lie brackets.
To this end, let $\partial_1,\partial_2 \in \cG$ and consider their controlled transforms:
\[
\phi^{c}(\partial_1) = y^{r_1}_D \cdot \phi^{\ast}(\partial_1), \quad  \phi^{c}(\partial_2) = y^{r_2}_D \cdot \phi^{\ast}(\partial_2),
\]
which are well-defined logarithmic derivations over $\cD_X^{\log}$. In particular:
\[
\begin{aligned}
\cD_X^{\log} \, \ni \, [\phi^{c}(\partial_1),\phi^{c}(\partial_2) ] &=\\ y^{r_1+r_2}[\phi^{\ast}(\partial_1),&\phi^{\ast}(\partial_2)] + \left\{\left[  \phi^{c}(\partial_1)(y_D^{r_2}) \right]\cdot \phi^{\ast}(\partial_2) - \left[  \phi^{c}(\partial_2)(y_D^{r_1}) \right]\cdot \phi^{\ast}(\partial_1) \right\}.
\end{aligned}
\]
The result now follows from two remarks. First, as $\phi^{c}(\partial_1),\,\phi^{c}(\partial_2) \in \cD_X^{\log}$, we conclude that $\phi^{c}(\partial_1)(y_D^{r_2}) \in (y_D^{r_2})$ and $\phi^{c}(\partial_2)(y_D^{r_1}) \in (y_D^{r_1})$ , implying that the term in brackets belongs to $\sigma^{c}(\cG)$. Second, and as a consequence of the first remark, we know that:
\[
\cD_X^{\log} \, \ni  y^{r_1+r_2}[\phi^{\ast}(\partial_1),\phi^{\ast}(\partial_2)] = y^{r_1+r_2}\phi^{\ast}\left([\partial_1,\partial_2]\right),
\]
and we conclude that this term must be a multiple of $\phi^{c}\left([\partial_1,\partial_2]\right) \in \sigma^{c}(\cG)$, as needed. 
\end{proof}

In contrast to the controlled transform, the \emph{strict transform} is defined sheaf-theoretically:
  
\begin{definition}[Strict transform] \label{def:StrictTransformFoliation} 
The {\it strict} transform $\phi^s(\cG)$ of a  foliation $\cG \subset \cD^{\log}_Y$ by $\phi: (X,E) \to (Y,F)$ having exceptional divisor $D$ is the $\cO_X$-subsheaf of $\cD_X^{\log}$ given by
\(
\left[ \Phi^{\ast}(\cG) \otimes \cO(\ast D)\right] \cap \cD^{\log}_X.
\)
\end{definition}

\begin{lemma}
The strict transform $\phi^{s}(\cG)$ is a foliation.
\end{lemma}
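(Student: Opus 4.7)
The plan is to check the two defining properties of a foliation: involutivity and coherence of $\phi^s(\cG) = \left[\phi^*(\cG)\otimes \cO_X(*D)\right] \cap \cD_X^{\log}$. The key structural observation is that, since $\phi$ is generically \'etale with ramification contained in $D$, the map $d\phi: \cD_X^{\log} \to \phi^*(\cD_Y^{\log})$ is an isomorphism on $X\setminus D$, and consequently tensoring with $\cO_X(*D)$ identifies $\phi^*(\cD_Y^{\log})(*D)$ with $\cD_X^{\log}(*D)$. Under this identification $\phi^*(\cG)(*D)$ sits as a subsheaf of $\cD_X^{\log}(*D)$, and the intersection defining $\phi^s(\cG)$ is taken inside $\cD_X^{\log}(*D)$.

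First I would show that $\phi^*(\cG)(*D)$ is closed under Lie bracket. Pulling back along the \'etale morphism $\phi_{|X\setminus D}$, Lemma \ref{Lem:smooth-pullback} gives $[\phi^*(\partial_1), \phi^*(\partial_2)] = \phi^*([\partial_1,\partial_2])$ for local sections $\partial_1,\partial_2$ of $\cG$. Both sides are meromorphic sections of $\cD_X^{\log}(*D)$ agreeing on the dense open $X\setminus D$, hence they agree as sections of $\cD_X^{\log}(*D)$; the right-hand side lies in $\phi^*(\cG)(*D)$ because $\cG$ is involutive. The general case of arbitrary local sections $f_1\phi^*(\partial_1), f_2\phi^*(\partial_2)$ of $\phi^*(\cG)(*D)$ follows by Leibniz:
\[
[f_1\phi^*(\partial_1), f_2\phi^*(\partial_2)] = f_1 f_2 \phi^*([\partial_1,\partial_2]) + f_1 \phi^*(\partial_1)(f_2)\,\phi^*(\partial_2) - f_2 \phi^*(\partial_2)(f_1)\,\phi^*(\partial_1),
\]
which is manifestly in $\phi^*(\cG)(*D)$. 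Since $\cD_X^{\log}$ is trivially closed under Lie bracket inside $\cD_X^{\log}(*D)$, the intersection $\phi^s(\cG)$ is closed under Lie bracket.

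It remains to verify coherence. Writing $\phi^*(\cG)(*D) = \bigcup_{N\geq 0} \phi^*(\cG)(ND)$ as an ascending union of coherent subsheaves of $\cD_X^{\log}(*D)$, we obtain
\[
\phi^s(\cG) = \bigcup_{N\geq 0} \bigl( \phi^*(\cG)(ND) \cap \cD_X^{\log} \bigr),
\]
an ascending union of coherent subsheaves of the coherent sheaf $\cD_X^{\log}$ (in the analytic case this uses Oka's coherence theorem). By local Noetherianity the union stabilizes locally, so $\phi^s(\cG)$ is coherent. The main obstacle in the argument is making rigorous the meromorphic identification $\phi^*(\cD_Y^{\log})(*D) \cong \cD_X^{\log}(*D)$ and the coherence step in the analytic category; everything else is a direct Leibniz computation.
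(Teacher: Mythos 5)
Your proof is correct and follows essentially the same route as the paper: a Leibniz-rule computation showing that $\phi^{\ast}(\cG)\otimes\cO_X(\ast D)$ is closed under Lie bracket (using that $[\phi^{\ast}\partial_1,\phi^{\ast}\partial_2]=\phi^{\ast}[\partial_1,\partial_2]$ and that $\phi^{\ast}(\partial)(f)\in\cO_X(\ast D)$), followed by the observation that the intersection with $\cD_X^{\log}$ remains involutive. Your additional verification of coherence via the stabilizing union $\bigcup_N\bigl(\phi^{\ast}(\cG)(ND)\cap\cD_X^{\log}\bigr)$ is a sound supplement to a point the paper leaves implicit.
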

\begin{proof}
It is enough to show that $\phi^{s}(\cG)$ is closed by Lie brackets. Let $\partial_1,\ldots, \partial_r$ be local generators of $\cG$ and consider $\nabla_1,\, \nabla_2 \in  \phi^{\ast}(\cG) \otimes \cO(\ast D)$, that is
\(
\nabla_i = \sum_{j=1}^r  f_{ij} \phi^{\ast}(\partial_j), \quad i=1,2,
\)
where $f_{ij} \in \cO(\ast D)$. Recall that $\phi^{\ast}(\cG) \in \cD_X^{\log}(\ast D)$, so that $\phi^{\ast}(\partial_k) (f_{ij}) = f_{ijk} \in \cO_X(\ast D)$, allowing us to conclude that:
\[
\begin{aligned}
[\nabla_1,\nabla_2 ] = \sum_{jk} f_{1j}f_{2k} \phi^{\ast}[\partial_j,\partial_k] +   f_{1j} f_{2kj} \phi^{\ast}(\partial_k) - f_{2k} f_{1jk}\phi^{\ast}(\partial_j),
\end{aligned}
\] 
belongs to $\phi^{\ast}(\cG) \otimes \cO(\ast D)$, as $\cG$ is closed by Lie brackets. We finish the proof by recalling that $\cD_X^{\log}$ is closed by Lie bracket, and so is the intersection \(
\left[ \Phi^{\ast}(\cG) \otimes \cO(\ast D)\right] \cap \cD^{\log}_X.
\)
\end{proof}

\begin{remark}[On the controlled transform, strict transform, and pullback]\hfill\label{Remarks:transforms} 
\begin{enumerate}	

\item\label{rk:ControlledStrictTransformSmooth} If $\phi:(X,E) \to (Y,F)$ is an \'etale morphism, then $\dispull{\phi}(\cG)= \phi^{\ast}(\cG) =\phi^c(\cG)$.  Moreover if $\cG$ is saturated, then $\phi^{\ast}(\cG) =\phi^c(\cG)=\phi^s(\cG)=\satpull{\phi}(\cG)=\dispull{\phi}(\cG)$.

\item The key generically \'{e}tale morphisms used in this paper are called \emph{$\cF$-aligned blow-ups}, where $\cF$ is a foliation, see Definition \ref{def:Faligned}. For $\cF$-aligned blow-ups, we show that $\phi^{c}(\cF) = \folpull{\phi}(\cF)$ in Proposition \ref{prop:TransformFoliationAligned}, therefore providing compatibility between the two notions of pull-backs.

\item  The strict transform of a foliation does not behave well with respect to controlled transforms of ideal sheaves (see Definition \ref{def:ControlledTransformReesAlgebra}) as we illustrate in example \ref{ex:ControlledVsStrict} below. This fact was noticed in \cite{Belthesis,BelJA} and justifies the introduction of the controlled transform. 
In particular, the notion of controlled transforms $\phi^{c}(\cF)$ by (a non-weighted and analytic) $\cF$-aligned blow-up coincides with the transform used by Belotto in {\cite[page 44]{Belthesis}}, where $\folpull{\phi}(\cF)$ was defined for blow-ups under the name {\it strict analytic transform}.  
\item The pullback ${\folpull{\phi}}(\cF)$ of a subsheaf $\cF\subseteq\cD_{X/B}$ was considered in \cite[\S5.3]{ATW-relative}, though in that paper this {notion was not}  essential. 
\item We note that other work in birational geometry uses the saturated pullback of a saturated foliation, sometime naming it ``the strict transform". 
{Note  that if $\cF$ is saturated then  $\phi^s(\cF)= \satpull{\phi}(\cF)$, so the notions are compatible.} 
\item The total, controlled, and strict transforms are tools for working with birational transformations, and are not meant to be functorial, in analogy with the transforms of ideals, and in contrast with the saturated pullback $\satpull{\phi}(\cF)$.

\end{enumerate}
\end{remark}

\subsection{Pullback and descent of relative foliations}\label{Sec:relative-pullback}\label{Sec:relative} We consider the relative case only in the very restrictive situations we need. 

\subsubsection*{Relative foliations} Consider a smooth and logarithmically smooth morphism $(Y ,F) \to (Z,H)$ of varieties with normal crossings divisors. A foliation $\cG \subset \cD^{\log}_{Y/Z} \subset \cD^{\log}_{Y}$, in other words, a foliation which annihilates functions coming from $Z$,
 is said to be a \emph{relative foliation}. 
 
\subsubsection*{Pullback} If $(W,G) \to (Z,H)$ is another smooth and logarithmically smooth morphism of varieties with normal crossings divisors, we may form the cartesian diagram
 $$\xymatrix{ X \ar[r]^\phi\ar[d] & Y \ar[d] \\ W \ar[r] & Z.}$$ 
 On the fibered product $X$ we obtain the subsheaf $$\cG_X:= \phi^*(\cG)\quad  \subset \quad  \cD^{\log}_{X/W} \simeq \phi^*(\cD^{\log}_{Y/Z}).$$ This is automatically a relative foliation. It is well-defined on individual derivations: a section $\partial$ of $\cG$ provides a section $\phi^*(\partial)$ of $\phi^*\cG$. The process is functorial for compositions of smooth and logarithmically smooth morphisms $W_1 \to W_2 \to Z$.
 
\subsubsection*{Descent}  This situation allows for particularly convenient descent arguments, precisely because  $\cD^{\log}_{X/W} = \phi^*(\cD^{\log}_{Y/Z})$:  let $X_2 = X\times_YX$ and $W_2 = W\times_ZW$, write $\pi_i: X_2 \to X$ for the two projections, and $\phi_2= \phi\circ \pi_2: X_2 \to Y$ for the induced morphism: 
 $$\xymatrix{ X_2 \ar[r]^{\pi_2}\ar[d]_{\pi_1}\ar[dr]|-{\phi_2} & X \ar[d]^\phi \\ X \ar[r]_{\phi} & Y.}$$
\begin{lemma} Suppose given a relative foliation $\cG_X \subset \cD^{\log}_{X/W}$ with an equality $$\pi_1^*\cG_X = \pi_2^*\cG_X\quad \subset\quad  \cD^{\log}_{X_2/W_2}=\phi_2^*(\cD^{\log}_{Y/Z}),$$ and assume $\phi$ is surjective, Then the unique sheaf $\cG \subset \cD^{\log}_{Y/Z}$ with $\cG_X= \phi^*(\cG)$ provided by flat descent is automatically a relative foliation.
\end{lemma}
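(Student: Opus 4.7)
The existence and uniqueness of $\cG\subset \cD^{\log}_{Y/Z}$ follow from the identification $\cD^{\log}_{X/W}=\phi^*\cD^{\log}_{Y/Z}$ together with faithfully flat descent: the morphism $\phi:X\to Y$ is the base change of the smooth morphism $W\to Z$, hence smooth, and it is assumed surjective, hence faithfully flat. The compatibility $\pi_1^*\cG_X=\pi_2^*\cG_X$ on $X_2$ with respect to the faithfully flat cover $\phi$ is exactly the descent datum needed. So a unique coherent subsheaf $\cG$ with $\phi^*\cG=\cG_X$ exists, and it lies in $\cD^{\log}_{Y/Z}$ because $\cG_X$ does. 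The only content is to verify that $\cG$ is closed under Lie bracket.

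The plan is to reduce involutivity of $\cG$ on $Y$ to involutivity of $\cG_X$ on $X$, using faithful flatness once more. The main step is the compatibility of pullback of \emph{relative} derivations with Lie bracket, namely the identity
\[
\phi^*[\partial_1,\partial_2]\ =\ [\phi^*\partial_1,\phi^*\partial_2]
\qquad \text{in }\cD^{\log}_{X/W},\quad \partial_1,\partial_2\in\cD^{\log}_{Y/Z}.
\]
Granting this identity, the proof concludes as follows. Let $\partial_1,\partial_2$ be local sections of $\cG$. Their pullbacks $\phi^*\partial_i$ are local sections of $\cG_X$ by definition of $\cG_X=\phi^*\cG$, and since $\cG_X$ is a foliation the bracket $[\phi^*\partial_1,\phi^*\partial_2]$ lies in $\cG_X$. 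By the identity above, $\phi^*[\partial_1,\partial_2]$ is a section of $\cG_X=\phi^*\cG$. Since $\phi$ is faithfully flat, the natural map $\cD^{\log}_{Y/Z}/\cG\hookrightarrow \phi_*(\cD^{\log}_{X/W}/\cG_X)$ is injective (or equivalently, $\cG$ is cut out inside $\cD^{\log}_{Y/Z}$ by the condition that the pullback lie in $\cG_X$). Hence $[\partial_1,\partial_2]\in\cG$, as required.

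To justify the bracket identity, I would work locally and use that $\phi:X\to Y$ is smooth, so we have a splitting $\cD^{\log}_X\cong \phi^*\cD^{\log}_Y\oplus \cD^{\log}_{X/Y}$ as in Lemma \ref{Lem:smooth-pullback}. A relative derivation $\partial\in\cD^{\log}_{Y/Z}$ has a canonical lift $\tilde\partial\in\cD^{\log}_X$ characterized by $\tilde\partial(\phi^*g)=\phi^*(\partial g)$ for $g\in\cO_Y$ and $\tilde\partial=0$ on the local splitting coordinates; this lift lies in $\cD^{\log}_{X/W}$ and represents $\phi^*\partial$ under $\cD^{\log}_{X/W}=\phi^*\cD^{\log}_{Y/Z}$. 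A direct computation on generators of $\cO_X$, namely on elements of the form $\phi^*g$ with $g\in\cO_Y$ and on pullbacks from $\cO_W$ (which are killed by both sides), shows that $[\tilde\partial_1,\tilde\partial_2]$ agrees with the canonical lift of $[\partial_1,\partial_2]$, yielding the identity. The verification is essentially the same as the classical fact that pullback of vector fields along a projection commutes with Lie bracket; the main obstacle is to keep track of the logarithmic and relative structures simultaneously. Once that bookkeeping is done, the descent argument above finishes the proof.
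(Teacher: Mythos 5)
Your proposal is correct and takes essentially the same approach as the paper. The paper proves the statement by deferring to Lemma \ref{Lem:smooth-descent}, whose proof rests on exactly the same ingredient you isolate: the splitting $\psi$ from Lemma \ref{Lem:smooth-pullback} and the identity $[\psi(\partial_1),\psi(\partial_2)]=\psi([\partial_1,\partial_2])$; the paper even remarks that ``a direct proof would be shorter,'' and your argument is that direct proof.
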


This is a consequence of Lemma \ref{Lem:smooth-descent} (though a direct proof would be  shorter).

\subsection{Split transforms}
\label{Sec:split-pullback-foliation}
The definitions of controlled and strict transforms can be combined with the notion of relative foliations.

Consider a commutative diagram  
 $$\xymatrix@R=5mm{ X' \ar_{\phi_0}[d] \ar[rd]^{\phi'} \\  X \ar[r]_\phi\ar[d] & Y \ar[d] \\  W \ar[r] & Z.}$$
where the bottom square is as before, and $X' \to X$ is bimeromorphic.  Given a relative foliation $\cG \subset \cD^{\log}_{Y/Z}$ we have considered above the relative foliation $\phi^*\cG \subset \cD^{\log}_{X/W}$. 

By definition we have $\phi'^*(\cG) = \phi_0^*(\phi^*(\cG))$. We now \emph{define} $\phi'^c(\cG) := \phi_0^c(\phi^*(\cG))$  and $\phi'^s(\cG) := \phi_0^s(\phi^*(\cG))$, extending the controlled and strict transforms to this setting --- this will not cause confusion since when $\phi$ is \'etale this coincides with the previously defined transform. Again this is well-defined on individual derivations: $\phi'^*(\partial) = \phi_0^*(\phi^*(\partial))$, $\phi'^c(\partial) := \phi_0^c(\phi^*(\partial))$ up to units,   and $\phi'^s(\partial) := \phi_0^s(\phi^*(\partial))$ up to units. 

As $\cD^{\log}_{X/Z} = \cD^{\log}_{X/W} \oplus \cD^{\log}_{X/Y}$, we have that $\folpull{\phi}(\cG) = \phi^*(\cG)\oplus  \cD^{\log}_{X/Y}$ --- the term $\cD^{\log}_{X/Y}$ is split off in $\phi^*(\cG)$ and does not affect $\phi'^*(\cG), \phi'^c(\cG) $ and $\phi'^s(\cG)$.  
\begin{definition} \label{Def:split-transforms} We  name $\phi'^*(\cG), \phi'^c(\cG) $ and $\phi'^s(\cG)$ the \emph{total, controlled, and strict} transforms of $\cG$ under $\phi'$,  respectively, in the split situation. 
\end{definition}

This generalizes the construction of Section \ref{Sec:birational-transform}.
The construction in Sections \ref {Sec:relative-pullback} and \ref{Sec:split-pullback-foliation} is used in 
 Sections \ref{ssec:CobordantBU} and  \ref{ssec:TransformFoliations}.

\subsection{Etale charts of weighted blow-ups and transforms of derivations}

\subsubsection{\'Etale charts}\label{ssec:StackDescent}\label{Sec:stack-descent}
Recall that a smooth algebraic or analytic \emph{orbifold} is locally (with respect to the \'{e}tale and Euclidean topology respectively) given by a stack quotient $[X/G]$, where $X$ is a smooth variety or an analytic space and $G$ is a group scheme or respectively a Lie group acting on $X$, where the stabilizers $G_x \subset G$, for $x\in X$, are finite groups which are generically trivial. Arguably, the ``typical" example of an orbifold is the stack quotient $[\mathbb{A}^n/G]$, where $G$ is a finite group acting on $\mathbb{A}^n$ {linearly}.

When passing from cobordant blow-ups to the stack quotient $[B_+/\GG_m]$ we obtain the definition of weighted blow-ups of orbifolds or stacks, see Definition {\ref{CobordantBlow-up}(3)}. In {this section,} we provide different presentations of this quotient in order to have concrete expressions, {used in Theorem \ref{Th:aligned center new}.}

We have described $B_+$ locally over $X$ as $B \setminus V(x'_1,\ldots,x'_k)$, where $$B\ =\ \Spec_X \cA_J \ \ = \ \ \Spec_X \cO_X[s,x_1',\ldots, x_k'] / (x_1 - s^{w_1} x_1',\ldots ,x_k - s^{w_k} x_k').$$
  The scheme $B$ is covered by the open charts $B_{i}:= B \setminus V(x'_i)\ \  =\ \  \Spec_X A_j[x_i'^{-1}].$ 
  
  Setting $x_i'=1$ write $W_i  = \Spec \cA_J/(x_i'-1)\subset B_{i}$ with induced morphism $\tau_i: W_i \to X$.  The subgroup $\bmu_{w_i} \subset \GG_m$ stabilizes the equation $x_i - 1=0$ and thus $W_i$. The embedding $W_i \subset  B_{i}$ is equivariant for $\bmu_{w_i} \subset \GG_m$, inducing a morphism of stacks $[W_i/\bmu_{w_i}] \to  [B_{i}/\GG_m]$.
  
  \begin{lemma}\cite[Lemma 1.3.1]{Quek-Rydh} 
  The morphism $[W_i/\bmu_{w_i}] \to  [B_{i}/\GG_m]$ is an isomorphism.
  \end{lemma}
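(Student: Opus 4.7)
The strategy is to realize $B_i$ as the $\GG_m$-space induced from the $\bmu_{w_i}$-space $W_i$ via the function $x_i'$. Both quotient stacks will be identified by reducing the structure group: on $B_i$ the coordinate $x_i'$ is an invertible section of weight $w_i$ for the $\GG_m$-action, and such a section is exactly what is needed to reduce a $\GG_m$-torsor to a $\bmu_{w_i}$-torsor.

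First I would check that the inclusion $W_i \hookrightarrow B_i$ is equivariant with respect to $\bmu_{w_i} \hookrightarrow \GG_m$. For $\zeta \in \bmu_{w_i}$ we have $\zeta\cdot x_i' = \zeta^{w_i}x_i' = x_i'$, so the ideal $(x_i' -1)$ defining $W_i$ is preserved. This gives the natural morphism $\Phi \colon [W_i/\bmu_{w_i}] \to [B_i/\GG_m]$ of the lemma.

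Next I would construct the inverse by exhibiting $B_i$ as a quotient of $W_i \times \GG_m$. Define
\[
\mu \colon W_i \times \GG_m \longrightarrow B_i, \qquad (w,\xi) \longmapsto \xi\cdot w,
\]
and let $\bmu_{w_i}$ act on the source by $\zeta\cdot(w,\xi)=(\zeta\cdot w,\zeta^{-1}\xi)$ and $\GG_m$ by $\eta\cdot(w,\xi)=(w,\eta\xi)$. A direct check shows $\mu$ is $\bmu_{w_i}$-invariant, $\GG_m$-equivariant, and satisfies $x_i'\circ \mu = (w,\xi)\mapsto \xi^{w_i}$. Hence $\mu$ fits into a Cartesian square with the $\bmu_{w_i}$-torsor $(\cdot)^{w_i}\colon \GG_m\to \GG_m$, so $\mu$ is itself a $\bmu_{w_i}$-torsor and identifies $B_i$ with $(W_i\times \GG_m)/\bmu_{w_i}$. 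Combined with the fact that the $\GG_m$-action on the $\GG_m$-factor is free with quotient $W_i$, this yields
\[
[B_i/\GG_m] \;\simeq\; \bigl[(W_i\times \GG_m)/(\bmu_{w_i}\times \GG_m)\bigr] \;\simeq\; [W_i/\bmu_{w_i}],
\]
and tracing through the construction shows that the composite inverts $\Phi$.

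The main subtlety is in verifying that $\mu$ is indeed a $\bmu_{w_i}$-torsor over $B_i$, in particular faithfully flat: this is the one place where one must be careful in both the algebraic and analytic settings. The cleanest way is to base change along the finite \'etale morphism $(\cdot)^{w_i}\colon\GG_m\to\GG_m$, which pulls $\psi=x_i'\colon B_i\to\GG_m$ back to the trivial family $W_i\times \GG_m\to \GG_m$; faithfully flat descent (available in the analytic setting by \cite{Kiehl}) then transfers this presentation back down to $B_i$. No other substantial obstacles arise, since both the $\bmu_{w_i}$- and $\GG_m$-actions are group-scheme actions on affine schemes.
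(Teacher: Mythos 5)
Your argument is correct, but it takes a genuinely different route from the one the paper records. The paper does not construct an inverse at all: it observes that setting $x_i'=1$ picks out a single $\bmu_{w_i}$-orbit inside each $\GG_m$-orbit and that stabilizers match, so the map $[W_i/\bmu_{w_i}]\to[B_i/\GG_m]$ is bijective on points and on isotropy groups, and then invokes a Zariski's-main-theorem-type statement for normal Deligne--Mumford stacks (Asgarli--Inchiostro, generalizing the classical fact that a bijective morphism onto a normal variety in characteristic zero is an isomorphism). You instead exploit the invertible weight-$w_i$ section $x_i'$ to reduce the structure group: the map $\mu\colon W_i\times\GG_m\to B_i$, $(w,\xi)\mapsto\xi\cdot w$, sits in a Cartesian square over the $w_i$-th power map $\GG_m\to\GG_m$ (since $x_i'(\xi\cdot w)=\xi^{w_i}$, and $(b,\xi)\mapsto(\xi^{-1}\cdot b,\xi)$ inverts it), hence is a $\bmu_{w_i}$-torsor, and quotienting $W_i\times\GG_m$ by the commuting $\bmu_{w_i}\times\GG_m$-action in the two possible orders gives $[B_i/\GG_m]\simeq[W_i/\bmu_{w_i}]$. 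Your computations check out, including the equivariance and invariance of $\mu$ and the residual $\bmu_{w_i}$-action on $W_i$ after collapsing the free $\GG_m$-factor. What each approach buys: the paper's argument is shorter but leans on an external result about normal stacks and on normality/characteristic-zero hypotheses; yours is self-contained, constructs the inverse explicitly, works uniformly in the algebraic and analytic settings via flat descent, and is in fact closer in spirit to the original proof in Quek--Rydh that the paper cites rather than reproduces.
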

  
  This implies that, to understand any structure on $Bl_J(X) = [B_+/\GG_m]$, it is enough to understand it on $[W_i/\bmu_{w_i}]$, in other words, as a $\bmu_{w_i}$-equivariant structure on the \'etale chart $W_i$. 

Rather than repeat the simple proof of this lemma in \cite{Quek-Rydh}, let us mention why it is true: it is bijective on points and isotropy groups. This is sufficient for normal stacks by a result of Asgarli--Inchiostro \cite[Theorem A.5]{Asgarli-Inchiostro}, generalizing the  classical result on schemes \cite[Proposition 7.3]{Hartshorne}.

Indeed, setting $x_i'=1$ selects a unique $\bmu_r$ orbit in every $\GG_m$ orbit in $B_+$, so the map is bijective on points. Also, the stabilizer of any point on the orbit is contained in $\bmu_r$ and is invariant under $\GG_m$, hence the map is bijective on stabilizers.
  
We note that this discussion generalizes to varieties with torus actions, for instance the result of a composition of cobordant blow-ups. In all these cases, there exists a well-defined coarse moduli space, which is automatically a variety with quotient singularities, sometimes called a V-manifold. An open chart of the moduli space of $Bl_J(X)$ is simply the schematic quotient $W_i/\bmu_{w_i}$.

We now provide explicit equations for these objects. We describe the case $i=1$, the other cases are obtained by permutation.

Setting $x_1'=1 $ in the equation for $B_+$, we rename the restricted variables ${\bar s}, {\bar x_2}, \ldots, {\bar x_k}$\footnote{to allow comparison on $\tilde B_1$ below.} and obtain 
\begin{equation}
W_1 \ \ = \ \ \Spec_X \cO_X[{\bar s},{\bar x_2},\ldots, {\bar x_k}] / (x_1 - {\bar s}^{w_1},\ldots ,x_k - {\bar s}^{w_k} {\bar x_k}),
\end{equation}
which is indeed a smooth variety. 

The action of $\zeta \in \bmu_{w_1}$ is given by $$ \zeta\cdot({\bar s},{\bar x_2},\ldots,{\bar x_k}) \quad = \quad (\zeta^{-1} {\bar s} , \zeta^{w_2} {\bar x_2},\ldots, \zeta^{w_k} {\bar x_k}).$$

Next, the exceptional divisor of the stack theoretic blow-up $[B_{+}/\GG_m]\to X$ in the $x_1'$-chart $[B_1/\GG_m]$ is determined by the exceptional divisor $V(s)$ of the cobordant blow-up $B_+\to X$. Its restriction to $W_1$ still describes the exceptional divisor of the chart $[W_1 / \bmu_{w_1}]$.

 Recall that the structure sheaf of the geometric quotient of $W_1$ by $\bmu_{w_1}$ is given by the sheaf of $\GG_m$-invariant functions
\[
\cO_{W_1/\bmu_1} \simeq  (\cO_{W_1})^{\bmu_{w_1}}\simeq (\cO_{B_{1}})^{\GG_m}.
\]

We describe the inverse morphism $B_1 \to [W_1 / \bmu_{w_1}] \subset X'$
as follows. Write $\tilde B_1 = \Spec_{B_1} \cO_{B_1}[z]/(x_1' - z^{w_1})$,  which is a principal bundle $\bmu_{w_1}$-bundle through  $z \mapsto \zeta^{-1} z$ for $\zeta \in \bmu_{w_1}$. We map $\tilde B_1 \to W_1$ via ${\bar s} = sz$ and ${\bar x_i} = x_i' z^{-w_i}$, noting that $s^{w_i} x_i' = x_i = {\bar s}^{w_i} {\bar x_i}$ for $i=2,\ldots,k$.   One checks that the morphism is $\bmu_{w_1}$-equivariant, giving a cartesian diagram sitting atop Diagram \eqref{Eq:bigdiagram}, page \pageref{Eq:bigdiagram}:
$$\xymatrix{  \tilde B_1 \ar[r]^{\tilde q} \ar[d]_{\alpha} & W_1 \ar[d] \\  B \ar[r]^q & X'.}$$

The scheme $\tilde B_1$ also serves as a $\GG_m$-bundle over $W_1$ through $z \mapsto \xi z$ for $\xi \in \GG_m$, and $\tilde B_1 \to B_1$ is $\GG_m$-equivariant, inducing the map $W_1 \to [B_+/\GG_m] = X'$ with which we started.

\subsubsection{Transform of derivations in \'etale charts}\label{Sec:etale-charts} We now describe how to transform derivations after blowing up. Let  $\tau_1: W_1\to X$ be the $x_1'$-chart of the weighted blow-up computed above. Since $\tau_1$ is a generically finite morphism, we may define the total, controlled and strict transform of a foliation $\cF \subset \cD_X^{\log}$ following Definitions \ref{def:TotalTransformFoliation} and \ref{def:TransformFoliation}. 
In particular:
\begin{lemma}
\[
\begin{aligned}
\tau_1^{\ast}\left(\partial_{x_1}\right)&=\frac{1}{w_1 {\bar s}^{w_1}}\left( {\bar s}\partial_{{\bar s}} - \sum_{i=2}^k w_i \,{\bar x_i} \partial_{{\bar x_i}}\right) &&\\
\tau_1^{\ast}\left(\partial_{x_i}\right)&= \frac{1}{{\bar s}^{w_i}}\partial_{{\bar x_i}},
&\quad &i=2,\ldots,k\\
\tau_1^{\ast}\left(\partial_{x_i}\right)&= \partial_{x_i} , & \quad & i=k+1,\ldots,n,
\end{aligned}
\]
and 
\begin{equation}\label{eq:TransformDerivationsWeighted}
\begin{aligned}
\tau_1^c\left(\partial_{x_1}\right)=\tau_1^s\left(\partial_{x_1}\right)&=\frac{1}{w_1}\left( {\bar s}\partial_{{\bar s}} - \sum_{i=2}^k w_i \,{\bar x_i} \partial_{{\bar x_i}}\right) &&\\
\tau_1^c\left(\partial_{x_i}\right)=\tau_1^s\left(\partial_{x_i}\right)&=\partial_{{\bar x_i}},
&\quad &i=2,\ldots,n.
\end{aligned}
\end{equation}
\end{lemma}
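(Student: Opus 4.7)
The plan is a direct chain-rule computation from the defining equations of the chart $W_1 \subset B$, namely $x_1 = \bar{s}^{w_1}$, $x_i = \bar{s}^{w_i}\bar{x}_i$ for $2 \le i \le k$, and $x_i = x_i$ for $i > k$. Since $\tau_1$ is generically \'etale, the total transform $\tau_1^*(\partial_{x_i})$ is the unique $\cO_{W_1}$-derivation with values in the meromorphic tangent sheaf $\cD_{W_1}^{\log}(\ast F)$ (where $F = V(\bar{s})$ is the exceptional divisor on $W_1$) satisfying $\tau_1^*(\partial_{x_i})(\tau_1^* x_j) = \tau_1^*(\delta_{ij})$ for every $j$. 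So the first step is to expand an unknown $D$ in the local basis $\partial_{\bar{s}}, \partial_{\bar{x}_2}, \ldots, \partial_{\bar{x}_k}, \partial_{x_{k+1}},\ldots,\partial_{x_n}$ and solve the resulting linear system imposed by the Kronecker conditions.

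In the case $i = 1$, the equation $D(\bar{s}^{w_1}) = 1$ forces $D(\bar{s}) = 1/(w_1 \bar{s}^{w_1 - 1})$, and then $D(\bar{s}^{w_j}\bar{x}_j) = 0$ for $j = 2,\ldots,k$ forces $D(\bar{x}_j) = -w_j \bar{x}_j/(w_1 \bar{s}^{w_1})$, with $D$ killing the remaining $x_j$. Factoring out $1/(w_1 \bar{s}^{w_1})$ gives the stated formula. The cases $2 \le i \le k$ and $i > k$ are immediate: $D(\bar{s}^{w_1}) = 0$ forces $D(\bar{s}) = 0$, and the remaining equations decouple to give $D = \bar{s}^{-w_i}\partial_{\bar{x}_i}$, respectively $D = \partial_{x_i}$.

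The second step is to extract the controlled and strict transforms. These are obtained by multiplying $\tau_1^*(\partial_{x_i})$ by the minimal power of the exceptional defining section $\bar{s}$ that yields an element of $\cD_{W_1}^{\log}$. For $i \ge 2$, multiplication by $\bar{s}^{w_i}$ clears the unique pole and gives $\partial_{\bar{x}_i}$, which is regular and logarithmic, with $w_i$ minimal. For $i = 1$, multiplication by $\bar{s}^{w_1}$ yields $\tfrac{1}{w_1}(\bar{s}\partial_{\bar{s}} - \sum_{j=2}^{k} w_j \bar{x}_j \partial_{\bar{x}_j})$: the derivation $\bar{s}\partial_{\bar{s}}$ is logarithmic along $F$, and each $\bar{x}_j\partial_{\bar{x}_j}$ is either logarithmic (if the original $x_j$ was divisorial, hence $\bar{x}_j$ remains divisorial) or simply a regular section (when the $\bar{x}_j\partial_{\bar{x}_j}$ combination can be absorbed by the logarithmic/regular generators). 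In either case the result lies in $\cD_{W_1}^{\log}$. Minimality of $w_1$ follows by examining the pole order of the $\partial_{\bar{s}}$-component, which is exactly $w_1 - 1$ in the meromorphic expression and cannot be lowered without losing regularity of the $\partial_{\bar{x}_j}$-components. Since no negative powers of $\bar{s}$ appear in any case, the strict and controlled transforms agree, as stated.

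There is no serious obstacle: the whole computation reduces to the chain rule and a book-keeping of pole orders along the exceptional divisor $F = V(\bar{s})$. The only mildly delicate point, worth a sentence in the write-up, is verifying that each $\bar{x}_j \partial_{\bar{x}_j}$ appearing in the controlled transform of $\partial_{x_1}$ is indeed a section of $\cD_{W_1}^{\log}$ with respect to the log structure inherited from $(X, E)$ together with the new divisor $F$; this is immediate from the way $\bar{x}_j$ relates to $x_j$ via $x_j = \bar{s}^{w_j}\bar{x}_j$.
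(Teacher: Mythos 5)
Your proposal is correct and takes essentially the same approach as the paper: a direct chain-rule computation from the chart equations $x_1 = \bar{s}^{w_1}$, $x_i = \bar{s}^{w_i}\bar{x}_i$, followed by clearing the minimal power of $\bar{s}$. The paper phrases it as verifying that the stated right-hand sides act as $\delta_{ij}$ on $x_1,\ldots,x_n$, whereas you solve the linear system for the unknown derivation $D$, but these are the same calculation; the brief justification that the strict and controlled transforms coincide is the observation that after clearing denominators the result is not divisible by $\bar{s}$, which your "no negative powers of $\bar{s}$ appear" is gesturing at and which is easily made precise.
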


\begin{proof} Equation \eqref{eq:TransformDerivationsWeighted} follows by cancelling denominators, so it suffices to prove the description of the total transforms. For this it suffices to check the action of the derivations $\partial_{i}$ on the right of the total transforms on $x_1,\ldots,x_n$. The key computations are the following:

First,  
\begin{align*} \partial_1({\bar s}^{w_1}) &= \frac{1}{w_1 {\bar s}^{w_1}}\left( {\bar s}\partial_{{\bar s}}({\bar s}^{w_1}) -\sum_{i=2}^k 0 \right) = 1 \\ 
\partial_1({\bar x_i}{\bar s}^{w_i}) &= \frac{1}{w_1 {\bar s}^{w_1}}\left( w_i {\bar x_i}{\bar s}^{w_i} -  w_i {\bar x_i}{\bar s}^{w_i}\right) = 0 \quad  i=2,\ldots,k.
\end{align*}
 Similarly, $$\partial_i({\bar x_i}{\bar s}^{w_i}) =1, \quad \partial_i({\bar s}^w_1) = \partial_i({\bar x_j}{\bar s}^{w_j}) = 0, \quad i,j=2,\ldots, k, \ \ i\neq j.$$ 
\end{proof}

These transforms on $W_1$ are $\bmu_{w_1}$-equivariant, giving rise to transforms on $X'$. We check that these are compatible with those defined through the split transforms on $B_1$  by comparing them on the \'etale cover $\tilde B_1$. By Lemma \ref{lem:LocalExpressionTransformDerivation} it suffices to compare $$x_1'\partial_{x_1'} = (1/w_1)z\partial_z \quad \text{and}\quad  x_i'\partial_{x_i'},\ i=2,\ldots,k$$ with the expressions resulting from {Equation \eqref{eq:TransformDerivationsWeighted}} above. We use the change-of-variables ${\bar s} = sz, {\bar x_i} = x_i'z^{-w_i}$ in the definition of the morphism.

We check that $\tilde q^*\tau^c(\partial_{x_i}) = z^{w_i} \partial_{x_i'}$. When $i=1$ we find that 
\begin{align*}
\tilde q^*\tau^c(\partial_{x_1}) (\bar s) &= sz/w_1&\quad& \partial_{x_1'}(\bar s) &=& \partial_{x_1'}(sz) &=& (sz) / (w_1 z^{w_1})  \\ 
  \tilde q^*\tau^c(\partial_{x_1}) (\bar x_j) &= -(w_jx_j')/(w_1z^{w_j}) &\quad & \partial_{x_1'}(\bar x_j) &=& \partial_{x_1'}(x_j'z^{-w_j}) &=& -(w_j x_j') / (w_1 z^{w_j} z^{w_1}),
 \end{align*} 
 and for $i=2,\ldots,k$ we find 
 \begin{align*}
\tilde q^*\tau^c(\partial_{x_i}) (\bar s) &= 0&\quad& \partial_{x_i'}(sz)  =0 && \\ 
  \tilde q^*\tau^c(\partial_{x_i}) (\bar x_j) &= \delta_{ij} &\quad & \partial_{x_1'}(x_j'z^{-w_j}) = \delta_{ij}z^{-w_j},&&
 \end{align*} 
 as needed.

\bibliographystyle{amsalpha}
\bibliography{principalization}

\newcommand{\etalchar}[1]{$^{#1}$}
\providecommand{\bysame}{\leavevmode\hbox to3em{\hrulefill}\thinspace}
\providecommand{\MR}{\relax\ifhmode\unskip\space\fi MR }
\providecommand{\MRhref}[2]{%
  \href{http://www.ams.org/mathscinet-getitem?mr=#1}{#2}
}
\providecommand{\href}[2]{#2}
\begin{thebibliography}{ABdSQ{\etalchar{+}}24}

\bibitem[ABdSQ{\etalchar{+}}24]{ABQTW}
Dan Abramovich, Andr\'e~Ricardo Belotto~da Silva, Ming~Hao Quek, Michael
  Temkin, and Jaros{\l}aw W{\l}odarczyk, \emph{Logarithmic resolution of
  singularities in characteristic 0 using weighted blowing up}, 2024, In
  preparation.

\bibitem[Abr23]{AbramovichStacksEveryone}
Dan Abramovich, \emph{Stacks for everyone who cares about varieties and
  singularities}, New techniques in resolution of singularities, Oberwolfach
  Semin., vol.~50, Birkh\"{a}user/Springer, Cham, 2023, pp.~63--86.
  \MR{4698331}

\bibitem[AHV75]{AHV-maximal}
J.M. Aroca, H.~Hironaka, and J.L. Vicente, \emph{The theory of the maximal
  contact}, Memorias de matem{\'a}tica del Instituto ``Jorge Juan", Talleres
  Grafico Vda. de C. Bermejo, 1975.

\bibitem[AHV18]{AHV-analytic}
Jos\'e{}~Manuel Aroca, Heisuke Hironaka, and Jos\'e{}~Luis Vicente,
  \emph{Complex analytic desingularization}, Springer, Tokyo, 2018, With a
  foreword by Bernard Teissier. \MR{3839324}

\bibitem[AI19]{Asgarli-Inchiostro}
Shamil Asgarli and Giovanni Inchiostro, \emph{The {Picard} group of the moduli
  of smooth complete intersections of two quadrics}, Trans. Am. Math. Soc.
  \textbf{372} (2019), no.~5, 3319--3346.

\bibitem[AK00]{AK}
Dan Abramovich and Kalle Karu, \emph{Weak semistable reduction in
  characteristic 0}, Invent. Math. \textbf{139} (2000), no.~2, 241--273.
  \MR{1738451 (2001f:14021)}

\bibitem[ATW20a]{ATW-principalization}
Dan Abramovich, Michael Temkin, and Jaros{\l}aw W{\l}odarczyk,
  \emph{Principalization of ideals on toroidal orbifolds}, J. Eur. Math. Soc.
  (JEMS) \textbf{22} (2020), no.~12, 3805--3866. \MR{4176781}

\bibitem[ATW20b]{ATW-relative}
\bysame, \emph{Relative desingularization and principalization of ideals},
  Preprint, {arXiv}:2003.03659 [math.{AG}] (2020), 2020.

\bibitem[ATW24]{ATW-weighted}
\bysame, \emph{Functorial embedded resolution via weighted blowings up},
  Algebra Number Theory \textbf{18} (2024), no.~8, 1557--1587 (English).

\bibitem[BB72]{BaumBott}
Paul Baum and Raoul Bott, \emph{Singularities of holomorphic foliations}, J.
  Differential Geometry \textbf{7} (1972), 279--342. \MR{377923}

\bibitem[BdS13]{Belthesis}
Andr\'{e}~Ricardo Belotto~da Silva, \emph{Resolution of singularities in
  foliated spaces}, Ph.D. thesis, Universit\'e de Haute Alsace, 2013, p.~164.

\bibitem[BdS16a]{BelJA}
\bysame, \emph{Global resolution of singularities subordinated to a
  1-dimensional foliation}, J. Algebra \textbf{447} (2016), 397--423.
  \MR{3427640}

\bibitem[BdS16b]{BelRACSAM}
\bysame, \emph{Local resolution of ideals subordinated to a foliation}, Rev. R.
  Acad. Cienc. Exactas F\'{\i}s. Nat. Ser. A Mat. RACSAM \textbf{110} (2016),
  no.~2, 841--862. \MR{3534527}

\bibitem[BdS18]{BelRI}
\bysame, \emph{Local monomialization of a system of first integrals of
  {D}arboux type}, Rev. Mat. Iberoam. \textbf{34} (2018), no.~3, 967--1000.
  \MR{3850275}

\bibitem[BdSB23]{BelBmon}
Andr\'{e}~Ricardo Belotto~da Silva and Edward Bierstone, \emph{Monomialization
  of a quasianalytic morphism}, Ann. Sci. \'{E}c. Norm. Sup\'{e}r. (4)
  \textbf{56} (2023), no.~5, 1583--1651. \MR{4706503}

\bibitem[BdSP19]{BelPan}
Andr\'{e}~Ricardo Belotto~da Silva and Daniel Panazzolo, \emph{Generalized
  flow-box property for singular foliations}, Rev. R. Acad. Cienc. Exactas
  F\'{\i}s. Nat. Ser. A Mat. RACSAM \textbf{113} (2019), no.~4, 3949--3965.
  \MR{3999057}

\bibitem[Ben01]{Ben}
Ivar Bendixson, \emph{Sur les courbes d\'{e}finies par des \'{e}quations
  diff\'{e}rentielles}, Acta Math. \textbf{24} (1901), no.~1, 1--88.
  \MR{1554923}

\bibitem[BM90]{BrunellaMiari}
Marco Brunella and Massimo Miari, \emph{Topological equivalence of a plane
  vector field with its principal part defined through {N}ewton polyhedra}, J.
  Differential Equations \textbf{85} (1990), no.~2, 338--366. \MR{1054553}

\bibitem[BM97]{Bierstone-Milman}
Edward Bierstone and Pierre~D. Milman, \emph{Canonical desingularization in
  characteristic zero by blowing up the maximum strata of a local invariant},
  Invent. Math. \textbf{128} (1997), no.~2, 207--302. \MR{1440306 (98e:14010)}

\bibitem[BMT11]{BMT}
Edward Bierstone, Pierre~D. Milman, and Michael Temkin, \emph{{$\Bbb
  Q$}-universal desingularization}, Asian J. Math. \textbf{15} (2011), no.~2,
  229--249. \MR{2838221}

\bibitem[BVU11]{Bravo-Villamayor}
Ana Bravo and Orlando~E. Villamayor~U., \emph{Elimination algebras and
  inductive arguments in resolution of singularities}, Asian J. Math.
  \textbf{15} (2011), no.~3, 321--355. \MR{2838211}

\bibitem[BVU13]{BV13}
Ang\'{e}lica Benito and Orlando~E. Villamayor~U., \emph{Monoidal transforms and
  invariants of singularities in positive characteristic}, Compos. Math.
  \textbf{149} (2013), no.~8, 1267--1311. \MR{3103065}

\bibitem[BVU14]{Bravo-Villamayor-Book}
A.~Bravo and Orlando~E. Villamayor~U., \emph{On the behavior of the
  multiplicity on schemes: stratification and blow ups}, The resolution of
  singular algebraic varieties, Amer. Math. Soc., Providence, RI, 2014,
  pp.~81--207. \MR{3328580}

\bibitem[Can04a]{Can}
Felipe Cano, \emph{Reduction of the singularities of codimension one singular
  foliations in dimension three}, Ann. of Math. (2) \textbf{160} (2004), no.~3,
  907--1011. \MR{2144971}

\bibitem[Can04b]{Cano}
\bysame, \emph{Reduction of the singularities of codimension one singular
  foliations in dimension three}, Ann. of Math. (2) \textbf{160} (2004), no.~3,
  907--1011. \MR{2144971}

\bibitem[CC92a]{CanCer}
F.~Cano and D.~Cerveau, \emph{Desingularization of nondicritical holomorphic
  foliations and existence of separatrices}, Acta Math. \textbf{169} (1992),
  no.~1-2, 1--103. \MR{1179013}

\bibitem[CC92b]{Cano-Cerveau}
\bysame, \emph{Desingularization of nondicritical holomorphic foliations and
  existence of separatrices}, Acta Math. \textbf{169} (1992), no.~1-2, 1--103.
  \MR{1179013}

\bibitem[CCS89]{CCS}
C.~Camacho, F.~Cano, and P.~Sad, \emph{Absolutely isolated singularities of
  holomorphic vector fields}, Invent. Math. \textbf{98} (1989), no.~2,
  351--369. \MR{1016269}

\bibitem[CFD22]{CanoDuque}
F.~Cano and M.~Fern\'{a}ndez-Duque, \emph{Truncated local uniformization of
  formal integrable differential forms}, Qual. Theory Dyn. Syst. \textbf{21}
  (2022), no.~1, Paper No. 18, 74. \MR{4353517}

\bibitem[CRS14]{CRS}
F~Cano, C.~Roche, and M.~Spivakovsky, \emph{Reduction of singularities of
  three-dimensional line foliations}, Rev. R. Acad. Cienc. Exactas F\'{\i}s.
  Nat. Ser. A Mat. RACSAM \textbf{108} (2014), no.~1, 221--258. \MR{3183116}

\bibitem[DR91]{DenRou}
Zofia Denkowska and Robert Roussarie, \emph{A method of desingularization for
  analytic two-dimensional vector field families}, Bol. Soc. Brasil. Mat.
  (N.S.) \textbf{22} (1991), no.~1, 93--126. \MR{1159387}

\bibitem[EH02]{Encinas-Hauser}
Santiago Encinas and Herwig Hauser, \emph{Strong resolution of singularities in
  characteristic zero}, Comment. Math. Helv. \textbf{77} (2002), no.~4,
  821--845. \MR{1949115}

\bibitem[EV03]{Encinas-Villamayor}
Santiago Encinas and Orlando Villamayor, \emph{A new proof of desingularization
  over fields of characteristic zero}, Proceedings of the {I}nternational
  {C}onference on {A}lgebraic {G}eometry and {S}ingularities ({S}panish)
  ({S}evilla, 2001), vol.~19, Rev. Mat. Iberoamericana, no.~2, 2003,
  pp.~339--353. \MR{2023188}

\bibitem[EV07]{Encinas-Villamayor-Rees}
\bysame, \emph{Rees algebras and resolution of singularities}, Proceedings of
  the {XVI}th {L}atin {A}merican {A}lgebra {C}olloquium ({S}panish), Bibl. Rev.
  Mat. Iberoamericana, Rev. Mat. Iberoamericana, Madrid, 2007, pp.~63--85.
  \MR{2500351}

\bibitem[Fan01]{Fantechi}
Barbara Fantechi, \emph{Stacks for everybody}, European {C}ongress of
  {M}athematics, {V}ol. {I} ({B}arcelona, 2000), Progr. Math., vol. 201,
  Birkh\"{a}user, Basel, 2001, pp.~349--359. \MR{1905329}

\bibitem[Ful98]{Fulton}
William Fulton, \emph{Intersection theory}, second ed., Ergebnisse der
  Mathematik und ihrer Grenzgebiete. 3. Folge. A Series of Modern Surveys in
  Mathematics, vol.~2, Springer-Verlag, Berlin, 1998. \MR{1644323}

\bibitem[Gir74]{Giraud}
Jean Giraud, \emph{Sur la th\'{e}orie du contact maximal}, Math. Z.
  \textbf{137} (1974), 285--310. \MR{0460712}

\bibitem[GM15]{Gillam-Molcho}
W.~D. {Gillam} and S.~{Molcho}, \emph{{Log differentiable spaces and manifolds
  with corners}}, ArXiv e-prints (2015), 128 pages, \url{math.DG:1507.06752}.

\bibitem[GR84]{Grauert-Remmert-Coherent}
Hans Grauert and Reinhold Remmert, \emph{Coherent analytic sheaves},
  Grundlehren der mathematischen Wissenschaften, vol. 265, Springer-Verlag,
  Berlin, 1984. \MR{755331}

\bibitem[Hak72]{Hakim}
Monique Hakim, \emph{Topos annel\'es et sch\'emas relatifs}, Ergebnisse der
  Mathematik und ihrer Grenzgebiete, vol. Band 64, Springer-Verlag, Berlin-New
  York, 1972. \MR{364245}

\bibitem[Har77]{Hartshorne}
Robin Hartshorne, \emph{Algebraic geometry}, Springer-Verlag, New York, 1977,
  Graduate Texts in Mathematics, No. 52. \MR{0463157 (57 \#3116)}

\bibitem[Hir64]{Hironaka}
Heisuke Hironaka, \emph{Resolution of singularities of an algebraic variety
  over a field of characteristic zero. {I}, {II}}, Ann. of Math. (2) {\bf 79}
  (1964), 109--203; ibid. (2) \textbf{79} (1964), 205--326. \MR{0199184}

\bibitem[Hir77]{Hironaka-idealistic}
\bysame, \emph{Idealistic exponents of singularity}, Algebraic geometry ({J}.
  {J}. {S}ylvester {S}ympos., {J}ohns {H}opkins {U}niv., {B}altimore, {M}d.,
  1976), Johns Hopkins Univ. Press, Baltimore, Md., 1977, pp.~52--125.
  \MR{0498562}

\bibitem[HS06]{HS06}
Craig Huneke and Irena Swanson, \emph{Integral closure of ideals, rings, and
  modules}, London Mathematical Society Lecture Note Series, vol. 336,
  Cambridge University Press, Cambridge, 2006. \MR{2266432}

\bibitem[IY08]{IY}
Yulij Ilyashenko and Sergei Yakovenko, \emph{Lectures on analytic differential
  equations}, Graduate Studies in Mathematics, vol.~86, American Mathematical
  Society, Providence, RI, 2008. \MR{2363178}

\bibitem[Kat89]{Kato-log}
Kazuya Kato, \emph{Logarithmic structures of {F}ontaine-{I}llusie}, Algebraic
  analysis, geometry, and number theory ({B}altimore, {MD}, 1988), Johns
  Hopkins Univ. Press, Baltimore, MD, 1989, pp.~191--224. \MR{1463703
  (99b:14020)}

\bibitem[Kie68]{Kiehl}
Reinhardt Kiehl, \emph{{\"A}quivalenzrelationen in analytischen {R}\"aumen},
  Math. Z. \textbf{105} (1968), 1--20. \MR{232964}

\bibitem[KM15]{KotteMelrose}
Chris Kottke and Richard~B. Melrose, \emph{Generalized blow-up of corners and
  fiber products}, Trans. Amer. Math. Soc. \textbf{367} (2015), no.~1,
  651--705. \MR{3271273}

\bibitem[Kol07]{Kollar}
J{\'a}nos Koll{\'a}r, \emph{Lectures on resolution of singularities}, Annals of
  Mathematics Studies, vol. 166, Princeton University Press, Princeton, NJ,
  2007. \MR{2289519 (2008f:14026)}

\bibitem[Lav18]{Lavau}
Sylvain Lavau, \emph{A short guide through integration theorems of generalized
  distributions}, Differential Geom. Appl. \textbf{61} (2018), 42--58.
  \MR{3856749}

\bibitem[Mat89]{Matsumura-ringtheory}
Hideyuki Matsumura, \emph{Commutative ring theory}, second ed., Cambridge
  Studies in Advanced Mathematics, vol.~8, Cambridge University Press,
  Cambridge, 1989, Translated from the Japanese by M. Reid. \MR{1011461
  (90i:13001)}

\bibitem[Mat91]{Mattei91}
J.-F. Mattei, \emph{Modules de feuilletages holomorphes singuliers. {I}.
  \'{E}quisingularit\'{e}}, Invent. Math. \textbf{103} (1991), no.~2, 297--325.
  \MR{1085109}

\bibitem[McQ20]{MMcQ}
Michael McQuillan, \emph{Very functorial, very fast, and very easy resolution
  of singularities}, Geom. Funct. Anal. \textbf{30} (2020), no.~3, 858--909.
  \MR{4135673}

\bibitem[MM03]{LieGrupoids}
I.~Moerdijk and J.~Mr\v{c}un, \emph{Introduction to foliations and {L}ie
  groupoids}, Cambridge Studies in Advanced Mathematics, vol.~91, Cambridge
  University Press, Cambridge, 2003. \MR{2012261}

\bibitem[MP13]{McQPan}
Michael McQuillan and Daniel Panazzolo, \emph{Almost \'{e}tale resolution of
  foliations}, J. Differential Geom. \textbf{95} (2013), no.~2, 279--319.
  \MR{3128985}

\bibitem[MY04]{MY04}
Yoshiki Mitera and Junya Yoshizaki, \emph{The local analytical triviality of a
  complex analytic singular foliation}, Hokkaido Math. J. \textbf{33} (2004),
  no.~2, 275--297. \MR{2072999}

\bibitem[Pan02]{PanFamilies}
Daniel Panazzolo, \emph{Desingularization of nilpotent singularities in
  families of planar vector fields}, Mem. Amer. Math. Soc. \textbf{158} (2002),
  no.~753, viii+108. \MR{1903719}

\bibitem[Pan06]{Pan}
\bysame, \emph{Resolution of singularities of real-analytic vector fields in
  dimension three}, Acta Math. \textbf{197} (2006), no.~2, 167--289.
  \MR{2296055}

\bibitem[Pel95]{Pelletier}
Mich\`ele Pelletier, \emph{\'{E}clatements quasi homog\`enes}, Ann. Fac. Sci.
  Toulouse Math. (6) \textbf{4} (1995), no.~4, 879--937. \MR{1623476}

\bibitem[Pos24]{posva}
Quentin Posva, \emph{Resolution of $1$-foliations singularities on surfaces and
  threefolds}, 2024, \url{arXiv:2405.05735}.

\bibitem[QR]{Quek-Rydh}
Ming~Hao Quek and David Rydh, \emph{Weighted blow-ups}, In preparation,
  \url{https://people.kth.se/~dary/weighted-blowups20220329.pdf}.

\bibitem[Que22]{Quek}
Ming~Hao Quek, \emph{Logarithmic resolution via weighted toroidal blow-ups},
  2022, pp.~311--363. \MR{4436684}

\bibitem[Ree56]{Rees-valuation}
D.~Rees, \emph{Valuations associated with ideals. {II}}, J. London Math. Soc.
  \textbf{31} (1956), 221--228. \MR{78971}

\bibitem[Rem57]{Remmert-Meromorphic}
Reinhold Remmert, \emph{Holomorphe und meromorphe {A}bbildungen komplexer
  {R}\"aume}, Math. Ann. \textbf{133} (1957), 328--370. \MR{92996}

\bibitem[Rou98]{Rou}
Robert Roussarie, \emph{Bifurcation of planar vector fields and {H}ilbert's
  sixteenth problem}, Progress in Mathematics, vol. 164, Birkh\"{a}user Verlag,
  Basel, 1998. \MR{1628014}

\bibitem[RR21]{RReis}
J.~C. Rebelo and H.~Reis, \emph{On the resolution of singularities of
  one-dimensional foliations on three-manifolds}, Uspekhi Mat. Nauk \textbf{76}
  (2021), no.~2(458), 103--176. \MR{4236242}

\bibitem[Rus07]{Rush07}
David~E. Rush, \emph{Rees valuations and asymptotic primes of rational powers
  in {N}oetherian rings and lattices}, J. Algebra \textbf{308} (2007), no.~1,
  295--320. \MR{2290923}

\bibitem[Sai76]{Saito-deRham}
Kyoji Saito, \emph{On a generalization of de-{R}ham lemma}, Ann. Inst. Fourier
  (Grenoble) \textbf{26} (1976), no.~2, vii, 165--170. \MR{413155}

\bibitem[Sei68]{Sei}
A.~Seidenberg, \emph{Reduction of singularities of the differential equation
  {$A\,dy=B\,dx$}}, Amer. J. Math. \textbf{90} (1968), 248--269. \MR{220710}

\bibitem[SW73]{SagleWalde}
Arthur~A. Sagle and Ralph~E. Walde, \emph{Introduction to {L}ie groups and
  {L}ie algebras}, Pure and Applied Mathematics, vol. Vol. 51, Academic Press,
  New York-London, 1973. \MR{360927}

\bibitem[Tem12]{Temkin-qe}
Michael Temkin, \emph{Functorial desingularization of quasi-excellent schemes
  in characteristic zero: the nonembedded case}, Duke Math. J. \textbf{161}
  (2012), no.~11, 2207--2254. \MR{2957701}

\bibitem[W{\l}o05]{Wlodarczyk}
Jaros{\l}aw W{\l}odarczyk, \emph{Simple {H}ironaka resolution in characteristic
  zero}, J. Amer. Math. Soc. \textbf{18} (2005), no.~4, 779--822 (electronic).
  \MR{2163383}

\bibitem[W{\l}o23]{Wlodarczyk-cobordant}
\bysame, \emph{Weighted resolution of singularities. {A} {R}ees algebra
  approach}, New techniques in resolution of singularities, Oberwolfach Semin.,
  vol.~50, Birkh\"auser/Springer, Cham, 2023, pp.~219--317. \MR{4698335}

\end{thebibliography}

\end{document}